\NeedsTeXFormat{LaTeX2e}
[1994/12/01]

\documentclass{amsart}
\usepackage{amssymb}
\usepackage{amsmath,amssymb,amsfonts,amsthm,graphics,
latexsym, amscd, amsfonts, epsfig, eepic,epic}
\usepackage{mathrsfs}
\usepackage{algorithm}
\usepackage{algorithmic}
\usepackage{color}

\usepackage{eucal}
\usepackage{eufrak}
\usepackage[all]{xypic}
\usepackage{xspace}


\newtheorem{thm}{Theorem}[section]
\newtheorem{lem}[thm]{Lemma}
\newtheorem{prop}[thm]{Proposition}
\newtheorem{cor}[thm]{Corollary}

\newtheorem{ques}[thm]{Question}
\newtheorem{conj}[thm]{Conjecture}

\theoremstyle{definition}
\newtheorem{defn}{Definition}[section]

\theoremstyle{remark}
\newtheorem{rem}{Remark}[section]


\makeatletter \@addtoreset{equation}{section}

\newcommand{\thmref}[1]{Theorem~\ref{#1}}

\newcommand{\lemref}[1]{Lemma~\ref{#1}}





\def\a{\alpha}
\def\b{\beta}

\def\l{\lambda}

\def\p{\partial}
\def\vphi{\varphi}

\def\L{\Lambda}

\def\and{\quad{\rm and}\quad}

\def\eps{\epsilon}

\let\lra=\longrightarrow

\def\mapright{\xrightarrow}
\def\mapright\#1{\,\smash{\mathop{\lra}\limits^{\#1}}\,}

\def\om{\omega}
\def\Om{\Omega}



\def\tri{\triangle}
\def\ul{\underline}
\newcommand{\gij}{\ensuremath{g_{i\bar j}}}
\newcommand{\as}{\underline{S}}

\numberwithin{equation}{section}

\DeclareMathOperator{\osc}{Osc}

\makeatletter

\newcommand{\Rmnum}[1]{\expandafter\@slowromancap\romannumeral #1@}


\title{Pseudo-Calabi Flow}
\author[Chen]{Xiuxiong Chen}
  \address{Department of Mathematics, University of Wisconsin, Madison, WI 53706, USA}
  \email{xiuxiong.chen@gmail.com}
\author[Zheng]{Kai Zheng}
  \address{Academy of Mathematics and Systems Sciences, Chinese Academy of Sciences, Beijing, 100190, P.R.~China}
  \email{kaizheng@amss.ac.cn}
\date{}
\keywords{}
\begin{document}
\maketitle

\section{abstract}
We first define
Pseudo-Calabi flow, as
\begin{equation*}
  \left\{
   \begin{aligned}
   {{\partial \varphi}\over {\partial t}}&= -f(\varphi),  \\
   \triangle_\varphi f(\varphi) &= S(\varphi) - \ul S.  \\
   \end{aligned}
  \right.
\end{equation*}
Then we prove the well-posedness of this flow including the short time existence, the regularity of the solution and the continuous dependence on the initial data. Next, we point out that the $L^\infty$ bound on Ricci curvature
is an obstruction to
the extension of the pseudo-Calabi flow. Finally, we show that
if there is a cscK metric in its K\"ahler class,
then for any initial
potential in a small $C^{2,\alpha}$ neighborhood of it,
the pseudo-Calabi flow
must converge exponentially to a nearby cscK metric.
\tableofcontents

\section{Introduction}

A renowned problem in K\"ahler geometry is
to find a constant scalar curvature K\"ahler metric (cscK)
in an arbitrary K\"ahler class.
When restricted to the canonical K\"ahler class,
a cscK metric is nothing but a K\"ahler-Einstein metric (KE).
The cscK metric satisfies a (totally nonlinear) 4th order partial differential equation
of the K\"ahler potential,
and the usual variational method is difficult to apply. Calabi \cite{MR645743}\cite{MR780039} suggested to
use the heat flow method:
\[
   {{\p \varphi}\over {\p t}} = S(\varphi) - \as,
\]
which has become known in the literature as the Calabi flow.
A critical point of this
Calabi flow is precisely a cscK metric.
This flow has been actively studied in recent
years (c.f.\cite{MR1101689}\cite{MR1820328}\cite{MR2405167}\cite{chen-2007}\cite{chen-2008}\cite{MR2357473}...).
However,
the remaining technical difficulties are still daunting simply
because it is a 4th order flow. \\

One wonders if we can take "square root" of the Calabi flow.
If so, then it will become a 2nd order flow and we can reduce it to something we are all familiar
with.
What we propose here is a very natural approach (where
we call it Pseudo-Calabi flow):

\begin{equation*}
  \left\{
   \begin{aligned}
   {{\partial \varphi}\over {\partial t}}&= -f(\varphi),  \\
   \triangle_\varphi f(\varphi) &= S(\varphi) - \ul S.  \\
   \end{aligned}
  \right.
\end{equation*}

In the canonical K\"ahler class, this flow will reduce to the famous K\"ahler-Ricci flow.
If we start with a metric
in general K\"ahler class, the leading term on
the right hand side is the same as the K\"ahler-Ricci flow
(the logarithm of volume
ratio between the evolving K\"ahler metrics
and the fixed reference metric). However,
there is an additional, unpleasant term,
which is 0th order pseudo-differential operator.
It is this additional term which makes things very subtle.
Nonetheless, it is crucial that
the cscK metric is the fixed point of the pseudo-Calabi flow. Moreover, in the space of K\"ahler metrics equipped with the Calabi gradient metric, the pseudo-Calabi flow is the gradient flow of the $K$-energy (see Remark \ref{notations: gradient flow}).

We first prove the following theorem:

\begin{thm} If the initial K\"ahler potential is
in $C^{2,\alpha} (0<\alpha <1)$,
then the flow exists for short time.
More importantly,
it becomes smooth right after $t>0.\;$
\end{thm}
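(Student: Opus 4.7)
The first step is to make the flow explicit by eliminating the implicit definition of $f$. Using the standard identity
\begin{equation*}
S(\varphi) = -\Delta_\varphi \log \frac{\omega_\varphi^n}{\omega_0^n} + \mathrm{tr}_\varphi \mathrm{Ric}(\omega_0),
\end{equation*}
the Poisson equation $\Delta_\varphi f = S(\varphi) - \underline S$ is solvable on the compact K\"ahler manifold (the integrability condition $\int (S(\varphi)-\underline S)\,\omega_\varphi^n = 0$ is topological), and its solution can be written as $f = -\log(\omega_\varphi^n/\omega_0^n) + h(\varphi) + c(\varphi)$, where $h(\varphi)$ solves $\Delta_\varphi h = \mathrm{tr}_\varphi \mathrm{Ric}(\omega_0) - \underline S$ with some fixed normalization and $c(\varphi)$ is a constant fixed by a normalization on $f$. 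The flow then reads
\begin{equation*}
\partial_t \varphi \;=\; \log \frac{\omega_\varphi^n}{\omega_0^n} \;-\; h(\varphi) \;-\; c(\varphi),
\end{equation*}
which is a parabolic complex Monge--Amp\`ere equation perturbed by a nonlocal, 0th-order term. The principal part of the linearization is $\partial_t - \Delta_\varphi$, so the system is parabolic.

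\textbf{Short-time existence.} The plan is a fixed-point argument in the parabolic H\"older space $X_T = C^{2+\alpha,\,1+\alpha/2}([0,T]\times X)$. Given $\psi\in X_T$ in a small ball around the initial data $\varphi_0$, I would first solve the elliptic problem $\Delta_{\omega_\psi(t)} h(t) = \mathrm{tr}_{\omega_\psi(t)}\mathrm{Ric}(\omega_0)-\underline S$; since the coefficients and the right-hand side lie in $C^{0,\alpha}$ uniformly in $t$, elliptic Schauder gives $h(\psi)\in C^{2+\alpha,1+\alpha/2}$. Next, I would solve the parabolic complex Monge--Amp\`ere equation
\begin{equation*}
\partial_t \varphi \;=\; \log \frac{\omega_\varphi^n}{\omega_0^n} \;-\; h(\psi)(t),\qquad \varphi(0)=\varphi_0,
\end{equation*}
for $\varphi$ with $h(\psi)$ frozen. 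Well-posedness of this Monge--Amp\`ere-type equation for Hölder data is classical (parabolic Schauder applied to its linearization, then continuity method or inverse function theorem in Banach spaces). The map $\psi\mapsto \varphi$ is then shown to be a contraction on a small time interval, yielding a fixed point $\varphi=\psi$ solving the pseudo-Calabi flow.

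\textbf{Instant smoothing.} Once $\varphi\in C^{2+\alpha,1+\alpha/2}$ on $[0,T]\times X$, I would bootstrap by spatial differentiation. For any holomorphic coordinate direction $\partial_i$, the function $u=\partial_i \varphi$ satisfies a linear parabolic equation whose coefficients and inhomogeneous term are polynomial expressions in $\varphi$, $\partial \varphi$, $\partial^2\varphi$, $h(\varphi)$, and derivatives of $\omega_0$, all of which lie in $C^{\alpha,\alpha/2}$. Parabolic Schauder then gives $u\in C^{2+\alpha,1+\alpha/2}$ on $[\tau,T]$ for any $\tau>0$ (the shift in time variable absorbs the fact that the initial data is only $C^{2,\alpha}$, which is exactly the standard trick to obtain \emph{interior} parabolic smoothing). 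Hence $\varphi\in C^{3+\alpha,\ldots}$ for $t>0$, and iterating gives $\varphi\in C^\infty$ for $t>0$.

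\textbf{Main obstacle.} The delicate part is the coupling through the nonlocal term $h(\varphi)$: the Laplacian $\Delta_\varphi$ used to define $h$ depends on $\varphi$ itself, so one must verify that the map $\varphi\mapsto h(\varphi)$ is continuous--indeed Lipschitz--in the parabolic H\"older norm, uniformly in the fixed-point iteration. A related subtlety is that classical parabolic Schauder for Monge--Amp\`ere typically requires $C^{2+\alpha}$ initial data (which borderline matches our hypothesis), so one must check carefully that the natural estimate
\begin{equation*}
\|\varphi\|_{C^{2+\alpha,1+\alpha/2}} \;\leq\; C\bigl(\|\varphi_0\|_{C^{2+\alpha}} + \|h(\psi)\|_{C^{\alpha,\alpha/2}}\bigr)
\end{equation*}
can be closed against the a priori bound on $h(\psi)$. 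Controlling the normalizing constant $c(\varphi)$ in a way compatible with the contraction estimate is the final piece.
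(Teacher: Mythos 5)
Your setup coincides with the paper's: the same decomposition $\partial_t\varphi=\log(\omega_\varphi^n/\omega^n)-P(\varphi)$ with $\Delta_\varphi P=\mathrm{tr}_\varphi \mathrm{Ric}(\omega)-\underline{S}$, a contraction argument for existence, and a parabolic bootstrap for instant smoothing. The genuine gap is in the contraction step. Your iteration freezes the nonlocal term and maps $\psi\mapsto\varphi$; the derivative of this map is (parabolic solution operator) composed with the derivative of $\psi\mapsto P(\psi)$, and the latter is $v\mapsto\Delta_\psi^{-1}\bigl(v^{i\bar j}T_{i\bar j}\bigr)$ where $T_{i\bar j}=R_{i\bar j}(\omega)-P_{i\bar j}$ is the harmonic tensor. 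This operator has norm of order $|T|_{C^\alpha}$, which is \emph{not} small, and shrinking the ball or the time interval does not obviously help because the parabolic Schauder constant from a $C^{\alpha,\alpha/2}$ source to $C^{2+\alpha,1+\alpha/2}$ does not tend to zero as $T\to 0$. So "the map is then shown to be a contraction on a small time interval" is exactly the assertion that needs a mechanism, and none is supplied. The paper uses two such mechanisms: (i) a Newton-type iteration $A\rho=[\Phi'(\bar\varphi)]^{-1}[\psi-\Phi(\bar\varphi+\rho)]+\rho$, which contracts because $\|\Phi'(\bar\varphi+\rho)-\Phi'(\bar\varphi)\|$ is small on a small ball (second-order smallness, independent of the size of $T$), and (ii) the key device of Remark 4.3: approximate $\varphi_0$ by a smooth potential $\phi$, absorb $\phi$ into the reference metric, and work with the new initial datum $\varphi_0-\phi$ of small $C^{2,\alpha}$ norm — this is what makes $\Delta_{\bar\varphi}-\Delta$ small and closes the estimates with merely $C^{2,\alpha}$ (little H\"older) data; it also explains why the completion of smooth functions is the right space, a point your argument never touches. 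For the nonlocal term itself the paper exploits that $Q=\Delta_\varphi^{-1}(v^{i\bar j}T_{i\bar j})$ is two derivatives smoother than a generic source, so the Duhamel integral against the heat kernel gains a factor $t_0^{1-\beta}$; your scheme could likely be repaired along the same lines, but as written it is not.

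Two smaller points. First, you invoke as "classical" the well-posedness of the parabolic Monge--Amp\`ere flow with $C^{2,\alpha}$ initial data; for smooth data this is Cao's theorem, but for $C^{2,\alpha}$ data it is essentially the content of the theorem being proved (the paper devotes Proposition 4.10 and Subsections 4.4--4.6 to inverting the linearized operator, which is not self-adjoint because of $Q$ and admits no maximum principle, via heat-kernel contraction, energy estimates in $V_2(Q_T)$, Sobolev embedding and a Moser-type bootstrap). Second, in the smoothing step one cannot directly apply $C^{2+\alpha}$ Schauder to $u=\partial_i\varphi$, since $u$ is only known to be $C^{1,\alpha}$ and the forcing contains $\partial_i P$, which is merely continuous at that stage; the paper instead takes difference quotients, passes through the interior $W^{2,p}$ estimates to get $\partial_i\varphi\in W^{2,p}(Q_{T_\varepsilon})$ for all $p$, and only then upgrades to H\"older regularity and iterates. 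Your time-shift idea for interior smoothing is correct in spirit, but the route through $L^p$ theory is needed to start the iteration.
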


Following the corresponding work in the Calabi flow
(\cite{MR2405167}...),
we have

\begin{thm}\label{intro: ricci} The $L^\infty$ bound of the Ricci curvature
is the obstruction to
an extension of the pseudo-Calabi flow.
\end{thm}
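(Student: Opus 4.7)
The plan is contrapositive: assume the pseudo-Calabi flow exists on a maximal time interval $[0,T)$ with $T<\infty$ and that $\sup_{[0,T)}\|\Ric(\omega_\varphi)\|_{L^\infty}\leq K$. We will show $\varphi(t)$ converges in $C^{2,\alpha}$ as $t\to T$, so that the first theorem lets us restart the flow beyond $T$, contradicting maximality. The bootstrap is: $\Ric$-bound $\Rightarrow$ $S$-bound $\Rightarrow$ $f$-bound $\Rightarrow$ $\varphi$-bound $\Rightarrow$ metric equivalence $\Rightarrow$ $C^{2,\alpha}$ bound $\Rightarrow$ all higher bounds.

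First I would note $|S(\varphi)|=|\tr_{\omega_\varphi}\Ric(\omega_\varphi)|\leq nK$, which is the key scalar input. Next, the elliptic equation $\triangle_\varphi f(\varphi) = S(\varphi)-\underline S$ has a bounded right-hand side; after the natural normalization (e.g.\ $\int f\,\omega_\varphi^n =0$), this yields $\|f\|_{L^\infty}\leq C$ via a Green's function bound, provided the Sobolev/Poincar\'e constants of $\omega_\varphi$ stay controlled --- this needs to be secured in tandem with the metric equivalence step below. From $\partial_t\varphi = -f$ and $|f|\leq C$ one gets $\|\varphi(t)\|_{L^\infty}\leq \|\varphi_0\|_{L^\infty}+CT$ on $[0,T)$. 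Meanwhile the identity $\partial_t\log(\omega_\varphi^n/\omega_0^n) = \triangle_\varphi\dot\varphi = -(S-\underline S)$ gives
\begin{equation*}
\Bigl|\log\frac{\omega_\varphi^n}{\omega_0^n}\Bigr|\leq C(T,K),
\end{equation*}
so the volume ratio is pinched.

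The central step is the $C^2$ (metric equivalence) estimate. I would run a Chern--Lu / Aubin--Yau type maximum principle argument on $\log\tr_{\omega_0}\omega_\varphi - A\varphi$. Along the pseudo-Calabi flow, the parabolic operator $(\partial_t-\triangle_\varphi)$ applied to $\tr_{\omega_0}\omega_\varphi$ produces the usual bisectional-curvature terms of $\omega_0$ together with a new term coming from $\partial_t\varphi = -f$; because $f$ appears only through first derivatives, it is absorbed using Cauchy--Schwarz once $\|f\|_{C^1}$ is controlled. Since $-\partial\bar\partial\log(\omega_\varphi^n/\omega_0^n) = \Ric(\omega_\varphi)-\Ric(\omega_0)$ is bounded, one gets a closed differential inequality producing $\tr_{\omega_0}\omega_\varphi\leq C$, and together with the volume bound this yields $C^{-1}\omega_0\leq \omega_\varphi\leq C\omega_0$. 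This is the step I expect to be the main technical obstacle, since the coupling of the evolution of $\varphi$ to the elliptic equation for $f$ means bounds on $f$ and bounds on the metric have to be closed simultaneously; a priori, controlling $f$ in $C^1$ requires controlling the metric, while controlling the metric requires controlling $f$. I would resolve this by running a continuity-in-$t$ argument: on the subinterval where the equivalence $C^{-1}\omega_0\leq\omega_\varphi\leq C\omega_0$ holds with a strictly better constant, standard elliptic theory applied to the equation for $f$ gives improved Schauder estimates on $f$, which plugged back into the maximum principle prevents the constant from degenerating before $T$.

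With $\omega_\varphi$ uniformly equivalent to $\omega_0$ and $|S|$ bounded, the elliptic equation for $f$ is uniformly elliptic with bounded coefficients and bounded right-hand side, so Calder\'on--Zygmund/Schauder estimates give $\|f\|_{C^{1,\alpha}}\leq C$. Then the complex Monge--Amp\`ere type bound on $\log\det g_\varphi$, together with the Evans--Krylov theorem (or Calabi's third-derivative identity) gives $\|\varphi\|_{C^{2,\alpha}}\leq C$ uniformly on $[0,T)$. Standard parabolic Schauder bootstrapping then yields $C^{k,\alpha}$ bounds for all $k$. In particular $\varphi(t)$ converges in $C^{2,\alpha}$ as $t\to T^-$ to some $\varphi_T\in C^{2,\alpha}$, and the short-time existence theorem (applied with initial data $\varphi_T$) extends the flow beyond $T$, contradicting the maximality of $T$. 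Hence $T=\infty$, proving the claim.
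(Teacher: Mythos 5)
Your overall skeleton --- argue by contraposition, reduce everything to a uniform $C^{2,\alpha}$ bound on $[0,T)$, then restart the flow by the short-time existence theorem --- is exactly the paper's (its Theorem on extension reduces to an a priori $C^{2,\alpha}$ estimate). Your direct integration of $\partial_t\log(\omega_\varphi^n/\omega^n)=-(S-\underline S)$ with $|S|\leq nK$ is a clean, in fact simpler, route to the two-sided bound on $h=\log(\omega_\varphi^n/\omega^n)$; the paper gets the upper bound the same way but obtains the lower bound via Chen--Tian's estimate $\inf_M h\geq -Ce^{2(1+\int_M h\,\omega_\varphi^n)}$, which only needs Ricci bounded below.

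The middle of your argument, however, has a genuine gap, concentrated precisely at the two places where you flag circularity. First, you derive $\|\varphi\|_{L^\infty}$ from $\|f\|_{L^\infty}$, which needs the Green's function or Sobolev constant of the \emph{evolving} metric, which needs the metric equivalence you have not yet established. The paper never bounds $f$ pointwise at this stage: $\sup_M(\varphi-\underline\varphi)$ comes from the Green's function of the \emph{fixed} metric $\omega$ applied to $\triangle\varphi+n>0$; $\inf_M(\varphi-\underline\varphi)$ comes from Yau's $C^0$ estimate, which needs only $\sup_M h$; and $\underline\varphi$ is controlled through Jensen's inequality applied to the normalizations $\int_M e^P\omega^n=V$, $\int_M e^h\omega^n=V$. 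Second, and more seriously, your parabolic Chern--Lu step rests on the claim that ``$f$ appears only through first derivatives.'' It does not: $\partial_t\bigl(\mathrm{tr}_{\omega_0}\omega_\varphi\bigr)=-g_0^{i\bar j}f_{i\bar j}$ is the full complex Hessian of $f$, and since $f_{i\bar j}=P_{i\bar j}+R_{\varphi i\bar j}-R_{i\bar j}(\omega)$, the Ricci bound controls the middle term but not $P_{i\bar j}$, which is a nonlocal quantity whose control requires elliptic estimates with respect to the very metric whose regularity you are trying to establish. Your ``continuity-in-$t$'' fix is not quantified and does not obviously prevent the constants from degenerating as $t\to T$. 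The paper's way out is to make the second-order estimate purely elliptic: the Chen--He estimate $n+\triangle\varphi\leq C\exp\bigl(C\,\mathrm{osc}_M\varphi+\sup_M h/(n-1)\bigr)$ uses only the upper Ricci bound together with the already established bounds on $\mathrm{osc}_M\varphi$ and $h$, with no reference to $f$ or $P$. Once the metrics are equivalent, the two-sided Ricci bound gives $|\triangle h|\leq C(n+\triangle\varphi)+|S|$, hence $h\in W^{2,p}$ for all $p$, and Evans--Krylov closes the $C^{2,\alpha}$ estimate --- the endgame you describe correctly. To repair your proof you should replace the $f$-based $C^0$ bound and the parabolic $C^2$ argument with elliptic ones of this type, or else supply a genuinely closed continuity scheme, which would require new a priori control of $P_{i\bar j}$.
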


In order to make a case that the pseudo-Calabi flow is the right approach,
we need the
following stability theorem.

\begin{thm}\label{intro: stability} If there is a cscK metric in its K\"ahler class,
then for any initial
potential in a small $C^{2,\alpha}$ neighborhood of it,
the pseudo-Calabi flow
must converge exponentially to a nearby cscK metric.
\end{thm}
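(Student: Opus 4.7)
The plan is to linearize the pseudo-Calabi flow at the cscK reference metric $\varphi_\infty$ and reduce the nonlinear stability problem to a standard perturbation argument for an exponentially contracting linear semigroup. Setting $\psi=\varphi-\varphi_\infty$ and using that the linearization of scalar curvature at a cscK metric is $\delta S(\psi)=-\cD^*\cD\,\psi$, where $\cD=\dbar\circ\nabla^{1,0}$ is the Lichnerowicz operator, the linearization of the system $\partial_t\varphi=-f$, $\triangle_\varphi f=S(\varphi)-\ul S$ becomes
\[
 \partial_t\psi \;=\; -L\psi, \qquad L \;:=\; -\triangle_{\varphi_\infty}^{-1}\,\cD^*\cD .
\]
This $L$ is a second-order elliptic pseudo-differential operator on mean-zero functions; it is self-adjoint and non-negative with respect to the Calabi inner product $\langle u,v\rangle_{\cC}=\int_X u\,(-\triangle_{\varphi_\infty}v)\,\omega_{\varphi_\infty}^n$, and its kernel coincides with the finite-dimensional space of real holomorphy potentials on $(X,\omega_{\varphi_\infty})$, i.e.\ the Lie algebra of $\Aut_0(X)$.

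Next, I would establish exponential decay for the linearized flow. By Matsushima's theorem and a standard slice argument, convergence of the full flow to a nearby cscK need only be shown transverse to the $\Aut_0(X)$-orbit of $\varphi_\infty$. On the Calabi-orthogonal complement of $\ker L$, the spectrum of $L$ is discrete with first eigenvalue $\lambda_1>0$, so the linear semigroup $e^{-tL}$ contracts at rate $e^{-\lambda_1 t}$. For the nonlinear problem, I would write the flow as $\partial_t\psi=-L\psi+Q(\psi)$, with $Q$ at least quadratic in $\psi$ and its derivatives up to order two. Using the instantaneous smoothing from Theorem 1.1, I would restart the flow at $t=1$ with smooth data of small $C^{2,\alpha}$ norm, apply Duhamel's formula in a weighted H\"older or Sobolev space, and close the estimate by a continuity argument: if $\|\psi(t)\|\leq 2C\|\psi(0)\|e^{-\lambda t}$ holds on $[0,T]$ for some $0<\lambda<\lambda_1$, then the quadratic nature of $Q$ together with the smallness of the initial data forces the improved bound with constant $C$, giving a global solution with exponential decay. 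Parabolic Schauder estimates on unit-length cylinders then transfer the decay back to the $C^{2,\alpha}$ norm.

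The principal obstacle is the combination of the low initial regularity $C^{2,\alpha}$ with the potentially nontrivial kernel of $L$ coming from continuous families of holomorphic automorphisms. The regularity issue is tamed by Theorem 1.1, which reduces the problem to one with smooth data of controlled norm. The kernel issue requires a gauge-fixing: one constructs a one-parameter family $\eta(t)\in\Aut_0(X)$ so that $\eta(t)^\ast\varphi(t)-\varphi_\infty$ remains Calabi-orthogonal to $\ker L$ for all $t$. The projected equation then takes the form $\partial_t\tilde\psi=-L\tilde\psi+\tilde Q(\tilde\psi)$ on the slice, and the analysis above applies verbatim, yielding convergence of $\eta(t)^\ast\varphi(t)$, hence of $\varphi(t)$ itself, to a cscK metric in the $\Aut_0$-orbit of $\varphi_\infty$. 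The main technical labor will be carrying out the gauge-fixing consistently at the $C^{2,\alpha}$ level and verifying that $Q$ really admits bounds of the form $\|Q(\psi)\|\leq C\|\psi\|^2$ in the spaces used to close the Duhamel estimate.
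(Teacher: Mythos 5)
Your proposal is a correct plan in outline, but it takes a genuinely different route from the paper. You follow the classical principle of linearized stability: identify the linearization $-L=\triangle_{\varphi_\infty}^{-1}\mathcal{D}^*\mathcal{D}$ at the cscK metric (which is indeed self-adjoint and nonnegative for the Calabi inner product, consistent with the flow being the gradient flow of the $K$-energy in that metric), use the spectral gap of $L$ transverse to the holomorphy potentials, and close a Duhamel/continuity argument for the quadratic remainder after a continuous $\Aut_0$-gauge-fixing. The paper never writes down the linearized semigroup at $\varphi_\infty$. Instead it (i) shows the flow stays in a small $C^{2,\alpha}$-neighborhood for all time by a compactness--contradiction argument that combines the instantaneous smoothing estimates with the monotonicity of the $K$-energy, Chen--Tian's theorem that a cscK metric is a global minimizer of the $K$-energy, and the uniqueness of cscK metrics modulo $\Aut_0(M)$ (in the presence of automorphisms the gauge is fixed \emph{discretely}, by projecting onto the finite-dimensional submanifold $\mathcal{E}_0$ of cscK potentials at a sequence of times and restarting the flow, rather than by solving an ODE for $\eta(t)$); and (ii) derives exponential decay a posteriori from the evolution of $\mu_1=\frac{1}{V}\int_M|\nabla\dot\psi|^2\,\omega_\psi^n$, using a \emph{uniform along-the-flow} lower bound for the first eigenvalue of the Lichnerowicz operator $L_t$ at the evolving metrics (again proved by compactness and contradiction), with the vanishing of the Futaki invariant guaranteeing that the velocity $\dot\psi$ is orthogonal to $\ker L_t$ at every time --- the role your gauge-fixing plays. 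Your approach buys independence from the global Chen--Tian minimization theorem (you only need the local spectral gap) and a cleaner dynamical-systems statement; the paper's approach buys freedom from the heavier semigroup machinery for a nonlocal second-order operator. Two points in your plan deserve explicit care: the Duhamel estimate $\int_0^t\|e^{-(t-s)L}\|_{C^\alpha\to C^{2,\alpha}}\,ds$ has a borderline $(t-s)^{-1}$ singularity for a second-order generator, so you must work in continuous interpolation (little H\"older) spaces with maximal regularity --- exactly the Da Prato--Grisvard/Lunardi framework the paper already invokes for short-time existence; and the pulled-back flow $\eta(t)^*\varphi(t)$ does not satisfy the original equation unless the reference metric and the normalization of $P$ are transformed along with it (compare the paper's equations \eqref{PCF on SoKM: transform equ}--\eqref{PCF on SoKM: extend equ}), so the extra Lie-derivative term must be checked to land in $\ker L$ up to quadratic errors.
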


\noindent {\bf Remark} There is an important earlier work in geometric flow which is essential a variant of this flow.
In a very interesting work \cite{MR2163555}, Simanca considered the so called ``extremal flow"
$\frac{\p\varphi}{\p t}=-G_t(S_t-\pi_tS_t)=F(\vphi)$,
where
$\pi_t$ is $L^2$-orthogonal projection operator onto the space of
real holomorphic potentials.  In a sense, it is a slight variation of the ``Pseudo-Calabi flow", which
we consider here, by a lower order term. One of main motivations for such a modification is that
fixed points of the extremal flow
$S=\pi S$ are precisely extremal metrics. We believe that our version has a simpler concept and
it can also be viewed as a generalization of
the K\"ahler-Ricci flow to the non-canonical K\"ahler class,
since it agrees with the K\"ahler-Ricci flow in the canonical class. Simanca proved the short time existence of the extremal flow
for the $G$-invariant initial K\"ahler potentials in the
space $C_{(k+1,0)}$ with the norm $||\varphi||=\sup_{t\in
I}\{\sup_{0\leq r\leq k+1}||\p_t^rv||_{W_G^{2(k+1-r),2}}\}$
provided that $2k>n+2.\;$
First of all, he chose an approximate solution in a small time interval by solving heat equation with the given $G$-invariant initial data.
Secondly, he used semigroup method to obtain the solution $v$ of the linearized equation with the coefficient which is determined by the approximate solution.
Thirdly, following Kato's program in \cite{MR930267}, he defined a map by solving $\l \vphi - F(\vphi)=-v(t)+\l(\vphi_0+\int_0^t v(s)ds )$ for some real number $\l$, such that $\l-F'_{\vphi_0}$ is an isomorphism. Finally, he obtained
the fixed point of this map and solved the extremal flow.
It is a nice work indeed. However, the proof
is difficult to comprehend (from the standard PDE point of view), and our assumption on regularity of initial data is weaker. \\

Guan \cite{MR2366370} defined a modified Ricci flow
$\frac{\p}{\p t}g=-Ric(g)+HRic(g)+L_Vg$ where $HRic(g)$
is the harmonic part of the Ricci form
and $V$ is a real holomorphic vector field.
Then he considered the problem of finding
generalized quasi-Einstein metrics.
This flow is another complicated variation of the pseudo-Calabi flow. Guan claimed the short time existence of his flow
with $C^\infty(M)$ initial data
by a very brief outline. \\

We remark that, when $M$ admits no holomorphic vector field, these three flows coincide. However, the soliton solutions formed under these three flows are different. We do believe our flow is the simplest
which allow us to focus on the main challenges which are arisen from the geometric aspects of cscK metrics. \\


\noindent {\bf Acknowledgements:}
The second author would like to thank Professor W. Y. Ding for constant encouragements over the past several years. He would also like to
thank Professor S. B. Angenent
for introducing him his paper \cite{MR1059647} and Professor J. Y. Li, Professor J. Qing, Professor Y. D. Wang,
Professor M. J. Zhu and Professor X. H. Zhu
for their helps and encouragements.
Thanks also go to ICTP, USTC and UW-Madison
for their hospitality during the time when part of the work was done.

We wish to thank Professor S. Simanca
for carefully reading an earlier version of this manuscript and for his helpful comments.

\section{Notations and setup}\label{notations}

Let $M$ be a n-dimensional compact K\"ahler manifold.
$\om$ is a K\"ahler form belonging to a fixed K\"ahler class $\Omega$. In the local coordinates
$(z_1,z_2,\dotsm z_n)$, we have

\[
 \om =
\frac{\sqrt{-1}}{2}\sum\limits_{i=1}^{n} \gij dz^i \wedge
dz^{\bar j}.
\]
The Rimannian metric corresponding to $\om$ is given by
$ g =
\sum\limits_{i=1}^{n} \gij dz^i \otimes dz^{\bar j }$
on $T^\mathbb{C}(M)$.
Written in this form, the metric $g$ is K\"ahler
if and only if
\begin{align*}
g_{ij}=g_{\bar i\bar j}=0 \text{ and }
\frac{\p g_{i\bar j}}{\p z^k}=\frac{\p g_{k\bar j}}{\p z^i}.
\end{align*}
The volume form is the $(n,n)$ form
\begin{equation*}
dV=\omega^{[n]}
=\frac{\om^{n}}{n!}
={(\frac{\sqrt{-1}}{2})}^n\det(\gij)
dz^1\wedge dz^{\bar{1}} \wedge \dotsm \wedge dz^n\wedge dz^{\bar{n}}.
\end{equation*}
For each $\omega\in \Omega$,
the corresponding Ricci form
\begin{equation*}
Ric= \frac{\sqrt{-1}}{2} \sum\limits_{i=1}^{n}
R_{i\bar j} dz^i \wedge dz^{\bar{j}}
=- \frac {\sqrt{-1}}{2}\p \bar \p \log (\om^n)
\end{equation*}
is a closed form, in which $\log (\om^n)$ is generally not a
globally defined function on $M$. The first Chern class is
$C_1(M)=\frac{[Ric]}{\pi}$. The scalar curvature
is the contraction of the Ricci curvature $$S=g^{i\bar{j}} R_{i\bar{j}}$$
and the Futaki potential $f$ is the real value solution of
$$\tri_\vphi f=S-\as$$ with
$\int_{M}e^{f}\om^n_{\vphi}=V$.
Furthermore, since
$$S\om^{n}=n Ric \wedge \om^{n-1},$$
we obtain that the average of the scalar curvature is
\begin{equation*}
\ul S =
\frac{\int_{M}SdV}{V}
=\frac{1}{(n-1)!V}\int_{M}Ric\wedge\om^{n-1}
=\frac{Ric[\om]^{[n-1]}}{[\om]^{[n]}}
=\frac{\pi C_1(M)[\om]^{[n-1]}}{[\om]^{[n]}}
\end{equation*} which only depends on the K\"ahler class $[\om]$. The space of K\"ahler potentials is defined as
\[
\mathcal{H}
=\{\vphi\in C^{\infty}(M,R)\vert\om+\frac{\sqrt{-1}}{2}\p\bar\p\vphi>0\}.
\]
Donaldson \cite{MR1736211}, Mabuchi \cite{MR909015} and Semmes \cite{MR1165352}
defined a Riemannian metric as
\[
\int_M f_1 f_2 \om^n_\vphi
\]
for any $f_1,f_2\in T_\vphi \mathcal{H}$,
under which $\mathcal{H}$ becomes a non-positive curved
infinite dimensional symmetric space.
Chen \cite{MR1863016}
proved that any two points in $\mathcal{H}$
can be connected by a $C^{1,1}$ geodesics
and $\mathcal{H}$ is a metric space. Later, Calabi and Chen proved $\mathcal{H}$ is negatively curved in the sense of Alexanderof in \cite{MR1969662}.
The space of normarlized K\"ahler potentials is defined as
\[
\mathcal{H}_0
=\{\vphi\in C^{\infty}(M,R)\vert\om+\frac{\sqrt{-1}}{2}\p\bar\p\vphi>0
\text{ and }I(\vphi)=0\},
\]where
\begin{align}\label{notations: I}
I(\vphi)
=\sum_{p=0}^n\frac{1}{(p+1)!(n-p)!}
\int_M\om^{n-p}(\p\bar\p\vphi)^p\vphi.
\end{align}
Calabi \cite{Cacom} suggested another metric on $\mathcal{H}$,
\begin{align}\label{notations: gra Calabi metric}
\int_M g_\vphi^{i\bar{j}}f_{1i}f_{2\bar{j}} \om^n_\vphi
\end{align}
for any $f_1,f_2\in T_\vphi \mathcal{H}_0$. It is computed in Calamai \cite{simone-2010} that the Calabi's gradient metric on $\mathcal{H}_0$ admits a unique Levi-Civita connection.

The pseudo-Calabi flow is defined as
\begin{equation*}
\left\{
\begin{aligned}
\frac{\p}{\p t}\gij&=-f_{i\bar{j}},\\
g(0)&=g_0,\\
\end{aligned}
\right.
\end{equation*}
in the fixed but arbitrary K\"ahler class $\Om$.
According to the definition of the K\"ahler condition
we see that the pseudo-Calabi flow preserves the K\"ahler condition, i.e.
\begin{thm}
If $g_0$ is K\"ahler, then $g(t)$ is K\"ahler if $g(t)$ satisfies the pseudo-Calabi flow.
\end{thm}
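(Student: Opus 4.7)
The plan is to verify directly the two defining conditions of the K\"ahler condition recalled in Section~\ref{notations}: the vanishing of the pure-type components $g_{ij} = g_{\bar i \bar j} = 0$, and the symmetry $\p_k g_{i\bar j} = \p_i g_{k\bar j}$.

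First I would observe that the Futaki potential $f$ is a real-valued function on $M$, so its complex Hessian $f_{i\bar j} = \p_i \p_{\bar j} f$ is purely of type $(1,1)$. The flow equation $\p_t g_{i\bar j} = -f_{i\bar j}$ therefore prescribes evolution only of the mixed components; the pure-type components $g_{ij}$ and $g_{\bar i\bar j}$ are left untouched. Since $g_0$ is K\"ahler these vanish at $t=0$, and hence for all $t$.

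Second, I would differentiate the K\"ahler symmetry along the flow:
\[
\frac{\p}{\p t}\bigl(\p_k g_{i\bar j} - \p_i g_{k\bar j}\bigr) = -\p_k\p_i\p_{\bar j} f + \p_i\p_k\p_{\bar j} f = 0,
\]
because the partial derivatives of $f$ commute. Thus $\p_k g_{i\bar j} - \p_i g_{k\bar j}$ is constant in $t$, and since it vanishes at $t=0$, it vanishes for all $t$.

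An equivalent, conceptually cleaner formulation is at the potential level. The scalar flow reads $\p_t \vphi = -f$, so $\vphi(t) = \vphi_0 - \int_0^t f(s)\,ds$ is real-valued and
\[
\om(t) = \om_0 + \frac{\sqrt{-1}}{2}\p\bar\p \vphi(t)
\]
is, by construction, a real closed $(1,1)$-form. Positive-definiteness of $\om(t)$ is built into the hypothesis that $g(t)$ is a genuine Riemannian metric along the flow, so nothing further need be checked. The only regularity concern is that $f$ have enough derivatives for the mixed partials above to commute, which is amply supplied by the smoothing property of the pseudo-Calabi flow guaranteed by the short-time existence theorem stated above. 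Accordingly there is no substantive technical obstacle: the statement is essentially tautological once one notes that the driving term on the right-hand side is $\p\bar\p$ of a real function.
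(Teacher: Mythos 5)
Your proof is correct and follows essentially the same route as the paper, which simply observes that the right-hand side $-f_{i\bar j}$ is the complex Hessian of a real function and so, by the definition of the K\"ahler condition recalled in Section~\ref{notations}, both the vanishing of the pure-type components and the symmetry $\p_k g_{i\bar j}=\p_i g_{k\bar j}$ are preserved. Your write-up is in fact more detailed than the paper's one-line justification, and the potential-level reformulation you add matches the paper's subsequent equation \eqref{notations: form of PCF}.
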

The equation for the K\"ahler form is:
\begin{equation}\label{notations: form of PCF}
\left\{
\begin{aligned}
\frac{\p\om_{\vphi}}{\p t}
&=-\frac{\sqrt{-1}}{2}\p\bar\p f,\\
\om_{\vphi(0)}&=\om_0.\\
\end{aligned}
\right.
\end{equation}
We observe the following:
\begin{thm}\label{notations: pre kahler class}
The pseudo-Calabi flow preserves the K\"ahler class.
\end{thm}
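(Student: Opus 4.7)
The plan is to read the invariance of the K\"ahler class directly off the potential formulation of the flow. Since the evolution is written in terms of a globally defined scalar potential $\vphi$ through $\om_\vphi = \om_0 + \frac{\sqrt{-1}}{2}\p\bar\p\vphi$, every such $\om_\vphi$ lies in the cohomology class $[\om_0]$ by the very definition of the space $\mathcal{H}$, so preservation of the K\"ahler class is essentially a bookkeeping consequence of the existence of the evolving potential.

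Concretely I would proceed in three short steps. First, I will check that $f=f(\vphi)$ is a bona fide globally defined real function on $M$: the Poisson equation $\tri_\vphi f = S(\vphi)-\as$ is solvable because its right-hand side has mean zero with respect to $\om_\vphi^n$, and the normalization $\int_M e^{f}\om_\vphi^n = V$ then pins down the additive constant uniquely. Second, differentiating $\om_\vphi = \om_0+\frac{\sqrt{-1}}{2}\p\bar\p\vphi$ in $t$ and inserting $\frac{\p\vphi}{\p t} = -f$ recovers the K\"ahler-form version of the flow displayed in (\ref{notations: form of PCF}), which already makes manifest that $\frac{\p}{\p t}\om_\vphi$ is an exact real $(1,1)$-form at each time. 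Third, integrating this identity over $[0,t]$ yields $\om_{\vphi(t)}-\om_0 = \frac{\sqrt{-1}}{2}\p\bar\p\vphi(t)$, and hence $[\om_{\vphi(t)}] = [\om_0]$ throughout the interval of existence.

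I do not anticipate any real obstacle; the only subtlety worth flagging is the global single-valuedness of $f$ as a function on $M$, and this is standard Hodge theory for the Laplacian $\tri_\vphi$. In particular, the argument uses nothing about short-time existence or regularity estimates: it applies to any classical solution of the flow for as long as the potential $\vphi(t)$ remains a genuine smooth function on $M$.
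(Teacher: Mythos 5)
Your argument is correct and coincides with what the paper does: the theorem is presented there as an immediate observation from the form-level equation $\frac{\p\om_\vphi}{\p t}=-\frac{\sqrt{-1}}{2}\p\bar\p f$ in \eqref{notations: form of PCF}, with $f$ a globally defined function, so the time derivative of $\om_\vphi$ is exact and the class is fixed. Your extra care about the global single-valuedness of $f$ (mean-zero right-hand side plus the normalization $\int_M e^f\om_\vphi^n=V$) is exactly the point that makes the observation legitimate, and nothing more is needed.
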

We now show that, when the class $\Omega$ is the canonical class, the pseudo-Calabi flow is just the K\"ahler-Ricci flow.
First, we recall that the K\"ahler-Ricci flow is
\begin{align*}
\frac{\p g_{\vphi i\bar{j}}}{\p t}
=\l g_{\vphi i\bar{j}}-R_{i\bar{j}},
\end{align*} where $\l$
is the sign of the first Chern class.
Its potential equation is
\begin{align*}
\frac{\p \vphi}{\p t}&=h+\l\vphi-h_\om,
\end{align*}
where $h_\om$, the Ricci potential of the background metric, satisfies
\begin{align*}
Ric(\om)-\l\om=\frac{\sqrt{-1}}{2}\p\bar\p
h_\om
\end{align*}
with
$\int_Me^{h_\om}\om^n=V$
and
$\int_Me^{\frac{\p \vphi}{\p t}-\l\vphi+h_\om}\om^n
=V$.
Next by definition, we have
\begin{align*}
\tri_\vphi f
=S-\as=-\tri_\vphi(h+\l \vphi-h_\om).
\end{align*}
Then the maximum principle implies that
$f=-h-\l\vphi+h_\om$. Finally we conclude that $f_{i\bar{j}}=R_{i\bar j}-\l g_{\vphi i\bar j}$, i.e. the pseudo-Calabi flow in the canonical class coincides with the K\"ahler-Ricci flow.

An observation which is used in the sequel is that the pseudo-Calabi flow can be
written as a
pseudo-differential Monge-Amp\`{e}re flow.
The potential equation of our
flow is
\begin{equation}\label{notations: PCF}
\left\{
\begin{aligned}
\tri_\vphi\frac{\p \vphi}{\p
t}&=-\tri_\vphi f=-S_\vphi+\ul S,\\
\vphi(0)&=\vphi_0.\\
\end{aligned}
\right.
\end{equation}
According to \cite{MR2405167},
we have a decomposition of the scalar curvature as
\begin{equation*}
S_{\vphi}=-\tri_{\vphi} h+tr_{\vphi} Ric(\om),
\end{equation*}
in which
\begin{equation*}
h=\log\frac{\om_{\vphi}^n}{\om^n}=
\log\frac{\det(\gij+\vphi_{i\bar{j}})}{\det(\gij)}.
\end{equation*}
Therefore the pseudo-Calabi flow can be rewritten as
\begin{equation}\label{notations: pam of PCF}
\frac{\p\vphi}{\p t}=-f+c(t)=h-P+c(t),
\end{equation}
where
\begin{equation}\label{notations: P}
\tri_\vphi
P=tr_{\vphi}{Ric(\om)}-\as=
g_{\vphi}^{i\bar{j}}R_{i\bar{j}}(\om)-\as
\end{equation}
under the normalization condition
\begin{align}\label{notations: P normalization condition}
\int_Me^{P}\om^n=V.
\end{align}
The function $P$ is well defined since we have
$\int_M{tr_{\vphi}{Ric(\om)}\om_{\vphi}^n}=
\as[\om_{\vphi}]^{[n]}.$
Since the volume is invariant along the pseudo-Calabi flow, we can choose
\begin{align}\label{notations: normalization condition}
\int_Me^{\frac{\p\vphi}{\p t}+P}\om^n=V.
\end{align} such that $c(t)=0$.
Note that for any $c_1(t)$ and $c_2(t)$ defined by two different normalization conditions, the corresponding solutions of \eqref{notations: pam of PCF} with the same initial data only differ by the constant $\int^t_0c_1(s)-c_2(s)ds$.

\section{Energy functionals}\label{Enerfun}
In this section, we assume that $\vphi(t)$ is the $C^\infty(M,g)$
solution of the pseudo-Calabi flow.
According to \thmref{notations: pre kahler class}, we see that the flow keeps the volume fixed,
so the flow can be viewed as some
kind of "normalized" flow.
Since the flow preserves the K\"ahler class
and the average of the scalar curvature $\ul S$ is an invariant of the K\"ahler class,
$\ul S$ stays constant under the flow.
The most important observation here
is that the $K$-energy is decreasing along the flow,
which makes it reasonable to search for cscK metrics using the pseudo-Calabi flow.

The $K$-energy is defined by Mabuchi \cite{MR867064} as the following
\begin{align}\label{notations: K energy}
\nu_\om(\vphi)
&=-\frac{1}{V}
\int_0^1\int_{M}\dot\vphi(\tau)(S_{\vphi(\tau)}-\as)
\om^n_{\vphi(\tau)}d\tau\
\end{align}
where $\vphi(\tau)$ is a path from $0$ to
$\vphi$.
\begin{thm}
The $K$-energy decreases along the pseudo-Calabi flow. Furthermore,
the $t$-derivative of the $K$-energy
achieves zero at some t if and only if
$\om_{\vphi(t)}$ is a cscK metric.
\end{thm}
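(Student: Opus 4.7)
The plan is to differentiate the $K$-energy directly along the flow and show that the resulting expression is manifestly non-positive, with equality exactly when $f$ is (pointwise) constant.

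First I would invoke the standard first variation formula of Mabuchi's $K$-energy, which follows immediately from the definition \eqref{notations: K energy}: along an arbitrary smooth path $\vphi(t)$ in $\mathcal{H}$,
\begin{equation*}
\frac{d}{dt}\nu_\om(\vphi(t))
=-\frac{1}{V}\int_M \dot\vphi(t)\,\bigl(S_{\vphi(t)}-\ul S\bigr)\,\om^n_{\vphi(t)}.
\end{equation*}
(That this is independent of the chosen path is exactly Mabuchi's well-definedness of $\nu_\om$.) Then I would plug in the pseudo-Calabi flow equation \eqref{notations: PCF}, namely $\dot\vphi=-f$ with $\tri_\vphi f = S_\vphi-\ul S$, to obtain
\begin{equation*}
\frac{d}{dt}\nu_\om(\vphi(t))
=\frac{1}{V}\int_M f\,\tri_\vphi f\,\om_\vphi^n
=-\frac{1}{V}\int_M |\nabla f|_\vphi^{2}\,\om_\vphi^n\le 0,
\end{equation*}
where the last step is integration by parts on the compact closed manifold $M$ (with respect to the evolving metric $g_\vphi$).

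For the equality statement, observe that the right-hand side vanishes at some time $t$ iff $\nabla f\equiv 0$ on $(M,g_{\vphi(t)})$, which on a compact connected manifold forces $f$ to be a constant. Substituting back into $\tri_\vphi f = S_\vphi-\ul S$ then gives $S_{\vphi(t)}\equiv \ul S$, i.e.\ $\om_{\vphi(t)}$ is a cscK metric. Conversely, if $\om_{\vphi(t)}$ is cscK, then $\tri_\vphi f = 0$ on a compact manifold forces $f$ to be constant, so $|\nabla f|_\vphi\equiv 0$ and the $t$-derivative of $\nu_\om$ vanishes.

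I do not expect any real obstacle here: the only subtlety is to justify the first-variation formula (which is Mabuchi's original computation and is unaffected by the flow) and to ensure sufficient regularity for the integration by parts. Both are granted by the assumption that $\vphi(t)\in C^\infty(M,g)$ along the flow, stated at the beginning of Section~\ref{Enerfun}. This also shows that the pseudo-Calabi flow is genuinely a gradient-type flow for the $K$-energy, which fits with the interpretation as the gradient flow with respect to Calabi's gradient metric \eqref{notations: gra Calabi metric} sketched in Remark~\ref{notations: gradient flow}.
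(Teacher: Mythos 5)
Your proposal is correct and follows essentially the same route as the paper: plug the flow equation $\dot\vphi=-f$, $\tri_\vphi f=S_\vphi-\ul S$ into Mabuchi's first variation formula and integrate by parts to get $-\frac{1}{V}\int_M|\nabla f|^2_{g_\vphi}\om^n_\vphi\le 0$. You spell out the equality case ($\nabla f\equiv 0$ iff $f$ constant iff $S_\vphi\equiv\ul S$) a bit more explicitly than the paper does, but the argument is the same.
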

\begin{proof}Plugging \eqref{notations: PCF} in \eqref{notations: K energy}, we obtain the derivative of the $K$-energy along the flow:
\begin{align*}
\delta_{\dot\vphi}\nu_\om(\vphi)
=-\frac{1}{V}\int_{M}\dot\vphi(S-\as)\om^n_\vphi
=-\frac{1}{V}\int_{M}|\nabla f|_{g_\vphi}^2\om^n_\vphi.
\end{align*} So the $K$-energy decreases along the flow unless $\nabla f\equiv0$.
Hence $g(t_0)$ is cscK for some $t_0$, or $g(t)$ converges to a cscK metric as $t$ tends to infinity if the $K$-energy is bounded from below.
\end{proof}
\begin{rem}\label{notations: gradient flow}
Since the derivative of the $K$-energy at $\vphi$ is
\begin{align*}
\delta\nu_\om(\vphi)
=\frac{1}{V}\int_{M}g_\vphi^{i\bar j}\dot\vphi_i f_{\bar j}\om^n_\vphi,
\end{align*} we conclude that \eqref{notations: pam of PCF} is the gradient flow of the $K$-energy in the space $\mathcal{H}_0$ with the Calabi's gradient metric \eqref{notations: gra Calabi metric}.
\end{rem}

\section{Short time existence of the pseudo-Calabi flow}\label{short time}
Let $C^{2,\a}(M,g)$ be the completion of
the smooth function under the $C^{2,\a}$ norm.
It is called little H\"older space in customary literature.
We shall show that the Cauchy problem for
the pseudo-Calabi flow,
\begin{equation}\label{short time: pam of PCF}
\left\{
\begin{aligned}
\frac{\p}{\p t}\vphi&=h-P,\\
\tri_\vphi
P&=tr_{\vphi}{Ric(\om)}-\ul S,\\
\vphi(0)&=\vphi_0,\\
\end{aligned}
\right.
\end{equation}
with the normalization condition \eqref{notations: P normalization condition} and \eqref{notations: normalization condition} has a short time solution for any initial K\"ahler potential in $C^{2,\a}(M,g)$.

The proof of local existence with the $C^{2,\a}$ initial data is quite different from the case of smooth initial data. DaPrato-Grisvard \cite{MR551075}, Angenent \cite{MR1059647} and some other mathematicians developed the abstract theory of local existence for the fully nonlinear parabolic equation. They have \cite{MR551075} constructed the continuous interpolation spaces so that the linearized operator stays in certain class.

We apply their ideas to prove the short time existence.
\begin{itemize}
\item
We first linearize the fully nonlinear equation at the initial data.

\item
Next we derive a priori estimates, related to our special solution space, and apply it to prove the linearized equation has a local solution by using the contraction mapping theorem.

\item
Then using Remark \ref{short time: choose small initial}, a key decomposition of the initial data, we derive the energy inequality. Combining the former inequality with the Sobolev imbedding theorem and the bootstrap method, we obtain a priori estimates of the solution to the linearized equation.
Thus, the solution exists for all the time and its $C^{2,\a}$ norm is continuous in $t$.
\item
Finally we construct a sequence of solutions to the linear approximation equations,
and show that it is a contractive sequence by choosing small time or small initial data.
\end{itemize}

Let
$(x_1,\cdots,x_{2n})$ be the local real coordinate.
We fix a background K\"ahler metric $g\in C^{\infty}(M,g)$ in $[\om]$.
Let $Q_T=M\times[0,T]$ be the time-space.
The point and the
distance in $Q_T$ are denoted by $X=(x,t)$ and
$d(X,X_0)=(d(x,x_0)^2+|t-t_0|)^\frac{1}{2}$, respectively.
The H\"older spaces on $Q_T$ are defined as the following:
\begin{align*}
&C^{1,\a}(Q_T)
=C^{1+\a,\frac{1+\a}{2}}(Q_T)=\{\vphi\vert|
\vphi|_{1+\a,\frac{1+\a}{2}}=|\vphi|_{C^0(Q_T)}+|D_{x}
\vphi|_{C^0(Q_T)}\\
&+|D_x\vphi|_{C^\a(Q_T)}+\sup_{x\in M,t\neq s,t,s\in[0,T]}
\frac{|\vphi(x,t)-\vphi(x,s)|}{|t-s|^{\frac{1+\a}{2}}}<\infty\},\\
&C^{2,\a}(Q_T)=C^{2+\a,1+\frac{\a}{2}}(Q_T)=\{\vphi\vert|
\vphi|_{2+\a,1+\frac{\a}{2}}=|\vphi|_{C^0(Q_T)}+|D_x
\vphi|_{C^0(Q_T)}\\
&+|D_{xx}\vphi|_{C^\a(Q_T)}+|D_t\vphi|_{C^\a(Q_T)}\}.
\end{align*}
The Sobolev spaces are defined as
\begin{align*}
&W^{q,r}(Q_T)
=\{\vphi\vert[\int_0^T(\int_M|
\vphi|^q\om^n)^{\frac{r}{q}}dt]^{\frac{1}{r}}\},
W^{q,q}(Q_T)=L^q(Q_T),\\
&W_2^{2k,k}(Q_T)
=\{\vphi\in L^2(Q_T) \text{ with }
\sum_{0\leq|p|+2q\leq2k}||D^p_xD^q_t\vphi||_{L^2(M)}<\infty\},\\
&W^{1,0}_2(Q_T)
=\{\vphi\vert||\vphi||_{2,Q_T}+||\nabla\vphi||
_{2,Q_T}<\infty\},\\
&V_2(Q_T)
=\{\vphi\in
W^{1,0}_2(Q_T)\vert||\vphi||_{V_2(Q_T)}=\sup_{0\leq t \leq
T}||\vphi(t)||_{2;M}+||\nabla\vphi||_{2,Q_T}<\infty\}.
\end{align*}
We denote $$X^k_T=
C^0([0,T],C^{k+2+\a}(M,g))\cap
C^1([0,T],C^{k+\a}(M,g))$$ which equipped with the norm $||\cdot||_{X^k_T}=\max_{0\leq t\leq T}[||\p_t\cdot||_{C^{k,\a}}+||\cdot||_{C^{k+2,\a}}]$.
We also denote $$\dot{X}^k_T=
C^0([0,T),C^{k+2+\a}(M,g))\cap
C^1([0,T),C^{k+\a}(M,g)).$$
Here, all the derivatives and norms are defined with respect to the background
metric $\om$.
We shall prove the following results.
\begin{thm}\label{short time: main}
Suppose that $\vphi_0\in C^{2,\a}(M,g)$ satisfies $\l \om
\leq \om_{\vphi(0)}\leq \L \om$ for some positive constants $\l$ and $\L$.
Then under the normalization conditions \eqref{notations: P normalization condition} and \eqref{notations: normalization condition}, the Cauchy problem for
the pseudo-Calabi flow \eqref{short time: pam of PCF}
has a unique solution $$\vphi(x,t)\in \dot X^0_T,$$
where $T$ is the maximal existence time.
\end{thm}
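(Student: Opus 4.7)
The strategy is to cast \eqref{short time: pam of PCF} as a fully nonlinear parabolic equation $\partial_t\varphi = F(\varphi)$ with $F(\varphi) := h(\varphi) - P(\varphi)$ and apply the continuous interpolation / little H\"older framework of DaPrato--Grisvard and Angenent. Linearizing at $\varphi_0$, the variation of $h$ is $\Delta_\varphi\,\delta\varphi$, so the principal part of $F'(\varphi_0)$ is $L := \Delta_{\varphi_0}$; the hypothesis $\lambda\omega \le \omega_{\varphi_0} \le \Lambda\omega$ makes $L$ uniformly elliptic with $C^\alpha$ coefficients. Differentiating the defining relation $\Delta_\varphi P(\varphi) = \mathrm{tr}_\varphi Ric(\omega) - \underline S$ yields another elliptic equation of the same type for $\delta P$ with $C^\alpha$ right-hand side; together with the normalization \eqref{notations: P normalization condition} (which fixes the additive constant), elliptic Schauder gives that $\varphi \mapsto P(\varphi)$ is $C^1$ from $C^{2,\alpha}(M,g)$ into itself, so $F'(\varphi_0) - L$ is of lower order and $L$ generates an analytic semigroup on the little H\"older space $C^{2,\alpha}(M,g)$.

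\textbf{Linear theory and contraction.} The next step is to establish a priori Schauder estimates for the inhomogeneous linear Cauchy problem
\[
  \partial_t u - L u = g, \qquad u(0) = \varphi_0,
\]
on the little H\"older scale, yielding a bounded solution map from $C^0([0,T], C^\alpha(M,g))$ into $X^0_T$ whose trajectory is continuous into $C^{2,\alpha}$ up to $t=0$. Define the nonlinear map $\Phi : \psi \mapsto u$ by setting $u(0) = \varphi_0$ and
\[
  \partial_t u - L u = N(\psi) := F(\psi) - L\psi;
\]
fixed points of $\Phi$ coincide with solutions of \eqref{short time: pam of PCF}. Since $F\in C^1$ with $F'(\varphi_0) = L$, Taylor expansion on a ball of radius $\rho$ around the constant path $\psi \equiv \varphi_0$ in $X^0_T$ gives
\[
  \|N(\psi) - N(\psi')\|_{C^0([0,T],C^\alpha)} \le \eta(\rho)\,\|\psi-\psi'\|_{X^0_T}
\]
with $\eta(\rho) \to 0$ as $\rho \to 0$; choosing first $\rho$ and then $T$ small enough that $\Phi$ maps this ball into itself, the Banach fixed-point theorem produces the unique local solution, which by maximal continuation lies in $\dot X^0_T$ with $T$ the maximal existence time.

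\textbf{Main obstacle.} The delicate point is the nonlocal term $P$: although formally of order zero, proving its $C^1$ dependence on $\varphi$ with norms controlled by $C^{2,\alpha}$ data requires Schauder estimates for an elliptic operator whose leading coefficients are themselves only $C^\alpha$, and the normalization \eqref{notations: P normalization condition} must be invoked to fix the additive constant so that $P(\varphi)$ is pointwise well defined. A second delicate point is the low regularity of $\varphi_0$: the requirement that $t \mapsto \varphi(t)$ be continuous into $C^{2,\alpha}$ up to $t=0$ forces the use of the \emph{little} H\"older space (closure of smooth functions in $C^{2,\alpha}$), where the semigroup generated by $L$ is strongly continuous; the analogous argument in the ordinary H\"older space would fail at this step. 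Once these issues are handled, the iteration and contraction just described yield the short-time existence claimed in the theorem.
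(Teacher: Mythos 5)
There is a genuine gap, and it sits exactly where the paper puts most of its effort. You take the linear part of your iteration to be $L=\triangle_{\vphi_0}$ alone and assert that ``$F'(\vphi_0)=L$'' so that the remainder $N(\psi)=F(\psi)-L\psi$ has Lipschitz constant $\eta(\rho)\to 0$ on a small ball. This is false: differentiating $\triangle_\vphi P=tr_\vphi Ric(\om)-\ul S$ gives $\triangle_{\vphi_0}\dot P=-v^{i\bar j}T_{i\bar j}$ with $T_{i\bar j}=R_{i\bar j}(\om)-P_{i\bar j}$ the harmonic tensor (the paper's \eqref{short time: harmonic tensor} and \eqref{short time: Q}), so $F'(\vphi_0)=\triangle_{\vphi_0}+Q_{\vphi_0}$ where $Q_{\vphi_0}$ is a nonlocal operator of norm comparable to $|T|_{C^\a}$ --- a fixed geometric quantity, not small, and not made small by shrinking either the ball radius $\rho$ or the time $T$ (the solution operator of $\p_t-L$ with zero initial data does not have small norm as $T\to 0$). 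Hence the contraction estimate you write down for $N$ does not hold, and the scheme as stated does not close. One could try to repair it by exploiting harmonicity of $T$ to write $v^{i\bar j}T_{i\bar j}=(v^iT_{i\bar j})^{\bar j}$, gaining a derivative and then interpolating $\|Q v\|_{C^\a}\leq\eps\|v\|_{C^{2,\a}}+C(\eps)\|v\|_{C^0}$ with the $C^0$ term absorbed by small $T$, but none of this is in your write-up, and it is precisely the kind of structure the paper has to exploit.

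The paper's proof instead keeps $Q$ inside the linearized operator: $\Phi'(\bar\vphi)v=(\p_tv-\triangle_{\bar\vphi}v-Q(v),v_0)$, and the entire technical core is Proposition~\ref{short time: Linearized iso}, the statement that this nonlocal, non-self-adjoint linear parabolic system is an isomorphism. Because Fredholm theory does not apply directly, the paper first solves the linearized equation for short time by a heat-kernel contraction, then proves an energy inequality, an $L^p$ bootstrap, and Schauder estimates for the coupled system (Subsections on the linearized equation) to extend the linear solution to all of $[0,T]$ with the a priori bound $\sup_{[0,T]}|v|_{C^{2,\a}}\leq C(|w|_{C^{2,\a}}+\sup_{[0,T]}|u|_{C^\a})$; only then does the Newton-type iteration $A\rho=[\Phi'(\bar\vphi)]^{-1}[\psi-\Phi(\bar\vphi+\rho)]+\rho$ contract, because the difference $\Phi'(\bar\vphi+s\rho_2+(1-s)\rho_1)-\Phi'(\bar\vphi)$ --- which now includes the difference of the $Q$-operators at nearby potentials --- is genuinely small by the continuity lemmas for $P$ and $Q$. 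Your proposal correctly identifies the functional-analytic setting (little H\"older spaces, DaPrato--Grisvard/Angenent) and the role of the normalization \eqref{notations: P normalization condition}, but it omits the solvability theory for the linearized equation with the $Q$-term, which is the actual content of the theorem's proof.
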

\begin{rem}
If in addition $\vphi_0\in C^{k,\a}(M,g)$ we obtain
$\vphi(x,t)\in \dot X^k_T.$
\end{rem}
\begin{thm}\label{short time: stability}
Let $M$ admits a cscK metric $\om$. Suppose that $\vphi_0\in C^{2,\a}(M,g)$ satisfies $\l \om
\leq \om_{\vphi(0)}\leq \L \om$ for some positive constants $\l$ and $\L$.
Then for any $T>0$
there exits a positive constant $\eps_0(T,g)$. If
$|\vphi_0|_{C^{2,\a}(M,g)}\leq\eps_0$,
then the equation has a unique solution on $[0,T]$, and
$$||\vphi||_{X^0_T}\leq C\eps_0,$$ where $C$ depends on $M$, $g$ and $T$. Furthermore
$\eps_0$ goes to zero, as $T$ goes to infinity.
\end{thm}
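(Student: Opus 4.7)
The plan is to set up a Banach fixed-point argument in $X^0_T$ around the cscK metric, using the analytic semigroup generated by the linearization at that fixed point. After recentring so that the cscK corresponds to $\vphi=0$ (under the normalizations \eqref{notations: P normalization condition} and \eqref{notations: normalization condition} this is a genuine fixed point, since then $h=P=0$), I would rewrite \eqref{notations: pam of PCF} as
\begin{equation*}
\vphi_t + L\vphi = N(\vphi), \qquad \vphi(0)=\vphi_0,
\end{equation*}
where $L$ is the linearization of $\vphi\mapsto f(\vphi)$ at $\vphi=0$ and $N(\vphi):=L\vphi-f(\vphi)$ collects the quadratic and higher-order remainder. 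Differentiating $\tri_\vphi f=S(\vphi)-\ul S$ at the cscK and using that $f$ and $\nabla S$ both vanish there, one obtains $\tri_\om(L\vphi)=-\cD^*\cD\vphi$, i.e.\ $L=(-\tri_\om)^{-1}\cD^*\cD$, where $\cD$ is the Lichnerowicz operator. This is a nonnegative self-adjoint pseudo-differential operator of order $2$ on the mean-zero potentials; its kernel consists of holomorphic potentials and so is trivial modulo constants when $M$ has no nonzero holomorphic vector field.

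The second step is to establish the analytic semigroup $e^{-tL}$ on the little H\"older space $C^{2,\a}(M,g)$. Since $L$ is a positive, sectorial operator of order $2$, the Da Prato--Grisvard \cite{MR551075}/Angenent \cite{MR1059647} continuous-interpolation maximal-regularity machinery (already invoked in Section~\ref{short time}) yields
\begin{equation*}
\|e^{-tL}\vphi_0\|_{X^0_T}\leq C_1\|\vphi_0\|_{C^{2,\a}(M,g)},\qquad
\Bigl\|\int_0^t e^{-(t-s)L}g(s)\,ds\Bigr\|_{X^0_T}\leq C_2\, T\sup_{[0,T]}\|g(s)\|_{C^\a(M,g)}.
\end{equation*}
Because $h(\vphi)$ is real-analytic in $D^2\vphi$ and $P(\vphi)$ depends smoothly on $\vphi$ through the elliptic equation \eqref{notations: P} together with Schauder estimates, a Taylor expansion at $\vphi=0$ gives the quadratic bounds $\|N(\vphi)\|_{C^\a}\leq C\|\vphi\|_{C^{2,\a}}^2$ and $\|N(\vphi)-N(\psi)\|_{C^\a}\leq C(\|\vphi\|_{C^{2,\a}}+\|\psi\|_{C^{2,\a}})\|\vphi-\psi\|_{C^{2,\a}}$ on any fixed $C^{2,\a}$-neighbourhood of $0$.

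Finally, for each $T>0$ set $R=2C_1\eps_0$ and define
\begin{equation*}
\Phi(\vphi)(t)=e^{-tL}\vphi_0+\int_0^t e^{-(t-s)L}N(\vphi(s))\,ds
\end{equation*}
on $B_R=\{\vphi\in X^0_T:\|\vphi\|_{X^0_T}\leq R\}$. Combining the linear and nonlinear bounds yields $\|\Phi(\vphi)\|_{X^0_T}\leq C_1\eps_0+C_2 T R^2\leq R$ together with a matching contraction estimate, both valid under the single smallness condition $\eps_0\leq 1/(4C_1^2 C_2 T)$. Banach's theorem then produces a unique fixed point $\vphi\in B_R$, which by uniqueness in \thmref{short time: main} is the pseudo-Calabi flow and satisfies $\|\vphi\|_{X^0_T}\leq 2C_1\eps_0=:C\eps_0$; the $T$-dependence forces $\eps_0\sim 1/T\to 0$ as $T\to\infty$. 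The main obstacle is the construction and Schauder bounds for the semigroup of the \emph{nonlocal} operator $L=(-\tri_\om)^{-1}\cD^*\cD$ in the little H\"older category, since it mixes the $-2$-order parametrix of $\tri_\om$ with the $4$-th order Lichnerowicz operator; one must verify sectoriality of $L$ in $C^\a$, quotient out the (possibly nontrivial) kernel of holomorphic potentials, and check that $N$ respects this splitting modulo constants annihilated by the projection. When $M$ admits no nonzero holomorphic vector field, $L$ has a positive spectral gap and the same scheme in fact delivers $T$-uniform smallness of $\eps_0$ together with exponential convergence of $\vphi(t)$ to the cscK metric.
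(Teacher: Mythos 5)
Your overall scheme is the same as the paper's: both proofs run a contraction/Newton iteration around the linearization of the flow at the cscK potential $\vphi=0$ over the \emph{whole} interval $[0,T]$, and both trade the usual "shrink the time" step for "shrink the initial data", which is exactly why $\eps_0=\eps_0(T)\to 0$ as $T\to\infty$. The paper's version is phrased as an inverse-function-theorem argument: it reuses the map $A\rho=[\Phi'(0)]^{-1}[\psi-\Phi(\rho)]+\rho$ from the proof of Theorem~\ref{short time: main} and observes that the only place where time-smallness was used was to make $\max_{[0,T]}\|A(0)\|_{C^{2,\a}}\le\varepsilon/2$; since $A(0)=[\Phi'(0)]^{-1}(0,\vphi_0)$ obeys the a priori bound of Proposition~\ref{estimate of linearized: v 2+a,1}, namely $\|A(0)\|\le C(T)|\vphi_0|_{C^{2,\a}}$, this can instead be achieved by taking $|\vphi_0|_{C^{2,\a}}\le\eps_0(T)$. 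Your Duhamel formulation with $e^{-tL}$ is the variation-of-constants repackaging of the same fixed point, and your book-keeping ($R=2C_1\eps_0$, $\eps_0\lesssim 1/T$) is consistent with the statement.

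The genuine gap is that you have not supplied the one input that carries all the analytic weight: solvability and maximal $C^{2,\a}$-regularity on all of $[0,T]$ for the linearized equation, i.e.\ the generation of an analytic semigroup with the two estimates you display. You cannot get this by citing Da Prato--Grisvard for the Laplacian, because the linearization at the cscK is not $-\tri_\om$ but $-\tri_\om-Q$ with the nonlocal zeroth-order term $Q=\tri_\om^{-1}(R^{i\bar j}v_{i\bar j})$ (equivalently your $L=(-\tri_\om)^{-1}\cD^*\cD$), and sectoriality plus the identification of the continuous interpolation space as the little H\"older space must be verified for this pseudo-differential operator --- you flag this as ``the main obstacle'' but do not resolve it. This is precisely where the paper spends its effort: Proposition~\ref{short time: Linearized iso} constructs the solution by treating $Q$ as a perturbation of the heat semigroup (heat-kernel Duhamel formula plus a contraction in $C^{2+\a,0}$), and Subsection~\ref{estimate of linearized} extends it to all of $[0,T]$ via the energy inequality, Sobolev embedding, bootstrapping, and Schauder estimates, culminating in $|v|_{C^{2+\a,1}(Q_T)}\le C(T)(|w|_{C^{2,\a}}+\sup_{[0,T]}|u|_{C^\a})$. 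Without that (or an honest verification of sectoriality of $L$ on $c^{2,\a}(M)$, including the treatment of the kernel of $\cD^*\cD$ when holomorphic vector fields exist), your estimates $\|e^{-tL}\vphi_0\|_{X^0_T}\le C_1\|\vphi_0\|_{C^{2,\a}}$ and $\|\int_0^te^{-(t-s)L}g\,ds\|_{X^0_T}\le C_2T\sup\|g\|_{C^\a}$ are assertions, not consequences; note in particular that the naive bound $\int_0^t\|e^{-(t-s)L}\|_{\cL(C^\a,C^{2,\a})}\,ds$ diverges, so genuine maximal regularity, not mere semigroup boundedness, is required. The nonlinear part of your argument (quadratic estimate on $N$ via the smooth dependence of $h$ and $P$ on $\vphi$, Lemma~\ref{short time: P regular}, and the ball/contraction computation) is fine.
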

\begin{rem}
The imbedding theorem \eqref{regularity of PCF: imb lem} implies that the solutions in both theorems satisfy $D_{ij}\vphi\in C^{\a,\frac{\a}{2}}(M\times [0,T))$. Then
using the equation \eqref{short time: pam of PCF} we obtain
$\vphi\in C^{2+\a,1+\frac{\a}{2}}(M\times[0,T))$.
\end{rem}
Chen's conjecture \cite{Chen4-2008} says that a global $C^{1,1}$ $K$-energy minimizer in any K\"ahler class must be smooth. This conjecture has been proved in the canonical K\"ahler class via the weak K\"ahler-Ricci flow \cite{MR2348984}\cite{MR2434691}\cite{ctz-2008}\cite{songtian2009}. We hope that the pseudo-Calabi flow will be the right approach to solve this conjecture. In Subsection \ref{estimate of linearized}, we obtain a partial estimates related to this conjecture.

\begin{rem}\label{short time: choose small initial}
For any $\vphi_0\in C^{2,\a}$, we can
choose a smooth function $\bar{\phi}$ which approximates $\vphi_0$ in
$C^{2,\a}$ norm. Let $\phi=\bar\phi-\int_M\bar\phi\om^n$.
Then we replace the reference metric $\om$ and $\vphi$ by
$\tilde{\om}=\om+\frac{\sqrt{-1}}{2}\p\bar\p\phi$ and
$\tilde{\vphi}_0=\vphi_0-\phi$ respectively in the equation \eqref{short time: pam of PCF}, so that
\begin{equation*}
  \left\{
   \begin{aligned}
   \frac{\p\tilde{\vphi}}{\p t}&=\log\frac{\det
   (\tilde{g}_{i\bar{j}}+\tilde{\vphi}_{i\bar{j}})}{\det(\tilde
   {g}_{i\bar{j}})}-P(\tilde{\vphi}),\\
   \tri_{\tilde{\vphi}}
   P(\tilde{\vphi})&=g_{\tilde{\vphi}}^{i\bar{j}}R_{i\bar{j}}(\om)-\ul S,\\
   \tilde{\vphi}(0)&=\tilde{\vphi}_0
   =\vphi_0-\phi\in{C^{2,\a}(M)}.\\
   \end{aligned}
  \right.
\end{equation*}
It is obvious that $\tilde{\vphi}+\phi$ gives the solution to the original equation.
Here $|\tilde{\vphi}_0|_{C^{2,\a}(M)}$ could be small enough to
be used later.
Moreover, we have
$\tilde{\om}=\om+\frac{\sqrt{-1}}{2}\p\bar\p\phi>0$,
since
$\om_{\vphi_0}-\tilde\om=\frac{\sqrt{-1}}{2}\p\bar\p\tilde\vphi_0$
is sufficiently small.
\end{rem}
\begin{proof}(proof of \thmref{short time: main})
To prove the theorem we employ the idea of the inverse function theorem in
\cite{MR0488101}\cite{MR656198}
and its adaption to a parabolic equation in \cite{MR1059647}\cite{MR551075}.

We introduce the following notations
\begin{align*}
J&=[0,T],
E_1=C^{2,\a}(M),
E_0=C^\a(M),\\
X&=\{\vphi\in C^1(J,E_0)\cap
C^0(J,E_1)\vert
||\vphi||_X\\
&=\max_{[0,T]}
(||\p_t\vphi||_{C^{\a}(M)}
+||\vphi||_{C^{2,\a}(M)})<\infty\},\\
Y&=\{\psi=(\vphi_1,\vphi_2)\in C^0(J,E_0)\times E_1\vert
||\psi||_Y\\&=\max_{[0,T]}||\vphi_1||_{C^{\a}(M)}
+||\vphi_2||_{C^{2,\a}(M)}<\infty\}.
\end{align*}

Let $U=\{\vphi\in E_1\vert\om_{\vphi}>0\}$. Then $U$ is an open subset of $E_1$ and $\vphi_0\in U$.
We define the map $\Phi$ from $U$
 to
$Y$ as the following
\begin{align}\label{short time: mapping}
\Phi:C^0(J,U)\cap C^1(J,E_0)&\rightarrow C^0(J,E_0)\times U,\\
\vphi&\mapsto(\p_t\vphi-h+P,\vphi_0)\nonumber.
\end{align}
Note that the pseudo term $P(\vphi)$ belongs to $C^0(J,E_0)$
by \lemref{short time: P regular} which
we will prove below.
Moreover, \lemref{short time: P regular}
assures $\Phi$ is infinite
Fr\'{e}chet differentiable at any point of $U$.
Choosing the approximate solution
\begin{align}\label{app sol fix}
\bar\vphi(t)\equiv\vphi_0
\text{ on  }[0,T],
\end{align} we get
\begin{align}\label{short time: app va}
\Phi(\bar\vphi)
=(-h(\vphi_0)+P(\vphi_0),\vphi_0)
\triangleq(\bar\psi,\vphi_0)\triangleq\psi_0.
\end{align}

Now we compute the linearized equation in a usual way.
Let the variation of $\vphi$ be
$$\dot\vphi\triangleq\frac{\p}{\p s}(\vphi+sv)|_{s=0}=v.$$
Differentiating both sides of \eqref{notations: P},
we get $$ \dot \tri_\vphi P+\tri_\vphi \dot
P=-v^{i\bar{j}}R_{i\bar{j}}(\om).$$
Thus we obtain
\begin{align*}
\tri_\vphi \dot P=v^{i\bar{j}}[P_{i\bar{j}}-R_{i\bar{j}}(\om)]
=-v^{i\bar{j}}T_{i\bar{j}}.
\end{align*}
Here the $(1,1)$-tensor
\begin{align}\label{short time: harmonic tensor}
T_{i\bar{j}}=-P_{i\bar{j}}+R_{i\bar{j}}(\om)
\end{align} is a
harmonic tensor which is smooth when $\vphi\in C^{3}$.
Since $\Phi$
is at least $C^1$ Fr\'{e}chet differentiable in $U$,
we get
\begin{align*}
\Phi'(\bar\vphi): C^0(J,E_1)&\rightarrow C^0(J,E_0)\times E_1,\\
v&\mapsto(\frac{\p v}{\p t}-\tri_{\bar\vphi}
v-Q,v_0).
\end{align*}
Here the function $Q$ satisfies the following equation
\begin{align}\label{short time: Q}
\tri_{\bar\vphi}
Q=v^{i\bar{j}}T_{i\bar{j}}
=-v^{i\bar{j}}(P_{i\bar{j}}-R_{i\bar{j}}(\om))
\end{align}
with the normalization condition
\begin{align}\label{short time: Q normalization condition}
\int_MQe^{P(\bar\vphi)}\om^n=0
\end{align} by differentiating \eqref{notations: P normalization condition}. Then $Q$ belongs to $C^0(J,E_0)$,
according to \lemref{short time: P regular}. By using
the following Proposition~\ref{short time: Linearized iso} for special case $\bar{\vphi}=\vphi_0$, we deduce that
the linearized
operator $\Phi'(\bar{\vphi})$ is a linear isomorphism.

In order to
seek a unique solution $\vphi$ satisfying the flow equation
$\Phi(\vphi)=(0,\vphi_0)$,
firstly we introduce the closed ball of $C^0(J,U)$ by $$B(0,\eps)=\{\rho\in C^0(J,U)\vert\max_{[0,T]}||\rho||_{C^{2,\a}(M)}\leq\eps\}$$ for some $\eps$ determined later. Then for any $\rho\in B(0,\eps)$ we define a map $A$ by
\begin{align}\label{short time: rho}
A\rho
=[\Phi'(\bar\vphi)]^{-1}[\psi-\Phi(\bar\vphi+\rho)]+\rho.
\end{align}
Letting $\rho_0=0$, we obtain a sequence by $\rho_{n+1}=A\rho_n$ inductively.
Secondly, we show that ${\rho_n}$ is a contractive sequence.
For any $\rho_1$ and $\rho_2$ in $B(0,\eps)$, we compute
\begin{align*}
A\rho_1-A\rho_2
=[\Phi'(\bar\vphi)]^{-1}[\int_0^1[\Phi'(\bar\vphi+s\rho_2+(1-s)\rho_1)
-\Phi'(\bar\vphi)](\rho_2-\rho_1)]ds.
\end{align*}
Since $\Phi$ is a $C^1$ map on open subset $U$, there exists a small constant $\eps$ such that
$$||\Phi'(\bar\vphi+s\rho_2+(1-s)\rho_1)-\Phi'(\bar\vphi)
||\leq\frac{1}{2||[\Phi'(\bar\vphi)]^{-1}||}.$$
Consequently, we obtain
\begin{align}\label{short time: contract}
\max_{[0,T]}||A\rho_1-A\rho_2||_{C^{2,\a}}
\leq\frac{1}{2}\max_{[0,T]}||\rho_2-\rho_1||_{C^{2,\a}}.
\end{align}
Moreover, for all $T'\leq T$, $\varepsilon$ is uniform
since
$||[\Phi'(\bar\vphi)]^{-1}||\geq\frac{1}{||\Phi'(\bar\vphi)||}$
is uniformly bounded below. Thirdly, we verify that $A$ maps $B(0,\varepsilon)$ into itself.
By the triangle inequality and \eqref{short time: contract} we have
\begin{align*}
\max_{[0,T]}||A\rho||_{C^{2,\a}}
&\leq\max_{[0,T]}||A\rho-A\rho_0||_{C^{2,\a}}+\max_{[0,T]}||A\rho_0||_{C^{2,\a}}\\
&\leq\frac{1}{2}\varepsilon+\max_{[0,T]}||A(0)||_{C^{2,\a}}.
\end{align*}
To estimate the second term, we use \eqref{short time: app va} and \eqref{short time: rho} to obtain
$$A(0)=[\Phi'(\bar\vphi)]^{-1}[(0,\vphi_0)-\Phi(\bar\vphi)]
=[\Phi'(\bar\vphi)]^{-1} (h(\vphi_0)-P(\vphi_0),0)\in X.$$
According to Proposition~\ref{short time: Linearized iso}, $A(0)$ solves the
following equations,
\begin{equation*}
  \left\{
   \begin{aligned}
\frac{\p\bar v}{\p t}-\tri_
{\bar\vphi}\bar{v}-Q(\bar{v})&=h(\vphi_0)-P(\vphi_0) \text{ in } Q_T,\\
\tri_{\bar\vphi} Q(\bar{v})&=\bar v^
{i\bar{j}}T_{i\bar{j}} \text{ in } Q_T,\\
\bar{v}(0)&=0.\\
   \end{aligned}
  \right.
\end{equation*}
Since $\bar{v}$ is continuous in $t$ by \lemref{estimate of linearized: con in t}, one could
choose a time $T_2$ small enough such that
\begin{align}\label{short time: T2}
\max_{[0,T_2]}||\bar{v}||_{C^{2,\a}(M)}\leq\frac{\varepsilon}{2}.
\end{align}

Note that $T_2\leq T$,
after we replace the time $T$ by $T_2$ in all of the argument above, both
the spaces $X$, $Y$, and the mappings $\Phi$
and $\Phi'(\bar\vphi)$ may change.
However the result in
Proposition~\ref{short time: Linearized iso}
and the contraction property of the mapping
$A$ remain valid.
Meanwhile, since the injectivity of the linearized operator implies
$[\Phi'(\bar\vphi)]_{T_2}^{-1}[\psi-\Phi(\bar\vphi)|_{T_2}]
=\bar{v}|_{T_2}$, the
following inequity holds
\begin{align*}
\max_{[0,T_2]}||A(0)||_{C^{2,\a}(M)}
&=\max_{[0,T_2]}||[\Phi'(\bar\vphi)]_{T_2}^
{-1}[\psi-\Phi(\bar\vphi)|_{T_2}]||_{C^{2,\a}(M)}
\leq\frac{1}{2}\varepsilon.
\end{align*}

Hence, $A$ is a contractive mapping and it maps $B(0,\varepsilon)$ into itself. So the contraction mapping theorem implies that there do exists a fixed point
$\rho\in B(0,\varepsilon)$ such that
$\psi=\Phi(\vphi_0+\rho)\in C^0(J,U)$ is a solution of \eqref{short time: pam of PCF} on
the small lifespan $[0,T_2]$.
Furthermore, we deduce $\vphi_0+\rho\in U$
from the equation \eqref{short time: pam of PCF}.
Therefore the main \thmref{short time: main} follows
by solving the flow equation with the initial data, given by
the value of the solution
at the end of the previous interval,
till the maximal existence time $T$.
\end{proof}
\begin{rem}
In fact, we can produce the approximate solution by using the following Monge-Amp\`{e}re type flow
\begin{equation}\label{short time: app sol}
  \left\{
   \begin{aligned}
\frac{\p\vphi}{\p t}
&=\log\frac{\om^n_\vphi}{\om^n}-P(\vphi_0),\\
\vphi(0)&=\vphi_0,  \\
   \end{aligned}
  \right.
\end{equation}
since Cao \cite{MR799272} proved the long time existence of such flow.
\end{rem}

\begin{proof}(proof of \thmref{short time: stability})
In order to solve $\Phi(\vphi)=(0,\vphi_0)$,
we only need to verify the following inequality
\begin{align*}
\max_{[0,T]}||A(0)||_{C^{2,\a}(M)}
&=\max_{[0,T]}||[\Phi'(0)]^{-1}(0,\vphi_0)||_{C^{2,\a}(M)}
\leq\frac{1}{2}\varepsilon.
\end{align*}
So it suffices to choose $\vphi_0$ such that $|\vphi_0|_{C^{2,\a}}$
is a small constant depending on $T$.
Thus, the rest of the proof of \thmref{short time: stability} follows
along the same lines of the rest of the proof of \thmref{short time: main}.
\end{proof}
\subsection{Continuous dependence on initial data}\label{con dpd}
The continuous dependence on initial data of solutions is used to study the stability of the pseudo-Calabi flow near a cscK metric which is the equilibrium solution of this flow.
\begin{thm}\label{con dpd: main}
If $\phi$ is a solution of \eqref{short time: pam of PCF} for initial data $\phi_0$ on $[0,T]$, then there is a neighborhood $U$ of $\phi_0$ such that \eqref{short time: pam of PCF} has a solution $\vphi(t)$ on $[0,T]$ for any $\vphi_0\in U$ and the mapping $\vphi_0\mapsto\vphi(t)$ is $C^k$ for $k=0,1,2,\ldots$
\end{thm}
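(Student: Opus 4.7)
The plan is to derive the theorem by applying the inverse function theorem in Banach spaces to the map $\Phi$ introduced in the proof of \thmref{short time: main}. Since the given solution $\phi$ satisfies $\Phi(\phi)=(0,\phi_0)$ on $[0,T]$, inverting $\Phi$ near this point will produce a local solution operator $\vphi_0\mapsto\vphi$ which is automatically $C^k$ for every $k$, bypassing any direct perturbation argument.

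First I would verify that $\Phi$ is smooth (not merely $C^1$) as a map from an open neighborhood of $\phi$ in $X^0_T$ into $Y$. The Monge--Amp\`ere nonlinearity $h(\vphi)=\log\det(\gij+\vphi_{i\bar j})/\det(\gij)$ is $C^\infty$ on the open set $\{\om_\vphi>0\}\subset E_1$ since the determinant and logarithm are analytic, and \lemref{short time: P regular}, combined with standard elliptic Schauder theory for the equation defining $P$, yields that the pseudo-differential correction $\vphi\mapsto P(\vphi)$ is smooth from $U$ into $E_0$. Hence $\Phi$ is $C^\infty$ Fr\'echet differentiable on its domain, which already implies $C^k$ for every $k$.

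Second, I would check that $\Phi'(\phi):X^0_T\to Y$ is a bounded linear isomorphism. Its form is
\[
\Phi'(\phi)\,v=\Bigl(\p_t v-\tri_\phi v-Q[v],\;v(0)\Bigr),\qquad
\tri_\phi Q[v]=-v^{i\bar j}T_{i\bar j},
\]
with the normalization condition inherited by differentiating \eqref{notations: P normalization condition}. This is precisely the linear parabolic system analyzed in the short-time existence proof, but now with time-dependent H\"older coefficients extracted from the genuine solution $\phi(t)$ instead of the constant-in-time approximate solution $\bar\vphi\equiv\vphi_0$. The invertibility proof of Proposition~\ref{short time: Linearized iso} carries over once one notes that it only requires the coefficients to lie in $C^{\a,\a/2}(Q_T)$, which is the case since $\phi\in X^0_T$. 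The operator norm of $[\Phi'(\phi)]^{-1}$ is then controlled by $\phi$, $T$ and the background metric $g$.

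Granted these two ingredients, the inverse function theorem in Banach spaces supplies an open neighborhood $V$ of $(0,\phi_0)$ in $Y$ together with a $C^\infty$ inverse $\Phi^{-1}:V\to X^0_T$. Setting $U=\{\vphi_0\in E_1:(0,\vphi_0)\in V\}$, a neighborhood of $\phi_0$ in $C^{2,\a}(M,g)$, the composition $\vphi_0\mapsto\Phi^{-1}(0,\vphi_0)\mapsto\Phi^{-1}(0,\vphi_0)(t)$ furnishes the desired solution map of class $C^k$ for every $k$. The main obstacle is the second step: one must extend the invertibility estimate of the linearized operator from the time-frozen setting of the existence proof to the time-dependent coefficients of the actual solution, and verify that the auxiliary elliptic solve for $Q[v]$ is uniformly well posed in $t\in[0,T]$; both reduce to parabolic Schauder bounds but demand a careful tracking of constants and of the normalization conditions along the flow.
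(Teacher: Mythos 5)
Your proposal is correct and follows essentially the same route as the paper: the paper also reduces the theorem to invertibility of the linearization at the genuine time-dependent solution $\phi$ via Proposition~\ref{short time: Linearized iso} (which is already stated for $t$-dependent $\vphi$, so your worry about extending it from the frozen case is moot), the only cosmetic difference being that the paper translates by the initial data, $\psi=\vphi-\vphi_0$, and invokes the implicit function theorem in the pair $(\psi,\vphi_0)$ rather than inverting $\vphi\mapsto(\p_t\vphi-h+P,\vphi(0))$ directly. The one ingredient you leave implicit under ``tracking of normalization conditions'' is the smallness hypothesis \eqref{short time: vphi small} required by Proposition~\ref{short time: Linearized iso}, which the paper arranges by the change of reference metric in Remark~\ref{short time: choose small initial}.
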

\begin{proof}
We derive, substituting $\psi=\vphi-\vphi_0$ into the potential equation \eqref{short time: main},
\begin{equation*}
\left\{
\begin{aligned}
\frac{\p}{\p t}\psi&=\log\frac{\om^n_{\psi+\vphi_0}}{\om^n}-P(\psi+\vphi_0),\\
\tri_{\psi+\vphi_0} P(\psi+\vphi_0)&=tr_{\psi+\vphi_0}{Ric(\om)}-\ul S,\\
\psi(0)&=0.\\
\end{aligned}
\right.
\end{equation*}
It is obvious that its solution plus $\vphi_0$ gives the solution of \eqref{short time: main}.
Analogously to the mapping \eqref{short time: mapping}, we define a mapping as
\begin{align*}
\Phi:C^1(J,E_0)\cap C^0(J,U)\times U&\rightarrow C^0(J,E_0),\\
(\psi,\vphi_0)&\mapsto\p_t\psi
-\log\frac{\om^n_{\psi+\vphi_0}}{\om^n}+P(\psi+\vphi_0).
\end{align*}
Then we have $\Phi(\phi-\phi_0,\phi_0)=0$ and
the Fr\'{e}chet derivative with respect to $\psi$ at
$(\phi-\phi_0,\phi_0)$ is given by
\begin{align*}
\Phi'(\phi-\phi_0,\phi_0):C^1(J,E_0)\cap C^0(J,E_1)\times E_1&\rightarrow C^0(J,E_0),\\
(v,w)&\mapsto\frac{\p v}{\p t}-\tri_{\phi} v-Q(\phi).
\end{align*}
We apply Remark~\ref{short time: choose small initial} and
Proposition~\ref{short time: Linearized iso} with $\vphi=\phi$. Hence
$\Phi'(\phi-\phi_0,\phi_0)$ is an isomorphism
and the theorem follows from the implicit function theorem.
\end{proof}

\subsection{The main proposition}
From here on we shall simply write $\vphi$
for both approximate solutions $\bar{\vphi}$ and $\phi$.
We are going to prove the following proposition for $\vphi$; we stress that $\vphi$ may depend on $t$.
Due to Remark \ref{short time: choose small initial},
we can further assume that $\vphi$ satisfies
\begin{align}\label{short time: vphi small}
\max_{[0,T]}|\vphi(t)|_{C^{2,\a}}\leq\delta\ll1
\end{align} for some $\delta$ to be determined later. Generally, we introduce the space
\begin{align}\label{short time: v delta}
V_\delta=\{\psi\in C^0([0,T],U)\vert \max_{[0,T]}|\psi(t)|_{C^{2,\a}}\leq\delta\}.
\end{align}

\begin{prop}\label{short time: Linearized iso}
Under the normalization condition
\eqref{short time: Q normalization condition} the linearized equation
\begin{equation}\label{short time: linearized equ}
  \left\{
   \begin{aligned}
\frac{\p v}{\p t}-\tri_{\vphi}v-Q
&=u(x,t) \text{ in } Q_T,\\
\tri_{\vphi} Q(v)&=v^
{i\bar{j}}(R_{i\bar{j}}(\om)-P_{i\bar{j}}) \text{ in } Q_T,\\
v(0)&=w, \\
   \end{aligned}
  \right.
\end{equation}
has a unique solution $v(x,t)\in X$, for any $u(x,t)\in
C^0([0,T],C^\a(M))$ and $w\in C^{2,\a}(M)$.
\end{prop}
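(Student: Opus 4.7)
The plan is to rewrite \eqref{short time: linearized equ} as a fixed-point problem in $X$ by decoupling the parabolic and elliptic components and then invoking the Banach contraction principle. Given any candidate $\tilde v\in X$, the proposal is to first compute $Q[\tilde v]$ from the elliptic part and then feed $Q[\tilde v]$ into the parabolic part to produce $v=F(\tilde v)$; a fixed point of $F$ is precisely the desired solution.

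For the elliptic step, at each fixed $t$ I solve
\begin{equation*}
\tri_\vphi Q = \tilde v^{i\bar j}\bigl(R_{i\bar j}(\om)-P_{i\bar j}\bigr)
\end{equation*}
on $M$ under the normalization \eqref{short time: Q normalization condition}. The solvability condition (integration of the right-hand side against $\om_\vphi^n$ vanishes) is automatic, being the linearization of the identity $\tri_\vphi P-\tr_\vphi Ric(\om)+\ul S\equiv 0$ used to define $P$. By \eqref{short time: vphi small} the operator $\tri_\vphi$ is uniformly elliptic with $C^\a$ coefficients, and by \lemref{short time: P regular} the tensor $\mathcal{T}_{i\bar j}:=R_{i\bar j}(\om)-P_{i\bar j}$ lies in $C^0(J,C^\a)$; elliptic Schauder theory then gives $Q[\tilde v]\in C^0(J,C^{2,\a})$ with
\begin{equation*}
\|Q[\tilde v]\|_{C^0(J,C^{2,\a})}\leq C\,\|\mathcal{T}\|_{C^0(J,C^\a)}\,\|\tilde v\|_X.
\end{equation*}
For the parabolic step, the Cauchy problem
\begin{equation*}
\frac{\p v}{\p t}-\tri_\vphi v=u+Q[\tilde v],\qquad v(0)=w,
\end{equation*}
has $C^\a$ uniformly elliptic coefficients, so classical parabolic Schauder theory yields a unique $v\in X$ with $\|v\|_X\leq C_1(\|u\|_{C^0(J,C^\a)}+\|Q[\tilde v]\|_{C^0(J,C^\a)}+\|w\|_{C^{2,\a}})$. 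This defines the linear map $F:X\to X$. Combining the two estimates, the difference $v_1-v_2=F(\tilde v_1)-F(\tilde v_2)$, which solves the homogeneous Cauchy problem with source $Q[\tilde v_1-\tilde v_2]$, obeys
\begin{equation*}
\|v_1-v_2\|_X\leq C_1 C\,\|\mathcal{T}\|_{C^0(J,C^\a)}\,\|\tilde v_1-\tilde v_2\|_X.
\end{equation*}

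The main obstacle is that $\|\mathcal{T}\|_{C^\a}$ is not itself small even when $|\vphi|_{C^{2,\a}}$ is small (since $P$ solves an inhomogeneous equation with nontrivial background data), so the product $C_1C\|\mathcal{T}\|_{C^\a}$ need not be strictly less than $1$. To close the contraction I would partition $[0,T]$ into subintervals $[T_{k-1},T_k]$ of common length $\eta$ and argue on each subslab separately; on a slab of width $\eta$ with zero initial data the sharper parabolic Schauder inequality can be interpolated between the short-time $L^\infty$ bound $\|v\|_{L^\infty}\leq C\eta\|f\|_{L^\infty}$ and the $C^{2,\a}$ bound, yielding a contraction constant $C(\eta)\to 0$ as $\eta\to 0$. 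The local solutions are then glued into a global solution on $[0,T]$ using the linearity of \eqref{short time: linearized equ}. Uniqueness is built into the fixed-point argument on each subinterval, and can be reconfirmed globally by an $L^2$ energy identity applied to homogeneous solutions, using the self-adjointness of $\tri_\vphi$ with respect to $\om_\vphi^n$ together with the $X$-bound on $Q$ in terms of $v$.
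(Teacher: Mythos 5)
Your overall architecture (elliptic step producing $Q[\tilde v]$, parabolic step via Duhamel, contraction on short subintervals, gluing by linearity) is close in spirit to the paper's proof, and your extension-to-$[0,T]$ by uniform time-stepping is actually more elementary than the paper's route, which instead derives global a priori $C^{2+\a,1}$ bounds through an $L^2$ energy inequality, the $V_2$ imbedding, a parabolic $L^p$ bootstrap, and Schauder estimates for $Q$, and then extends by continuation. However, the mechanism you give for closing the contraction has a genuine gap. You claim that on a slab of width $\eta$ with zero initial data the solution map $f\mapsto v$ of $\p_t v-\tri_\vphi v=f$ has small norm from $C^0([0,\eta],C^\a)$ to $C^0([0,\eta],C^{2,\a})$ by ``interpolating'' the bound $\|v\|_{L^\infty}\le C\eta\|f\|_{L^\infty}$ against the Schauder bound. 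This is false in the top norm: interpolation between $L^\infty$-smallness and $C^{2,\a}$-boundedness only yields smallness in $C^{2,\b}$ for $\b<\a$, and indeed for $f$ independent of $t$ one has $\tri v(t)=(e^{t\tri}-I)f$, whose $C^\a$ seminorm does not tend to $0$ uniformly over the unit ball of $C^\a$ as $t\to0$. So the iteration as designed either fails to contract in $X$ or silently drops to a weaker H\"older exponent at each step. The correct mechanism --- which is exactly what the paper exploits, and which your own elliptic step already provides --- is that $Q[\tilde v]$ lies in $C^{2,\a}$, two derivatives better than a generic source; feeding a $C^{2,\a}$ source into Duhamel does give a factor $O(\eta^{1-\b})$ in the $C^{2,\a}$ norm (this is the paper's \lemref{short time linearized: C0 norm of app sol} applied after moving derivatives onto $Q$). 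You compute the $C^{2,\a}$ bound on $Q[\tilde v]$ and then never use it where it matters.

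Two further points. First, you cannot simply invoke ``classical parabolic Schauder theory'' for $\p_t-\tri_\vphi$: the coefficients are only $C^\a$ in space and continuous in $t$, and the source $u+Q$ is not H\"older in time, so the Lady\v{z}enskaja--Solonnikov--Ural'ceva theory does not apply. The paper handles this by freezing the coefficients, writing $\p_t v-\tri v=(\tri_\vphi-\tri)v+Q+u$ and using the heat kernel of the fixed metric together with Lunardi-type optimal regularity (\thmref{regularity of PCF: space reg}); the perturbation $(\tri_\vphi-\tri)v$ gains no power of $\eta$ and must instead be absorbed using the smallness $\max_{[0,T]}|\vphi|_{C^{2,\a}}\le\delta$ arranged in Remark \ref{short time: choose small initial} --- an ingredient absent from your write-up. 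Second, your $L^2$ uniqueness argument needs $\|Q\|_{L^2}\le C\|v\|_{L^2}$, which does not follow from the Schauder bound you have; it requires the divergence structure $v^{i\bar j}T_{i\bar j}=(vT_{i\bar j})^{i\bar j}$ coming from harmonicity of $T_{i\bar j}$ and a duality argument (the paper's \lemref{short time: L2 estimate of Q}). Alternatively, uniqueness follows from the contraction together with linearity, so this last point is easily repaired.
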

Since the linearized operator is not
self-adjoint, we can not use the Fredholm theory directly. In order to
prove the Proposition~\ref{short time: Linearized iso},
we first show that the linearized
equation has a short time solution by using the contraction mapping
theorem. Then we prove that the solution of the linearized
equation exists for any $t$.
Before demonstrating the procedure, we
need the following technical lemmas.
\subsection{The technical lemmas}
We first deal with
the pseudo-differential term $P$ which satisfies \eqref{notations: P} and \eqref{notations: P normalization condition}.
The following identity is computed directly.
\begin{lem}
Suppose that $\vphi_1$ and $\vphi_2$ are two $C^{2,\a}$ K\"ahler potentials. Let $g_a=ag_{\vphi_2}+(1-a)g_{\vphi_1}$. We have
\begin{align}\label{short time: P diff}
\tri_{\vphi_1}(P(\vphi_1)-P(\vphi_2))=-\int_0^1g^{i\bar l}_ag^{k \bar j}_ad(\vphi_1-\vphi_2)_{k\bar l}(R_{i\bar j}-P(\vphi_2)_{i\bar j}).
\end{align}
\end{lem}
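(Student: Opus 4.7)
The plan is to derive the identity by combining the two defining elliptic equations for $P(\vphi_1)$ and $P(\vphi_2)$, and then expressing the resulting difference of inverse metrics as a line integral over the segment connecting $g_{\vphi_1}$ and $g_{\vphi_2}$.

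First, I would apply the defining equation \eqref{notations: P} at both potentials, namely $g_{\vphi_1}^{i\bar j}P(\vphi_1)_{i\bar j}=g_{\vphi_1}^{i\bar j}R_{i\bar j}(\om)-\ul S$ and $g_{\vphi_2}^{i\bar j}P(\vphi_2)_{i\bar j}=g_{\vphi_2}^{i\bar j}R_{i\bar j}(\om)-\ul S$. The key manipulation is to compute $\tri_{\vphi_1}P(\vphi_2)=g_{\vphi_1}^{i\bar j}P(\vphi_2)_{i\bar j}$ and subtract it from the first equation, using the second equation to eliminate $\ul S$. The constants cancel, and after regrouping the Ricci and $P(\vphi_2)_{i\bar j}$ terms I obtain the clean pointwise identity $\tri_{\vphi_1}(P(\vphi_1)-P(\vphi_2))=(g_{\vphi_1}^{i\bar j}-g_{\vphi_2}^{i\bar j})\,(R_{i\bar j}(\om)-P(\vphi_2)_{i\bar j})$.

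Second, I would convert the difference of inverse metrics into an integral along the linear path $g_a=ag_{\vphi_2}+(1-a)g_{\vphi_1}$. Each $g_a$ is positive definite by convexity and hence invertible, so the path lies in the space of K\"ahler metrics. Differentiating $g_a\cdot g_a^{-1}=I$ and using $\tfrac{d}{da}g_{a,k\bar l}=(\vphi_2-\vphi_1)_{k\bar l}$ yields the standard formula $\tfrac{d}{da}g_a^{i\bar j}=-g_a^{i\bar l}g_a^{k\bar j}(\vphi_2-\vphi_1)_{k\bar l}$. The fundamental theorem of calculus from $a=0$ to $a=1$ then gives $g_{\vphi_1}^{i\bar j}-g_{\vphi_2}^{i\bar j}=-\int_0^1 g_a^{i\bar l}g_a^{k\bar j}(\vphi_1-\vphi_2)_{k\bar l}\,da$, and substituting into the identity from the first step produces precisely the formula stated in the lemma.

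The main issue is essentially bookkeeping, namely tracking signs through the path-integral representation and verifying that $R_{i\bar j}(\om)$ is the Ricci tensor of the \emph{fixed} background $\om$ (and therefore identical in the two defining equations), which is what makes the cancellation so clean. No analytic obstruction arises; the identity is nothing more than a first-order Taylor expansion of the map $\vphi\mapsto P(\vphi)$ packaged as an integral along a straight-line segment in the space of K\"ahler potentials.
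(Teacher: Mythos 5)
Your proposal is correct and is precisely the ``direct computation'' the paper alludes to (the paper states the lemma with no written proof beyond the remark that it is computed directly): subtracting the two defining equations $\tri_{\vphi_k}P(\vphi_k)=g_{\vphi_k}^{i\bar j}R_{i\bar j}(\om)-\ul S$ to cancel $\ul S$ and obtain $(g_{\vphi_1}^{i\bar j}-g_{\vphi_2}^{i\bar j})(R_{i\bar j}-P(\vphi_2)_{i\bar j})$, and then writing the difference of inverse metrics as $-\int_0^1 g_a^{i\bar l}g_a^{k\bar j}(\vphi_1-\vphi_2)_{k\bar l}\,da$ along the linear segment, is exactly the intended argument, with the signs checking out. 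No gap.
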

The regularity of $P$ in the space direction can be improved
due to the regularity of $\vphi$.
\begin{lem}\label{short time: P regular}For any $\vphi(x,t)\in C^0([0,T],U)$, we have
\begin{align*}
P\in C^0([0,T],C^{2+\a}(M))
\end{align*}
and
\begin{align*}
T_{i\bar{j}}=-P_{i\bar{j}}+R_{i\bar{j}}(\om)\in
C^0([0,T],C^{\a}(M)).
\end{align*}
\end{lem}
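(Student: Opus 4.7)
The plan is to view $P(t)$ as the normalized solution of the elliptic PDE
\begin{equation*}
g_{\vphi(t)}^{i\bar j} P(t)_{i\bar j} = g_{\vphi(t)}^{i\bar j} R_{i\bar j}(\om) - \ul S,\qquad \int_M e^{P(t)}\om^n = V,
\end{equation*}
and apply Schauder theory uniformly in $t$. First I would fix $t$ and argue pointwise regularity: since $\vphi(t) \in U \subset C^{2,\a}(M)$, the inverse metric $g_{\vphi(t)}^{i\bar j}$ lies in $C^\a(M)$ and is uniformly positive definite. The right hand side is likewise in $C^\a(M)$ because $R_{i\bar j}(\om)$ is smooth and we are contracting with $C^\a$ coefficients; the compatibility condition $\int_M (g_{\vphi}^{i\bar j}R_{i\bar j}(\om)-\ul S)\om_{\vphi}^n = 0$ holds by the definition of $\ul S$, so a solution exists and is unique up to an additive constant which is fixed by the normalization. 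Schauder's interior (global, on a compact manifold) estimate then yields $P(t)\in C^{2,\a}(M)$ with
\begin{equation*}
\|P(t)\|_{C^{2,\a}} \le C\bigl(\|g_{\vphi(t)}^{i\bar j}R_{i\bar j}(\om)-\ul S\|_{C^\a}+\|P(t)\|_{C^0}\bigr),
\end{equation*}
where the Schauder constant depends only on the $C^\a$ norm and ellipticity bounds of the coefficients; the $C^0$ bound on $P(t)$ is a consequence of the normalization together with the maximum principle applied to the elliptic equation.

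Next I would upgrade pointwise regularity to continuity in $t$. Given $t_n \to t_0$ in $[0,T]$, the assumption $\vphi \in C^0([0,T],U)$ gives $\vphi(t_n)\to\vphi(t_0)$ in $C^{2,\a}(M)$, hence $g_{\vphi(t_n)}^{i\bar j} \to g_{\vphi(t_0)}^{i\bar j}$ in $C^\a$ and the RHS converges in $C^\a$. To get convergence of $P(t_n)$ in $C^{2,\a}$ I would use the difference identity \eqref{short time: P diff} with $\vphi_1 = \vphi(t_0)$, $\vphi_2 = \vphi(t_n)$: the right hand side there is a linear combination of $(\vphi_1-\vphi_2)_{k\bar l}$ with $C^\a$ coefficients bounded uniformly by the known $C^{2,\a}$ bound on $\vphi$, hence its $C^\a$ norm is controlled by $\|\vphi(t_0)-\vphi(t_n)\|_{C^{2,\a}}\to 0$. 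Schauder estimates for the elliptic operator $\triangle_{\vphi(t_0)}$ then give $\|P(t_0)-P(t_n)\|_{C^{2,\a}} \le C\|\vphi(t_0)-\vphi(t_n)\|_{C^{2,\a}}$, provided we control the additive constant: that constant is pinned down by writing the normalization $\int_M e^{P(t_n)}\om^n = V$ as a condition on the difference $P(t_0)-P(t_n)$, and a short argument (linearizing $e^{P(t_n)}-e^{P(t_0)}$) shows the constant tends to zero as well.

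The last statement, $T_{i\bar j}\in C^0([0,T],C^\a(M))$, follows immediately: $T_{i\bar j} = -P_{i\bar j}+R_{i\bar j}(\om)$, and $P$ lies in $C^0([0,T],C^{2,\a}(M))$ by the previous step, so its second derivatives lie in $C^0([0,T],C^\a(M))$, while $R_{i\bar j}(\om)$ is smooth and $t$-independent.

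The main obstacle I expect is the uniformity of the Schauder constant across $t$, and more subtly, handling the normalization cleanly so that the difference $P(\vphi_1)-P(\vphi_2)$ inherits a $C^\a$ bound with no ambiguous constant. This is a routine but delicate point: one must verify that the Schauder constants depend only on the ellipticity and $C^\a$ norm of the coefficients, which are uniformly controlled by the $C^0([0,T],C^{2,\a})$ hypothesis on $\vphi$, and that the normalization functional $\psi\mapsto\int_M e^\psi\om^n$ is $C^1$ at $P(t_0)$ so the implicit constant depends continuously on $t$.
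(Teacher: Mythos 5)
Your proposal is correct, and its first half (pointwise Schauder regularity for fixed $t$, using the compatibility of the right-hand side and the normalization to fix the constant) matches the paper. For the continuity in $t$ you start from the same difference identity \eqref{short time: P diff}, but then diverge: you apply the Schauder estimate directly to the operator $\tri_{\vphi(t_0)}$ (which has $C^\a$ coefficients) and pin down the additive constant by linearizing the normalization functional $\psi\mapsto\int_M e^{\psi}\om^n$ at $P(t_0)$. The paper instead freezes the leading coefficients to the fixed background Laplacian $\tri$, treats $(\tri-\tri_{\vphi(t_1)})f$ as an error term, controls the mean of $f=P(t_1)-P(t_2)$ via the Green representation together with the sign information $\sup_M P\ge 0\ge \inf_M P$ coming from $\int_M e^{P}\om^n=V$, and absorbs the resulting $\delta|D^2f|$ term using the standing smallness hypothesis \eqref{short time: vphi small} on $\max_{[0,T]}|\vphi|_{C^{2,\a}}$. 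Your route buys something real: it does not need \eqref{short time: vphi small}, so it proves the lemma for an arbitrary $\vphi\in C^0([0,T],U)$ exactly as stated, whereas the paper's argument as written only applies after the reduction of Remark \ref{short time: choose small initial}; the price is that you must invoke Schauder theory for merely H\"older-continuous coefficients and check solvability of the mean-zero problem for $\tri_{\vphi(t_0)}$, both of which are standard. One small imprecision to fix: the $C^0$ bound on $P(t)$ does not follow from the maximum principle alone (the right-hand side has no sign); you need the Green representation or De Giorgi--Nash--Moser to bound $\osc_M P$ by the $C^0$ norm of the right-hand side, after which the normalization fixes the mean --- this is exactly the role the Green function plays in the paper's proof.
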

\begin{proof}
Since $\vphi(x,t)\in C^0([0,T],U)$, we have
$$tr_{\vphi}{Ric(\om)}-\as\in
C^0([0,T],C^{\a}(M)).$$
It follows directly from the Schauder estimate that $$ P\in
L^\infty([0,T],C^{2+\a}(M)).$$ Let
$$g_{\vphi(t_s)}=sg_{\vphi(t_1)}+(1-s)g_{\vphi(t_2)}.$$ Then
we plug $\vphi_i=\vphi(t_i)$ for $i=1,2$ in \eqref{short time: P diff} to get
\begin{align*}
\tri_{\vphi(t_1)}(P(t_1)-P(t_2))
=-\int_0^1g_{\vphi(t_s)}^{k\bar{j}}g_{\vphi
(t_s)}^{i\bar{l}}ds(\vphi(t_1)-\vphi(t_2))_{k\bar{l}}[R_{i\bar{j}}(\om)
-P(t_2)_{i\bar{j}}].
\end{align*}
Freezing the coefficient by the fixed metric $\om$, we get
\begin{align}\label{short time: diff t }
\tri(P(t_1)-P(t_2))&=(\tri-\tri_{\vphi(t_1)})(P(t_1)-P(t_2))\\
&-\int_0^1g_{\vphi(t_s)}^{k\bar{j}}g_{\vphi
(t_s)}^{i\bar{l}}ds(\vphi(t_1)-\vphi(t_2))_{k\bar{l}}[R_{i\bar{j}}(\om)
-P(t_2)_{i\bar{j}}]\nonumber.
\end{align}
Let $f=P(t_1)-P(t_2)$. Then the Green representation gives
\begin{align*}
f-\int_Mf\om^n=-\frac{1}{V}\int_M\tri f\cdot G(g)(x,y)\om^n(y)
\end{align*}
which implies
\begin{align}\label{short time: nor diff t t}
|f-\int_Mf\om^n|\leq \delta|D^2f|+ C|D^2(\vphi(t_1)-\vphi(t_2))|
\end{align}
by condition \eqref{short time: vphi small}.
Since the normalization condition \eqref{notations: P normalization condition} implies both $\sup_M P$ and $-\inf_M P$ are nonnegative, we infer that
\begin{align*}
&0\leq \sup_M f\leq \int_Mf\om^n+\delta|D^2f|+ C|D^2(\vphi(t_1)-\vphi(t_2))|,\\
&0\geq \inf_M f\geq \int_Mf\om^n-\delta|D^2f|- C|D^2(\vphi(t_1)-\vphi(t_2))|\nonumber.
\end{align*}
Then it follows that
$$-\delta|D^2f|- C|D^2(\vphi(t_1)-\vphi(t_2))|\leq\int_Mf\om^n\leq \delta|D^2f|+ C|D^2(\vphi(t_1)-\vphi(t_2))|.$$
So by \eqref{short time: nor diff t t} we get
\begin{align*}
|f|\leq 2\delta|D^2f|+C|D^2(\vphi(t_1)-\vphi(t_2))|.
\end{align*}
By applying the Schauder estimate to \eqref{short time: diff t } again,
we deduce that
\begin{align*}
|f|_{C^{2,\a}(M)}\leq C(\delta|D^2f|+|D^2(\vphi(t_1)-\vphi(t_2))|
+|D^2(\vphi(t_1)-\vphi(t_2))|_{C^{\a}(M)}).
\end{align*} Let $\delta$ be small enough so that
\begin{align*}
|f|_{C^{2,\a}(M)}\leq C(|D^2(\vphi(t_1)-\vphi(t_2))|
+|D^2(\vphi(t_1)-\vphi(t_2))|_{C^{\a}(M)}).
\end{align*}
Then $|P(t)|_{C^{2,\a}}$ is continuous with respect to $t$, since $\vphi\in C^0([0,T],U)$.
\end{proof}
Now we construct a sequence of smooth K\"ahler potentials approximating $\vphi$ in $C^0([0,T],C^{2,\a}(M))$.
\begin{lem}\label{short time: app coe}
For any $C^{2,\a}(M)$ K\"ahler potential $\vphi_0$, there exists a sequence of
smooth K\"ahler potentials $\vphi_n$ such that
$$\lim_{n\rightarrow\infty}|\vphi_n-\vphi_0|_{C^{2,\a}(M)}=0.$$
Moreover, the corresponding harmonic tensors
$$\lim_{n\rightarrow\infty}|T_{ni\bar j}-T_{0i\bar j}|_{C^{\a}(M)}=0.$$
\end{lem}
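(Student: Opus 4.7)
The plan is to handle the two assertions separately: first produce the approximating sequence $\vphi_n$, then deduce convergence of the harmonic tensors by appealing to the continuity result already established in \lemref{short time: P regular}.

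For the first assertion, I would exploit the very definition of the little Hölder space $C^{2,\a}(M,g)$ as the completion of $C^\infty(M,\RR)$ in the $C^{2,\a}$ norm. By this definition alone there is a sequence $\wt\vphi_n\in C^\infty(M,\RR)$ with $|\wt\vphi_n-\vphi_0|_{C^{2,\a}(M)}\to 0$; one can write them down explicitly by mollifying $\vphi_0$ in a finite atlas of normal charts via a smooth partition of unity. The condition that $\vphi_0$ be a Kähler potential, $\om+\tfrac{\sqrt{-1}}{2}\p\bar\p\vphi_0>0$, is open in the $C^{2}$ topology, and hence open in the $C^{2,\a}$ topology since $C^{2,\a}\hookrightarrow C^{2}$. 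Therefore for all $n$ sufficiently large $\om_{\wt\vphi_n}>0$ on $M$; discarding the finitely many initial indices produces the desired smooth Kähler potentials $\vphi_n$.

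For the second assertion, since $R_{i\bar j}(\om)$ is independent of $n$, we have $T_{n i\bar j}-T_{0 i\bar j}=-(P(\vphi_n)-P(\vphi_0))_{i\bar j}$, so it suffices to prove that $|P(\vphi_n)-P(\vphi_0)|_{C^{2,\a}(M)}\to 0$. This is essentially the continuity statement in \lemref{short time: P regular}; I would reapply the argument given there, but with the role of the time variable replaced by the discrete index $n$. Explicitly, invoke the identity \eqref{short time: P diff} with $\vphi_1=\vphi_0$, $\vphi_2=\vphi_n$, freeze coefficients at the background metric $\om$ as in \eqref{short time: diff t }, and then combine the Schauder estimate with the normalization control on $\int_M (P(\vphi_0)-P(\vphi_n))\,\om^n$ (using that $\sup_M P$ and $-\inf_M P$ are nonnegative under \eqref{notations: P normalization condition}) exactly as in the display leading to \eqref{short time: nor diff t t}. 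The outcome is an inequality of the form
\[
|P(\vphi_n)-P(\vphi_0)|_{C^{2,\a}(M)}\le C\,|\vphi_n-\vphi_0|_{C^{2,\a}(M)},
\]
from which the second claim follows at once by Step~1.

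The only genuine obstacle is ensuring that the constant $C$ in the Schauder bound is uniform in $n$. This is handled by first applying \textbf{Remark \ref{short time: choose small initial}} to reduce to the regime where $|\vphi_0|_{C^{2,\a}}$ is small; then for $n$ large the $\vphi_n$ lie in a small $C^{2,\a}$ ball around $\vphi_0$, so the metrics $g_{\vphi_n}$ are uniformly equivalent to $g$, the coefficient-freezing error $\|\tri-\tri_{\vphi_n}\|$ stays below any preassigned threshold, and the absorption argument used in the proof of \lemref{short time: P regular} goes through with a constant depending only on $g$. This gives the uniform estimate and completes the proof.
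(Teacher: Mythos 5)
Your proposal is correct, and the treatment of the harmonic tensors coincides with the paper's: the authors likewise reduce $T_{ni\bar j}-T_{0i\bar j}=-\bigl(P(\vphi_n)-P(\vphi_0)\bigr)_{i\bar j}$ to the continuity estimate of \lemref{short time: P regular}, simply "replacing $P(t_1)$ and $P(t_2)$ with $P(\vphi_n)$ and $P(\vphi_0)$" in \eqref{short time: diff t }, with the smallness regime of Remark \ref{short time: choose small initial} supplying the uniform constants exactly as you describe. Where you diverge is in producing the approximating potentials. You appeal directly to the definition of $C^{2,\a}(M,g)$ as the completion of $C^\infty(M)$ in the $C^{2,\a}$ norm, obtain smooth $\wt\vphi_n\to\vphi_0$ in one line, and invoke openness of the K\"ahler condition. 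The paper instead sets $\phi_0=\tri\vphi_0\in C^{\a}(M)$, approximates $\phi_0$ by smooth $\phi_n$ in $C^{\a}$, solves $\tri\vphi_n=\phi_n$ with $\int_M\vphi_n\om^n=0$, and recovers $C^{2,\a}$ convergence from the Schauder estimate plus a Green-representation $L^\infty$ bound, before concluding with the same openness argument. Both routes ultimately rest on the density of smooth functions in the relevant little H\"older space, so neither is more general; yours is shorter and sidesteps the solvability constraint $\int_M\phi_n\om^n=0$ that the paper's elliptic detour tacitly requires, while the paper's construction has the minor benefit of producing normalized potentials and of rehearsing the Schauder/Green machinery reused elsewhere in the section. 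Your caveat that mollification converges in the $C^{2,\a}$ norm only because one is working in the little H\"older space is well placed and is precisely the point that makes either argument work.
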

\begin{proof}
Let $\phi_0=\tri\vphi_0$. We select a sequence of smooth functions $\phi_n$ such that it converges to $\phi_0$ in $C^{\a}(M)$. Let $\vphi_n$ be the smooth solution of $\tri\vphi_n=\phi_n$ with the normalization condition $\int_M\vphi_n\om^n=0$. The Schauder estimate implies
\begin{align*}
|\vphi_n-\vphi_0|_{C^{2,\a}(M)}\leq C(|\phi_n-\phi_0|_{C^{\a}(M)}+|\vphi_n-\vphi_0|_{C^{0}(M)}).
\end{align*}
Meanwhile, the Green representation gives the following $L^\infty$ bound of $\vphi_n$
\begin{align*}
|\vphi_n-\vphi_0|_{C^{0}(M)}\leq C(|\phi_n-\phi_0|_{C^{0}(M)}).
\end{align*}
Then by combining these two estimates, we have $\om_{\vphi_n}$ is a K\"ahler form and the first part of the lemma follows.
The second part of the lemma follows from
\lemref{short time: P regular} by replacing
$P(t_1)$ and $P(t_2)$ with $P(\vphi_n)$ and
$P(\vphi_0)$ respectively in \eqref{short time: diff t }.
\end{proof}
\begin{lem}\label{short time: app coe t}
For any $\vphi(x,t)\in C^0([0,T],U)$, there exists a sequence of
smooth K\"ahler potentials $\vphi_n$ such that
$$\lim_{n\rightarrow\infty}\max_{[0,T]}|\vphi_n-\vphi_0|_{C^{2,\a}(M)}=0.$$
Moreover, the corresponding harmonic tensors satisfy
$$\lim_{n\rightarrow\infty}\max_{[0,T]}|T_{ni\bar j}-T_{0i\bar j}|_{C^{\a}(M)}=0.$$
\end{lem}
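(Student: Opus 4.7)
The plan is to view this as the time-parametrized version of the previous Lemma \ref{short time: app coe}. The main observation is that the continuous curve $t \mapsto \vphi(\cdot,t)$ from the compact interval $[0,T]$ into $C^{2,\a}(M,g)$ has image $K = \{\vphi(\cdot,t) : t\in[0,T]\}$ which is compact in the little H\"older space $C^{2,\a}(M,g)$. Since this space is by definition the closure of $C^\infty(M)$ under the $C^{2,\a}$ norm, smooth functions are dense, and mollification by a standard smoothing kernel converges uniformly on any compact subset of the little H\"older space. This uniform convergence on $K$ is what will promote the pointwise-in-$t$ approximation of Lemma \ref{short time: app coe} to a uniform-in-$t$ approximation.

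Concretely, I would proceed as follows. First, following the construction in Lemma \ref{short time: app coe}, set $\phi(x,t) = \tri\vphi(x,t)$, which lies in $C^0([0,T], C^{\a}(M))$, and convolve in the space variable with a standard mollifier $\rho_{1/n}$ (locally in coordinate charts, patched by a partition of unity) to obtain smooth $\phi_n(x,t)$. By compactness of $\{\phi(\cdot,t)\}$ in the little H\"older space $C^{\a}(M)$, the convergence $\phi_n(\cdot,t) \to \phi(\cdot,t)$ in $C^{\a}(M)$ is uniform in $t\in[0,T]$. Next, solve the Poisson equation $\tri \vphi_n(\cdot,t) = \phi_n(\cdot,t)$ with normalization $\int_M \vphi_n(\cdot,t)\,\om^n = \int_M\vphi(\cdot,t)\,\om^n$; the Schauder estimate combined with the Green function bound (exactly as in the proof of Lemma \ref{short time: app coe}) yields
\begin{align*}
|\vphi_n(\cdot,t) - \vphi(\cdot,t)|_{C^{2,\a}(M)} \leq C\bigl( |\phi_n(\cdot,t) - \phi(\cdot,t)|_{C^{\a}(M)} + |\vphi_n(\cdot,t) - \vphi(\cdot,t)|_{C^{0}(M)} \bigr),
\end{align*}
with $C$ independent of $t$. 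Maximizing over $t\in[0,T]$ gives the first conclusion. Since $U$ is open in $C^{2,\a}$ and $\{\vphi(\cdot,t)\}\subset U$ is compact, for $n$ sufficiently large $\vphi_n(\cdot,t)\in U$ for every $t$, so the $\vphi_n$ are genuine K\"ahler potentials.

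For the second conclusion, the harmonic tensor difference reduces to $T_{n,i\bar j} - T_{i\bar j} = -(P(\vphi_n) - P(\vphi))_{i\bar j}$, since the $R_{i\bar j}(\om)$ contribution cancels. Applying the identity \eqref{short time: P diff} with $\vphi_1 = \vphi_n(\cdot,t)$ and $\vphi_2 = \vphi(\cdot,t)$, freezing the coefficients against the background metric $\om$ exactly as in the derivation of \eqref{short time: diff t } in the proof of Lemma \ref{short time: P regular}, and invoking the Schauder estimate together with the normalization-based $L^\infty$ control, one obtains
\begin{align*}
|P(\vphi_n(\cdot,t)) - P(\vphi(\cdot,t))|_{C^{2,\a}(M)} \leq C\,|\vphi_n(\cdot,t) - \vphi(\cdot,t)|_{C^{2,\a}(M)},
\end{align*}
where $C$ depends only on a uniform bound on the family $\{\vphi(\cdot,t)\}$ and hence is independent of $t$. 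Combined with the first part this yields the desired $C^{\a}$-convergence of the harmonic tensors, uniformly in $t$. The main subtle point throughout is the uniformity in $t$: both mollification convergence in the full $C^{2,\a}$-norm and the constant $C$ in the $P$-estimate rely on the compactness of the curve $t\mapsto\vphi(\cdot,t)$ in the little H\"older space, so the framework of the little H\"older space (rather than the ordinary H\"older space) is essential.
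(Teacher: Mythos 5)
Your proposal is correct and follows essentially the same route as the paper, whose proof is only a two-line sketch ("similar to Lemma \ref{short time: app coe}\ldots the rest follows from Lemma \ref{short time: P regular}"); you simply fill in the details, in particular the uniformity in $t$ via compactness of the curve $t\mapsto\vphi(\cdot,t)$ in the little H\"older space, which is exactly the point the paper leaves implicit. The only cosmetic difference is that the paper asserts $\vphi_n\in C^\infty(M\times[0,T])$ (joint smoothness, used later in Proposition \ref{estimate of linearized: v 2+a,1}), which your space-only mollification does not give directly but is obtained by additionally mollifying in $t$.
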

\begin{proof}
Similar to \lemref{short time: app coe}
$\vphi_n\in C^\infty(M\times[0,T])$ is obtained.
The rest part of the lemma follows from
\lemref{short time: P regular}.
\end{proof}
For lacking of maximal principle, the following lemmas play an important role in solving the linearized equation $Q$ satisfying
\eqref{short time: Q} and \eqref{short time: Q normalization condition}.
Let $H_0^p(M,g_\vphi)=\{\eta\in H^p(M,g_\vphi)\vert\int_M\eta\om^n_\vphi=0\}$.
Analogously to $P$, the function $Q$ is characterized by the following lemmas.
\begin{lem}\label{short time: Q diff}
Suppose that $\vphi_1$ and $\vphi_2$ are two $C^{2,\a}$ K\"ahler potentials. Let $g_a=ag_{\vphi_2}+(1-a)g_{\vphi_1}$. We have for any $C^2$ functions $v_1$ and $v_2$
\begin{align*}
&\tri_{\vphi_1}(Q(v_1,\vphi_1)-Q(v_2,\vphi_2))
=(-v_1^{i\bar j}+v_2^{i\bar j})(P_{i\bar j}(\vphi_2)
-R_{i\bar j}(\om))\\
&-v_1^{i\bar j}(P(\vphi_1)-P(\vphi_2))_{i\bar j}
-\int_0^1g_a^{i\bar l}g_a^{k\bar j}(\vphi_2-\vphi_1)_{k\bar l}
Q(\vphi_2)_{i\bar j}da.
\end{align*}
\end{lem}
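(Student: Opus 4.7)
The plan is to mirror the proof of Lemma \ref{short time: P diff} (the analogous identity for $P(\vphi_1)-P(\vphi_2)$), but allowing the test functions $v_1,v_2$ to differ as well. Recall that $Q(v,\vphi)$ is defined by
$$\tri_\vphi Q(v,\vphi) = -v^{i\bar j}\bigl(P_{i\bar j}(\vphi) - R_{i\bar j}(\om)\bigr)$$
together with the normalization \eqref{short time: Q normalization condition}. The strategy is to subtract the two defining equations, replace $\tri_{\vphi_2}$ by $\tri_{\vphi_1}$ via a first-order interpolation of the inverse metrics along the segment $g_a$, and then regroup the algebraic terms to produce the three pieces on the right-hand side.

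First I would split
$$\tri_{\vphi_1}\bigl(Q(v_1,\vphi_1)-Q(v_2,\vphi_2)\bigr) = \tri_{\vphi_1}Q(v_1,\vphi_1) - \tri_{\vphi_2}Q(v_2,\vphi_2) - \bigl(\tri_{\vphi_1}-\tri_{\vphi_2}\bigr)Q(v_2,\vphi_2),$$
and substitute the defining equation for $Q$ into the first two terms. By adding and subtracting $v_1^{i\bar j}P_{i\bar j}(\vphi_2)$ and $v_1^{i\bar j}R_{i\bar j}(\om)$, the combination $-v_1^{i\bar j}(P_{i\bar j}(\vphi_1)-R_{i\bar j}(\om)) + v_2^{i\bar j}(P_{i\bar j}(\vphi_2)-R_{i\bar j}(\om))$ regroups as
$$(-v_1^{i\bar j}+v_2^{i\bar j})\bigl(P_{i\bar j}(\vphi_2)-R_{i\bar j}(\om)\bigr) \;-\; v_1^{i\bar j}\bigl(P(\vphi_1)-P(\vphi_2)\bigr)_{i\bar j},$$
which yields the first two terms of the desired identity.

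For the third term I would derive the interpolation identity for the inverse metrics: starting from $\partial_a g_a^{-1} = -g_a^{-1}(\partial_a g_a)g_a^{-1}$ with $\partial_a g_a = g_{\vphi_2}-g_{\vphi_1}$ and $(g_{\vphi_2}-g_{\vphi_1})_{k\bar l} = (\vphi_2-\vphi_1)_{k\bar l}$, integration in $a$ over $[0,1]$ gives
$$g_{\vphi_1}^{i\bar j}-g_{\vphi_2}^{i\bar j} = \int_0^1 g_a^{i\bar l}g_a^{k\bar j}(\vphi_2-\vphi_1)_{k\bar l}\,da.$$
Contracting against $Q(v_2,\vphi_2)_{i\bar j}$ converts $-(\tri_{\vphi_1}-\tri_{\vphi_2})Q(v_2,\vphi_2)$ into precisely the integral term stated in the lemma (with $Q(\vphi_2)_{i\bar j}$ being shorthand for $Q(v_2,\vphi_2)_{i\bar j}$). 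Collecting all three contributions gives the claimed identity.

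This is a direct algebraic computation; I do not expect any genuine obstacle. The only subtlety is to keep track of sign conventions in the interpolation formula and the order of $\vphi_1,\vphi_2$ in the defining equations, which I would sanity-check by specializing to $v_1=v_2$ and comparing the structure of the resulting identity to that of Lemma \ref{short time: P diff}.
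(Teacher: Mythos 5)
Your computation is correct and is exactly the ``direct computation'' the paper intends: the paper gives no written proof of this lemma (it is introduced only as ``analogously to $P$''), and your decomposition into the difference of the two defining equations plus the commutator term $-(\tri_{\vphi_1}-\tri_{\vphi_2})Q(v_2,\vphi_2)$, handled via the interpolation identity $g_{\vphi_1}^{i\bar j}-g_{\vphi_2}^{i\bar j}=\int_0^1 g_a^{i\bar l}g_a^{k\bar j}(\vphi_2-\vphi_1)_{k\bar l}\,da$, reproduces all three terms with the correct signs. No gaps.
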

Similarly to \lemref{short time: P regular}, we have
\begin{lem}\label{short time: Q regular}
For any $\vphi(x,t)\in C^0([0,T],U)$ and any $v\in C^0([0,T],C^{2+\a}(M))$, we have
\begin{align*}
Q\in C^0([0,T],C^{2+\a}(M)).
\end{align*}
\end{lem}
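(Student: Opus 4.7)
The plan is to mimic the proof of \lemref{short time: P regular} almost verbatim, with the only substantive change being how the normalization condition \eqref{short time: Q normalization condition} is used to control the zeroth-order term after freezing coefficients.

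First I would establish the pointwise-in-$t$ regularity. By \lemref{short time: P regular} we already know $P\in C^0([0,T],C^{2+\a}(M))$, hence the harmonic tensor $T_{i\bar j}=-P_{i\bar j}+R_{i\bar j}(\om)$ lies in $C^0([0,T],C^\a(M))$. Combined with the hypothesis $v\in C^0([0,T],C^{2+\a}(M))$, the right-hand side $-v^{i\bar j}T_{i\bar j}$ of \eqref{short time: Q} belongs to $C^0([0,T],C^\a(M))$. Since $\vphi(t)\in U$ satisfies \eqref{short time: vphi small}, the coefficients of $\tri_\vphi$ are uniformly $C^\a$ and uniformly elliptic, so the standard Schauder estimate yields $Q\in L^\infty([0,T],C^{2+\a}(M))$.

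Next I would upgrade $L^\infty$ in $t$ to continuity in $t$ using \lemref{short time: Q diff}. Apply it with $\vphi_1=\vphi(t_1)$, $\vphi_2=\vphi(t_2)$, $v_1=v(t_1)$, $v_2=v(t_2)$; this produces an equation of the form
\begin{equation*}
\tri_{\vphi(t_1)}\bl Q(t_1)-Q(t_2)\br = \mathcal{R}(t_1,t_2),
\end{equation*}
where $\mathcal{R}(t_1,t_2)$ is a sum of three terms, each of which is a product of a $C^\a$ coefficient involving $P$, $T_{i\bar j}$ or $Q(t_2)$ (already controlled in $C^\a$ by the previous step and by \lemref{short time: P regular}) with one of the differences $v(t_1)-v(t_2)$, $P(t_1)-P(t_2)$, or $\vphi(t_1)-\vphi(t_2)$ taken in $C^{2,\a}$. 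By the continuity hypotheses on $v$ and $\vphi$ together with \lemref{short time: P regular}, each such difference tends to $0$ in $C^{2,\a}(M)$ as $t_1\to t_2$, so $\|\mathcal{R}(t_1,t_2)\|_{C^\a}\to 0$.

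Freezing the coefficients of $\tri_{\vphi(t_1)}$ against the background metric $\om$ and moving the lower-order difference $(\tri-\tri_{\vphi(t_1)})(Q(t_1)-Q(t_2))$ to the right-hand side, exactly as in \eqref{short time: diff t }, I can then apply Schauder to $\tri$ to bound $|Q(t_1)-Q(t_2)|_{C^{2,\a}}$ in terms of $\|\mathcal{R}\|_{C^\a}$ plus a small multiple of $|Q(t_1)-Q(t_2)|_{C^{2,\a}}$ (the smallness coming from \eqref{short time: vphi small}) plus an $L^\infty$ piece of $Q(t_1)-Q(t_2)$ itself. The main obstacle — the analogue of \eqref{short time: nor diff t t} — is controlling this last $L^\infty$ piece, since $Q$ is normalized by $\int_M Q e^{P(\vphi)}\om^n=0$ rather than by a fixed volume form; I would handle this by writing
\begin{equation*}
\int_M\bl Q(t_1)-Q(t_2)\br e^{P(t_1)}\om^n = \int_M Q(t_2)\bl e^{P(t_2)}-e^{P(t_1)}\br\om^n,
\end{equation*}
whose right side is $O(|P(t_1)-P(t_2)|_{C^0})$ by the $L^\infty$ bound on $Q$ from step one, and then observing that since $Q(t_1)-Q(t_2)$ has signed integral against the strictly positive weight $e^{P(t_1)}\om^n$, its sup and inf can be estimated by this weighted integral plus the second-derivative difference via a Green-function representation mirroring \eqref{short time: nor diff t t}. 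Absorbing the small $C^{2,\a}$ term on the right and sending $t_1\to t_2$ yields $|Q(t_1)-Q(t_2)|_{C^{2,\a}}\to 0$, completing the proof.
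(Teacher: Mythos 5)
Your proposal is correct and follows exactly the route the paper intends: the paper gives no proof of this lemma beyond the phrase ``Similarly to Lemma~\ref{short time: P regular}'', and your argument is precisely that analogy --- Schauder for the uniform $C^{2+\a}$ bound, then Lemma~\ref{short time: Q diff} in place of \eqref{short time: P diff}, followed by the frozen-coefficient Schauder/Green-function scheme of \eqref{short time: diff t }--\eqref{short time: nor diff t t}. Your treatment of the normalization is the one genuinely new point and is handled correctly; in fact the linearity of $\int_M Q\,e^{P(\vphi)}\om^n=0$ in $Q$ makes the control of the mean value more direct than the $\sup/\inf$ sign argument the paper needs for $P$.
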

The following lemma enables us to derive the $L^p$ bound of $Q$.
\begin{lem}\label{short time: rho}
Suppose that $\vphi\in U$.
For any $\eta\in H_0^0(M,g_\vphi)$, let $\rho$ be the solution of $\tri_{\vphi}
\rho=\eta$ given by the $L^{p}$ theory. Then we
have
\begin{align*}
||\rho^{i\bar{j}}||_{2;M;g_\vphi}=||\eta||_{2;M;g_\vphi},
\end{align*}
and for all $p>1$, there exists
\begin{align*}
||\rho^{i\bar{j}}||_{p;M;g_\vphi}\leq C||\eta||_{p;M;g_\vphi}.
\end{align*}
\end{lem}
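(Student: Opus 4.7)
The plan is to prove the two assertions separately: the $L^{2}$ equality is a sharp Kähler identity obtained by integration by parts, while the $L^{p}$ bound is a standard Calderón--Zygmund / $W^{2,p}$ estimate for the elliptic operator $\tri_\vphi$ applied to the mean-zero function $\rho$.

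For the $L^{2}$ identity, I would multiply $\tri_\vphi\rho=\eta$ by itself, integrate against $\om_\vphi^n$, and prove the Kähler identity
\[
\int_M |\rho_{i\bar j}|^2_{g_\vphi}\,\om_\vphi^n=\int_M(\tri_\vphi\rho)^2\,\om_\vphi^n.
\]
The derivation is a double integration by parts. Write the left hand side as $\int g_\vphi^{i\bar l}g_\vphi^{k\bar j}\rho_{;i\bar j}\rho_{;k\bar l}\om_\vphi^n$, move $\nabla_{\bar j}$ off by parts (allowed because the Kähler metric is parallel), then commute the two antiholomorphic covariant derivatives $\rho_{;k\bar l\bar j}=\rho_{;k\bar j\bar l}$ (the curvature component $R^{a}{}_{b\bar l\bar j}$ vanishes on a Kähler manifold, and in any case curvature acts trivially on a scalar), and finally move $\nabla^{i}$ off by parts again to recover $\int (\tri_\vphi\rho)^2\om_\vphi^n$. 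Substituting $\tri_\vphi\rho=\eta$ yields the advertised equality, and $|\rho^{i\bar j}|^2=|\rho_{i\bar j}|^2$ identifies the left hand side with $\|\rho^{i\bar j}\|_{2}^{2}$.

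For the $L^{p}$ bound I would go through the Green's function. Since $\eta\in H^{0}_{0}(M,g_\vphi)$, the solution with $\int_M\rho\,\om_\vphi^n=0$ is uniquely given by $\rho(x)=\int_M G_\vphi(x,y)\eta(y)\om_\vphi^n(y)$, and its complex Hessian $\rho^{i\bar j}$ is produced from $\eta$ by a kernel of singular-integral (Calderón--Zygmund) type, namely $g_\vphi^{i\bar l}g_\vphi^{k\bar j}\nabla_{k}\nabla_{\bar l}G_\vphi(x,y)$. The classical theory then gives $L^{p}$-boundedness of this operator for every $1<p<\infty$. Equivalently one can apply the standard $W^{2,p}$ estimate to $\tri_\vphi\rho=\eta$, obtaining
\[
\|\rho\|_{W^{2,p}(M,g_\vphi)}\leq C\bigl(\|\eta\|_{L^{p}}+\|\rho\|_{L^{p}}\bigr),
\]
and absorb the lower-order term using the Poincaré inequality together with the mean-zero normalization of $\rho$.

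The main obstacle I expect is the uniformity of the constant $C$: because the coefficients of $\tri_\vphi$ depend on $\vphi\in U$ and are only $C^{\a}$ regular, one has to check that $C$ depends only on the uniform ellipticity ratio $\l\om\leq\om_\vphi\leq\L\om$ and on the fixed background geometry $(M,g)$, not on higher derivatives of $\vphi$. This is exactly where the openness of $U$ and the smallness hypothesis \eqref{short time: vphi small} stemming from Remark~\ref{short time: choose small initial} are needed; once uniform ellipticity is guaranteed, classical singular-integral theory applied on the compact manifold $M$ with volume form $\om_\vphi^n$ delivers the desired $L^{p}$ bound with a constant independent of the particular $\vphi$, completing the proof.
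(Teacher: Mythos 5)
Your proposal is correct and follows essentially the same route as the paper: the $L^2$ equality is the K\"ahler integration-by-parts identity $\int_M|\rho_{i\bar j}|^2_{g_\vphi}\,\om^n_\vphi=\int_M|\tri_\vphi\rho|^2\,\om^n_\vphi$, which the paper simply displays and you derive in detail, and the $L^p$ bound is the standard elliptic $L^p$ (Calder\'on--Zygmund) estimate that the paper invokes in a single line. Your extra care about commuting the covariant derivatives and about the uniformity of the constant over $\vphi\in U$ merely fills in details the paper leaves implicit.
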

\begin{proof}
We compute for $p=2$
\begin{align*}
||\rho^{i\bar{j}}||_{2;M;g_\vphi}
=\int_M|\tri_\vphi\rho|^2\om^n_\vphi
=||\eta||_{2;M;g_\vphi}.
\end{align*} For $p>2$
the lemma follows from the $L^p$ estimate.
\end{proof}
\begin{lem}\label{short time: L2 estimate of Q}
Suppose that $\vphi\in U$.
If $v\in L^2(M)$,
then there exists a weak solution $Q\in L^{2}(M)$ of \eqref{short time: Q} in the distribution sense,
for any $\zeta\in C^\infty(M)$
\begin{align*}
\int_M\tri_\vphi\zeta Q\om_\vphi^n
=\int_MvT_{i\bar{j}}\zeta^{i\bar{j}}\om_\vphi^n.
\end{align*}
Moreover, for any $\eta\in L^2(M)$
\begin{align*}
\int_M\eta Q\om_\vphi^n\leq|T|_{0;Q_T}||v||_{2;M}||\eta||_{2;M}.
\end{align*}
\end{lem}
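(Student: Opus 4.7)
The plan is to construct $Q$ via the Riesz representation theorem on the Hilbert space $L^2_0(M,g_\vphi) = \{\eta \in L^2(M,\om_\vphi^n) : \int_M \eta\, \om_\vphi^n = 0\}$, translating $L^2$ control of the Hessian $\zeta^{i\bar j}$ (furnished by \lemref{short time: rho} at $p=2$) into $L^2$ control of $Q$. No pointwise identity on $\tri_\vphi Q$ need be shown; the hypothesis $v\in L^2$ only yields a distributional solution, and the weak form is pre-adapted to duality testing against $\zeta\in C^\infty(M)$.

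First I would define a linear functional on $L^2_0(M,g_\vphi)$ as follows. For $\eta\in L^2_0$, the equation $\tri_\vphi \zeta=\eta$ is solvable with $\zeta$ unique modulo constants; hence $\zeta^{i\bar j}$ is a well-defined element of $L^2$, and \lemref{short time: rho} gives $\|\zeta^{i\bar j}\|_{2;M;g_\vphi}=\|\eta\|_{2;M;g_\vphi}$. Set
\[
\Lambda(\eta):=\int_M v\, T_{i\bar j}\,\zeta^{i\bar j}\,\om_\vphi^n.
\]
Since $T\in C^0([0,T],C^\a(M))$ by \lemref{short time: P regular}, Cauchy--Schwarz yields
\[
|\Lambda(\eta)|\leq |T|_{0;Q_T}\,\|v\|_{2;M}\,\|\zeta^{i\bar j}\|_{2;M;g_\vphi}\leq |T|_{0;Q_T}\,\|v\|_{2;M}\,\|\eta\|_{2;M;g_\vphi},
\]
so $\Lambda$ is a bounded linear functional on $L^2_0(M,g_\vphi)$.

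Next, Riesz representation produces a unique $Q\in L^2_0(M,g_\vphi)$ with $\Lambda(\eta)=\int_M\eta Q\,\om_\vphi^n$ for all $\eta\in L^2_0$, and moreover $\|Q\|_{2;M}\leq |T|_{0;Q_T}\|v\|_{2;M}$. Cauchy--Schwarz applied to this $L^2$ bound then gives, for any $\eta\in L^2(M)$, the stated inequality
\[
\int_M\eta Q\,\om_\vphi^n\leq |T|_{0;Q_T}\,\|v\|_{2;M}\,\|\eta\|_{2;M}.
\]
To verify the weak equation, I would specialize to $\eta=\tri_\vphi\zeta$ for $\zeta\in C^\infty(M)$; this $\eta$ lies in $L^2_0$, so $\int_M\tri_\vphi\zeta\cdot Q\,\om_\vphi^n=\Lambda(\tri_\vphi\zeta)=\int_M v T_{i\bar j}\zeta^{i\bar j}\om_\vphi^n$, which is exactly the required formulation.

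The only genuinely delicate point is checking that $\Lambda(\eta)$ is well-defined: $\zeta$ is unique only up to an additive constant, but the Hessian $\zeta^{i\bar j}$ kills constants so $\Lambda(\eta)$ is unambiguous. I would emphasize that this duality argument sidesteps any integration-by-parts question on the harmonic tensor $T$, and it is precisely the Calder\'on--Zygmund estimate of \lemref{short time: rho} that upgrades the functional from being defined on the dense subspace $\tri_\vphi(C^\infty)$ to all of $L^2_0$ with a uniform bound. Everything else is a direct consequence of Cauchy--Schwarz and Riesz.
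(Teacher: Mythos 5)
Your proof is correct, and it reaches the same key estimate as the paper by the same duality pairing, but it packages the existence step differently. The paper approximates $\vphi$, $T_{i\bar j}$ and $v$ by smooth data (via Lemma~\ref{short time: app coe t}), solves $\tri_\vphi Q_n=(v_nT_{ni\bar j})^{i\bar j}$ classically, tests against the $W^{2,2}$ solution $\rho$ of $\tri_\vphi\rho=\eta-\frac{1}{V}\int_M\eta\,\om^n_\vphi$, integrates by parts to get $\int_M\rho\,(v_nT_{ni\bar j})^{i\bar j}\om^n_\vphi=\int_M\rho^{i\bar j}v_nT_{ni\bar j}\om^n_\vphi$, applies Lemma~\ref{short time: rho}, and then extracts a weakly convergent subsequence $Q_n\rightharpoonup Q_1$ in $L^2$ before passing to the limit in the weak formulation. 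You instead define the bounded functional $\Lambda$ on $L^2_0(M,g_\vphi)$ directly and invoke Riesz representation; the core inequality (Cauchy--Schwarz plus the Calder\'on--Zygmund identity of Lemma~\ref{short time: rho}) is identical, but you never need to integrate by parts on $vT_{i\bar j}$ for rough data, so the whole approximation-and-weak-compactness scaffolding disappears. That is a genuine simplification, and your observation that the constant ambiguity in $\zeta$ is killed by the Hessian is exactly the point that makes $\Lambda$ well defined. Two small remarks: your $Q$ is normalized by $\int_MQ\,\om^n_\vphi=0$, whereas the paper ultimately renormalizes to $\int_MQe^{P(\vphi)}\om^n=0$ to match \eqref{short time: Q normalization condition}; since the weak equation only determines $Q$ up to a constant, shifting your $Q$ accordingly costs at most a factor controlled by $\|Q\|_{2}$, so the final inequality survives with a harmless constant. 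Also, like the paper, you silently identify norms taken with respect to $g$ and $g_\vphi$; since $\vphi\in U$ these are equivalent, but strictly speaking the stated inequality should carry a constant $C=C(\l,\L)$, as indeed appears in the paper's own intermediate estimate.
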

\begin{proof}
According to \lemref{short time: app coe t} there exist three sequences of smooth $\vphi_n$, $T_{ni\bar{j}}$ and $v_n$ such that $\vphi_{n}\rightarrow \vphi \text{ in }
C^0([0,T],C^{2,\a})$, $T_{ni\bar{j}}\rightarrow T_{i\bar{j}}\text{ in }
C^0([0,T],C^{\a})$ and $v_{n}\rightarrow v\text{ in } L^2(M)$.
Let $Q_n$ be the smooth solution of the equation
$$\tri_\vphi
Q_n=v_n^{i\bar{j}}T_{ni\bar{j}}=(v_nT_{ni\bar{j}})^{i\bar{j}}$$
and $\rho$ be the $W^{2,2}(M)$ solution of
$$\tri_\vphi\rho=\eta-\frac{1}{V}\int_M\eta\om^n_\vphi.$$
Note that we have
\begin{align*}
\int_M\eta (Q_n-\frac{1}{V}\int_MQ_n\om^n_\vphi)\om_\vphi^n
=\int_M\rho(v_nT_{ni\bar{j}})^{i\bar{j}}\om^n_{\vphi}
=\int_M\rho^{i\bar{j}}v_nT_{ni\bar{j}}\om^n_{\vphi}.
\end{align*}
Then by invoking \lemref{short time: rho} and the H\"older inequality, we have
\begin{align*}
\int_M\eta (Q_n-\frac{1}{V}\int_MQ_n\om^n_\vphi)\om_\vphi^n
&\leq||\rho^{i\bar{j}}||_{2;M,g_\vphi}||v_nT_{ni\bar{j}}||_{2;M,g_\vphi}\\
&\leq C|T_n|_{0;Q_T}||\eta||_{2;M}||v_n||_{2;M}.
\end{align*}
It follows that $Q_n-\frac{1}{V}\int_MQ_n\om^n_\vphi$
weakly converges to some function $Q_1$ in $L^2(M)$.
Furthermore, since for any $\zeta\in C^\infty(M)$
\begin{align*}
\int_M\tri_\vphi\zeta Q_n\om_\vphi^n
=\int_Mv_nT_{n i\bar{j}}\zeta^{i\bar{j}}\om_\vphi^n,
\end{align*}
after taking the limit we obtain the desired weak solution $Q=Q_1-\frac{\int_MQ_1e^{P(\vphi)}\om^n}
{\int_Me^{P(\vphi)}\om^n}\in L^2(M)$, whence the lemma follows.
\end{proof}

By using the $L^p$ estimate instead of the $L^2$ estimate, we can deduce that
\begin{lem}\label{short time: Lp estimate of Q}
Suppose that $\vphi\in C^0([0,T],U)$.
If $v\in L^{p}(Q_T)$,
then there exists a weak solution $Q\in L^{p}(Q_T)$
of \eqref{short time: Q} in the distribution sense,
for any $\zeta\in C^\infty(Q_T)$
\begin{align*}
\int_{Q_T}\tri_\vphi\zeta Q\om_\vphi^ndt
=\int_{Q_T}vT_{i\bar{j}}\zeta^{i\bar{j}}\om_\vphi^ndt.
\end{align*}
Moreover, for any $\eta\in L^q(Q_T)$ with
$p,q>1$ such that $\frac{1}{p}+\frac{1}{q}=1$, we have
\begin{align*}
\int_{Q_T}\eta
Q\om_\vphi^n\leq C|T|_{0;Q_T}||v||_{p;Q_T}||\eta||_{q;Q_T}.
\end{align*}
\end{lem}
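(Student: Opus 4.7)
The plan is to replicate the proof of the $L^2$ estimate in \lemref{short time: L2 estimate of Q}, replacing $L^2$ self-duality by $L^p$--$L^q$ duality and invoking the general $p>1$ case of \lemref{short time: rho} in place of the $p=2$ case; all the supporting pieces are already in place.

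First I would use \lemref{short time: app coe t} to select smooth approximations $\vphi_n\to\vphi$ in $C^0([0,T],C^{2,\a}(M))$ with $T_{ni\bar j}\to T_{i\bar j}$ in $C^0([0,T],C^{\a}(M))$, and mollify $v$ in the space-time variables to obtain $v_n\in C^\infty(Q_T)$ with $v_n\to v$ in $L^p(Q_T)$. For each $n$, classical Schauder theory produces a smooth solution $Q_n$ of $\tri_{\vphi_n}Q_n=v_n^{i\bar j}T_{ni\bar j}$, normalized by $\int_M Q_n e^{P(\vphi_n)}\om^n=0$. To derive the key uniform bound, fix an arbitrary $\eta\in L^q(Q_T)$ with $\frac{1}{p}+\frac{1}{q}=1$, and for a.e.\ $t$ let $\rho(\cdot,t)\in W^{2,q}(M)$ solve $\tri_{\vphi_n}\rho=\eta-\frac{1}{V}\int_M\eta\,\om^n_{\vphi_n}$; \lemref{short time: rho} applied with exponent $q$ gives $||\rho^{i\bar j}||_{q;M;g_{\vphi_n}}\leq C||\eta||_{q;M;g_{\vphi_n}}$, the constant being uniform in $n$ and $t$ since the coefficients of $\tri_{\vphi_n}$ converge in $C^{\a}$.

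Next, integration by parts together with H\"older's inequality in space and time yields
$$\int_{Q_T}\eta(Q_n-c_n(t))\om^n_{\vphi_n}dt\leq C|T_n|_{0;Q_T}||v_n||_{p;Q_T}||\eta||_{q;Q_T},$$
where $c_n(t)=\frac{1}{V}\int_M Q_n\om^n_{\vphi_n}$. Since $\eta$ was arbitrary, $Q_n-c_n$ is uniformly bounded in $L^p(Q_T)$, so a subsequence converges weakly to some $Q_1\in L^p(Q_T)$. Testing the smooth identity $\int_{Q_T}\tri_{\vphi_n}\zeta\cdot Q_n\om^n_{\vphi_n}dt=\int_{Q_T}v_n T_{ni\bar j}\zeta^{i\bar j}\om^n_{\vphi_n}dt$ against $\zeta\in C^\infty(Q_T)$ and passing to the limit---combining the weak convergence of $Q_n$ with the strong convergence of $v_n$ in $L^p$ and the $C^{\a}$ convergence of the coefficients and volume forms---shows that $Q_1$ solves \eqref{short time: Q} in the distributional sense. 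Finally, adjusting the normalization by setting $Q:=Q_1-(\int_M Q_1 e^{P(\vphi)}\om^n)/(\int_M e^{P(\vphi)}\om^n)$, exactly as at the end of the $L^2$ argument, gives the required weak solution, and the stated duality bound is inherited directly from the uniform inequality above.

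The main obstacle, and the only place where genuine $L^p$ theory (beyond what was used in the $L^2$ case) is needed, is ensuring that the Calder\'on--Zygmund type constant in \lemref{short time: rho} can be made uniform along the approximating sequence $\vphi_n$; once this is tracked via the $C^{\a}$ convergence of $g^{i\bar j}_{\vphi_n}$ this is standard, and the rest of the proof is a faithful transcription of the $L^2$ argument.
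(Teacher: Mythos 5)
Your proposal is correct and is essentially the proof the paper intends: the paper gives no separate argument for this lemma, remarking only that one should repeat the proof of the $L^2$ case with the $L^p$ estimate, and your transcription (duality against $\eta\in L^q$, \lemref{short time: rho} with exponent $q$, H\"older in space and time, weak $L^p$ compactness, and the final renormalization) is exactly that. The one point you rightly flag beyond the $L^2$ argument --- uniformity of the Calder\'on--Zygmund constant along the approximating sequence via the $C^{\a}$ convergence of $g^{i\bar j}_{\vphi_n}$ --- is handled correctly.
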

Now we generalize a priori estimates for parabolic equation in a bounded domain to an arbitrary Riemannian manifold $M$.
We introduce some function spaces first.
Consider the following space $$B([0,t_0],C^{k,\a}(M))=\{v(t)\in C^0(M),
\forall t\in[0,t_0]\vert \sup_{[0,t_0]}||v||_{C^{k,\a}}<\infty\}.$$
Define also $$C^{\a,0}(M\times [0,t_0])
=\{v\in C^0(M\times [0,t_0])\vert v(t,\cdot)\in C^{\a}(M),
\forall t\in [0,t_0]\};$$ equip this space with the norm
$$\|v\|_{C^{\a,0}(M\times [0,t_0])}
=\sup_{0\leq{t}\leq{t_0}}|v(t)|_{C^{2,\a}(M)}.$$ Finally consider the space
$$C^{2+\a,1}(M\times [0,t_0])
=\{v\in C^{2,1}(M\times [0,t_0])\vert D_tv,D_{ij}v\in C^{\a}(M),
\forall t\in [0,t_0]\}$$
endowed with the norm
$$\|v\|_{C^{2+\a,1}(M\times [0,t_0])}
=||v||_\infty+\sum_{i=1}^n||D_iv||_\infty+
||D_tv||_{C^{\a,0}}+\sum_{i,j=1}^n||D_{ij}v||_{C^{\a,0}}.$$
We cite an imbedding Lemma 5.1.1 in Lunardi's book
\cite{MR1329547}.
\begin{lem}(Lunardi
\cite{MR1329547})\label{regularity of PCF: imb lem}
If $v\in C^{2+\a,1}(M\times [0,t_0])$, then
$D_{ij}v\in C^{\a,\frac{\a}{2}}(M\times [0,t_0])$.
\end{lem}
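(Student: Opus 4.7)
The plan is to prove joint space--time Hölder regularity of $D_{ij}v$ by a second-difference Taylor argument. Fix $x\in M$ and $t_1<t_2$ in $[0,t_0]$ and set $w(y)=v(y,t_1)-v(y,t_2)$; then since the spatial $\alpha$-Hölder bound on $D_{ij}v(\cdot,t)$ comes immediately from $\|D_{ij}v\|_{C^{\a,0}}\le M$, the whole task reduces to estimating $|D_{ij}w(x)|$ by $C|t_1-t_2|^{\alpha/2}$.

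The crucial point is that the assumption $\|D_tv\|_{C^{\a,0}}\le M$ yields \emph{two} simultaneous bounds on $w$, not just one. Integrating the pointwise bound $|D_tv|\le M$ gives $\|w\|_\infty\le M(t_2-t_1)$; integrating the spatial $\alpha$-Hölder seminorm of $D_tv$ gives
\[
|w(y_1)-w(y_2)|\le \int_{t_1}^{t_2}|D_tv(y_1,s)-D_tv(y_2,s)|\,ds\le M(t_2-t_1)|y_1-y_2|^{\a},
\]
so $[w]_{C^{\a}}\le M(t_2-t_1)$. It is this improved $C^\a$-bound on $w$, rather than only the sup-norm bound, that forces the temporal exponent to be $\a/2$ instead of the naive $\a/(2+\a)$.

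In a local chart around $x$, Taylor-expand $w$: for a unit vector $e$ and small $h>0$,
\[
w(x+he)+w(x-he)-2w(x)=D_{ee}w(x)\,h^{2}+R,\qquad |R|\le [D^2w]_{C^{\a}}|h|^{2+\a}\le 2M|h|^{2+\a}.
\]
Bounding the left-hand side with the $C^{\a}$-estimate on $w$ by $2M(t_2-t_1)|h|^{\a}$ yields
\[
|D_{ee}w(x)|\le 2M(t_2-t_1)|h|^{\a-2}+2M|h|^{\a}.
\]
Choosing $|h|=(t_2-t_1)^{1/2}$ balances the two terms and produces $|D_{ee}w(x)|\le 4M(t_2-t_1)^{\a/2}$. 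Polarizing in coordinate directions gives the same bound for each $D_{ij}w(x)$, hence the desired temporal Hölder estimate for $D_{ij}v$. Combined with the spatial Hölder bound from the definition of $\|\cdot\|_{C^{2+\a,1}}$, this gives $D_{ij}v\in C^{\a,\a/2}(M\times[0,t_0])$.

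The main technical obstacle is ensuring that the chosen scale $|h|=(t_2-t_1)^{1/2}$ fits inside a chart in which the Euclidean Taylor expansion is meaningful. Compactness of $M$ supplies a uniform injectivity radius $r_0$; when $(t_2-t_1)^{1/2}\le r_0$ the argument above applies directly, and when $(t_2-t_1)^{1/2}>r_0$ one falls back on the trivial bound $|D_{ij}w|\le 2\|D_{ij}v\|_\infty\le 2M\le 2Mr_0^{-\a}(t_2-t_1)^{\a/2}$. A second minor subtlety is expressing the Euclidean second differences intrinsically (via the Levi-Civita connection of the background metric $g$), but this only affects the constant $C$ via the $C^\infty$-geometry of $(M,g)$ and does not alter the exponent.
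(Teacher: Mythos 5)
Your argument is correct. The paper itself offers no proof of this statement: it is quoted verbatim as Lemma 5.1.1 of Lunardi's book \cite{MR1329547} and used as a black box, so any proof you supply is necessarily ``a different route.'' What you have written is the standard interpolation-by-second-differences argument that underlies the cited result, and every step checks out: the two bounds $\|w\|_{\infty}\le M(t_2-t_1)$ and, crucially, $[w]_{C^{\alpha}}\le M(t_2-t_1)$ obtained by integrating $D_tv$ in time; the Taylor second-difference identity with remainder controlled by $[D^2w]_{C^{\alpha}}\le 2M$; and the balancing choice $|h|=(t_2-t_1)^{1/2}$, which is exactly what upgrades the naive exponent $\alpha/(2+\alpha)$ to $\alpha/2$. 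Your handling of the two manifold-specific issues (uniform injectivity radius, with the trivial fallback bound when $(t_2-t_1)^{1/2}>r_0$, and the chart-dependence of the constants) is also adequate, and polarization recovers the mixed derivatives from the pure ones at the cost of a constant. The only thing worth making explicit if this were to be written out in full is that $v\in C^{2,1}$ guarantees $w(y)=-\int_{t_1}^{t_2}D_tv(y,s)\,ds$, which is what legitimizes both integrated bounds; with that observation recorded, your proof is a complete, self-contained and elementary replacement for the citation.
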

The following optimal regularity theorem in a domain $\Om$
is a part of Theorem 5.1.13 in Lunardi's book \cite{MR1329547} for a linear parabolic equation
\begin{align}\label{regularity of PCF: linear parabolic}
u_t=a^{ij}u_{ij}+b^iu_i+cu+f.
\end{align}
\begin{thm}(Lunardi \cite{MR1329547})\label{regularity of PCF: space reg Rn}
Let $\Om$ be an open set in $R^n$ of uniformly
$C^{2,\a}$ boundary.
Let $f\in UC(\bar\Om\times[0,t_0])\cap C^{\a,0}(\bar\Om\times[0,t_0])$ be such that $f(t,x)=0$ for every $t\in[0,t_0]$ and $x\in \p\Om$.
Suppose that the coefficients $a^{ij}$, $b^i$ and $c$ belong to $C^{\a}(\bar\Om)$
with $0<\a< 1$, and $a^{ij}$ satisfies
 $\L|\xi|^2\geq a^{ij}\xi_i\xi_j\geq\l|\xi|^2>0
\text{ for any }\xi\in R^n\backslash\{0\}$. If $u_0\in C^0(\bar\Om)$,
then \eqref{regularity of PCF: linear parabolic} has a unique solution which belongs to $C([0,T]\times\Om)$.  For every $\eps\in (0,t_0]$ there is $C$ such that
\begin{align}\label{regularity of PCF: space reg Rn est weak}
||u||_{C^{2+\a,1}(\bar\Om\times[\eps,t_0])}
\leq \frac{C}{\eps^{\frac{\a}{2}+1}}(|u_0|_{L^\infty(\bar\Om)}
+|f|_{C^{\a,0}(\bar\Om\times[0,t_0])}).
\end{align} If also $u_0\in C^{2,\a}(\bar\Om)$, then there exists $C'$ such that
\begin{align}\label{regularity of PCF: space reg Rn est}
||u||_{C^{2+\a,1}(\bar\Om\times[0,t_0])}
\leq C'(|u_0|_{C^{2,\a}(\bar\Om)}
+|f|_{C^{\a,0}(\bar\Om\times[0,t_0])}).
\end{align}
\end{thm}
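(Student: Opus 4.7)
Since this statement is lifted from Lunardi's monograph \cite{MR1329547}, the plan is to outline the analytic semigroup strategy she develops there, rather than to produce an independent argument. First, I would recast the linear parabolic problem \eqref{regularity of PCF: linear parabolic} as an abstract Cauchy problem
\begin{equation*}
u'(t)=A u(t)+f(t),\qquad u(0)=u_0,
\end{equation*}
on the Banach space $X=C(\bar\Om)$ (or the little H\"older space $h^{\a}(\bar\Om)$, depending on which estimate one is after), where $A$ is the realization of $a^{ij}\p_{ij}+b^i\p_i+c$ with homogeneous Dirichlet boundary conditions. Under the assumed ellipticity, $C^{\a}$ regularity of the coefficients, and $C^{2,\a}$ regularity of $\p\Om$, a classical result (Stewart, Acquistapace--Terreni, Lunardi) gives that $A$ is sectorial and therefore generates an analytic semigroup $e^{tA}$ on $X$, and that $D(A)$ coincides with $\{u\in C^{2,\a}(\bar\Om): u|_{\p\Om}=0\}$ up to equivalence of norms.

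With this setup in place, the solution is represented by the variation of constants formula
\begin{equation*}
u(t)=e^{tA}u_0+\int_0^t e^{(t-s)A}f(s)\,ds,
\end{equation*}
and everything reduces to smoothing estimates for $e^{tA}$ and to maximal regularity for the convolution term. The key tool is the characterization of the intermediate spaces $D_A(\theta,\infty)$ in H\"older scales: for our operator, $D_A(1+\a/2,\infty)$ is identified with (a subspace of) $C^{2+\a}(\bar\Om)$, while membership in $C^{\a,0}$ corresponds to $f\in B([0,t_0];D_A(\a/2,\infty))$. Together with the standard bound $\|A^{k}e^{tA}\|_{X\to X}\le C t^{-k}$ of analytic semigroup theory, these identifications yield the optimal H\"older regularity for $u$. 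When $u_0\in C^{2,\a}(\bar\Om)$ satisfies the compatibility condition, $u_0\in D(A)$ and the bound \eqref{regularity of PCF: space reg Rn est} follows with no blow-up in $t$; when only $u_0\in C^{0}(\bar\Om)$ is assumed, one pays the penalty $\eps^{-(1+\a/2)}$ in \eqref{regularity of PCF: space reg Rn est weak}, reflecting the parabolic smoothing that lifts an $L^{\infty}$ datum to $C^{2+\a}$ at positive time.

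The main obstacle, and the reason I would just quote Lunardi rather than rewrite the argument, is the verification that $A$ is sectorial together with the precise identification of the interpolation spaces $D_A(\th,\infty)$ in H\"older scales with variable (merely $C^{\a}$) coefficients; this is exactly the content of Chapter~3 and Chapter~5 of \cite{MR1329547}. Once those ingredients are granted, the proof is a routine but careful interpolation/semigroup calculation, and the two estimates follow by considering separately the homogeneous and inhomogeneous parts of the variation of constants formula and taking norms in $C^{2+\a,1}$.
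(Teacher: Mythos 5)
The paper does not prove this theorem at all: it is stated explicitly as a quotation of (part of) Theorem 5.1.13 in Lunardi's monograph, and your proposal likewise defers to that source while correctly sketching its proof mechanism (sectoriality of the realization of $a^{ij}\p_{ij}+b^i\p_i+c$ with Dirichlet conditions, the variation of constants formula, and the identification of the interpolation spaces $D_A(\th,\infty)$ with H\"older scales, which accounts for the $\eps^{-(1+\a/2)}$ factor for merely continuous data). This is consistent with the paper's treatment, so nothing further is needed.
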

Since $M$ is compact, it can be covered by a finite number of charts $\{\Om_p\}$.
Let $\eta_p(t)\in C_0^\infty(\Om_p)$ be a partition of unity subordinate to the charts $\{\Om_p\}$.
We assume $u_0\in C^{2,\a}(\bar\Om)$ and consider the following equation for $\eta_p u$
\begin{align*}
&(\eta_p u)_t-a^{ij}(\eta_p u)_{ij}-b^i(\eta_p u)_i-c\eta_p u\\
&=\tilde{f}=\eta_p f-2a^{ij}[(\eta_p)_i u]_{j}-a^{ij}(\eta_p)_{ij} u
+b^iu(\eta_p)_i \text{ in } \Om_p.
\end{align*}
Since $\tilde{f}|_{\p \Om_p}=0$,
applying \thmref{regularity of PCF: space reg Rn}
with $\Om=\Om_p$, we have
\begin{align*}
||\eta_p u||_{C^{2+\a,1}(\bar\Om_p\times[0,t_0])}
\leq C(|u_0|_{L^\infty(M)}
+|\tilde{f}|_{C^{\a,0}(\bar\Om_p\times[0,t_0])}).
\end{align*}
Combining all estimates in each ball and using the H\"older inequality,
we obtain
\begin{align}\label{regularity of PCF: space reg weak}
||u||_{C^{2+\a,1}(M\times[0,t_0])}
\leq C(|u|_{C^0(M\times[0,t_0])}+|u_0|_{C^{2,\a}(M)}
+|f|_{C^{\a,0}(M\times[0,t_0])}).
\end{align}
For any $f\in C^{\a,0}(M\times [\eps,t_0])$ we can
choose a smooth sequence $f_i$ which converges to $f$ in $C^{\a,0}(M\times [\eps,t_0])$. Then
by the classical existence theory (see \cite{MR0241822}),
one has a unique solution $u_i\in C^{2+\a,1+\frac{\a}{2}}$
of \eqref{regularity of PCF: linear parabolic} for each $f_i$.
The maximum principle implies that
$|u|_{C(M\times[0,t_0])}$ has a uniform bound.
Consequently, one can derive the solution of \eqref{regularity of PCF: linear parabolic} from the compactness of $u_i$. The uniqueness of the solution also follows from the maximum principle. In conclusion, we obtain:
\begin{thm}\label{regularity of PCF: space reg}
Let $f\in UC(M\times[0,t_0])\cap C^{\a,0}(M\times[0,t_0])$
and $u_0\in C^{2,\a}(M)$.
If the coefficients $a^{ij}$, $b^i$ and $c$ belong to $C^{\a}(M)$
with $0<\a< 1$, and $a^{ij}$ satisfies
 $\L|\xi|^2\geq a^{ij}\xi_i\xi_j\geq\l|\xi|^2>0
\text{ for any }\xi\in R^n\backslash\{0\}$.
Then \eqref{regularity of PCF: linear parabolic} has a unique solution $u\in C^{2+\a,1}(M\times[0,t_0])$ on $M$, and
\begin{align*}
||u||_{C^{2+\a,1}(M\times[0,t_0])}
\leq C(|u_0|_{C^{2,\a}(M)}
+|f|_{C^{\a,0}(M\times[0,t_0])}).
\end{align*}
\end{thm}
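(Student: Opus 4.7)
The plan is to transfer the Euclidean statement \thmref{regularity of PCF: space reg Rn} to the compact manifold $M$ via a partition of unity and then close the estimate by removing an auxiliary $C^0$ term. Since $M$ is compact, I would fix a finite atlas of coordinate patches $\{\Omega_p\}$ together with a smooth partition of unity $\{\eta_p\}$ subordinate to it, each $\eta_p$ compactly supported in $\Omega_p$. For each $p$ the localized function $w_p = \eta_p u$ satisfies a linear parabolic equation of the form \eqref{regularity of PCF: linear parabolic} on $\Omega_p$ with forcing
\begin{equation*}
\tilde f_p = \eta_p f - 2a^{ij}[(\eta_p)_i u]_j - a^{ij}(\eta_p)_{ij} u + b^i u(\eta_p)_i,
\end{equation*}
which vanishes on $\p\Omega_p$ by construction. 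Hence \thmref{regularity of PCF: space reg Rn} applies chart-by-chart and yields a local $C^{2+\alpha,1}$ Schauder estimate for $w_p$.

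Summing over the finite cover and absorbing the lower-order $u$-terms in $\tilde f_p$ by the interpolation $|u|_{C^{1,\alpha}} \leq \epsilon |u|_{C^{2,\alpha}} + C_\epsilon |u|_{C^0}$ produces the global inequality \eqref{regularity of PCF: space reg weak}, in which $|u|_{C^0(M\times[0,t_0])}$ appears on the right. To remove that term I would apply the parabolic maximum principle to \eqref{regularity of PCF: linear parabolic}: uniform ellipticity of $a^{ij}$ and the sup-bound on $c$ yield
\begin{equation*}
|u|_{C^0(M\times[0,t_0])} \leq e^{|c|_\infty t_0}\bigl(|u_0|_{C^0(M)} + t_0 \, |f|_{C^0(M\times[0,t_0])}\bigr),
\end{equation*}
which is controlled by $|u_0|_{C^{2,\alpha}} + |f|_{C^{\alpha,0}}$. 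Substituting back into \eqref{regularity of PCF: space reg weak} gives the claimed Schauder estimate, with constant depending only on $M$, $t_0$, $\alpha$, $\lambda$, $\Lambda$ and the $C^\alpha$ norms of the coefficients.

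For existence, I would approximate $f$ in $C^{\alpha,0}(M\times[0,t_0])$ by a sequence of smooth functions $f_i$; the classical parabolic theory cited in \cite{MR0241822} then produces smooth solutions $u_i \in C^{2+\alpha,1+\alpha/2}$ of \eqref{regularity of PCF: linear parabolic} with $f$ replaced by $f_i$. Applying the Schauder estimate just established to each $u_i$ gives a uniform $C^{2+\alpha,1}$ bound, and an Arzel\`a-Ascoli argument extracts a subsequence converging in $C^{2,1}$ to a limit $u \in C^{2+\alpha,1}(M\times[0,t_0])$ that solves \eqref{regularity of PCF: linear parabolic}; the estimate passes to the limit. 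Uniqueness is then immediate from the maximum principle applied to the difference of two solutions.

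The main obstacle, though ultimately just careful bookkeeping, is controlling the commutator terms introduced by the partition of unity without picking up higher-order norms of $u$ on the right-hand side (which would be circular). The explicit formula for $\tilde f_p$ makes this transparent: since $\eta_p$ is smooth and $a^{ij}, b^i \in C^\alpha(M)$, $\tilde f_p$ lies in $C^{\alpha,0}$ with norm bounded linearly by $|u|_{C^{1,\alpha}}$, and the interpolation mentioned above then lets the second-order piece be absorbed on the left. Once this step is handled carefully, the rest of the proof is a direct transcription of Lunardi's Euclidean machinery.
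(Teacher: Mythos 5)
Your proposal is correct and follows essentially the same route as the paper: localize with a partition of unity, apply the Euclidean Schauder theorem to $\eta_p u$ with the commutator forcing $\tilde f_p$, absorb the lower-order terms by interpolation to reach the intermediate estimate with $|u|_{C^0}$ on the right, eliminate that term by the maximum principle, and obtain existence by smoothing $f$ and passing to the limit, with uniqueness again from the maximum principle. The only difference is that you spell out the interpolation and the explicit $C^0$ bound, which the paper leaves implicit.
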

\subsection{Short time existence of the linearized equation}\label{short time linearized}
We now prove the short time existence of the linearized equation by using the
contraction mapping theorem.
Since the complex Laplacian is a real operator, we prove all a apriori estimates in the real coordinates.

Since the coefficients of the leading terms depending on $t$ create some problems, we freeze these coefficients and analyze
the corresponding modified equation instead.
This approach is similar to \cite{MR0164306} but more complicated,
because of the pseudo-differential term in our equations.

Before discussing the proof we recall the definition and some features of the heat kernel on a compact Riemannian manifold $(M,g)$
in \cite{MR768584}\cite{MR1333601}\cite{Ding}\cite{MR0181836}.
Suppose that $\rho(\l)\in C^\infty([0,\infty))$ satisfies
\begin{equation*}
  \begin{cases}
    \rho(\l)=1,  \l<\frac{inj(M)}{4},\\
    \rho(\l)=0,  \l>\frac{inj(M)}{2}.\\
  \end{cases}
  \end{equation*}
Set $\rho(x,y)=\rho(d(x,y))$.
On $M$
there exists a complete orthonomal basis $\{f_k\}$ of $L^2(M)$,
consisting of the eigenfunctions $f_k$ of $\tri$ with eigenvalues $\l_k$.
Then the heat kernel is given by
$$H(x,y,t)=\sum_{k=0}^\infty e^{-\l_kt}f_k(x)f_k(y).$$

\begin{prop}(\cite{MR768584}\cite{MR1333601}\cite{Ding}\cite{MR0181836})\label{short time linearized: heat kernel}
As $t\rightarrow0$, the heat kernel has an asymptotic expansion
$$H(x,y,t)=\frac{1}{(2\sqrt{\pi})^n}t^{-\frac{n}{2}}\exp^{-\frac{d^2(x,y)}{4t}}(\rho\sum_{k=0}^Kt^k\phi_k+O(t^{K+1})),$$ where $K>\frac{n}{2}+2$. The expression is independent of $K$, and $\phi_k$ are fixed functions constructed by Minakshisundaram-Pleijel.
The heat kernel has the properties\textup{:}
\begin{enumerate}
\renewcommand{\labelenumi}{(\roman{enumi})}
\item $(\frac{\p}{\p{t}}-\tri_x)H(x,y,t)=0$.
\item $H(x,y,t)$ is smooth except $x=y$,
positive and symmetric in $x$ and y.
\item $\int_MH(x,y,t)\om^n(y)=1$ for any $t>0$ and any $x\in{M}$.
\item $\lim_{t\rightarrow{0}}{\int_MH(x,y,t)f(y)\om^n(y)=f(x)}$
for any $f\in{C^0(M)}$.
\item $|H(x,y,t)|\leq{C}t^{-\frac{-n}{2}}e^{-\frac{d^2(x,y)}{t}},$\\
$|\nabla_xH(x,y,t)|\leq{C}t^{-\frac{-(n+1)}{2}}e^{-\frac{d^2(x,y)}{t}}.$\\
 In particular, these inequalities imply \\ $|H(x,y,t)|\leq{C}t^{-\b}d^{-n+2\b},$\\
$|\nabla_xH(x,y,t)|\leq{C}t^{-\b}d^{-n-1+2\b},$ for
some constant C and any $\b\in(0,1)$.
\item The solution of $\frac{\p}{\p{t}}\vphi-\tri{\vphi}
=f\in{C^0(M\times(0,t_0])}$ with
$\vphi(t=0)=\vphi(0)\in{C^0(M)}$ is of the form
$$\vphi(x,t)=\int_MH(x,y,t)\vphi(y,0)\om^n(y)
+\int_0^t\int_M{H(x,y,t-s)f(y,s)}\om^n(y)ds$$
on $(0,t_0]\times{M}$.
\end{enumerate}
\end{prop}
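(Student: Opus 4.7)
The plan is to construct the heat kernel via the classical parametrix method of Minakshisundaram--Pleijel and then read off the six listed properties from the construction. First I would build an approximate heat kernel (parametrix)
\[
H_K(x,y,t) = \frac{1}{(4\pi t)^{n/2}} e^{-d^2(x,y)/(4t)}\, \rho(x,y) \sum_{k=0}^K t^k \phi_k(x,y),
\]
where the coefficients $\phi_k$ are determined recursively so that $(\partial_t - \Delta_x) H_K(x,y,t) = O(t^{K-n/2})$ as $t \to 0^+$. The cutoff $\rho$ keeps us inside the injectivity radius, where normal coordinates at $y$ are available and $d^2(x,y)$ is smooth; in those coordinates, setting $\phi_0(x,y)$ to be a suitable power of the Jacobian $\det(d\exp_y)^{-1/2}$ satisfies the leading transport equation, and higher $\phi_k$ are obtained by integrating the subsequent transport equations along radial geodesics from $y$.

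Next I would promote the parametrix to an exact kernel $H$ using Duhamel's principle. Writing $R_K = (\partial_t - \Delta_x) H_K$ and formally iterating,
\[
H = H_K + H_K \ast R_K + H_K \ast R_K \ast R_K + \cdots,
\]
where $\ast$ is space-time convolution on $M \times (0, t_0]$. The condition $K > n/2 + 2$ guarantees that $R_K$ carries enough powers of $t$ for the Volterra-type series to converge in a weighted space enforcing the desired Gaussian decay, producing a kernel that is $C^\infty$ off the diagonal. Independence from $K$ follows because increasing $K$ only adds terms that are absorbed into the iterated correction. The eigenfunction expansion $H(x,y,t) = \sum_k e^{-\lambda_k t} f_k(x) f_k(y)$ then follows from the spectral theorem applied to $\Delta$ on the compact manifold $M$, together with uniqueness of the bounded solution of the Cauchy problem.

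The listed properties are routine once $H$ is constructed: (i) holds by construction; (ii) uses parabolic regularity away from the diagonal, the strong maximum principle for positivity, and self-adjointness of $\Delta$ for the symmetry $H(x,y,t) = H(y,x,t)$; (iii) and (iv) encode that $H(\cdot, y, t)$ is the fundamental solution with initial datum $\delta_y$, and conservation of $\int_M H$ follows by integrating the heat equation against $1$; (v) comes from the Gaussian upper bound built into the parametrix combined with standard Bernstein-type gradient estimates applied to the iteration, while the algebraic interpolation inequality $t^{-n/2} e^{-d^2/t} \leq C t^{-\beta} d^{-n+2\beta}$ follows from maximizing $u \mapsto u^{n/2 - \beta} e^{-u}$ with $u = d^2/t$; (vi) is Duhamel's formula for the inhomogeneous Cauchy problem, combined with $H(\cdot, y, t) \to \delta_y$ as $t \to 0^+$.

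The main obstacle is book-keeping the Volterra iteration with borderline Gaussian weights: one must verify that each convolution $H_K \ast R_K^{\ast j}$ preserves both the polynomial-in-$t$ rate and the off-diagonal exponential decay, and that the resulting series is summable uniformly on $M \times (0, t_0]$. This is precisely why the statement requires $K > n/2 + 2$, rather than merely $K > n/2$. Once the iteration is controlled, everything else is a standard consequence of the parametrix construction, which is why the result can legitimately be attributed to the classical references cited.
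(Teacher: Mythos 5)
The paper offers no proof of this proposition at all --- it is quoted as a known result from the cited references --- and your parametrix-plus-Duhamel sketch is precisely the classical Minakshisundaram--Pleijel argument those references use, so the approach matches and the outline (including the elementary bound obtained by maximizing $u\mapsto u^{n/2-\beta}e^{-u}$ for item (v)) is sound. The one imprecision is your explanation of the hypothesis $K>\frac{n}{2}+2$: the Volterra series already converges once $K-\frac{n}{2}>-1$, and the stronger condition is really there so that the remainder $R_K$ vanishes to high enough order in $t$ to be differentiated twice in space and once in time, i.e.\ so that one can legitimately apply $\partial_t-\triangle_x$ to the iterated series and confirm that the corrected kernel solves the heat equation.
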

Consider the modified equation
\begin{equation}\label{short time linearized: itr equ}
  \left\{
   \begin{aligned}
\frac{\p v}{\p t}-\tri v+\tri v-\tri_{\vphi}v-Q
&=u(x,t) \text{ in } Q_T,\\
\tri_{\vphi}Q&=v^{i\bar{j}}T_{i\bar{j}} \text{ in } Q_T,\\
v(0)&=w.  \\
   \end{aligned}
  \right.
\end{equation}
Let $\tilde{v}=v-w$. Then the solution of the original linearized equation is obtained, by adding $w$ to the solution of the following equations
\begin{equation*}
  \left\{
   \begin{aligned}
&\frac{\p \tilde{v}}{\p t}-\tri\tilde{v}+\tri\tilde{v}
-\tri_{\vphi}\tilde{v}-\tri_{\vphi}w-\tilde{Q}-W
=u(x,t) \text{ in } Q_T,\\
&\tri_\vphi \tilde{Q}=\tilde{v}^{i\bar j}T_{i\bar j}\text{ in } Q_T,\\
&\tri_\vphi W=w^{i\bar j}T_{i\bar j}\text{ in } Q_T,\\
&\tilde{v}(0)=0.  \\
   \end{aligned}
  \right.
\end{equation*}
So we can solve
\eqref{short time linearized: itr equ} with $w=0$ instead.
Introduce the function space
\begin{align*}
Z=\{v\in C^{2+\a,0}(M\times [0,t_0])\vert v(0)=0\}
\end{align*}
with the norm
$\|v\|=\sup_{0\leq{t}\leq{t_0}}|v(t)|_{C^{2,\a}(M)}$.
We define the following operator:
\begin{multline}\label{short time linearized: app sol con}
\hat v=T(v)
=\int_0^t\int_M{H(x,y,t-s)\{Q+u-\tri v+\tri_\vphi v\}(y,s)}\om^n(y)ds.
\end{multline}
Let $\tilde{f}=\int_0^t\int_M{H(x,y,t-s)f(y,s)\om^n(y)ds}$.
\begin{lem}\label{short time linearized: C0 norm of app sol}
If $f\in  C^{0}(M\times [0,t_0])$,
then
\begin{align*}
|\tilde{f}|_{C^{0}(M)}
\leq{Ct_0^{1-\b}}\sup_{0\leq{t}\leq{t_0}}|f|_{C^{0}(M)}.
\end{align*}
\end{lem}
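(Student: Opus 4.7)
The plan is a straightforward convolution estimate against the heat kernel, using the weak pointwise bound from item (v) of the heat kernel proposition rather than the sharp Gaussian. Pulling $\sup|f|$ outside, it suffices to show that, uniformly in $x\in M$ and $t\in[0,t_0]$,
\begin{equation*}
\int_0^t\int_M |H(x,y,t-s)|\,\om^n(y)\,ds \;\leq\; C\, t_0^{1-\b}.
\end{equation*}

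First, I would invoke the pointwise bound $|H(x,y,\t)|\leq C\t^{-\b}d(x,y)^{-n+2\b}$ from Proposition~\ref{short time linearized: heat kernel}(v), which separates the time and space factors. Second, I would observe that the spatial integral
$$\int_M d(x,y)^{-n+2\b}\,\om^n(y)$$
is finite and bounded independently of $x\in M$: in geodesic polar coordinates around $x$ the volume form behaves like $r^{\dim_{\RR}M-1}dr\,d\sigma$, so the singular integrand $r^{-n+2\b}$ is integrable near $r=0$ for any $\b\in(0,1)$, while away from the diagonal $d(x,y)$ is bounded below by compactness of $M$. Third, the remaining time integral is elementary:
$$\int_0^t (t-s)^{-\b}\,ds \;=\; \frac{t^{1-\b}}{1-\b}\;\leq\;\frac{t_0^{1-\b}}{1-\b}.$$

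Combining the three steps with Fubini gives $|\tilde f(x,t)|\leq C\,t_0^{1-\b}\sup|f|_{C^0(M\times[0,t_0])}$, where $C$ depends only on $M$, $\b$, and the constant in the heat kernel bound, but not on $t_0$ or $f$. There is no real obstacle here: the only point requiring attention is the integrability of $d(x,y)^{-n+2\b}$ in the real dimension of $M$, which is automatic for $\b\in(0,1)$.
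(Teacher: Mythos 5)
Your proof is correct and follows essentially the same route as the paper: both pull $\sup|f|$ out of the convolution and then bound $\int_0^t\int_M H(x,y,t-s)\,\om^n(y)\,ds$ by $Ct_0^{1-\b}$ using the pointwise bound $|H(x,y,\t)|\leq C\t^{-\b}d(x,y)^{-n+2\b}$ from item (v) of the heat kernel proposition. The paper simply states this bound in one line, whereas you supply the (correct) details of the spatial integrability of $d(x,y)^{-n+2\b}$ and the elementary time integral $\int_0^t(t-s)^{-\b}ds=t^{1-\b}/(1-\b)$.
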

\begin{proof}
From Proposition~\ref{short time linearized: heat kernel} (\textrm{v}),
it follows directly
\begin{align*}
&|\int_0^t\int_M{H(x,y,t-s)f(y,s)\om^n(y)ds}|_{C^{0}(M)}\\
&\leq\int_0^t\int_MH(x,y,t-s)\om^nds\sup_{0\leq{t}\leq{t_0}}|f|_{C^{0}(M)}\\
&\leq Ct_0^{1-\b}\sup_{0\leq{t}\leq{t_0}}|f|_{C^{0}(M)}.
\end{align*}
\end{proof}
\begin{lem}\label{short time linearized: map itself}
$T$ is a map from $Z$ to itself.
\end{lem}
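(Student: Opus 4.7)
The plan is to verify the three defining properties of membership in $Z$ for $\hat v = T(v)$: the initial condition $\hat v(0) = 0$, continuity of $t \mapsto \hat v(\cdot,t)$ into $C^{2+\alpha}(M)$, and finiteness of the norm $\sup_{0 \le t \le t_0}|\hat v(t)|_{C^{2,\alpha}(M)}$. The vanishing at $t=0$ is immediate from the definition of the time integral.

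First I would establish that the integrand
\[
F(y, s) = Q(y,s) + u(y,s) - \triangle v(y,s) + \triangle_\varphi v(y,s)
\]
lies in $C^0([0,t_0], C^\alpha(M))$. By hypothesis $u \in C^0([0,T], C^\alpha(M))$. Since $v \in Z$ and $\varphi \in C^0([0,T], U)$ with small $C^{2,\alpha}$ norm from \eqref{short time: vphi small}, Lemma~\ref{short time: Q regular} applied with the input $v$ shows $Q \in C^0([0,T], C^{2+\alpha}(M))$. The remainder $\triangle_\varphi v - \triangle v$, written in real coordinates as $(g_\varphi^{ij} - g^{ij}) v_{ij}$, has coefficients in $C^\alpha$ (of norm controlled by $\delta$) multiplied by $v_{ij} \in C^\alpha$, hence lies in $C^0([0,t_0], C^\alpha(M))$. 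Thus
\[
\sup_{0 \le t \le t_0} |F(t)|_{C^\alpha(M)} \le C\bigl(\|v\|_Z + \sup_t |u(t)|_{C^\alpha} + \sup_t |Q(t)|_{C^{2+\alpha}}\bigr).
\]

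Next, by property (vi) of Proposition~\ref{short time linearized: heat kernel}, the Duhamel formula identifies $\hat v$ as the unique solution of the background heat equation $\partial_t \hat v - \triangle \hat v = F$ with zero initial data. Applying the parabolic Schauder estimate from Theorem~\ref{regularity of PCF: space reg} (with $a^{ij} = g^{ij}$, $b^i = 0$, $c = 0$ and zero initial data) yields
\[
\sup_{0 \le t \le t_0} |\hat v(t)|_{C^{2,\alpha}(M)} \le C \sup_{0 \le t \le t_0} |F(t)|_{C^\alpha(M)},
\]
establishing the required norm bound. Continuity of $\hat v$ in $t$ with values in $C^{2+\alpha}(M)$ then follows from continuity of $F$ in $t$ with values in $C^\alpha(M)$, combined with the parabolic regularity theorem applied on shifted time intervals.

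The main technical obstacle is the $C^{2,\alpha}$ spatial regularity step, because the raw pointwise bounds on $\nabla_x^2 H(x,y,t-s)$ in Proposition~\ref{short time linearized: heat kernel}(v) are not integrable near $s=t$, so one cannot simply estimate the Duhamel integral termwise. The resolution is to use the H\"older seminorm of $F$ (not only its $L^\infty$ norm) to exploit cancellation in the heat kernel; this is exactly what Theorem~\ref{regularity of PCF: space reg} provides, so invoking it cleanly sidesteps the need to redo the parabolic Schauder analysis by hand.
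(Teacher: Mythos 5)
Your proof is correct and follows essentially the same route as the paper: establish that $Q$ (and hence the whole integrand) lies in $C^0([0,t_0],C^{\alpha}(M))$ via the Schauder estimate, identify $Tv$ through the Duhamel formula as the solution of the background heat equation with zero initial data, and invoke Theorem~\ref{regularity of PCF: space reg} to conclude $Tv\in C^{2+\alpha,1}(M\times[0,t_0])$ with $Tv(0)=0$. Your version merely spells out the details (the decomposition of the forcing term and the non-integrability issue for $\nabla_x^2 H$) that the paper's two-line proof leaves implicit.
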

\begin{proof}
Since $v\in Z$, the Schauder estimate implies $Q\in C^{2+\a,0}(M\times [0,t_0])$.
Hence, we obtain that $Tv$ belongs to
$C^{2+\a,1}(M\times [0,t_0])$ and $Tv(0)=0$
from \thmref{regularity of PCF: space reg}.
\end{proof}
\begin{lem}
$T$ is a contraction map.
\end{lem}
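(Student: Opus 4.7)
The plan is to verify that for any two $v_1, v_2 \in Z$ the difference $T(v_1) - T(v_2)$ is contracted in the $C^{2,\alpha}$-norm. Since the inhomogeneous term $u$ cancels, the Duhamel formula \eqref{short time linearized: app sol con} gives
\begin{equation*}
T(v_1) - T(v_2) = \int_0^t \int_M H(x,y,t-s)\bigl\{(Q_1 - Q_2) + (\tri_\vphi - \tri)(v_1 - v_2)\bigr\}(y,s)\, \om^n(y)\, ds,
\end{equation*}
where $Q_i$ is determined from $v_i$ by \eqref{short time: Q}--\eqref{short time: Q normalization condition}. The strategy is to estimate the source in $C^{\alpha,0}(M \times [0,t_0])$ and then to exploit parabolic smoothing to bound the $C^{2,\alpha}$-norm of the convolution by a small factor times $\|v_1 - v_2\|_Z$.

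First I would estimate the two source terms. Applying \lemref{short time: Q diff} with $\vphi_1 = \vphi_2 = \vphi$ gives $\tri_\vphi(Q_1 - Q_2) = (v_1 - v_2)^{i\bar j} T_{i\bar j}$; since $T_{i\bar j} \in C^\alpha$ by \lemref{short time: P regular} and $\om_\vphi$ is uniformly elliptic on $V_\delta$, elliptic Schauder theory yields $\sup_{[0,t_0]} |Q_1 - Q_2|_{C^\alpha} \leq C\, \|v_1 - v_2\|_Z$ with $C$ depending on $|T|_{C^\alpha}$. For the differential piece, the coefficients of $\tri_\vphi - \tri = (g_\vphi^{i\bar j} - g^{i\bar j})\partial_{i\bar j}$ have $C^\alpha$-norm bounded by $C\delta$ by \eqref{short time: vphi small}, whence $\sup_{[0,t_0]} |(\tri_\vphi - \tri)(v_1 - v_2)|_{C^\alpha} \leq C\delta\, \|v_1 - v_2\|_Z$.

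Next I would invoke heat-kernel smoothing. The function $T(v_1) - T(v_2)$ vanishes at $t = 0$ and satisfies $(\partial_t - \tri)(T(v_1) - T(v_2)) = (Q_1 - Q_2) + (\tri_\vphi - \tri)(v_1 - v_2)$. Using the decay bound $|D^2 e^{s\tri} f|_{C^0} \leq C s^{-(1-\alpha/2)} |f|_{C^\alpha}$, which follows from Proposition~\ref{short time linearized: heat kernel}(v), and integrating the integrable singularity in $s$, one gains a factor $t_0^{\alpha/2}$ when controlling the $C^2$-norm of the convolution; a parallel Campanato-type argument handles the $C^\alpha$-seminorm of $D^2$. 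Combining,
\begin{equation*}
\|T(v_1) - T(v_2)\|_Z \leq C\, t_0^{\alpha/2}\, \|v_1 - v_2\|_Z.
\end{equation*}
Choosing $t_0$ small enough (depending on $\vphi$ through $|T|_{C^\alpha}$) makes the prefactor at most $\tfrac{1}{2}$, proving that $T$ is a contraction on $Z$.

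The main obstacle lies in extracting the small time factor at the level of the $C^{2,\alpha}$-norm: the classical parabolic Schauder estimate (\thmref{regularity of PCF: space reg}) delivers only a uniform constant and by itself does not give contraction. One must use the sharp heat-kernel decay for second derivatives, which becomes integrable after multiplication by $s^{\alpha/2}$, together with a careful Campanato-style estimate of the Hölder seminorm of $D^2$. The nonlocality of $Q$ does not spoil the argument because the elliptic Schauder bound for $\tri_\vphi Q = \cdots$ controls $Q_1 - Q_2$ in $C^\alpha$ without any derivative loss, so both pieces of the source fit uniformly into the time-smoothing estimate.
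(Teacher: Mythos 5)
There is a genuine gap at the step you yourself flag as the main obstacle: the H\"older seminorm of $D^2$ of the convolution. Your claimed bound $\|T(v_1)-T(v_2)\|_Z\le C\,t_0^{\alpha/2}\|v_1-v_2\|_Z$ with a source controlled only in $C^{\alpha,0}$ cannot hold. The estimate ``source in $C^\alpha$ implies $D^2$ of the Duhamel integral in $C^\alpha$'' is exactly scaling-critical for the parabolic dilation $u_\lambda(x,t)=\lambda^{-2-\alpha}u(\lambda x,\lambda^2 t)$, which preserves both $[D^2u]_{C^\alpha}$ and $[f]_{C^\alpha}$ while shrinking the time interval; hence the constant in the optimal Schauder estimate (\thmref{regularity of PCF: space reg}) is uniform but does \emph{not} decay as $t_0\to 0$ (one can also see this by taking $f$ time-independent with its $C^\alpha$ mass at spatial scale $r\ll t_0^{1/2}$, for which $[(e^{t_0\tri}-1)\tri^{-1}f]_{C^{2,\alpha}}\sim[f]_{C^\alpha}$). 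The kernel bound $|D^2e^{s\tri}f|_{C^0}\le Cs^{-1+\alpha/2}|f|_{C^\alpha}$ does give $t_0^{\alpha/2}$ for the $C^2$-norm, but no Campanato argument recovers a positive power of $t_0$ for $[D^2\cdot]_{C^\alpha}$; so your final contraction constant is not actually small.

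The paper closes this in two different ways, one for each piece of the source. For $Q_1-Q_2$ it first applies \emph{elliptic} Schauder theory to $\tri_\vphi(Q_1-Q_2)=(v_1-v_2)^{i\bar j}T_{i\bar j}$ (via \lemref{short time: P regular}) to bound $Q_1-Q_2$ in $C^{2,\alpha}$, not merely $C^\alpha$; with the source two derivatives better than needed, the convolution genuinely gains the factor $t_0^{1-\beta}$ from \lemref{short time linearized: C0 norm of app sol}. For the piece $(\tri_\vphi-\tri)(v_1-v_2)$, which is only $C^\alpha$, no time-smallness is claimed: the paper accepts the uniform constant $C_2$ from \thmref{regularity of PCF: space reg} and instead extracts smallness from the \emph{coefficient}, writing $(\tri_\vphi-\tri)w=-\int_0^1 g_{s\vphi}^{k\bar j}g_{s\vphi}^{i\bar l}\,ds\,\vphi_{i\bar j}w_{k\bar l}$ and invoking $\max_{[0,T]}|\vphi|_{C^{2,\alpha}}\le\delta\ll1$ (condition \eqref{short time: vphi small}, arranged by the reduction of Remark~\ref{short time: choose small initial}). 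This freezing-of-coefficients device is the reason the smallness hypothesis on $\vphi$ is built into the setup, and it is the ingredient your argument is missing; you do note the factor $C\delta$ for this term, so the repair is to keep that $\delta$ as the contraction factor for the second piece rather than asking $t_0^{\alpha/2}$ to do the work.
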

\begin{proof}
We compute for any $v_1,v_2\in Z$
\begin{align*}
|T(v_1)-T(v_2)|_{C^{2,\a}(M)}&=|\int_0^t\int_M[H(x,y,t-s)
\{Q(v_1)-Q(v_2)\\
&+(\tri_{\vphi}-\tri)(v_1-v_2)\}
(y,s)]\om^n(y)ds|_{C^{2,\a}(M)}.
\end{align*}
According to \lemref{short time linearized: C0 norm of app sol} and \thmref{regularity of PCF: space reg}, we obtain
\begin{align*}
|T(v_1)-T(v_2)|_{C^{2,\a}(M)}
&\leq{C_1}t_0^{1-\b}
\sup_{0\leq{s}\leq{t_0}}|Q(v_1)-Q(v_2)|_{C^{2,\a}(M)}\\
&+C_2\sup_{0\leq{s}\leq{t_0}}|(\tri_{\vphi}-\tri)(v_1-v_2)|_{C^{\a}(M)}.
\end{align*}
Since $Q(v_1)-Q(v_2)$ satisfies
\begin{align*}
\tri_\vphi [Q(v_1)-Q(v_2)]=(v_1-v_2)^{i\bar{j}}T_{i\bar{j}},
\end{align*}
by applying \lemref{short time: P regular} we have that the first term can not exceed
\begin{align*}
{C_3}t_0^{1-\b}
\sup_{0\leq{s}\leq{t_0}}|v_1-v_2|_{C^{2,\a}(M)}.
\end{align*}
By using the following identity
\begin{equation*}
(\tri_{\vphi}-\tri)(v_1-v_2)=-\int_0^1g_{s\vphi}^{k\bar{j}}
g_{s\vphi}^{i\bar{l}}ds\vphi_{i\bar{j}}(v_1-v_2)_{k\bar{l}}
\end{equation*}
and the condition \eqref{short time: vphi small}, we see that the second term can be estimated as
\begin{align*}
|(\tri_{\vphi}-\tri)(v_1-v_2)|_{C^{\a}(M)}
&\leq C_4\delta|v_1-v_2|_{C^{2,\a}(M)}.
\end{align*}
Here $\delta$ becomes smaller while $t_0$ goes to $0$.
Adding all these estimates, we have
\begin{align*}
|T(v_1)-T(v_2)|_{C^{2,\a}(M)}
&\leq({C_3}t_0^{1-\b}+C_5\delta)
\sup_{0\leq{s}\leq{t_0}}|v_1-v_2|_{C^{2,\a}(M)}.
\end{align*}
Accordingly if $t_0$ and $\delta$ are small enough, $T$ is a contraction with constant $\frac{1}{2}$.
\end{proof}
Therefore we get a fixed point $v$ that satisfies
$Tv=v\in Z$ by the contraction mapping theorem. Lastly, we check that the fixed
point is the solution of \eqref{short time linearized: itr equ}. Obviously, there holds
\begin{equation*}
v=T(v)=\int_0^t\int_M{H(x,y,t-s)\{Q+u-\tri v+\tri_{\vphi}v\}(y,s)}\om^n(y)ds.
\end{equation*} So we get $v\in C^{2+\a,1}(M\times [0,t_0])$. Then after differentiating on the both sides, we obtain that $v$ satisfies \eqref{short time: linearized equ} on $[0,t_0]\times{M}$  on account of Proposition~\ref{short time linearized: heat kernel} (\textrm{vi}).
As a result, we conclude that
\begin{thm}\label{short time linearized: local sol}
The linearized equations
\eqref{short time: linearized equ}
have a local solution $v\in C^{2+\a,1}(M\times [0,t_0])$ under the normalization condition
\eqref{short time: Q normalization condition}.
\end{thm}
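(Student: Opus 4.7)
The plan is to deduce this theorem by the contraction mapping argument already set up in the previous lemmas. First I would reduce to the homogeneous initial condition: writing $v = \tilde v + w$, the new unknown $\tilde v$ satisfies a system of the same form with $\tilde v(0) = 0$, where $w$ contributes a known source term $\triangle_\varphi w + W$ with $W$ solving an elliptic equation involving the harmonic tensor $T_{i\bar j}$. Thus it suffices to work with $w = 0$ on the space
\[
Z = \{ v \in C^{2+\alpha,0}(M\times[0,t_0]) : v(0) = 0\},\quad \|v\|_Z = \sup_{0\le t\le t_0} |v(t)|_{C^{2,\alpha}(M)}.
\]

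Second, I would freeze the leading coefficient. Rewrite $-\triangle_\varphi v$ as $-\triangle v + (\triangle - \triangle_\varphi) v$ so that the principal part becomes the fixed background $\triangle$. Using the heat kernel $H(x,y,t)$ of $\triangle$ on $(M,g)$ (whose properties are recalled in Proposition~\ref{short time linearized: heat kernel}), define the Duhamel operator
\[
T(v)(x,t) = \int_0^t \!\!\int_M H(x,y,t-s)\bigl\{ Q(v) + u + (\triangle_\varphi - \triangle) v \bigr\}(y,s)\, \omega^n(y)\, ds,
\]
where $Q(v)$ is the solution of the elliptic equation $\triangle_\varphi Q = v^{i\bar j} T_{i\bar j}$ subject to \eqref{short time: Q normalization condition}. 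Fixed points of $T$ are precisely the sought solutions.

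Third, I would verify $T : Z \to Z$ and the contraction property. That $T$ lands in $Z$ follows by combining the Schauder regularity for $Q$ (via Lemma~\ref{short time: Q regular}) with the optimal parabolic regularity for $\triangle$ on $M$ (Theorem~\ref{regularity of PCF: space reg}), giving in fact $Tv \in C^{2+\alpha,1}(M\times[0,t_0])$ with vanishing trace at $t=0$. For the contraction estimate, I would bound
\[
|T(v_1) - T(v_2)|_{C^{2,\alpha}(M)} \le C\, t_0^{1-\beta}\sup_s |Q(v_1) - Q(v_2)|_{C^{2,\alpha}(M)} + C\sup_s |(\triangle_\varphi - \triangle)(v_1 - v_2)|_{C^{\alpha}(M)},
\]
estimate the first term by the Schauder bound $|Q(v_1) - Q(v_2)|_{C^{2,\alpha}} \le C |v_1 - v_2|_{C^{2,\alpha}}$, and the second by factoring $(\triangle_\varphi - \triangle)$ through the identity
\[
(\triangle_\varphi - \triangle)(v_1 - v_2) = -\int_0^1 g_{s\varphi}^{k\bar j} g_{s\varphi}^{i\bar l}\, ds\, \varphi_{i\bar j}(v_1 - v_2)_{k\bar l},
\]
which is bounded by $C\delta\, |v_1 - v_2|_{C^{2,\alpha}}$ using the smallness assumption \eqref{short time: vphi small}. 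Choosing $t_0$ and $\delta$ sufficiently small, $T$ is a contraction with constant $1/2$, so the contraction mapping theorem delivers a unique fixed point $v \in Z$. Finally, Duhamel's formula and Proposition~\ref{short time linearized: heat kernel}(vi) upgrade this fixed point to an actual solution in $C^{2+\alpha,1}(M\times[0,t_0])$ of \eqref{short time: linearized equ}.

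The main obstacle is the pseudo-differential term $Q$, which is nonlocal in space and whose dependence on $v$ goes through second derivatives $v^{i\bar j}$. This would be fatal for a standard Banach contraction unless the elliptic solve for $Q$ fully recovers two derivatives; this is precisely what Lemma~\ref{short time: Q regular} and the Schauder theory provide, but making it play well with the parabolic Duhamel norms (in particular, ensuring the heat smoothing does not cost regularity in $x$) is the delicate part, which is why we freeze to the background $\triangle$ rather than work with $\triangle_\varphi$ directly.
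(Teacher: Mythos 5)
Your proposal is correct and follows essentially the same route as the paper: reduction to zero initial data, freezing the principal part to the background Laplacian, the heat-kernel Duhamel operator on the space $Z$, the contraction estimate splitting into the $t_0^{1-\beta}$ gain on the $Q$-term and the $\delta$-smallness of $(\triangle_\varphi-\triangle)$, and the final upgrade via Proposition~\ref{short time linearized: heat kernel}(vi). No substantive differences to report.
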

\begin{rem}
In Kru\u{z}kov-Kastro-Lopes \cite{MR0393848},
they proved the following. Suppose $\Om=R^n$,
$f=f_1+f_2$, $f_1\in B([0,T],C^{\b})$,
$f_2\in B([0,T],C^{\a})$, $\a>\b$ and
$u(t,x)\in B([0,T],C^{2,\b})$
is a solution of \eqref{regularity of PCF: linear parabolic};
then $u\in C^{0,2+\b}$ and
\begin{align}\label{regularity of PCF: space reg potential}
\sup_{[0,t]}|u|_{C^{2,\b}}\leq M[\sup_{[0,T]}|f_1|_{C^{2,\b}}
+\int_0^t(t-\tau)^{\frac{\a-\b}{2}-1}\sup_{[0,\tau]}|f_2|_{C^{2,\b}}d\tau]
\end{align}
is satisfied for any $0\leq t\leq T$.
Note that in our linearized equation,
$u-\tri v+\tri_\vphi v\in C^0([0,T],C^{\a})$ holds and
$Q\in C^0([0,T],C^{\gamma})$ for any $\gamma$ satisfying
$1>\gamma\geq\a$ follows from \lemref{short time: Q regular}; so their estimate is sufficient in our proof.
\end{rem}
In the next subsection, we are going to prove an a priori estimate of the solution of the linearized equation to make sure that the short time solution of this linearized equation can be extended to any
fixed time $t$ under the condition \eqref{short time: vphi small} on $\vphi$.
\subsection{A priori estimates of the linearized equation}\label{estimate of linearized}
Suppose $v$ is the solution of the linearized equations
\eqref{short time: linearized equ}
under the normalization condition \eqref{short time: Q normalization condition}. Meanwhile, suppose that $\vphi\in  V_\delta$ (see \eqref{short time: v delta}) satisfies
\begin{equation}\label{short time: app sol}
  \left\{
   \begin{aligned}
\frac{\p\vphi}{\p t}
&=\log\frac{\om^n_\vphi}{\om^n}-\tilde{P},\\
\vphi(0)&=\vphi_0. \\
   \end{aligned}
  \right.
\end{equation}
Here $\tilde{P}$ belongs to $C^0([0,T],C^{2+\a}(M))$.
In this section, we will prove the following extension theorem.
\begin{thm}\label{estimate of linearized: est}
For any $T>0$, the equation \eqref{short time: linearized equ} with \eqref{short time: Q normalization condition} has a solution $v\in C^{2+\a,1}(Q_T)$.
\end{thm}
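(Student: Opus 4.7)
The plan is to derive an a priori $C^{2+\a,1}(Q_T)$ estimate for $v$ that is uniform on any compact time interval, after which the short time existence of \thmref{short time linearized: local sol} can be iterated across $[0,T]$. Because the non-local term $Q$ is not self-adjoint and has no pointwise bound by $v$, the maximum principle is unavailable and the estimates must go through integral energy methods. Throughout, the smallness hypothesis $\vphi\in V_\delta$ ensures that $\tri_\vphi$ is uniformly elliptic with $C^\a$ coefficients close to those of $\tri$, and that $\om^n_\vphi$ is uniformly equivalent to $\om^n$.

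First I would establish an $L^2$ energy inequality. Multiplying the equation $\p_t v=\tri_\vphi v+Q+u$ by $v$ and integrating against $\om^n_\vphi$, integration by parts yields
\begin{align*}
\frac{1}{2}\frac{d}{dt}\int_M v^2\om^n_\vphi+\int_M|\nabla v|^2_{g_\vphi}\om^n_\vphi=\int_M(Q+u)v\,\om^n_\vphi+\frac{1}{2}\int_M v^2\,\p_t(\om^n_\vphi).
\end{align*}
The last term, measuring the drift of the volume form, is absorbed using \eqref{short time: app sol} together with the $C^{2,\a}$ smallness of $\vphi$, while \lemref{short time: L2 estimate of Q} applied with $\eta=v$ controls the $Q$-contribution by $C|T|_{0;Q_T}||v||_{2;M}^2$. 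Gronwall's inequality then produces a uniform bound
\begin{align*}
||v||_{L^\infty([0,T];L^2(M))}+||\nabla v||_{L^2(Q_T)}\leq C\bigl(T,|w|_{L^2(M)},||u||_{L^2(Q_T)}\bigr).
\end{align*}

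Next I would bootstrap. The $L^p$ analogue \lemref{short time: Lp estimate of Q} controls $||Q||_{p;Q_T}$ by $||v||_{p;Q_T}$, so iterated energy estimates (or a Moser iteration carried out directly on the parabolic equation $\p_t v-\tri_\vphi v=Q+u$) promote $v$ to $L^\infty(Q_T)$, and the parabolic De Giorgi-Nash-Moser theorem then gives $v\in C^{\a',\a'/2}(Q_T)$ for some $\a'\in(0,\a]$. Invoking \lemref{short time: Q regular} one upgrades $Q\in C^0([0,T];C^{2+\a}(M))$, so that $Q+u\in C^{\a,0}(Q_T)$. Finally \thmref{regularity of PCF: space reg}, applied with $a^{ij}=g_\vphi^{ij}$ (which are $C^\a$ by \lemref{short time: P regular}), delivers
\begin{align*}
||v||_{C^{2+\a,1}(Q_T)}\leq C\bigl(|w|_{C^{2,\a}(M)}+|Q+u|_{C^{\a,0}(Q_T)}\bigr),
\end{align*}
with $C$ depending only on $T$, the ellipticity bound, and $\delta$. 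This a priori bound allows iterated application of \thmref{short time linearized: local sol} to reach the prescribed $T$.

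The main obstacle is the first step: the pseudo-differential operator $v\mapsto Q(v)$ has no pointwise bound by $v$, so classical parabolic tools do not immediately apply. The resolution lies in the duality argument behind \lemref{short time: L2 estimate of Q}, which tests against the auxiliary $\rho$ solving $\tri_\vphi\rho=\eta$ and uses the elliptic identity $||\rho^{i\bar j}||_{2,g_\vphi}=||\eta||_{2,g_\vphi}$ of \lemref{short time: rho} to produce a clean $L^2$ control of $\int_M Qv\,\om^n_\vphi$. Once the $L^2$ energy inequality is closed in this way, the remaining steps are a standard parabolic bootstrap adapted to the compact manifold $M$ via \thmref{regularity of PCF: space reg}.
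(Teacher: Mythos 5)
Your overall architecture --- a $V_2$ energy inequality closed by the duality lemmas, a regularity bootstrap, a Schauder estimate, and then continuation of the local solution of \thmref{short time linearized: local sol} up to time $T$ --- is the same as the paper's, and your first step (testing against $v$, absorbing the volume-drift term via \eqref{short time: app sol} and handling the $Q$-contribution with \lemref{short time: L2 estimate of Q}) reproduces Proposition~\ref{estimate of linearized: energy estimate}. The gap is in how you close the Schauder loop. You invoke \lemref{short time: Q regular} to conclude $Q\in C^0([0,T],C^{2+\a}(M))$, but that lemma \emph{assumes} $v\in C^0([0,T],C^{2+\a}(M))$ --- exactly the regularity you are in the middle of proving. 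At the point where you use it you only have $v\in C^{\a',\a'/2}(Q_T)$ from De Giorgi--Nash--Moser, so the step is circular. Moreover, H\"older continuity of $v$ alone cannot control $Q$ in $C^\a$: the right-hand side of $\tri_\vphi Q=v^{i\bar j}T_{i\bar j}$ involves two derivatives of $v$, so without some structural input the estimate for $Q$ loses derivatives and the Schauder argument for $v$ does not close.

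The paper's resolution is the observation that $T_{i\bar j}$ is divergence-free (harmonic), so the $Q$-equation can be written in divergence form, $\tri_\vphi Q=[v^{i}T_{i\bar j}]^{\bar j}$. The Schauder estimate for equations with divergence-form data (Corollary 8.35 of Gilbarg--Trudinger) then yields $\sup_{[0,T]}|Q|_{C^{1,\a}(M)}\leq C(\sup_{[0,T]}|Q|_{0;M}+\sup_{[0,T]}|v|_{C^{1,\a}(M)})$, i.e.\ $Q$ is controlled in $C^{\a}$ by only \emph{one} spatial derivative of $v$. To feed this you need $v\in C^{1+\a,\frac{1+\a}{2}}(Q_T)$, which is exactly what the paper's bootstrap through the parabolic $W_p^{2,1}$ spaces and the Sobolev imbeddings (\lemref{estimate of linearized: v 1,a}) delivers, and which De Giorgi--Nash--Moser does not. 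With $Q\in C^{\a,0}(Q_T)$ so obtained, \thmref{regularity of PCF: space reg} closes the $C^{2+\a,1}$ estimate after absorbing an $\eps|v|_{C^{2}}$ term coming from the $L^\infty$ bound of $Q$. To repair your argument you would need to replace the De Giorgi--Nash--Moser step by a bootstrap reaching $C^{1+\a}$ in space and to exploit the divergence structure of the $Q$-equation; as written, the proof does not close.
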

Since our potential $\vphi$ belongs to
$C^0([0,T],U)$, \lemref{short time: app coe t} implies that there is a
sequence $\vphi_n\in C^{\infty}(Q_T)$ such that
$$\vphi_n\rightarrow\vphi\in C^0([0,T],U).$$ In the following lemmas, since the
constants in all inequalities do not contain any derivatives of
$\vphi$ more than
$\max_{[0,T]}|\p_t\vphi|_{C^{\a}(M)}+\max_{[0,T]}|\vphi|_{C^{2,\a}(M)}$,
we omit the approximation process for convenience.
\begin{lem}\label{estimate of linearized: L^p Kahler}
If $v$ satisfies the following heat equation with the initial data $\vphi_0$
\begin{equation*}
  \left\{
   \begin{aligned}
\frac{\p v}{\p t}-\tri_{\vphi}v
&=u(x,t) \text{ in } Q_T,  \\
v(0)&=w, \\
   \end{aligned}
  \right.
\end{equation*}
then we have
\begin{equation*}
||\tri_\vphi v||^2_{2;Q_T,g_\vphi}\leq
C(||v||^2_{2;Q_T,g_\vphi}+||u||^2_{2;Q_T,g_\vphi}
+||w||^2_{2;M,g_\vphi}+||\tri_\vphi w||^2_{2;M,g_\vphi}),
\end{equation*}
where $C$ depends on $n, Q_T, \lambda_0, \Lambda_0$ and the moduli of continuity of $g^{i\bar j}_\vphi$ on $M$.
\end{lem}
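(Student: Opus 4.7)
The plan is to carry out a parabolic $L^2$ energy estimate using $-\tri_\vphi v$ as the test function. First, multiply the equation $v_t-\tri_\vphi v = u$ by $-\tri_\vphi v\,\om_\vphi^n$ and integrate over $Q_T$; this makes $\int_{Q_T}(\tri_\vphi v)^2\om_\vphi^n\,dt$ appear on the left. Next, use spatial integration by parts to get $-\int_M v_t\tri_\vphi v\,\om_\vphi^n = \int_M g^{i\bar j}_\vphi (v_t)_i v_{\bar j}\,\om_\vphi^n$, and then combine with the product rule
\[
\frac{d}{dt}\int_M g^{i\bar j}_\vphi v_i v_{\bar j}\,\om_\vphi^n = 2\int_M g^{i\bar j}_\vphi (v_t)_i v_{\bar j}\,\om_\vphi^n + \int_M \p_t(g^{i\bar j}_\vphi \om_\vphi^n)\, v_i v_{\bar j}
\]
to write the $v_t\tri_\vphi v$ integral as a total time derivative plus an error involving $\p_t g^{i\bar j}_\vphi$.

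After integrating in $t\in[0,T]$, setting $E(t)=\int_M |\nabla_\vphi v(t)|^2\om_\vphi^n$, this yields the identity
\[
\int_{Q_T}(\tri_\vphi v)^2\om_\vphi^n\,dt + \tfrac12 E(T) = \tfrac12 E(0) + \tfrac12\int_{Q_T}\p_t(g^{i\bar j}_\vphi \om_\vphi^n)\, v_i v_{\bar j}\,dt - \int_{Q_T} u\,\tri_\vphi v\,\om_\vphi^n\,dt.
\]
The initial energy is controlled by integration by parts and Cauchy-Schwarz: $E(0)=-\int_M w\,\tri_\vphi w\,\om_\vphi^n\leq \tfrac12(\|w\|^2_{2;M;g_\vphi}+\|\tri_\vphi w\|^2_{2;M;g_\vphi})$. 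The $u$-term is handled by Young's inequality, absorbing $\tfrac14\|\tri_\vphi v\|^2_{2;Q_T;g_\vphi}$ into the left side, and $\tfrac12 E(T)\geq 0$ may be discarded.

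The main obstacle is bounding the coefficient-error integral $\int_{Q_T}\p_t(g^{i\bar j}_\vphi \om_\vphi^n)\,v_i v_{\bar j}\,dt$ in a way that depends only on the moduli of continuity of $g^{i\bar j}_\vphi$, since $\p_t g^{i\bar j}_\vphi = -g^{i\bar k}_\vphi(\p_t\vphi)_{k\bar l}g^{l\bar j}_\vphi$ a priori involves quantities not directly controlled by the $C^{2,\alpha}$-norm of $\vphi$. I would first work with the smooth approximations $\vphi_n\to\vphi$ supplied by \lemref{short time: app coe t}, for which $\p_t g^{i\bar j}_{\vphi_n}$ is pointwise defined via \eqref{short time: app sol}. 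The error integral is then bounded by $C\|\nabla_\vphi v\|^2_{L^2(Q_T)}$, and the interpolation $\|\nabla_\vphi v\|_2^2 = -\int_M v\,\tri_\vphi v\,\om_\vphi^n \leq \varepsilon\|\tri_\vphi v\|_2^2 + C(\varepsilon)\|v\|_2^2$ lets us absorb a small fraction of $\|\tri_\vphi v\|_{2;Q_T;g_\vphi}^2$ into the left side. Collecting all terms gives the asserted bound, and the dependence of $C$ on only $n$, $Q_T$, $\lambda_0$, $\Lambda_0$ and the modulus of continuity of $g^{i\bar j}_\vphi$ is recovered on passing to the limit $\vphi_n\to\vphi$.
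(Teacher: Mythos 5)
The paper's own proof is a one-liner: substitute $\tilde v=v-w$ to reduce to zero initial data with source $u+\tri_\vphi w$, then invoke the standard parabolic $L^2$ (Calder\'on--Zygmund / $W^{2,1}_2$) estimate as a black box, whose constant depends only on the ellipticity constants and the moduli of continuity of $g^{i\bar j}_\vphi$ because that estimate is proved by freezing coefficients and a partition of unity. You instead try to reprove that estimate by the naive energy method with test function $-\tri_\vphi v$, and this is where your argument has a genuine gap: the coefficient-error term $\int_{Q_T}\p_t\bigl(g^{i\bar j}_\vphi\om_\vphi^n\bigr)v_iv_{\bar j}\,dt$ cannot be bounded by $C\,\|\nabla_\vphi v\|_{2;Q_T}^2$ with $C$ depending only on the quantities allowed in the statement. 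Indeed $\p_t g^{i\bar j}_\vphi=-g^{i\bar k}_\vphi(\p_t\vphi)_{k\bar l}g^{l\bar j}_\vphi$ and, since $\vphi$ evolves by \eqref{short time: app sol}, $(\p_t\vphi)_{k\bar l}=h_{k\bar l}-\tilde P_{k\bar l}$ with $h=\log(\om_\vphi^n/\om^n)$, so this term involves up to four spatial derivatives of $\vphi$ --- which are not controlled when $\vphi$ is merely in $C^0([0,T],C^{2,\a})$, and certainly not by the modulus of continuity of $g^{i\bar j}_\vphi$. Your proposed remedy of running the argument for the smooth approximations $\vphi_n$ of \lemref{short time: app coe t} and ``recovering the dependence on passing to the limit'' does not work: for each $n$ the constant you produce is of order $\sup_{Q_T}|\p_t g_{\vphi_n}|$, which is not uniform in $n$, so no estimate survives the limit. (Note also that elsewhere in this subsection the authors handle $\p_t\om^n_\vphi=\tri_\vphi(\p_t\vphi)\,\om^n_\vphi$ only by first integrating by parts to move all derivatives off $h$ and then using $|h|\le\delta$; your term does not admit such an integration by parts without generating third derivatives of $v$, which are likewise uncontrolled by $\|\tri_\vphi v\|_{2}$.)

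The rest of your computation (the energy identity, the treatment of $E(0)=-\int_M w\,\tri_\vphi w\,\om^n_\vphi$, Young's inequality on the $u$-term, discarding $E(T)\ge 0$) is correct, and in the special case of the time-frozen approximate solution $\bar\vphi\equiv\vphi_0$ of \eqref{app sol fix} your argument is complete since the error term vanishes. But the lemma is applied with genuinely time-dependent $\vphi$, and there the only route to a constant depending solely on $n$, $Q_T$, $\l_0$, $\L_0$ and the moduli of continuity of $g^{i\bar j}_\vphi$ is the frozen-coefficient proof of the parabolic $L^2$ estimate (as in Lady\v{z}enskaja--Solonnikov--Ural'ceva), which is exactly what the paper cites. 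You should either cite that estimate directly, as the authors do, or restrict your direct energy argument to the time-independent case and say so.
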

\begin{proof}
Since $\tilde{v}=v-w$ satisfies the following equation with zero initial data
\begin{equation*}
\frac{\p\tilde{v}}{\p t}-\tri_{\vphi}\tilde{v}
=u(x,t)+\tri_{\vphi}w \text{ in } Q_T,
\end{equation*}
by the $L^2$ estimate we obtain
\begin{equation*}
||\tri_\vphi\tilde{v}||^2_{2;Q_T,g_\vphi}\leq
C(||\tilde{v}||^2_{2;Q_T,g_\vphi}
+||u+\tri_{\vphi}w||^2_{2;Q_T,g_\vphi}).
\end{equation*}
\end{proof}
\begin{lem}\label{estimate of linearized: energy estimate on M}
There exists a constant $C$ depending on $\tilde{P}$ such that
\begin{align}
\frac{1}{2}\int_{M}|v|^2\p_t\om_\vphi^n\leq\delta||\nabla_{\vphi}v||
^2_{2;M,g_{\vphi}}+C||v||^2_{2;M,g_\vphi}+\delta^2||\tri_\vphi
v||^2_{2;M,g_\vphi}.
\end{align}
\end{lem}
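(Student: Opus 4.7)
The starting point is the standard formula for the time derivative of the volume form along an evolving Kähler potential: since $\vphi$ is smooth and satisfies \eqref{short time: app sol}, one has
\[
\frac{\p}{\p t}\om_{\vphi}^{n}
=g_{\vphi}^{j\bar i}\,\dot\vphi_{i\bar j}\,\om_{\vphi}^{n}
=\tri_{\vphi}\dot\vphi\cdot\om_{\vphi}^{n},
\qquad\dot\vphi=h-\tilde P,
\]
so the quantity we must estimate is $\tfrac12\int_M |v|^2\tri_{\vphi}(h-\tilde P)\,\om_{\vphi}^{n}$. The plan is to move the Laplacian off of $\dot\vphi$ by integration by parts on the compact manifold, treating the two summands of $\dot\vphi$ differently.

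For the $h$-piece I would move \emph{both} derivatives of $\tri_{\vphi}$ onto $|v|^{2}$, using $\tri_{\vphi}|v|^{2}=2v\tri_{\vphi}v+2|\nabla_{\vphi}v|_{\vphi}^{2}$, which gives
\[
\tfrac12\int_{M}|v|^{2}\tri_{\vphi}h\,\om_{\vphi}^{n}
=\int_{M} h\,v\,\tri_{\vphi}v\,\om_{\vphi}^{n}
+\int_{M} h\,|\nabla_{\vphi}v|_{\vphi}^{2}\,\om_{\vphi}^{n}.
\]
For the $\tilde P$-piece I would move only \emph{one} derivative, yielding
\[
-\tfrac12\int_{M}|v|^{2}\tri_{\vphi}\tilde P\,\om_{\vphi}^{n}
=\int_{M} v\,\langle\nabla_{\vphi}v,\nabla_{\vphi}\tilde P\rangle_{\vphi}\,\om_{\vphi}^{n}.
\]
The reason for the asymmetric choice of integrations by parts is that $h$ has built-in smallness, $|h|_{\infty}\le C\delta$, coming from $\vphi\in V_\delta$ and $h=\log\det(I+g^{-1}\p\bar\p\vphi)$, whereas $\tilde P$ and $\nabla\tilde P$ are merely bounded (in terms of $\|\tilde P\|_{C^{2+\alpha}}$), so for $\tilde P$ we cannot afford to expose a second derivative and must leave a cancellation-friendly $\nabla\tilde P\cdot\nabla v$ instead.

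The remaining work is three weighted Cauchy--Schwarz applications. For $\int h\,v\,\tri_{\vphi}v$, apply Young with weight $\epsilon=2\delta^{2}$ to get $\le \delta^{2}\|\tri_{\vphi}v\|_{2}^{2}+\frac{\|h\|_{\infty}^{2}}{4\delta^{2}}\|v\|_{2}^{2}\le\delta^{2}\|\tri_{\vphi}v\|_{2}^{2}+C\|v\|_{2}^{2}$, where the crucial point is that $\|h\|_{\infty}^{2}/\delta^{2}\le C$ because $\|h\|_{\infty}\le C\delta$. For $\int h|\nabla_{\vphi}v|_{\vphi}^{2}$, the pointwise estimate $\|h\|_{\infty}\le C\delta$ immediately gives $\le \delta\|\nabla_{\vphi}v\|_{2}^{2}$ (after choosing $\delta$ small enough to absorb the constant). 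For $\int v\langle\nabla_{\vphi}v,\nabla_{\vphi}\tilde P\rangle_{\vphi}$, apply Young with weight $\delta$ to get $\le \delta\|\nabla_{\vphi}v\|_{2}^{2}+\frac{\|\nabla\tilde P\|_{\infty}^{2}}{\delta}\|v\|_{2}^{2}$, where the coefficient of $\|v\|_{2}^{2}$ depends on $\tilde P$ and on the (fixed) small parameter $\delta$. Adding the three bounds gives the claimed inequality.

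The only genuinely delicate step is the handling of $\tilde P$, since it does not share the smallness of $h$: absorbing $\tri_{\vphi}\tilde P$ naively would spoil both the $\delta$ in front of $\|\nabla_{\vphi}v\|^{2}$ and the $\delta^{2}$ in front of $\|\tri_{\vphi}v\|^{2}$. The device of integrating by parts only once on the $\tilde P$-term, and paying for smallness on $\|\nabla_{\vphi}v\|^{2}$ with a $\delta^{-1}$ factor on $\|v\|^{2}$ (harmless because the lemma allows $C$ to depend on $\tilde P$ and on $\delta$), is what makes the bookkeeping match the stated right-hand side.
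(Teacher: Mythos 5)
Your proof is correct and follows essentially the same route as the paper: write $\p_t\om_\vphi^n=\tri_\vphi(h-\tilde P)\,\om_\vphi^n$, move the Laplacian in the $h$-term onto $|v|^2$ via $\tri_\vphi|v|^2=2v\tri_\vphi v+2|\nabla_\vphi v|^2$, and exploit $\|h\|_\infty\le C\delta$ from \eqref{short time: vphi small} together with weighted Cauchy--Schwarz. The only deviation is the $\tilde P$-term, which the paper bounds directly by $C\|v\|_{2}^{2}$ using $\tilde P\in C^0([0,T],C^{2+\a}(M))$ rather than integrating by parts once more; both work, yours merely requiring a slight rebalancing of the Young weights so the two $\delta\|\nabla_\vphi v\|^2$ contributions sum to the stated coefficient.
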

\begin{proof}
Using \eqref{short time: app sol}, we express the left hand side of the inequality in the statement as
\begin{align*}
&\frac{1}{2}\int_{M}|v|^2\p_t\om_\vphi^n
=\frac{1}{2}\int_{M}|v|^2\tri_\vphi(\p_t\vphi)\om_\vphi^n\\
&=\frac{1}{2}\int_{M}|v|^2\tri_\vphi(\log\frac{\om_\vphi^n}{\om^n}
-\tilde{P})\om_\vphi^n\\
&=\int_M|\nabla_\vphi v|^2\log\frac{\om_\vphi^n}{\om^n}\om_\vphi^n
+\int_Mv\tri_\vphi v\log\frac{\om_\vphi^n}{\om^n}\om_\vphi^n
-\frac{1}{2}\int_{M}|v|^2\tri_\vphi \tilde{P}\om_\vphi^n.
\end{align*}
Therefore the lemma follows from the condition \eqref{short time: vphi small} and the Cauchy-Schwarz inequality.
\end{proof}
\begin{lem}\label{estimate of linearized: energy estimate on Q}
If $v$ is the solution of \eqref{short time: linearized equ}, then we have
\begin{align}
&\frac{1}{2}\int_{Q_T}|v|^2\p_t\om_\vphi^n
\leq\delta||\nabla_{\vphi}v||^2_{2;Q_T,g_\vphi}\nonumber\\
&+C(||v||^2_{2;Q_T,g_\vphi}+||u||^2_{2;Q_T,g_\vphi}
+||w||^2_{2;M,g_\vphi}+||\tri_\vphi w||^2_{2;M,g_\vphi})
\end{align}
where $C$ depends on $n, Q_T, \lambda_0, \Lambda_0, \tilde{P},
|T|_{C^{0}(Q_T)}$ and the moduli of continuity of $g^{i\bar j}_\vphi$.
\end{lem}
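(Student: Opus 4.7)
The plan is to integrate \lemref{estimate of linearized: energy estimate on M} in time over $[0,T]$. This immediately produces the desired left-hand side, bounded by
\[
\delta\|\nabla_\vphi v\|^2_{2;Q_T,g_\vphi} + C\|v\|^2_{2;Q_T,g_\vphi} + \delta^2\|\tri_\vphi v\|^2_{2;Q_T,g_\vphi}.
\]
The first two terms already match the target inequality, so the whole task reduces to controlling the last term, i.e., the space-time $L^2$ norm of $\tri_\vphi v$, by the data $v,u,w$.

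For this, I would rewrite the linearized equation \eqref{short time: linearized equ} as the linear heat equation $\p_t v - \tri_\vphi v = u + Q$ with initial value $w$, and apply \lemref{estimate of linearized: L^p Kahler} with source $u+Q$. This yields
\[
\|\tri_\vphi v\|^2_{2;Q_T,g_\vphi} \leq C\bigl(\|v\|^2_{2;Q_T,g_\vphi} + \|u\|^2_{2;Q_T,g_\vphi} + \|Q\|^2_{2;Q_T,g_\vphi} + \|w\|^2_{2;M,g_\vphi} + \|\tri_\vphi w\|^2_{2;M,g_\vphi}\bigr),
\]
where $C$ has precisely the dependencies asserted in the statement. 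The only new quantity appearing is $\|Q\|_{2;Q_T,g_\vphi}$, which must be traded for $\|v\|_{2;Q_T,g_\vphi}$.

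This trade is exactly what \lemref{short time: L2 estimate of Q} is designed for. Plugging $\eta=Q$ into the duality bound there gives $\|Q\|^2_{2;M,g_\vphi}\leq |T|_{C^0(Q_T)}\|v\|_{2;M}\|Q\|_{2;M}$, hence $\|Q\|_{2;M,g_\vphi}\leq C|T|_{C^0(Q_T)}\|v\|_{2;M,g_\vphi}$ at each time slice, and integrating in $t$ supplies the space-time bound. Substituting this back and multiplying by $\delta^2$ absorbs $\delta^2\|\tri_\vphi v\|^2_{2;Q_T,g_\vphi}$ into a constant multiple of the four data terms on the right-hand side of the claim, completing the proof.

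The main obstacle, in my view, is not analytical but bookkeeping: one must verify that every constant produced along the way depends only on $n$, $Q_T$, $\lambda_0,\Lambda_0$, $\tilde P$, $|T|_{C^0(Q_T)}$, and the modulus of continuity of $g^{i\bar j}_\vphi$, with no hidden dependence on higher derivatives of $\vphi$. This is ensured by the smallness hypothesis \eqref{short time: vphi small}, by the equivalence of $L^2$ norms with respect to $\om^n$ and $\om^n_\vphi$ via $\lambda_0,\Lambda_0$, and by the fact that each of the three ingredients (the pointwise energy identity, the parabolic $L^2$-regularity estimate, and the duality bound for $Q$) has already been written with exactly the admissible dependencies.
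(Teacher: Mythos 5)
Your proposal is correct and follows essentially the same route as the paper: the paper likewise obtains the bound $\delta\|\nabla_\vphi v\|^2+C\|v\|^2+\delta^2\|\tri_\vphi v\|^2$ on $Q_T$ (by the same computation you get from integrating \lemref{estimate of linearized: energy estimate on M} in time), then absorbs the $\delta^2\|\tri_\vphi v\|^2_{2;Q_T}$ term via \lemref{estimate of linearized: L^p Kahler} applied with source $u+Q$, and finally trades $\|Q\|_{2}$ for $\|v\|_{2}$ using the duality estimate for $Q$ (the paper cites \lemref{short time: Lp estimate of Q} where you cite the $L^2$ version, an immaterial difference).
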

\begin{proof}
By using \eqref{short time: app sol}, we calculate
\begin{align*}
&\frac{1}{2}\int_{Q_T}|v|^2\p_t\om_\vphi^n\wedge dt
=\frac{1}{2}\int_{Q_T}|v|^2\tri_\vphi(\p_t\vphi)\om_\vphi^n\wedge dt\\
&=\frac{1}{2}\int_{Q_T}|v|^2
\tri_\vphi(\log\frac{\om^n_\vphi}{\om^n}-\tilde{P})\om_\vphi^n\wedge dt\\
&=\int_{Q_T}|\nabla_\vphi v|^2\log\frac{\om^n_\vphi}{\om^n}\om_\vphi^n\wedge dt+\int_{Q_T}v\tri_\vphi
v\log\frac{\om^n_\vphi}{\om^n}\om_\vphi^n\wedge dt\\
&-\frac{1}{2}\int_{Q_T}|v|^2\tri_\vphi \tilde{P}\om_\vphi^n\wedge dt.
\end{align*}
Using the condition \eqref{short time: vphi small} and the Cauchy-Schwarz inequality we get
\begin{align*}
\frac{1}{2}\int_{Q_T}|v|^2\p_t\om_\vphi^n\wedge dt
\leq\delta||\nabla_{\vphi}v||^2_{2;Q_T,g_\vphi}
+C||v||^2_{2;Q_T,g_\vphi}+\delta^2||\tri_\vphi
v||^2_{2;Q_T,g_\vphi}.
\end{align*}
By applying \lemref{estimate of linearized: L^p Kahler} to \eqref{short time: linearized equ}, we obtain that the right-hand side can be bounded by terms
\begin{align*}
&\delta||\nabla_{\vphi}v||^2_{2;Q_T,g_\vphi}
+C||v||^2_{2;Q_T,g_\vphi}\\
&+\delta^2C(||v||^2_{2;Q_T,g_\vphi}
+||u+Q||^2_{2;Q_T,g_\vphi}
+||w||^2_{2;M,g_\vphi}+||\tri_\vphi w||^2_{2;M,g_\vphi}).
\end{align*}
Furthermore, \lemref{short time: Lp estimate of Q} gives the bound of the above
\begin{align*}
\delta||\nabla_{\vphi}v||^2_{2;Q_T,g_\vphi}
+C(||v||^2_{2;Q_T,g_\vphi}+||u||^2_{2;Q_T,g_\vphi}
+||w||^2_{2;M,g_\vphi}+||\tri_\vphi w||^2_{2;M,g_\vphi}).
\end{align*} Therefore the lemma follows.
\end{proof}
\begin{lem}
If $v$ is the solution of \eqref{short time: linearized equ}, then we have
\begin{align}\label{estimate of linearized: energy estimate of v}
\sup_{0\leq t\leq T}||v||_{2;M,g_\vphi}&\leq
C(||u||_{L^{2}(Q_T,g_\vphi)}+||w||_{2;M,g_\vphi}
+||\tri_\vphi w||_{2;M,g_\vphi}),
\end{align}
where $C$ depends on $n, Q_T, \l_0, \L_0, \tilde{P},
|T|_{C^{0}(Q_T)}$ and the moduli of continuity of $g^{i\bar j}_\vphi$.
\end{lem}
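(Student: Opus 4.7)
The plan is to carry out a standard $L^{2}$ energy estimate, using Gronwall's inequality at the end and absorbing the awkward cross terms coming from the pseudo-differential coupling $Q$ and from the time-dependence of $\om_\vphi^n$. The main obstacle is that the equation is not self-adjoint and $Q$ depends nonlocally on $v$, so every step that would be immediate for a classical heat equation now has to invoke one of the technical lemmas above.

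To begin, I would multiply the first equation of \eqref{short time: linearized equ} by $v$ and integrate against $\om_\vphi^n$. After integration by parts and using $\p_t\om_\vphi^n=\tri_\vphi(\p_t\vphi)\om_\vphi^n$, this produces the identity
\begin{equation*}
\frac{d}{dt}\Bl\tfrac12\int_M|v|^2\om_\vphi^n\Br
+\int_M|\nabla_\vphi v|_{g_\vphi}^2\om_\vphi^n
=\int_M vQ\,\om_\vphi^n+\int_M vu\,\om_\vphi^n+\tfrac12\int_M|v|^2\p_t\om_\vphi^n.
\end{equation*}
The second term on the right is bounded by $\tfrac12\|v\|_{2;M}^2+\tfrac12\|u\|_{2;M}^2$ via Cauchy--Schwarz.

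Next I would dispose of the remaining two terms using the machinery already in place. For $\int_M vQ\,\om_\vphi^n$, applying \lemref{short time: L2 estimate of Q} with $\eta=v$ gives $\int_M vQ\,\om_\vphi^n\leq |T|_{0;Q_T}\|v\|_{2;M}^2$, which is absorbed into the $C\|v\|_{2;M}^2$ reservoir. For the volume-form derivative $\tfrac12\int_M|v|^2\p_t\om_\vphi^n$, I would apply \lemref{estimate of linearized: energy estimate on M} to get
\begin{equation*}
\tfrac12\int_M|v|^2\p_t\om_\vphi^n\leq \d\|\nabla_\vphi v\|_{2;M,g_\vphi}^2+C\|v\|_{2;M,g_\vphi}^2+\d^2\|\tri_\vphi v\|_{2;M,g_\vphi}^2.
\end{equation*}
Choosing $\d$ small (which is legitimate thanks to \eqref{short time: vphi small}) lets me absorb the gradient term into the left-hand side.

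The real obstacle is the residual $\d^2\|\tri_\vphi v\|_{2;M}^2$, which cannot be absorbed pointwise in time since $\tri_\vphi v$ is second-order in the spatial variable. My strategy is to integrate everything in time from $0$ to $t\leq T$, producing
\begin{equation*}
\tfrac12\|v(t)\|_{2;M,g_\vphi}^2+(1-\d)\|\nabla_\vphi v\|_{2;Q_t,g_\vphi}^2
\leq C\int_0^t\|v(s)\|_{2;M,g_\vphi}^2ds+\tfrac12\|u\|_{2;Q_T,g_\vphi}^2+\tfrac12\|w\|_{2;M,g_\vphi}^2+\d^2\|\tri_\vphi v\|_{2;Q_t,g_\vphi}^2,
\end{equation*}
and then to feed \lemref{estimate of linearized: L^p Kahler} into the last term; this replaces $\|\tri_\vphi v\|_{2;Q_t}^2$ by a multiple of $\|v\|_{2;Q_t}^2+\|u\|_{2;Q_T}^2+\|w\|_{2;M}^2+\|\tri_\vphi w\|_{2;M}^2$, and the $\d^2$ prefactor keeps the coefficients tame.

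Finally, I would apply the integral Gronwall inequality to the resulting estimate
\begin{equation*}
\|v(t)\|_{2;M,g_\vphi}^2\leq C\int_0^t\|v(s)\|_{2;M,g_\vphi}^2ds+C(\|u\|_{2;Q_T,g_\vphi}^2+\|w\|_{2;M,g_\vphi}^2+\|\tri_\vphi w\|_{2;M,g_\vphi}^2),
\end{equation*}
and take the supremum over $t\in[0,T]$ to obtain \eqref{estimate of linearized: energy estimate of v}. The dependence of $C$ on $n,Q_T,\l_0,\L_0,\tilde P, |T|_{C^0(Q_T)}$ and the modulus of continuity of $g^{i\bar j}_\vphi$ is inherited verbatim from the constants in \lemref{estimate of linearized: L^p Kahler}, \lemref{short time: L2 estimate of Q} and \lemref{estimate of linearized: energy estimate on M}.
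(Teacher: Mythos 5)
Your proposal is correct and follows essentially the same route as the paper's proof: the identical energy identity, the same three auxiliary lemmas (the $L^2$ bound on $Q$ with $\eta=v$, the estimate for $\frac12\int_M|v|^2\p_t\om_\vphi^n$, and the parabolic $L^2$ bound on $\tri_\vphi v$), and the same absorption of the $\delta^2\|\tri_\vphi v\|^2$ term. The only difference is cosmetic ordering --- you integrate in time and substitute the Laplacian bound before invoking the integral Gronwall inequality, whereas the paper applies the differential Gronwall inequality first and then absorbs the residual $\delta^2C\|v\|^2_{2;Q_T}$ by taking $\delta$ small --- and both orderings work.
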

\begin{proof}
By using \eqref{short time: linearized equ} we calculate
\begin{align*}
&\p_t\frac{1}{2}\int_M|v|^2\om_\vphi^n
=\int_Mv\p_tv\om_\vphi^n
+\frac{1}{2}\int_M|v|^2\tri_\vphi(\p_t\vphi)\om_\vphi^n\\
&=\int_Mv(\tri_{\vphi}v
+Q+u)\om_\vphi^n
+\frac{1}{2}\int_M|v|^2\tri_\vphi(\p_t\vphi)\om_\vphi^n.
\end{align*}
Applying \lemref{short time: L2 estimate of Q} and \lemref{estimate of linearized: energy estimate on M}, we get
\begin{align*}
\p_t\frac{1}{2}\int_M|v|^2\om_\vphi^n
&\leq-||\nabla_{\vphi}v||^2_{M,g_{\vphi}}
+C||v||^2_{2;M,g_\vphi}+||u||^2_{2;M,g_\vphi}\\
&+\delta||\nabla_{\vphi}v||^2_{M,g_{\vphi}}
+C||v||^2_{2;M,g_\vphi}+\delta^2||\tri_\vphi
v||^2_{2;M,g_\vphi}\\
&\leq
C(t)||v||^2_{2;M,g_\vphi}+||u||^2_{2;M,g_\vphi}
+\delta^2||\tri_\vphi v||^2_{2;M,g_\vphi}.
\end{align*}
Then the Gronwall's inequality implies
\begin{align*}
\frac{1}{2}||v||^2_{2;M,g_\vphi}
&\leq
e^{\int_0^tC(s)ds}[\frac{1}{2}||v(0)||^2_{2;M,g_\vphi}
+\int_0^t||u||^2_{2;M,g_\vphi}
+\delta^2||\tri_\vphi v||^2_{2;M,g_\vphi}ds]\\
&\leq C[\frac{1}{2}||w||^2_{2;M,g_\vphi}+||u||^2_{2;Q_T,g_\vphi}
+\delta^2||\tri_\vphi v||^2_{2;Q_T,g_\vphi}]\\
&\leq C[\frac{1}{2}||w||^2_{2;M,g_\vphi}+||u||^2_{2;Q_T,g_\vphi}\\
&+\delta^2C(||v||^2_{2;Q_T,g_\vphi}+||u||^2_{2;Q_T,g_\vphi}
+||w||^2_{2;M,g_\vphi}+||\tri_\vphi w||^2_{2;M,g_\vphi}].
\end{align*}
The last inequality follows from
\lemref{estimate of linearized: L^p Kahler}.
Therefore \eqref{estimate of linearized: energy estimate of v} holds, if we choose
$\delta$ small enough so that $\delta^2C\leq\frac{1}{4}$.
\end{proof}
\begin{lem}
If $v$ is the solution of \eqref{short time: linearized equ}, then we have
\begin{align}\label{estimate of linearized: energy estimate of nabla v}
||\nabla_{\vphi}v||^2_{2;Q_T,g_\vphi}
\leq C(||u||^2_{2;Q_T,g_\vphi}+||w||^2_{2;M,g_\vphi}
+||\tri_\vphi w||^2_{2;M,g_\vphi})
\end{align}
where $C$ depends on $n, Q_T, \l_0, \L_0, \tilde{P},
|T|_{C^{0}(Q_T)}$ and the moduli of continuity of $g^{i\bar j}_\vphi$.
\end{lem}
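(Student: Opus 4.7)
The goal is to obtain a Cacciopoli-type bound on $\|\nabla_\varphi v\|_{2;Q_T,g_\varphi}$ in terms of $u$, $w$, and $\triangle_\varphi w$. The natural energy identity comes from pairing the linearized equation with $v$ itself and integrating by parts. Concretely, I would multiply the equation $\p_t v = \triangle_\varphi v + Q + u$ by $v$, integrate over $M$, and use the identity
\begin{equation*}
\int_M v\,\p_t v\,\om_\vphi^n = \p_t\!\int_M \tfrac{v^2}{2}\,\om_\vphi^n - \int_M \tfrac{v^2}{2}\,\p_t\om_\vphi^n,
\end{equation*}
together with $\int_M v\,\triangle_\vphi v\,\om_\vphi^n = -\int_M|\nabla_\vphi v|^2\,\om_\vphi^n$. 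After integration from $0$ to $T$ this yields
\begin{equation*}
\int_{Q_T}|\nabla_\vphi v|^2\,\om_\vphi^n\wedge dt
= -\tfrac{1}{2}\int_M v^2\om_\vphi^n\Big|_0^T
+ \int_{Q_T}\tfrac{v^2}{2}\,\p_t\om_\vphi^n\wedge dt
+ \int_{Q_T} v(Q+u)\,\om_\vphi^n\wedge dt,
\end{equation*}
so the LHS is already exposed as a sum of three pieces whose control is the content of the earlier lemmas.

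The boundary-in-time term at $t=0$ contributes $\tfrac12\|w\|_{2;M,g_\vphi}^2$, and the term at $t=T$ has a favorable sign and can be dropped. The source term $\int_{Q_T} v(Q+u)$ splits via Cauchy-Schwarz into $\|v\|_{2;Q_T}\|u\|_{2;Q_T}$ plus $\|v\|_{2;Q_T}\|Q\|_{2;Q_T}$, and \lemref{short time: Lp estimate of Q} (applied with $p=2$) controls $\|Q\|_{2;Q_T}$ by $|T|_{0;Q_T}\|v\|_{2;Q_T}$. For the third term, \lemref{estimate of linearized: energy estimate on Q} already bounds $\tfrac12\int_{Q_T}|v|^2\p_t\om_\vphi^n$ by
\begin{equation*}
\delta\,\|\nabla_\vphi v\|^2_{2;Q_T,g_\vphi} + C\bigl(\|v\|^2_{2;Q_T,g_\vphi}+\|u\|^2_{2;Q_T,g_\vphi}+\|w\|^2_{2;M,g_\vphi}+\|\triangle_\vphi w\|^2_{2;M,g_\vphi}\bigr),
\end{equation*}
which is exactly the form needed. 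Collecting everything, the $\delta\|\nabla_\vphi v\|^2_{2;Q_T,g_\vphi}$ piece on the right is absorbed into the LHS by choosing $\delta$ sufficiently small (consistent with the standing smallness assumption \eqref{short time: vphi small}).

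Finally, the residual $\|v\|_{2;Q_T,g_\vphi}^2$ appearing from the Cauchy-Schwarz bound and from \lemref{estimate of linearized: energy estimate on Q} is controlled by $T\sup_{[0,T]}\|v\|_{2;M,g_\vphi}^2$, and the previous lemma \eqref{estimate of linearized: energy estimate of v} bounds this supremum by exactly $C(\|u\|_{L^2(Q_T,g_\vphi)}+\|w\|_{2;M,g_\vphi}+\|\triangle_\vphi w\|_{2;M,g_\vphi})^2$. Chaining these estimates produces \eqref{estimate of linearized: energy estimate of nabla v}.

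The main technical subtlety, and the place where the argument is not a routine heat-equation energy estimate, is the handling of $\p_t\om_\vphi^n$: because $\vphi$ only solves the auxiliary Monge-Amp\`ere flow \eqref{short time: app sol}, this term produces an extra $\triangle_\vphi(\log(\om_\vphi^n/\om^n)-\tilde P)$ factor which, after integration by parts against $v^2$, reintroduces gradient terms. Controlling those requires both the smallness \eqref{short time: vphi small} and the $L^2$-bound on $\triangle_\vphi v$ from \lemref{estimate of linearized: L^p Kahler}, in turn feeding the nonlocal $Q$ back in through \lemref{short time: Lp estimate of Q}. This tightly coupled use of the three earlier lemmas is the delicate step; everything else reduces to Cauchy-Schwarz and absorption.
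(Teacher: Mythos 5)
Your proposal is correct and follows essentially the same route as the paper: test the equation with $v$, integrate by parts, bound the boundary term by $\tfrac12\|w\|^2_{2;M,g_\vphi}$, invoke Lemma \ref{short time: Lp estimate of Q} for the $Q$-term and Lemma \ref{estimate of linearized: energy estimate on Q} for the $\p_t\om_\vphi^n$ term, absorb $\delta\|\nabla_\vphi v\|^2$, and control the residual $\|v\|^2_{2;Q_T}$ via the preceding estimate \eqref{estimate of linearized: energy estimate of v}. You in fact make explicit the final step (bounding $\|v\|^2_{2;Q_T}$ by $T\sup_t\|v\|^2_{2;M}$) that the paper leaves implicit.
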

\begin{proof}
Using \eqref{short time: linearized equ} we have
\begin{align*}
||\nabla_\vphi v||^2_{2;Q_T}&=-\int_0^T\int_Mv\tri_\vphi v\om_\vphi^n\wedge dt=\int_{Q_T}\{-v\frac{\p
v}{\p t}+vQ\}\om_\vphi^n\wedge dt
+\int_{Q_T}vu\om_\vphi^n\wedge dt\\
&=-\frac{1}{2}[\int_{Q_T}\p_t(|v|^2\om_\vphi^n)\wedge dt
-\int_{Q_T}|v|^2\p_t\om_\vphi^n\wedge dt]
+\int_{Q_T}v(Q+u)\om_\vphi^n\wedge dt.
\end{align*}
Then applying \lemref{short time: Lp estimate of Q} and \lemref{estimate of linearized: energy estimate on Q} we have
\begin{align*}
||\nabla_\vphi v||^2_{2;Q_T}&\leq\frac{1}{2}||w||^2_{2;M,g_\vphi}+\delta||\nabla_{\vphi}v||^2_{2;Q_T,g_\vphi}
+C(||v||^2_{2;Q_T,g_\vphi}+||u||^2_{2;Q_T,g_\vphi}\\
&+||w||^2_{2;M,g_\vphi}+||\tri_\vphi w||^2_{2;M,g_\vphi})
+C(||v||^2_{2;Q_T,g_\vphi}+||u||^2_{2;Q_T,g_\vphi}).
\end{align*}
Consequently, we get to the following estimate
\begin{align*}
||\nabla_{\vphi}v||^2_{2;Q_T,g_\vphi}
\leq C(||u||^2_{2;Q_T,g_\vphi}+||w||^2_{2;M,g_\vphi}
+||\tri_\vphi w||^2_{2;M,g_\vphi}),
\end{align*} provided that
$\delta\leq\frac{1}{2}$.
\end{proof}
\begin{prop}(Energy inequality)\label{estimate of linearized: energy estimate}If $v$ is the solution of \eqref{short time: linearized equ}, then we have that $v$ is uniformly bounded in $V_2(Q_T)$;
\begin{align*}
||v||_{V_2(Q_T)}=\sup_{0\leq t\leq T}||v||_{2;M}+||\nabla
v||_{2;Q_T}\leq C[||w||_{2;M}+||\tri_\vphi w||_{2;M}+||u||_{2;Q_T}],
\end{align*}
where $C$ depends on $n, Q_T, \l_0, \L_0, \tilde{P},
|T|_{C^{0}(Q_T)}$ and the moduli of continuity of $g^{i\bar j}_\vphi$.
\end{prop}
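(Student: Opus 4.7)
The plan is to simply combine the two preceding lemmas, namely the $\sup_t L^2$ estimate \eqref{estimate of linearized: energy estimate of v} and the spacetime gradient estimate \eqref{estimate of linearized: energy estimate of nabla v}, and then convert the norms taken with respect to $g_\vphi$ into norms with respect to the fixed background metric $g$. Since this proposition is essentially a packaging statement, I do not expect any genuine new difficulty beyond what has already been done in Subsection \ref{estimate of linearized}.

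\textbf{Step 1 (Equivalence of norms).} Under the standing smallness assumption \eqref{short time: vphi small}, we have $\lambda_0 \om \leq \om_\vphi \leq \Lambda_0 \om$ uniformly in $t\in[0,T]$. Consequently the volume forms $\om^n$ and $\om_\vphi^n$ are comparable, and the inverse metrics $g^{i\bar j}$ and $g_\vphi^{i\bar j}$ are comparable as well. This yields, with constants depending only on $\lambda_0,\Lambda_0,n$, the two-sided bounds
\begin{equation*}
c^{-1}\|\cdot\|_{2;M}\leq \|\cdot\|_{2;M,g_\vphi}\leq c\|\cdot\|_{2;M},\qquad c^{-1}\|\nabla\cdot\|_{2;Q_T}\leq \|\nabla_\vphi\cdot\|_{2;Q_T,g_\vphi}\leq c\|\nabla\cdot\|_{2;Q_T}.
\end{equation*}
Thus all the estimates already proved with respect to $g_\vphi$ carry over to the corresponding estimates with respect to $g$, at the cost of harmless constants absorbed into $C$.

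\textbf{Step 2 (Combining the two lemmas).} Applying \eqref{estimate of linearized: energy estimate of v} gives a bound on $\sup_{0\leq t\leq T}\|v\|_{2;M,g_\vphi}$, and \eqref{estimate of linearized: energy estimate of nabla v} gives a bound on $\|\nabla_\vphi v\|_{2;Q_T,g_\vphi}$. Both are controlled by the same right-hand side
\begin{equation*}
C\bigl(\|w\|_{2;M,g_\vphi}+\|\tri_\vphi w\|_{2;M,g_\vphi}+\|u\|_{2;Q_T,g_\vphi}\bigr).
\end{equation*}
Adding the two bounds, converting to the background metric via Step 1, and recalling the definition
\begin{equation*}
\|v\|_{V_2(Q_T)}=\sup_{0\leq t\leq T}\|v\|_{2;M}+\|\nabla v\|_{2;Q_T},
\end{equation*}
we obtain the desired inequality
\begin{equation*}
\|v\|_{V_2(Q_T)}\leq C\bigl[\|w\|_{2;M}+\|\tri_\vphi w\|_{2;M}+\|u\|_{2;Q_T}\bigr],
\end{equation*}
where $C$ depends on $n,Q_T,\lambda_0,\Lambda_0,\tilde P,|T|_{C^0(Q_T)}$ and the modulus of continuity of $g^{i\bar j}_\vphi$, exactly as in the two component lemmas.

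\textbf{Anticipated obstacle.} The only subtlety is to make sure that in replacing $\tri_\vphi w$ by $\tri w$ (or vice versa) one does not lose control: but in fact the conclusion is stated in terms of $\|\tri_\vphi w\|_{2;M}$, so no such replacement is needed, and the identification of the $V_2$-norm with the sum appearing on the left-hand side of the two preceding lemmas is immediate. Hence the proposition follows formally by addition, and this is the shortest step in Subsection \ref{estimate of linearized}.
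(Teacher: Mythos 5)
Your proposal is correct and follows essentially the same route as the paper: the paper's own proof consists precisely of establishing the uniform equivalence of the $g_\vphi$- and $g$-norms (via simultaneous diagonalization in normal coordinates and the bounds $\l g\leq g_\vphi\leq\L g$) and then adding the estimates \eqref{estimate of linearized: energy estimate of v} and \eqref{estimate of linearized: energy estimate of nabla v}. The only cosmetic difference is that the paper spells out the equivalence for the full Hessian contraction defining $||\tri_\vphi w||_{2;M,g_\vphi}$, which your Step 1 covers implicitly by the comparability of the inverse metrics and volume forms.
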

\begin{proof}
One can triangulate $g$
and $g_\vphi$ at the same time in the normal coordinates such that
$g_{i\bar j}=\delta_{ij}$ and $g_{\vphi i\bar j}=\delta_{ij}+\vphi_{i\bar i}$.
Then we get
\begin{equation*}
\frac{1}{\L_0^2}w_{i\bar
k}w_{k\bar i}\leq g_\vphi^{i\bar j}g_\vphi^{k\bar l}w_{i\bar l}w_{k\bar
j}=\frac{1}{1+\vphi_{i\bar i}}\frac{1}{1+\vphi_{k\bar
k}}w_{i\bar k}w_{k\bar i}\leq\frac{1}{\l_0^2}w_{i\bar
k}w_{k\bar i},
\end{equation*} since $\l g\leq g_{\vphi}\leq\L g$.
Accordingly, \begin{equation*}
||\tri_\vphi w||^2_{2;M,g_\vphi}
=\int_Mg_\vphi^{i\bar j}g_\vphi^{k\bar l}w_{i\bar l}w_{k\bar j}\om_\vphi^n
\end{equation*} is equivalent to $||\tri_\vphi w||^2_{2;M,g}$.
Analogously, $||\nabla v||_{2;M,g_\vphi}$ and $||v||_{2;M,g_\vphi}$ are equivalent to $||\nabla v||_{2;M,g}$ and $||v||_{2;M,g}$ respectively.
As a result we combine \eqref{estimate of linearized: energy estimate of v}
and \eqref{estimate of linearized: energy estimate of nabla v}
with the conclusion above to
obtain the energy inequality.
\end{proof}
We introduce the following imbedding theorem (\cite{MR0241822} Page 77).
Let $m=2n$ be the real dimension.
\begin{thm}(Lady\u{z}enskaja, Solonnikov and Ural'ceva \cite{MR0241822})\label{estimate of linearized: sobolev v2}
If $v\in V_2(Q_T)$, one has the imbedding estimate $$|v|_{W^{q,r}(Q_T)}\leq C|v|_{V_2(Q_T)}$$ for
$$C=2\b(r,n,q)+T^{\frac{1}{r}}V^{-\frac{1}{2}+\frac{1}{q}}$$ and $q,r$ satisfying the relation
$\frac{1}{r}+\frac{m}{2q}=\frac{m}{4}$ with
\begin{equation*}
\begin{cases}
r\in[2,\infty],q\in[2,\frac{2m}{m-2}] \text{ for } m>2;\\
r\in[2,\infty],q\in[2,\infty] \text{ for } m=2.\\
\end{cases}
\end{equation*}
\end{thm}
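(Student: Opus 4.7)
The statement is the standard parabolic Sobolev embedding of Ladyženskaja--Solonnikov--Ural'ceva; my plan is to reproduce their argument adapted to the compact manifold $M$. The key observation is that the scaling relation $\frac{1}{r}+\frac{m}{2q}=\frac{m}{4}$ is exactly what couples the two pieces of the $V_2$-norm (namely $\sup_t\|v(t)\|_{L^2}$ and $\|\nabla v\|_{L^2(Q_T)}$) so that the interpolation exponent from Gagliardo--Nirenberg hits $r\theta=2$ on the nose.

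First I would apply the classical Gagliardo--Nirenberg--Sobolev inequality on the compact Riemannian manifold $(M,g)$, pointwise in $t$: for almost every $t\in[0,T]$,
\begin{equation*}
\|v(t)\|_{L^q(M)} \;\le\; \beta(n,q)\,\|v(t)\|_{L^2(M)}^{1-\theta}\,\|\nabla v(t)\|_{L^2(M)}^{\theta},
\qquad \theta=\frac{m(q-2)}{2q},
\end{equation*}
valid for $q$ in the stated range. On a compact manifold this can either be proved directly or be reduced to the Euclidean inequality via a finite partition of unity subordinate to a coordinate atlas, absorbing the lower-order term $\|v\|_{L^2}$ into the right-hand side (this is what generates the additive term $T^{1/r}V^{-1/2+1/q}$ from Hölder in space with the constant function).

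Second I would raise both sides to the $r$-th power and integrate in $t$:
\begin{equation*}
\int_0^T\|v(t)\|_{L^q}^{r}\,dt \;\le\; \beta^{r}\int_0^T \|v(t)\|_{L^2}^{r(1-\theta)}\|\nabla v(t)\|_{L^2}^{r\theta}\,dt.
\end{equation*}
A direct computation using the scaling relation gives $r\theta=2$ and $r(1-\theta)=r-2$; hence the right-hand side is bounded by
\begin{equation*}
\beta^{r}\,\Bigl(\sup_{0\le t\le T}\|v(t)\|_{L^2}\Bigr)^{r-2}\int_0^T\|\nabla v(t)\|_{L^2}^{2}\,dt \;\le\; \beta^{r}\,\|v\|_{V_2(Q_T)}^{r}.
\end{equation*}
Taking $r$-th roots yields the estimate with leading constant $\beta(r,n,q)$. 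The extra additive piece $T^{1/r}V^{-1/2+1/q}$ handles the endpoint cases (e.g.\ $r=\infty$ or $q$ at the lower boundary $q=2$) where the interpolation degenerates; there one falls back on the trivial bounds $\|v(t)\|_{L^q}\le V^{1/q-1/2}\|v(t)\|_{L^2}$ and integrates in $t$ with a Hölder in time.

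The main obstacle I anticipate is bookkeeping the constant rather than any conceptual difficulty: one must verify that the factor $2\beta(r,n,q)+T^{1/r}V^{-1/2+1/q}$ is uniform in $q,r$ across the admissible range and, in particular, behaves correctly at the two endpoints (when $\theta=0$ or $\theta=1$) where Gagliardo--Nirenberg alone does not apply. A secondary technical point is that the manifold version of Gagliardo--Nirenberg requires a uniform covering by charts with controlled distortion; since $M$ is compact this is automatic but should be recorded so that the Sobolev constant $\beta$ depends only on $(M,g,n,q,r)$ and not on $v$. Once these are handled, the conclusion $v\in W^{q,r}(Q_T)$ with the stated bound follows directly from the chain above.
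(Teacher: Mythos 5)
The paper does not prove this statement at all: it is quoted verbatim as a known imbedding theorem from Lady\u{z}enskaja--Solonnikov--Ural'ceva (the citation to \cite{MR0241822}, p.~77), so there is no ``paper's proof'' to compare against. Your argument is the standard textbook proof of that result, and its core is correct: the Gagliardo--Nirenberg interpolation with $\theta=\frac{m(q-2)}{2q}$, combined with the relation $\frac1r+\frac{m}{2q}=\frac{m}{4}$, does give $r\theta=2$ and $r(1-\theta)=r-2$, after which pulling $\bigl(\sup_t\|v\|_{L^2}\bigr)^{r-2}$ out of the time integral and bounding $a^{r-2}b^2\le(a+b)^r$ yields exactly the $V_2$-norm. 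Your identification of where the additive piece $T^{1/r}V^{-1/2+1/q}$ comes from (the zeroth-order part that scale-invariant Gagliardo--Nirenberg cannot see on a closed manifold or bounded domain --- concretely, splitting off the spatial mean $\bar v$, bounding $\|\bar v\|_{L^q}\le V^{1/q-1/2}\|v\|_{L^2}$ by Cauchy--Schwarz, and integrating in $t$) is also the right mechanism. One small correction: the inequality $\|v(t)\|_{L^q}\le V^{1/q-1/2}\|v(t)\|_{L^2}$ that you invoke for the ``endpoint fallback'' is \emph{false} for $q>2$ (H\"older runs the other way); it is only an identity at $q=2$, $r=\infty$, where the claim reduces to $\sup_t\|v\|_{L^2}\le\|v\|_{V_2}$ anyway. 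The additive term is not there to rescue degenerate endpoints of the interpolation but, as you correctly say earlier, to absorb the mean/constant mode uniformly over the whole admissible range of $(q,r)$.
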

The Sobolev imbedding theorem states:
\begin{thm}(Lady\u{z}enskaja, Solonnikov and Ural'ceva \cite{MR0241822})\label{estimate of linearized: sobolev parabolic}
Suppose that $v\in W_p^{2k,k}(Q_T),p\geq1,m\geq2$ and $0\leq
r+2s=\mu<2k$.\\
If $(2k-\mu)p<m+2$, then $D_t^sD_x^rv\in L^{q}(Q_T)$ for
$q=\frac{(m+2)p}{m+2-(2k-\mu)p}$;\\
if $(2k-\mu)p>m+2$, then $D_t^sD_x^rv\in C^{\a}(Q_T)$ for
$\a=(2k-\mu)-\frac{m+2}{p}$.
\end{thm}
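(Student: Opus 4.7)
The plan is to reduce the statement to the classical parabolic Sobolev embedding on $\mathbb{R}^m\times\mathbb{R}$ via a partition of unity, and then invoke anisotropic fractional-integration estimates. Since $M$ is compact, cover it by finitely many coordinate charts $\{\Omega_p\}$ with a subordinate smooth partition of unity $\{\eta_p\}$. Each cut-off $\eta_p v$ lies in $W_p^{2k,k}(\Omega_p\times[0,T])$ with norm bounded by $\|v\|_{W_p^{2k,k}(Q_T)}$ up to constants depending only on the background metric $g$ and the $\eta_p$'s. Extending by zero reduces the problem to compactly supported functions on a single Euclidean slab $\mathbb{R}^m\times[0,T]$.

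Next I would express $D_t^s D_x^r v$ as a parabolic Riesz potential applied to a function of mixed order $(2k,k)$. On $\mathbb{R}^m\times\mathbb{R}$ the caloric fundamental solution scales as $\Gamma(\lambda x,\lambda^2 t)=\lambda^{-m}\Gamma(x,t)$, so the natural parabolic dimension is $m+2$. For compactly supported $v\in W_p^{2k,k}$ one writes $D_t^s D_x^r v = I_{2k-\mu}\ast g$, where $g$ is a linear combination of $D_t^k v$ and $D_x^{2k} v$ (hence $\|g\|_{L^p}\le C\|v\|_{W_p^{2k,k}}$) and $I_\beta$ is the parabolic Riesz potential of order $\beta$, whose kernel has homogeneity $-(m+2-\beta)$ in the parabolic metric $d(X,X')=(|x-x'|^2+|t-t'|)^{1/2}$.

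In the subcritical regime $(2k-\mu)p<m+2$, the parabolic Hardy--Littlewood--Sobolev inequality yields $\|I_{2k-\mu}\ast g\|_{L^q}\le C\|g\|_{L^p}$ with $\tfrac{1}{q}=\tfrac{1}{p}-\tfrac{2k-\mu}{m+2}$, which is exactly the exponent in the statement. In the supercritical regime $(2k-\mu)p>m+2$, a Morrey-type estimate for the parabolic Riesz potential produces H\"older continuity in the parabolic metric with exponent $\alpha=(2k-\mu)-\frac{m+2}{p}$. Reassembling via $v=\sum_p\eta_p v$ and absorbing the commutator terms that fall on the cut-offs into lower-order pieces of $\|v\|_{W_p^{2k,k}}$ delivers the global estimate on $Q_T$.

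The main obstacle is the anisotropic scaling: one must build (or cite) the parabolic Riesz-potential inequalities with homogeneity $-(m+2-\beta)$ rather than the isotropic $-(m+1-\beta)$ of the elliptic case. Once the correct parabolic dimension $m+2$ has been identified, both the critical $L^q$ exponent and the H\"older exponent fall out of the standard HLS/Morrey machinery applied to this anisotropic kernel, and the commutators in the partition-of-unity step are handled by induction on the order of differentiation.
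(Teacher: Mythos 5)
This theorem is not proved in the paper at all: it is quoted, with attribution, from Lady\u{z}enskaja--Solonnikov--Ural'ceva \cite{MR0241822}, so there is no in-paper argument to compare yours against. Your sketch is, in substance, the standard proof from that source: localize with a partition of unity, represent the intermediate derivative $D_t^sD_x^rv$ as an anisotropic (parabolic) potential of order $2k-\mu$ acting on the top-order derivatives, and then apply the Hardy--Littlewood--Sobolev inequality, respectively a Morrey-type estimate, for a kernel homogeneous of degree $-(m+2)+(2k-\mu)$ under the scaling $(x,t)\mapsto(\lambda x,\lambda^2t)$. Identifying $m+2$ as the homogeneous dimension is exactly what produces both the exponent $q=\frac{(m+2)p}{m+2-(2k-\mu)p}$ and the H\"older exponent $(2k-\mu)-\frac{m+2}{p}$, so the skeleton is right.

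A few points you should still nail down. The representation $u=\Gamma_k\ast(\partial_t-\Delta)^k u$, with $\Gamma_k$ the $k$-fold convolution of the heat kernel, requires $u$ to vanish for $t\le0$ or a time-extension across $t=0$ preserving the $W_p^{2k,k}$ norm; extending by zero in space alone does not dispose of the initial slice, and the resulting potential is one-sided in $t$, so you should check that it is still dominated pointwise by the parabolic Riesz kernel. Also, $(\partial_t-\Delta)^k u$ is a combination of all mixed derivatives $D_t^jD_x^{2(k-j)}u$, not only $D_t^ku$ and $D_x^{2k}u$; this is harmless since the $W_p^{2k,k}$ norm controls all of them, but it should be stated. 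Finally, the H\"older continuity in the supercritical case is with respect to the parabolic distance $d(X,X_0)=(d(x,x_0)^2+|t-t_0|)^{\frac{1}{2}}$ defined in the paper, i.e.\ the conclusion is $C^{\a,\frac{\a}{2}}$ regularity, and the borderline case $(2k-\mu)p=m+2$ is genuinely excluded. With these repairs your route is correct and coincides with the cited one.
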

These imbedding theorems and the energy inequality imply the following estimate by the standard bootstrap method.
\begin{lem}\label{estimate of linearized: v 1,a}
There exists a positive constant $C$ such that
$$|v|_{C^{1+\a,\frac{1}{2}+\frac{\a}{2}}(Q_T)}\leq
C(||w||_{W^{2,p}(M)}+||u||_{p;Q_T}).$$
\end{lem}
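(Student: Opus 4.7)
The plan is to chain together the energy inequality of Proposition~\ref{estimate of linearized: energy estimate} with the two embedding theorems (Theorem~\ref{estimate of linearized: sobolev v2} and Theorem~\ref{estimate of linearized: sobolev parabolic}) in a bootstrap, pushing the integrability of $v$ step by step until the parabolic Sobolev embedding produces the target Hölder norm. Throughout, the pseudo-differential term $Q$ must be controlled at each stage using Lemma~\ref{short time: Lp estimate of Q}, which gives $\|Q\|_{p;Q_T}\le C\,|T|_{0;Q_T}\|v\|_{p;Q_T}$.

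First I would invoke Proposition~\ref{estimate of linearized: energy estimate} to place $v$ in $V_2(Q_T)$ with norm bounded by $\|w\|_{2;M}+\|\triangle_\vphi w\|_{2;M}+\|u\|_{2;Q_T}$, which is in turn controlled by $\|w\|_{W^{2,p}(M)}+\|u\|_{p;Q_T}$ via Hölder. Applying Theorem~\ref{estimate of linearized: sobolev v2} with admissible exponents satisfying $\tfrac{1}{r}+\tfrac{m}{2q}=\tfrac{m}{4}$ upgrades $v$ to $W^{q_0,r_0}(Q_T)$ for some $q_0>2$. Then Lemma~\ref{short time: Lp estimate of Q} transfers this integrability to $Q$, so the right-hand side $u+Q$ of the first line of \eqref{short time: linearized equ} lies in $L^{q_0}(Q_T)$. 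A standard parabolic $L^p$ estimate applied to the linear equation $\p_t v-\triangle_\vphi v=u+Q$ (the coefficients $g_\vphi^{i\bar j}$ are in $C^{\a}$ by \eqref{short time: vphi small}, so the Calderón–Zygmund theory applies) yields $v\in W^{2,q_0}_{q_0}(Q_T)$ with a bound in terms of $\|w\|_{W^{2,q_0}(M)}+\|u+Q\|_{q_0;Q_T}$.

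Next I would iterate: Theorem~\ref{estimate of linearized: sobolev parabolic} with $k=1$ and $\mu=0$ produces a new $q_1>q_0$ (as long as $(2k-\mu)q_0<m+2$) for which $v\in L^{q_1}(Q_T)$; Lemma~\ref{short time: Lp estimate of Q} promotes $Q$ correspondingly, and a second application of parabolic $L^p$ theory gives $v\in W^{2,q_1}_{q_1}(Q_T)$. Repeating this finitely many times one reaches some exponent $q$ with $(2k-\mu)q>m+2$ at the level $\mu=1$. At that stage, the second clause of Theorem~\ref{estimate of linearized: sobolev parabolic} gives $D^s_tD^r_xv\in C^{\a}(Q_T)$ for $r+2s=1$ with exactly $\a=1-\tfrac{m+2}{q}$, which rearranges to the joint parabolic Hölder regularity $v\in C^{1+\a,\frac{1+\a}{2}}(Q_T)$, and the estimate aggregates to $|v|_{C^{1+\a,\frac12+\frac{\a}{2}}(Q_T)}\le C(\|w\|_{W^{2,p}(M)}+\|u\|_{p;Q_T})$ with $p$ chosen at the end of the bootstrap.

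The main technical obstacle I anticipate is ensuring that the pseudo-differential term $Q$ does not break the iteration: at every step I need $Q$ to be controlled in the same space as $v$ before one can close the parabolic $L^p$ estimate, and this is precisely what Lemma~\ref{short time: Lp estimate of Q} provides, provided $\vphi\in V_\delta$ so that $|T|_{0;Q_T}$ stays bounded. A secondary point is that the parabolic Schauder/$L^p$ theory on the closed manifold $M$ is not stated in textbook form for our operator with $C^{\a}$ coefficients; but Theorem~\ref{regularity of PCF: space reg}, combined with the partition-of-unity localization used in its derivation, supplies the needed statement for the $L^p$ version as well. Once these two ingredients are in place, the bootstrap is routine counting of exponents.
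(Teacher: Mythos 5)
Your proposal is correct and follows essentially the same route as the paper: the energy inequality places $v$ in $V_2(Q_T)$, Theorem~\ref{estimate of linearized: sobolev v2} gives the starting $L^p$ bound with $p=\tfrac{2(m+2)}{m}$, Lemma~\ref{short time: Lp estimate of Q} carries the integrability over to $Q$, and the parabolic $L^p$ theory plus Theorem~\ref{estimate of linearized: sobolev parabolic} are iterated until the exponent exceeds $m+2$ and the H\"older conclusion drops out. The paper's proof is exactly this bootstrap, with the explicit exponent sequence $p_k=\tfrac{2(m+2)}{m-4k}$.
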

\begin{proof}
Owing to the energy inequality,
Proposition~\ref{estimate of linearized: energy estimate},
we have $v\in
V_2(Q_T)$. Then the imbedding
theorem, \thmref{estimate of linearized: sobolev v2}, implies a
uniform $L^{p}(Q_T)$ bound for $p=\frac{2(m+2)}{m}$ i.e.~$$
||v||_{L^{p}(Q_T)}\leq C||v||_{V_2(Q_T)}.$$ According to
~\lemref{short time: Lp estimate of Q}
we obtain that $Q$ has uniform $L^{p}(Q_T)$ bound,
i.e. $$||Q||_{L^{p}(Q_T)}\leq C||v||_{L^{p}(Q_T)}.$$ Then the
parabolic $L^p$ theory tells us $v$ has uniform $W_{p}^{2,1}(Q_T)$
bound; i.e.
\begin{align*}
||v||_{W_{p}^{2,1}(Q_T)}&\leq
C(||v||_{L^{p}(Q_T)}+||Q||_{L^{p}(Q_T)}+||u||_{L^{p}(Q_T)}
+||w||_{W^{2,p}(M)})\\
&\leq C(||v||_{L^{p}(Q_T)}+||u||_{L^{p}(Q_T)}+||w||_{W^{2,p}(M)}).
\end{align*}
The Sobolev imbedding theorem, \thmref{estimate of linearized: sobolev parabolic}, implies $$v\in L^{p_1}(Q_T)$$ for
$p_1=\frac{(m+2)p}{m+2-2p}=\frac{2(m+2)}{m-4}>p$.

Analogously, we get $Q\in L^{p_1}(Q_T)$ and
$v\in W_{p_1}^{2,1}(Q_T)$. Repeating the similar
process, we derive $p_k=\frac{2(m+2)}{m-4k}>m+2$ for some step $k$.
Then according to the Sobolev imbedding
theorem, \thmref{estimate of linearized: sobolev parabolic}, we have $$v\in C^{1+\a,\frac{1+\a}{2}}(Q_T).$$ Thus the lemma follows.
\end{proof}

\begin{prop}\label{estimate of linearized: v 2+a,1}
There exists a positive constant $C$ such that
$$|v|_{C^{2+\a,1}(Q_T)}\leq
C(|w|_{C^{2,\a}(M)}+\sup_{[0,T]}|u|_{C^{\a}(M)}).$$
\end{prop}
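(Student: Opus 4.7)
The plan is to bootstrap the $C^{1+\a,(1+\a)/2}$ bound of Lemma \ref{estimate of linearized: v 1,a} up to the full $C^{2+\a,1}$ bound by invoking the optimal parabolic regularity result (Theorem \ref{regularity of PCF: space reg}) applied to
\[
\partial_t v - \triangle_\varphi v = Q + u
\]
viewed as a linear parabolic equation in $v$ with known source. The leading coefficients $g_\varphi^{i\bar j}$ lie in $C^\a(M)$ uniformly in $t$ because $\varphi \in V_\delta$, and the initial datum $w$ is already in $C^{2,\a}(M)$, so the only remaining ingredient is to control $Q+u$ in $C^{\a,0}(M\times[0,T])$ by the data norms $|w|_{C^{2,\a}(M)}+\sup_{[0,T]}|u|_{C^{\a}(M)}$.

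First, I would revisit the iteration inside the proof of Lemma \ref{estimate of linearized: v 1,a}: the same bootstrap actually produces $v\in W^{2,1}_p(Q_T)$ for any $p<\infty$, with norm controlled by $|w|_{W^{2,p}(M)}+\|u\|_{p;Q_T}$. In particular $D^2 v\in L^p(Q_T)$ with $p$ as large as needed. Freezing $t$ and feeding this bound into the elliptic equation $\triangle_\varphi Q = v^{i\bar j}T_{i\bar j}$, and using Lemma \ref{short time: P regular} to guarantee $T_{i\bar j}\in C^0([0,T],C^\a(M))$, the elliptic $L^p$ theory yields $Q(\cdot,t)\in W^{2,p}(M)$ uniformly in $t$. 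A Sobolev embedding then puts $Q$ in $C^0([0,T],C^{1+\gamma}(M))$ for some $\gamma\in(0,1)$, which continuously embeds into $C^{\a,0}(M\times[0,T])$ for $\a<\gamma$; continuity in $t$ is inherited from Lemma \ref{short time: Q regular} combined with the continuity of $v$ in $t$.

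Second, with $Q+u\in C^{\a,0}(M\times[0,T])$ in hand, I apply Theorem \ref{regularity of PCF: space reg} (appealing to the Kru\u{z}kov--Kastro--Lopes variant mentioned in the remark after Theorem \ref{short time linearized: local sol} to absorb the mild time-dependence of the leading coefficients) to obtain
\[
|v|_{C^{2+\a,1}(Q_T)} \leq C\bigl(|w|_{C^{2,\a}(M)} + |Q+u|_{C^{\a,0}(Q_T)}\bigr).
\]
The step-one bound on $|Q|_{C^{\a,0}}$, chained with the energy inequality of Proposition \ref{estimate of linearized: energy estimate} and the embedding $C^{2,\a}(M)\hookrightarrow W^{2,p}(M)$, then reduces the right-hand side to $|w|_{C^{2,\a}(M)}+\sup_{[0,T]}|u|_{C^\a(M)}$, which is the desired conclusion.

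The delicate point I anticipate is keeping the constant in the bootstrap for $|Q|_{C^{\a,0}}$ genuinely independent of $|v|_{C^{2+\a,1}}$, so that the argument does not collapse into circularity. The right-hand side $v^{i\bar j}T_{i\bar j}$ of the equation for $Q$ pairs second derivatives of $v$ against $T$, which is only $C^\a$, so Schauder theory is unavailable at that stage. The resolution is to remain within $L^p$ theory on the elliptic equation for $Q$ and harvest Hölder control solely through Sobolev embedding, reserving the Schauder-type Theorem \ref{regularity of PCF: space reg} exclusively for the final parabolic step on $v$.
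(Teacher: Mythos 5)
Your overall architecture (control $Q+u$ in $C^{\a,0}$, then invoke \thmref{regularity of PCF: space reg} with $w\in C^{2,\a}$) matches the paper's, but the mechanism you propose for the $Q$-estimate has a genuine gap. You claim that the bootstrap gives $D^2_xv\in L^p(Q_T)$ and that ``freezing $t$'' and applying elliptic $L^p$ theory to $\tri_\vphi Q=v^{i\bar j}T_{i\bar j}$ yields $Q(\cdot,t)\in W^{2,p}(M)$ \emph{uniformly in $t$}. That uniformity is not available: $v\in W^{2,1}_p(Q_T)$ only controls $\int_0^T\|D^2_xv(\cdot,t)\|^p_{L^p(M)}\,dt$, and the trace of $W^{2,1}_p$ at a fixed time lands only in $W^{2-2/p,p}(M)$; the parabolic embedding \thmref{estimate of linearized: sobolev parabolic} with $k=1$, $\mu=2$ requires $(2k-\mu)p>m+2$, i.e.\ $0>m+2$, so no value of $p$ gives pointwise-in-$t$ control of second space derivatives. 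Consequently $\|v^{i\bar j}T_{i\bar j}(\cdot,t)\|_{L^p(M)}$ is not bounded uniformly in $t$ by the data, and you cannot reach $\sup_{[0,T]}|Q(\cdot,t)|_{C^{\a}(M)}$, which is precisely the $C^{\a,0}$ norm that \thmref{regularity of PCF: space reg} needs. (A secondary circularity: \lemref{short time: Q regular} presupposes $v\in C^0([0,T],C^{2+\a}(M))$, so it cannot be used to supply continuity of $Q$ in $t$ at this stage.)

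The missing idea is structural: $T_{i\bar j}=-P_{i\bar j}+R_{i\bar j}(\om)$ is a \emph{harmonic} $(1,1)$-tensor, hence divergence-free, so the source can be rewritten as $\tri_\vphi Q=[v^{i}T_{i\bar j}]^{\bar j}$ --- the divergence of a quantity involving only $\nabla v$. The paper localizes with a partition of unity and applies the Schauder estimate for divergence-form data (Gilbarg--Trudinger, Corollary 8.35) to get $\sup_{[0,T]}|Q|_{C^{1,\a}(M)}\leq C(\sup_{[0,T]}|Q|_{0;M}+\sup_{[0,T]}|v|_{C^{1,\a}(M)})$, where the right-hand side is already under control by \lemref{estimate of linearized: v 1,a}; the remaining $\sup_M|Q|$ is handled by the De Giorgi--Nash type bound (Gilbarg--Trudinger, Theorem 8.16) plus interpolation, which leaves only an $\eps\sup_{[0,T]}|v|_{C^{2}(M)}$ term that is absorbed into the left-hand side of the final parabolic Schauder estimate. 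Without exploiting this divergence structure, the pairing of $D^2v$ against the merely $C^\a$ tensor $T$ cannot be upgraded to a sup-in-$t$ H\"older bound, and the argument does not close.
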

\begin{proof}
Because of \lemref{short time: app coe t} one can select a sequence $\vphi_n\in C^\infty(Q_T)$ and $T_{ni\bar{j}}\in
C^\infty(Q_T)$ such that $\vphi_{n}\rightarrow \vphi\in
C^0([0,T],C^{2,\a})$ and $T_{ni\bar{j}}\rightarrow T_{i\bar{j}}\in
C^0([0,T],C^{\a})$. We omit the standard approximation argument in the sequel of the proof.
Since $M$ is compact,
$M$ can be covered by finite number of balls $\{B_p(2r)\}$
with radii $2r$. Let $0\leq\eta_p\leq1$
be a smooth partition of unity
subordinate to the covering $\{B_p(2r)\}$. Since $T_{i\bar j}$ is a harmonic tensor, using \eqref{short time: Q} we have
\begin{align*}
\tri_\vphi Q=[v^{i}T_{i\bar{j}}]^{\bar{j}}.
\end{align*}
Now on each ball $B(2r)\in \{B_p(2r)\}$,
we calculate
\begin{align}\label{short time: eta Q}
\tri_\vphi (\eta Q)=[\eta v^{i}T_{i\bar{j}}]^{\bar{j}}-\eta^{\bar{j}} v^{i}T_{i\bar{j}}
+\tri_\vphi \eta  Q + 2\nabla_\vphi \eta  \nabla_\vphi Q.
\end{align}
Then the Schauder estimate for the first
derivatives in \cite{MR1814364} (Corollary 8.35) provides us the following estimate
\begin{align*}
\sup_{[0,T]}|\eta  Q|_{C^{1,\a}(B(r))}
&\leq C(\sup_{[0,T]}|Q|_{C^{0}(B(2r))}
+\sup_{[0,T]}|v^{i}T_{i\bar{j}}|_{C^{\a}(B(2r))}\\
&+\sup_{[0,T]}|-\eta^{\bar{j}} v^{i}T_{i\bar{j}}
+\tri_\vphi \eta  Q
+2\nabla_\vphi\eta \nabla_\vphi Q|_{C^{0}(B(2r))}.
\end{align*}
Combining these estimates on each covering and using the interpolation inequality for the H\"older space,
we obtain
\begin{align}\label{short time: Q 1 a}
\sup_{[0,T]}|Q|_{C^{1,\a}(M)}
\leq C(\sup_{[0,T]}|Q|_{0;M}+\sup_{[0,T]}|v|_{C^{1,\a}(M)}).
\end{align}
Since $\eta v$ satisfies \eqref{short time: eta Q} on the ball $B(2r)$,
the $L^\infty$ bound of $Q$ follows
from Theorem 8.16 in \cite{MR1814364}, for some $q>n$
\begin{align*}
\sup_{B_{2r}}|\eta Q|
&\leq C(||\eta v^{i}T_{i\bar{j}}||_{L^q(B_{2r})}
+||-\eta^{\bar{j}}v^{i}T_{i\bar{j}}
+\tri_\vphi\eta  Q
+2\nabla_\vphi\eta\nabla_\vphi Q||_{L^{\frac{q}{2}}(B_{2r})})\\
&\leq C(|v|_{C^1}(B_{2r})+||Q||_{L^{\frac{q}{2}}(B_{2r})}
+||\nabla Q||_{L^{\frac{q}{2}}(B_{2r})}).
\end{align*}
Note that the interpolation inequality
for the Sobolev space implies
$$||\nabla Q||_{L^{\frac{q}{2}}(B_{2r})}
\leq \eps||Q||_{W^{2,\frac{q}{2}}(B_{2r})}
+C(\eps)||Q||_{L^{\frac{q}{2}}(B_{2r})};$$
moreover, the $L^p$ estimate of \eqref{short time: Q} gives that
$$||Q||_{W^{2,\frac{q}{2}}(B_{2r})}\leq C(||Q||_{L^\frac{q}{2}(M)}+|v|_{C^2}(M)).$$
Combining the above estimates, we infer that
\begin{align}\label{estimate of linearized: Q C0}
\max_{M}|Q|\leq\sum_q\max_{B_q(2r)}|\eta_q Q|
\leq C(|v|_{C^1(M)}+||Q||_{L^{\frac{q}{2}}(M)}
+\eps|v|_{C^{2}(M)}).
\end{align}
Plugging \lemref{estimate of linearized: v 1,a} and
\eqref{short time: Q 1 a} in
\thmref{regularity of PCF: space reg} we obtain
\begin{align*}
&\sup_{[0,T]}|\p_tv|_{C^{\a}(M)}
+\sup_{[0,T]}|v|_{C^{2,\a}(M)}\\
&\leq
C(\sup_{[0,T]}|Q|_{C^{\a}(M)}+|w|_{C^{2,\a}(M)}
+\sup_{[0,T]}|u|_{C^{\a}(M)})\\
&\leq
C(\sup_{[0,T]}|Q|_{0;M}+\sup_{[0,T]}|v|_{C^{1,\a}(M)}+|w|_{C^{2,\a}(M)}
+\sup_{[0,T]}|u|_{C^{\a}(M)})\\
&\leq C(\sup_{[0,T]}|Q|_{0;M}+
|w|_{C^{2,\a}(M)}+\sup_{[0,T]}|u|_{C^{\a}(M)}).
\end{align*}
Then \eqref{estimate of linearized: Q C0} gives the bound of the right-hand side
\begin{align*}
C(\eps\sup_{[0,T]}|v|_{C^{2}(M)}+
|w|_{C^{2,\a}(M)}+\sup_{[0,T]}|u|_{C^{\a}(M)})
\end{align*}
which implies the estimate of $v$
\begin{align}\label{estimate of linearized: est v}
\sup_{[0,T]}|\p_tv|_{C^{\a}(M)}
+\sup_{[0,T]}|v|_{C^{2,\a}(M)}
\leq C(|w|_{C^{2,\a}(M)}+\sup_{[0,T]}|u|_{C^{\a}(M)})
\end{align}
provided $C\eps<\frac{1}{2}$.
\end{proof}
\begin{proof}(proof of \thmref{estimate of linearized: est})
Now we complete our proof of \thmref{estimate of linearized: est} by contraction. If the time of existence cannot be extended beyond some $t_0<T$, then the $C^{2,\a}$ norm must blow up at time $t_0$.
This contradicts
the a priori $C^{2,\a}$ estimates, Proposition~\ref{estimate of linearized: v 2+a,1}, we derived.
\end{proof}


Finally we consider the time regularity of the solution.
\begin{lem}\label{estimate of linearized: con in t}
Let
$w\in C^{2,\a}$ and $u\in C^0([0,T],C^{2,\a})$. Then the solution of \eqref{short time: linearized equ} stays in
$$C^0([0,T],C^{2,\a})\cap C^1([0,T],C^{\a}).$$
\end{lem}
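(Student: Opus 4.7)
The estimate in Proposition \ref{estimate of linearized: v 2+a,1} already furnishes uniform boundedness of $t \mapsto v(t)$ in $C^{2,\a}(M)$ and of $t \mapsto \p_t v(t)$ in $C^{\a}(M)$, but the lemma asks for genuine strong continuity of these maps. The plan is a density/approximation argument exploiting that $C^{2,\a}(M,g)$ is, by definition in this paper, the little H\"older space (the closure of $C^\infty(M)$ in the $C^{2,\a}$ norm). Under this structure, smooth data are dense in the data space, and the a priori estimate of Proposition \ref{estimate of linearized: v 2+a,1} is already strong enough to transfer continuity from smooth approximants to the limit.

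First, I would choose $w_n \in C^\infty(M)$ with $w_n \to w$ in $C^{2,\a}(M)$ and $u_n \in C^\infty(M\times[0,T])$ with $u_n \to u$ in $C^0([0,T], C^{2,\a}(M))$, and simultaneously approximate $\vphi$ and the associated coefficients $T_{i\bar j}$, $\tilde P$ by smooth ones via \lemref{short time: app coe t}. Second, for the resulting smooth linear Cauchy problem, the classical parabolic theory cited in \cite{MR0241822} produces a smooth solution $v_n \in C^\infty(M\times[0,T])$, for which $t \mapsto v_n(t) \in C^{2,\a}$ and $t \mapsto \p_t v_n(t) \in C^{\a}$ are trivially continuous.

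Third, by linearity the difference $v_n - v_m$ solves the same linearized system \eqref{short time: linearized equ} with data $(w_n - w_m, u_n - u_m)$, and the a priori estimate of Proposition \ref{estimate of linearized: v 2+a,1} applied to this difference gives
\begin{equation*}
\sup_{[0,T]}|v_n - v_m|_{C^{2,\a}(M)} + \sup_{[0,T]}|\p_t(v_n - v_m)|_{C^{\a}(M)} \leq C\bigl(|w_n - w_m|_{C^{2,\a}(M)} + \sup_{[0,T]}|u_n - u_m|_{C^{\a}(M)}\bigr),
\end{equation*}
which tends to zero. Hence $\{v_n\}$ is Cauchy in $C^0([0,T], C^{2,\a}(M)) \cap C^1([0,T], C^{\a}(M))$; by the uniqueness already established in \thmref{estimate of linearized: est} its uniform limit must coincide with $v$, and therefore $v$ itself inherits the desired strong continuity.

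The main technical point I expect to wrestle with is the simultaneous approximation of the coefficients: one must check that replacing $\vphi$ by a smooth $\vphi_n$ produces only small errors in $g_\vphi^{i\bar j}$, in $T_{i\bar j}$, and in $\tilde P$, uniformly in $t$, so that the smooth $v_n$ constructed from the mollified system indeed approximate $v$ in the stronger norms. This stability is however already implicit in the Schauder/$L^p$ machinery behind Proposition \ref{estimate of linearized: v 2+a,1}, combined with Lemmas \ref{short time: P regular}, \ref{short time: Q regular} and \ref{short time: app coe t}, so no ideas beyond those already developed in this subsection should be required.
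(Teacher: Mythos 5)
Your overall strategy --- approximate the data, solve the approximate problems, and use the a priori estimate on differences to get a Cauchy sequence in $C^0([0,T],C^{2,\a})$ --- is exactly the paper's strategy. The paper chooses $w_n\in C^{\a+\eps}(M)$ and $u_n\in C^0([0,T],C^{\a+\eps}(M))$ (only slightly more regular data, \emph{not} smooth, and with the coefficients $\vphi$, $T_{i\bar j}$, $\tilde P$ left untouched), invokes \thmref{short time linearized: local sol} and the estimate \eqref{estimate of linearized: est} to get $v_n\in C^{2+\a+\eps,1}(M\times[0,T])$, verifies directly that such $v_n$ lie in $C^0([0,T],C^{2,\a})\cap C^1([0,T],C^{\a})$, and then applies \eqref{estimate of linearized: est} to $v_m-v_n$.

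The genuine problem with your version is the third step. Because you insist on simultaneously mollifying the coefficients, $v_n$ solves the system built from $\vphi_n$ while $v_m$ solves the one built from $\vphi_m$; by linearity $v_n-v_m$ then satisfies \emph{not} the clean system with data $(w_n-w_m,\,u_n-u_m)$, but that system with the additional source terms $(\tri_{\vphi_n}-\tri_{\vphi_m})v_m$ and $(Q_n-Q_m)(v_m)$ on the right-hand side. Your displayed inequality, which is the crux of the Cauchy argument, is therefore false as written. It is repairable --- the commutator terms are controlled by $\max_{[0,T]}|\vphi_n-\vphi_m|_{C^{2,\a}}$ times the uniform bound $\sup_{[0,T]}|v_m|_{C^{2,\a}}$ from Proposition~\ref{estimate of linearized: v 2+a,1}, using \lemref{short time: P regular} and \lemref{short time: Q diff} --- but you must actually carry this out; waving at ``the Schauder/$L^p$ machinery'' does not close it. The cleaner fix is to notice that the coefficient mollification is unnecessary: the existence theory of this subsection already works with the original $C^{2,\a}$-in-space coefficients, and data in $C^{\a+\eps}$ suffice to produce $v_n\in C^{2+\a+\eps,1}$, whose continuity into the strictly weaker $C^{2,\a}$ topology follows by interpolation. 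Keeping the operator fixed is precisely what makes the difference $v_n-v_m$ satisfy the difference system, and hence what makes your key estimate legitimate. (A minor further point: the uniqueness you appeal to in identifying the limit with $v$ comes from Proposition~\ref{short time: Linearized iso} and the a priori estimates, not from \thmref{estimate of linearized: est}, which is an existence statement.)
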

\begin{proof}
Choose two sequences $w_n\in C^{\a+\eps}(M)$ and $u_n \in C^0([0,T],C^{\a+\eps}(M))$ for $0\leq\eps<1-\a$ such that
\begin{align*}
&w_{n}\rightarrow w \in C^{\a}(M)\text{ and } u_{n}\rightarrow u \in C^0([0,T],C^{\a}(M)).
\end{align*}
According to \thmref{short time linearized: local sol}
and the estimate \eqref{estimate of linearized: est}, for each $n$
there exits a solution $v_n\in C^{2+\a+\eps,1}(M\times[0,T])$
of the linearized equation
 \begin{equation*}
  \left\{
   \begin{aligned}
\frac{\p v}{\p t}-\tri_{\vphi}v-Q(v)
&=u_n \text{ in } Q_T,\\
\tri_{\vphi} Q(v)&=v^
{i\bar{j}}T_{i\bar{j}} \text{ in } Q_T,\\
v(0)&=w_n. \\
   \end{aligned}
  \right.
\end{equation*} It is directly verified that
$v_n\in C^0([0,T],C^{2,\a})\cap C^1([0,T],C^{\a})$.
Moreover, \eqref{estimate of linearized: est} implies
for any $t,s\in [0,T]$,
\begin{align*}
\sup_{[0,T]}|v_m-v_n|_{C^{2,\a}(M)}
\leq C(|w_m-w_n|_{C^{2,\a}(M)}
+\sup_{[0,T]}|u_m-u_n|_{C^{\a}(M)}).
\end{align*}
From this we immediately
conclude that $v_n$ is a Cauchy sequence
in $C^0([0,T],C^{2,\a})$. Hence we obtain $v\in C^0([0,T],C^{2,\a})$ and the lemma follows from \eqref{short time: linearized equ}.
\end{proof}

\subsection{Regularity of the pseudo-Calabi flow}\label{regularity of PCF}
We start with recalling an interior regularity theorem
for the linear equation
in \cite{MR0241822} and \cite{MR1465184}.
Let $Q=M\times (T_1,T_2)$
and $Q'=M\times(T_3,T_4)\subset Q$,
where $0<T_1<T_3<T_4<T_2$.

\begin{thm}($L^p$ interior estimate \cite{MR0241822}\cite{MR1465184})\label{regularity of PCF: Lp estimate of standard equ}
 Let $u\in W_{loc}^{k+2,p}(Q)$ for some $p\geq 2$
 be the solution of the linear
 equation \eqref{regularity of PCF: linear parabolic}
that satisfies the following conditions
$$a^{ij}\xi_i\xi_j\geq\l|\xi|^2>0 \text{ for any }\xi
\in R^n\backslash\{0\},$$
$$|a^{ij}|_{W^{k,\infty}(Q)}+|b^i|_{W^{k,\infty}(Q)}
+|c|_{W^{k,\infty}(Q)}\leq M$$ and
$$|a^{ij}(X)-a^{ij}(Y)|=\om(\frac{|X-Y|}{d(X,Y)})$$
for some positive constants $\l$ and $M$ and a positive,
continuous, increasing
function $\om$ with $\om(0)= 0$.
Then for any $T_3-T_1>\eps$ we have
$$|u|_{W^{k+2,p}(Q')}
\leq C(k+2,n,p,\l,M,\eps)(|u|_{L^p(Q)}+|f|_{W^{k,p}(Q)}).$$
\end{thm}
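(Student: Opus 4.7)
My plan is to prove the estimate by induction on $k$, with the base case $k=0$ obtained by the classical Calder\'on--Zygmund $L^p$ theory for parabolic equations with Dini continuous leading coefficients, and the inductive step obtained by differentiating the equation.

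For the base case $k=0$, I would first reduce to a localization estimate. Choose a cutoff $\eta \in C_c^\infty(Q)$ with $\eta \equiv 1$ on $Q'$, $0 \leq \eta \leq 1$, and $|D\eta|,|D^2\eta|,|\partial_t\eta|$ controlled in terms of $\varepsilon = T_3-T_1$ and the distance to $\partial M$ (here $M$ is compact, so only the $t$-direction matters). Then $v=\eta u$ solves $v_t - a^{ij}v_{ij} - b^i v_i - c v = \eta f + 2a^{ij}\eta_i u_j + (a^{ij}\eta_{ij}+b^i\eta_i-\eta_t)u$, with $v=0$ at $t=T_1$. The proof then reduces to proving the global estimate $\|v\|_{W^{2,p}} \le C(\|v_t-a^{ij}v_{ij}-b^iv_i-cv\|_{L^p})$ for such a $v$, which I would derive by the standard freezing of coefficients: cover $M\times(T_1,T_2)$ by small parabolic cylinders $C_r(X_0)$ of radius $r$, on each cylinder freeze $a^{ij}$ at $X_0$ and use the classical Fourier/singular integral $L^p$ estimate for the constant coefficient heat-type operator $\partial_t - a^{ij}(X_0)\partial_i\partial_j$. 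The modulus of continuity hypothesis on $a^{ij}$ controls the error $(a^{ij}(X)-a^{ij}(X_0))v_{ij}$ on $C_r(X_0)$: its $L^p$ norm is bounded by $\omega(r)\,\|D^2 v\|_{L^p(C_r(X_0))}$, and by choosing $r$ small enough (uniformly, using the Dini-type decay of $\omega$) this error term is absorbed into the left-hand side. A partition of unity then glues local estimates into a global one; the lower order terms $b^i v_i + cv$ are absorbed via interpolation.

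For the inductive step, suppose the estimate holds at level $k-1$ on any slightly larger subcylinder. I would differentiate the equation: each spatial derivative $D^\alpha u$ with $|\alpha|\le k$ satisfies $(D^\alpha u)_t = a^{ij}(D^\alpha u)_{ij} + \tilde b^\beta D^\beta u + \tilde f_\alpha$, where $\tilde f_\alpha$ involves $D^\gamma a^{ij}, D^\gamma b^i, D^\gamma c$ (with $|\gamma|\le k$) paired with lower derivatives of $u$ and also $D^\alpha f$. The $W^{k,\infty}$ hypothesis on the coefficients gives $\|\tilde f_\alpha\|_{L^p(Q'')} \le C(\|u\|_{W^{k+1,p}(Q'')} + \|f\|_{W^{k,p}(Q'')})$ on an intermediate cylinder $Q''$ with $Q'\Subset Q''\Subset Q$. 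Applying the base case to $D^\alpha u$ on $Q'$ with data in $Q''$ gives $\|D^\alpha u\|_{W^{2,p}(Q')} \le C(\|D^\alpha u\|_{L^p(Q'')}+\|\tilde f_\alpha\|_{L^p(Q'')})$, and summing over $|\alpha|\le k$ and applying the inductive hypothesis to bound $\|u\|_{W^{k+1,p}(Q'')}$ in terms of $\|u\|_{L^p(Q)}+\|f\|_{W^{k-1,p}(Q)}$ completes the step. Time derivatives are handled directly from the equation.

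The main obstacle I expect is the freezing-of-coefficients step in the base case: quantifying precisely how the Dini-type modulus $\omega$ interacts with the local $L^p$ estimate to ensure the error from freezing can be absorbed is delicate, and the exact form of the modulus of continuity hypothesis as stated must be strong enough to imply the Calder\'on--Zygmund-type estimate (continuity alone is not sufficient for such an $L^p$ result in general). Given that this precise statement is attributed to Lady\v{z}enskaja--Solonnikov--Ural'ceva \cite{MR0241822} and Lieberman \cite{MR1465184}, I would in practice cite those references for the technical core, while the induction on $k$ and the cutoff localization are routine.
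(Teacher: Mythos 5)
The paper does not prove this theorem at all: it is explicitly ``recalled'' from Lady\v{z}enskaja--Solonnikov--Ural'ceva and Lieberman and used as a black box, so there is no in-paper argument to compare against. Your outline is the standard textbook proof of the cited result and is essentially sound: localization in $t$ only (correctly noting that since $Q=M\times(T_1,T_2)$ with $M$ compact, interior-ness is purely temporal), freezing of coefficients with absorption of the error $(a^{ij}(X)-a^{ij}(X_0))v_{ij}$, interpolation for the lower-order terms, and induction on $k$ by differentiating the equation under the $W^{k,\infty}$ coefficient hypothesis. One correction to your stated ``main obstacle'': for $L^p$ (Calder\'on--Zygmund) estimates, unlike for Schauder estimates, no Dini-type integrability of the modulus $\om$ is needed --- uniform continuity of the leading coefficients alone suffices (indeed VMO is the sharp condition), because the absorption only requires choosing the cylinder radius $r$ uniformly small so that $C\,\om(r)<\tfrac12$, which is possible on a compact manifold for any modulus with $\om(0)=0$. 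So the hypothesis as stated (a uniform modulus of continuity) is already strong enough, and your worry that ``continuity alone is not sufficient'' does not apply in the $L^p$ setting. With that adjustment, and given that you ultimately defer the singular-integral core to the same references the paper cites, your proposal is consistent with how the result is used here.
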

Consider the linear parabolic equation of divergence form
\begin{align}\label{regularity of PCF: linear parabolic div}
u_t=g^{p\bar q}u_{p\bar q}+b^iu_i+cu+f.
\end{align}
Here $g_{p\bar q}$ is a K\"ahler metric and $g_{ij}$ for $1\leq i,j\leq 2n$ is the corresponding $J$-invariant Riemannian metric.
Similarly to \thmref{regularity of PCF: space reg}, we can generalize the first part of \thmref{regularity of PCF: space reg Rn} on a Riemannian manifold $M$.
\begin{thm}\label{regularity of PCF: weak space reg}
Let $f\in UC(M\times[0,t_0])\cap C^{\a,0}(M\times[0,t_0])$
and $u_0\in C^{0}(M)$.
If the coefficients $g^{ij}$, $b^i$ and $c$ belong to $C^{\a}(M)$
with $0<\a< 1$, and $g^{i\bar j}$ satisfies
 $\L|\xi|^2\geq g^{ij}\xi_i\xi_j\geq\l|\xi|^2>0
\text{ for any }\xi\in R^n\backslash\{0\}$. If $u_0\in C^0(M)$,
then \eqref{regularity of PCF: linear parabolic div} has a unique solution which belongs to $C([0,T]\times\Om)$.  For every $\eps\in (0,t_0]$ there is a constant $C$ such that
\begin{align*}
||u||_{C^{2+\a,1}(M\times[\eps,t_0])}
\leq C(\eps)(|u_0|_{C^{0}(M)}
+|f|_{C^{\a,0}(M\times[0,t_0])}).
\end{align*}
\end{thm}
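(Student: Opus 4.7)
The plan is to mimic the proof of Theorem \ref{regularity of PCF: space reg} (which handled the non-divergence case via localization in coordinate charts and the first half of Theorem \ref{regularity of PCF: space reg Rn}), but now applied to the divergence form equation \eqref{regularity of PCF: linear parabolic div} with merely $C^0$ initial data, so that we must invoke the \emph{weak} estimate \eqref{regularity of PCF: space reg Rn est weak} rather than \eqref{regularity of PCF: space reg Rn est}.

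First, since $g^{p\bar q}$ is $C^\alpha$, I expand the divergence form into the non-divergence form $u_t = g^{p\bar q} u_{p\bar q} + \tilde b^i u_i + c u + f$, absorbing any extra lower-order terms produced by distributing derivatives off $g^{p\bar q}$ into a redefined drift $\tilde b^i \in C^\alpha$. Thus \eqref{regularity of PCF: linear parabolic div} falls under the hypotheses of the Euclidean Theorem \ref{regularity of PCF: space reg Rn}, chart by chart. Next, I cover $M$ by finitely many charts $\{\Omega_p\}$ on which coefficients can be extended to have uniformly $C^{2,\alpha}$ boundary in $R^{2n}$, with a smooth partition of unity $\eta_p \in C_0^\infty(\Omega_p)$, and write down the localized equation satisfied by $\eta_p u$:
\begin{align*}
(\eta_p u)_t - g^{ij}(\eta_p u)_{ij} - \tilde b^i(\eta_p u)_i - c \eta_p u = \tilde f_p,
\end{align*}
where $\tilde f_p = \eta_p f - 2 g^{ij}[(\eta_p)_i u]_j - g^{ij}(\eta_p)_{ij} u + \tilde b^i u (\eta_p)_i$ vanishes on $\partial \Omega_p$.

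Existence and uniqueness follow exactly as in the discussion after Theorem \ref{regularity of PCF: space reg}: approximate $u_0 \in C^0(M)$ and $f \in UC \cap C^{\alpha,0}$ by smooth sequences $u_0^{(k)}, f^{(k)}$, solve the classical problem to produce smooth $u^{(k)}$, use the maximum principle to get a uniform $C^0$ bound on $M \times [0,t_0]$, and pass to the limit via compactness; uniqueness is again by the maximum principle. The regularity estimate is the main technical point. Applying the weak bound \eqref{regularity of PCF: space reg Rn est weak} of Theorem \ref{regularity of PCF: space reg Rn} on each $\Omega_p$ to $\eta_p u^{(k)}$ for $t \in [\eps, t_0]$, I obtain
\begin{align*}
\|\eta_p u^{(k)}\|_{C^{2+\alpha,1}(\bar\Omega_p \times [\eps,t_0])}
\le \frac{C}{\eps^{\alpha/2+1}} \bigl(|u_0^{(k)}|_{L^\infty(\Omega_p)} + |\tilde f_p^{(k)}|_{C^{\alpha,0}}\bigr),
\end{align*}
where $|\tilde f_p^{(k)}|_{C^{\alpha,0}}$ is controlled by $|f^{(k)}|_{C^{\alpha,0}(M \times [0,t_0])}$ plus terms involving $|u^{(k)}|_{C^{0}(M \times [0,t_0])}$ and $|u^{(k)}_i|_{C^{\alpha,0}}$. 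The latter derivative terms are absorbed by standard interpolation between the $C^{2+\alpha,1}$ seminorm and the $C^0$ norm (with a small coefficient), and the $C^0$ norm is controlled by the maximum principle applied to the original equation. Summing over $p$ and passing to the limit $k \to \infty$ gives the stated estimate on $M \times [\eps, t_0]$.

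The main obstacle is not any single step but the bookkeeping of the cutoff commutators: the terms $[(\eta_p)_i u]_j$ threaten to require a priori a $C^{1+\alpha,0}$ bound on $u$ that we do not yet have. The standard remedy is a bootstrap/interpolation argument: first establish a global $L^\infty$ bound by the maximum principle, then run the chart estimate with the commutator terms absorbed using $|u^{(k)}_i|_{C^{\alpha}} \le \delta \|\eta_p u^{(k)}\|_{C^{2+\alpha,1}} + C(\delta)|u^{(k)}|_{C^0}$, choosing $\delta$ small enough so the $C^{2+\alpha,1}$ norm on the left absorbs its counterpart on the right. After this absorption, the constants still carry the factor $\eps^{-\alpha/2-1}$ inherited from \eqref{regularity of PCF: space reg Rn est weak}, which is exactly what the statement of the theorem permits through $C(\eps)$. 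This completes the proof.
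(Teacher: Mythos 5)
Your overall route is the one the paper intends: the paper offers no written proof of this theorem beyond the remark that it is obtained ``similarly to'' \thmref{regularity of PCF: space reg} by invoking the first part of \thmref{regularity of PCF: space reg Rn}, i.e.\ the weak estimate \eqref{regularity of PCF: space reg Rn est weak}, and your localization by a partition of unity, your treatment of existence and uniqueness by smooth approximation plus the maximum principle, and your appeal to the $\eps^{-\alpha/2-1}$ smoothing estimate all match that sketch. (Your preliminary step of rewriting the equation in non-divergence form is vacuous: \eqref{regularity of PCF: linear parabolic div} is already written as $u_t=g^{p\bar q}u_{p\bar q}+b^iu_i+cu+f$.)

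There is, however, one step that does not close as written: the absorption of the cutoff commutators. The estimate \eqref{regularity of PCF: space reg Rn est weak} controls $\|\eta_p u\|_{C^{2+\alpha,1}}$ only on $\bar\Omega_p\times[\eps,t_0]$, while its right-hand side requires $|\tilde f_p|_{C^{\alpha,0}}$ over the \emph{full} interval $[0,t_0]$; since $\tilde f_p$ contains the term $g^{ij}(\eta_p)_i u_j$, this demands a $C^{1+\alpha,0}$ bound on $u$ up to $t=0$, which is exactly what a solution with merely $C^0$ initial data does not have (and which is not uniformly bounded along your smooth approximations $u^{(k)}$). Consequently the interpolation $|u^{(k)}_i|_{C^{\alpha,0}}\le\delta\|u^{(k)}\|_{C^{2+\alpha,1}}+C(\delta)|u^{(k)}|_{C^0}$ produces a term $\delta\|u^{(k)}\|_{C^{2+\alpha,1}(M\times[0,t_0])}$ that cannot be absorbed by a left-hand side living on $[\eps,t_0]$, so ``choosing $\delta$ small'' does not by itself close the estimate. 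The standard repair is to restart the problem at an intermediate time --- for instance, insert a temporal cutoff vanishing for $t\le\eps/2$ so that the commutator only sees $u$ on $[\eps/2,t_0]$ --- and then close the resulting inequality $\Phi(\eps)\le\theta\,\Phi(\eps/2)+C(\eps)(\cdots)$ by an iteration lemma over nested time intervals (or, equivalently, work with time-weighted norms of the type $\sup_t t^{1+\alpha/2}[u(t)]_{C^{2+\alpha}}$, as in Lunardi's own semigroup proof). With that modification your argument goes through; without it, the key a priori inequality is not actually established.
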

Denote $Q_{T_\eps}=M\times[\eps,T]$
where $0<\eps<T$;
we have the following result.
\begin{prop}\label{regularity of PCF: classical solution reg}
The $C^{2,1}(Q_{T_\eps})\cap C^0([\eps,T],C^{2}(M))$ solution
of the pseudo-Calabi flow
belongs to $$C^0([\eps,T],C^{2+\a}(M))\cap
C^1([\eps,T],C^{\a}(M)).$$
\end{prop}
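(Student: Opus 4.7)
The plan is a two-step linear bootstrap: an interior $L^p$ step that upgrades continuous coefficients to H\"older ones, followed by a parabolic Schauder step. Throughout, I write the flow as $\vphi_t=h-P$ with $h=\log(\om_\vphi^n/\om^n)$ and $P$ determined by the elliptic equation $\tri_\vphi P=g_\vphi^{a\bar b}R_{a\bar b}(\om)-\ul S$. The starting hypothesis $\vphi\in C^0([\eps,T],C^2(M))$ together with $\l g\leq g_\vphi\leq \L g$ gives uniform ellipticity of $g_\vphi^{a\bar b}$ and continuity of the coefficients in space and time on $Q_{T_\eps}$.

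First I would establish that $P\in C^0([\eps,T],C^{1,\a}(M))$ for any $\a\in(0,1)$. The equation for $P$ has continuous coefficients and continuous right-hand side on the compact manifold $M$, so the Calder\'on-Zygmund $L^p$ theory gives $P(\cdot,t)\in W^{2,p}(M)$ uniformly in $t$ for every finite $p$, and Sobolev embedding yields the claimed H\"older regularity; continuity in $t$ follows because the coefficients and right-hand side depend continuously on $t\in[\eps,T]$ in the relevant norms, exactly as in the argument of \lemref{short time: P regular}. Next I would differentiate $\vphi_t=h-P$ in a real spatial direction $x_k$. A direct computation of $D_k\log\det(g+\p\bar\p\vphi)$ produces the linear parabolic equation
\begin{align*}
(D_k\vphi)_t-g_\vphi^{a\bar b}(D_k\vphi)_{a\bar b}=(g_\vphi^{a\bar b}-g^{a\bar b})D_k g_{a\bar b}-D_k P
\end{align*}
for $v=D_k\vphi$, with continuous uniformly elliptic coefficients $g_\vphi^{a\bar b}$ and a right-hand side in $C^0(Q_{T_\eps})$. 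The interior $L^p$ estimate \thmref{regularity of PCF: Lp estimate of standard equ} gives $v\in W^{2,p}_{\mathrm{loc}}(Q_{T_\eps})$ for every finite $p$, and the parabolic Sobolev embedding \thmref{estimate of linearized: sobolev parabolic} yields $v\in C^{1,\a}$ in space for any $\a\in(0,1)$; hence $\vphi$ lies in $C^{2,\a}$ in space, uniformly in $t$.

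Once $g_\vphi^{a\bar b}$ is $C^\a$ in space, the Schauder elliptic estimate lifts $P$ to $C^{2,\a}$ in space, and the parabolic Schauder estimate \thmref{regularity of PCF: space reg} applied to the linear equation for $v$ delivers $v\in C^{2+\a,1}(Q_{T_\eps})$, which is stronger than the required $\vphi\in C^0([\eps,T],C^{2+\a}(M))$. The time-derivative regularity $\vphi_t=h-P\in C^0([\eps,T],C^\a(M))$ then reads off from the flow equation once both $h(\vphi)$ and $P(\vphi)$ are $C^\a$ in space and continuous in $t$, giving the missing factor $\vphi\in C^1([\eps,T],C^\a(M))$. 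The main obstacle I expect is the first bootstrap step: \thmref{regularity of PCF: Lp estimate of standard equ} requires a modulus-of-continuity condition on the leading coefficients, so one must verify that the uniform modulus of continuity of $g_\vphi^{a\bar b}$ inherited from $\vphi\in C^0([\eps,T],C^2(M))$ satisfies the hypothesis; on compact $M$ this is automatic, but the verification must be carried out explicitly in order to apply the cited theorem, and some care is needed to pass from the interior estimate on $Q_{T_\eps}$ to the claimed global-in-$x$ regularity by the partition-of-unity argument already used in \thmref{regularity of PCF: space reg}.
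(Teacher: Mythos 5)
Your overall strategy --- elliptic $L^p$ theory for $P$ giving $P\in C^0([\eps,T],C^{1,\a}(M))$, then a linear uniformly parabolic equation for a spatial derivative of $\vphi$, interior $L^p$ estimates, and parabolic Sobolev embedding --- is exactly the paper's. The one genuine gap is in how you produce the equation for $v=D_k\vphi$: you differentiate $\vphi_t=h-P$ directly in $x_k$, but writing $g_\vphi^{a\bar b}(D_k\vphi)_{a\bar b}$ already presupposes third spatial derivatives of $\vphi$, and, more to the point, \thmref{regularity of PCF: Lp estimate of standard equ} is an a priori estimate: it requires the solution to lie in $W^{k+2,p}_{loc}(Q)$ before it can be invoked. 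Under the standing hypothesis $\vphi\in C^0([\eps,T],C^{2}(M))$ you only know $v\in C^1$ in space, so applying the interior estimate to $v$ is circular --- the membership $v\in W^{2,p}_{loc}$ is precisely what you are trying to prove. The paper avoids this by replacing $D_k$ with the difference quotient $\vphi_\rho=(\vphi(x+\rho e_k,t)-\vphi(x,t))/\rho$, which for each $\rho$ is an honest $C^2$ (hence $W^{2,p}_{loc}$) solution of a parabolic equation with leading coefficients $\int_0^1 g_{\vphi\theta}^{i\bar j}\,d\theta$ that are uniformly elliptic, continuous, and independent of $\rho$ in their bounds (the reference form is absorbed via a local potential $\psi$ with $\om=\frac{\sqrt{-1}}{2}\p\bar\p\psi$); the resulting $W^{2,p}$ bound is uniform in $\rho$ and one lets $\rho\to0$. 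Your argument becomes correct once this regularization is inserted. The modulus-of-continuity hypothesis you flag at the end is indeed automatic on compact $M$ and is not the real obstacle.

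A secondary remark: your final Schauder upgrade to $v\in C^{2+\a,1}(Q_{T_\eps})$ is not needed for this proposition (the Sobolev embedding already gives $D_k\vphi\in C^{1,\a}$, hence $\vphi\in C^{2,\a}$ in space, and $\p_t\vphi=h-P\in C^0([\eps,T],C^{\a}(M))$ is read off from the equation once $P\in C^0([\eps,T],C^{1,\a}(M))$), and as stated it is not justified on the same time interval: \thmref{regularity of PCF: space reg} requires $C^{2,\a}$ initial data, whereas $D_k\vphi(\eps)$ is only $C^{1,\a}$ at that stage. The paper postpones exactly this upgrade to Proposition~\ref{regularity of PCF: higher reg}, paying for it by shrinking the time interval from $[\eps,T]$ to $[2\eps,T]$.
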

\begin{proof}
Since we assume $\vphi\in C^{2}(Q_{T_\epsilon})$, applying the $L^p$ estimate to the equation \eqref{notations: P}
with the normalization condition \eqref{notations: P normalization condition},
we have for any
$1<p<\infty$ $$P\in C^0([\eps,T],W^{2,p}(M)).$$
Then the embedding theorem implies that $P$ belongs to
$C^0([\eps,T],C^{1,\a}(M))$.

In order to obtain a higher regularity, we
consider the equation
in a coordinate chart $(\mathcal{U},x_k)$
for $1\leq k\leq 2n$,
and select
a direction $e_k=\frac{\p}{\p{x^k}}$
in the tangent space
$T_R{M}$. We define the difference quotient
of $\vphi$ at $x$ in the direction $e_k$ as
\begin{align*}
\vphi_\rho=\frac{\vphi(x+\rho e_k,t)-\vphi(x,t)}{\rho}.
\end{align*}
Taking the difference quotients
of both sides of the equation \eqref{short time: pam of PCF},
we obtain
\begin{align}\label{regularity of PCF: x deri of PCF}
\frac{\p\vphi_\rho}{\p
t}&=\frac{h(x+\rho e_k,t)-h(x,t)}{\rho}+\frac{P(x+\rho e_k,t)-P(x,t)}{\rho}\\
&=\frac{\log\frac{\om_{\vphi}^n}{\om^n}(x+\rho e_k,t)-\log\frac{\om_{\vphi}^n}{\om^n}(x,t)}{\rho}+\frac{P(x+\rho e_k,t)-P(x,t)}{\rho}\nonumber\\
&=\frac{1}{\rho}\log\frac{\om_{\vphi}^n(x+\rho
e_k,t)}{\om_{\vphi}^n(x,t)}-\frac{1}{\rho}\log\frac{\om^n(x+\rho
e_k,t)}{\om^n(x,t)}+\p_kP(x+ \vartheta e_k,t).\nonumber
\end{align}
We can see that the second term is smooth
and the third term belongs to
$C^0(Q_{T_\epsilon})$.

Now we deal with the first term. In the local
coordinate chart $(\mathcal{U},x_k)$ for $1\leq k\leq 2n$,
we can choose some smooth real valued function $\psi$
such that $\om=\frac{\sqrt{-1}}{2}\p\bar\p \psi$. Then the first term can be expressed as
\begin{align*}
\frac{1}{\rho}\log\frac{\om_{\vphi}^n(x+\rho
e_k,t)}{\om_{\vphi}^n(x,t)}
&=\frac{1}{\rho}\log\frac{\det(g_{i\bar{j}}+\vphi_{i\bar{j}})(x+\rho
e_k,t)}{\det(g_{i\bar{j}}+\vphi_{i\bar{j}})(x,t)}\\
&=\frac{1}{\rho}
\int_0^1\frac{\p}{d\theta}\log\det(g_{\theta i\bar{j}})d\theta\\
&=\int_0^1g_{\vphi\theta}^{i\bar{j}}d\theta
(\vphi_{\rho}+\psi_{\rho})_{i\bar{j}},
\end{align*}
where $g_{\vphi\theta}=\theta g_{\vphi}(x+\rho
e_k,t)+(1-\theta)g_{\vphi}(x,t)$
and $\psi_\rho=\frac{\psi(x+\rho
e_k,t)-\psi(x,t)}{\rho}$. Since
$$a^{ij}=\int_0^1g_{\vphi\theta}^{i\bar{j}}d\theta\in C^0(Q_{T_\epsilon})$$
is uniformly elliptic and $\p_kP\in C^0(Q_{T_\epsilon})$,
Proposition~\ref{regularity of PCF: Lp estimate of standard equ}
implies $$\vphi_\rho\in W^{2,p}(Q_{T_\eps})$$ for any $p>1$.
After letting $\rho\rightarrow0$, we get
$\frac{\p\vphi}{\p x^k}$ in $W^{2,p}(Q_{T_\eps})$. The imbedding theorem further implies $\frac{\p\vphi}{\p x^k}\in C^{1,\a}(Q_{T_\eps})$. So we have $$\vphi\in C^0([\eps,T],C^{2+\a}(M)).$$
Since $P\in C^0([\eps,T],C^{1,\a}(M))$, by using the equation \eqref{short time: pam of PCF} we have
$$\p_t\vphi\in C^0([\eps,T],C^{\a}(M)).$$
Hence the proposition follows.
\end{proof}
\begin{prop}\label{regularity of PCF: higher reg}
The $C^0([\epsilon,T],C^{2+\a}(M))\cap
C^1([\epsilon,T],C^{\a}(M))$
solution of the pseudo-Calabi flow equation belongs to $C^{\infty}(M\times[2\epsilon,T])$.
\end{prop}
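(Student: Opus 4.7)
The plan is a bootstrap built from two ingredients already established in the paper: the elliptic Schauder estimate applied to the pseudo-differential equation $\triangle_\varphi P = \mathrm{tr}_\varphi Ric(\omega) - \ul S$, and the parabolic Schauder estimate of \thmref{regularity of PCF: space reg} applied to the linearized flow equation for difference quotients, exactly in the spirit of Proposition \ref{regularity of PCF: classical solution reg}, but iterated to arbitrary order. The first step is to upgrade the hypothesis $\varphi \in C^0([\epsilon,T],C^{2+\a})\cap C^1([\epsilon,T],C^\a)$ to the space-time H\"older class $C^{2+\a,1+\a/2}(M\times[\epsilon,T])$ by invoking the imbedding \lemref{regularity of PCF: imb lem}. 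This yields $g_\varphi^{i\bar j}\in C^{\a,\a/2}(M\times[\epsilon,T])$, whence \lemref{short time: P regular} and elliptic Schauder give $P\in C^0([\epsilon,T],C^{2+\a}(M))$.

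Next, mirroring the difference-quotient computation of Proposition \ref{regularity of PCF: classical solution reg}, for each real coordinate direction $e_k$ the quotient $\varphi^\rho$ satisfies a linear parabolic equation
\[
\frac{\p \varphi^\rho}{\p t} = a^{ij}(\rho)(\varphi^\rho+\psi^\rho)_{ij} + F_\rho,
\]
where $a^{ij}(\rho)\to g_\varphi^{i\bar j}\in C^{\a,\a/2}$ as $\rho\to 0$, and where $F_\rho$ splits into smooth terms plus the difference quotient of $P$, which has a limit $\p_k P\in C^0([\epsilon,T],C^{1+\a}(M))$ by the previous step. Applying \thmref{regularity of PCF: space reg} on a slightly shrunk interval $[\epsilon_1,T]$ with $\epsilon_1>\epsilon$, and passing to the limit $\rho\to 0$, gives $\p_k\varphi\in C^{2+\a,1+\a/2}(M\times[\epsilon_1,T])$. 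Summing over $k$ promotes $\varphi$ to $C^0([\epsilon_1,T],C^{3+\a}(M))$, and reading off $\p_t\varphi = h - P$ from the flow itself gives $\p_t\varphi\in C^0([\epsilon_1,T],C^{1+\a}(M))$.

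Now iterate: at stage $m$ one has $\varphi\in C^0([\epsilon_m,T],C^{m+\a}(M))$; elliptic Schauder for the $P$-equation then yields $P\in C^0([\epsilon_m,T],C^{m+\a}(M))$; differentiating the flow $m$ times in space and applying the parabolic Schauder estimate on $[\epsilon_{m+1},T]$ produces $\varphi\in C^0([\epsilon_{m+1},T],C^{m+1+\a}(M))$. Choosing the shrinkings $\epsilon_{m+1}-\epsilon_m$ summable with $\lim\epsilon_m<2\epsilon$ preserves the final interval and shows $\varphi\in C^0([2\epsilon,T],C^\infty(M))$. Joint smoothness in $(x,t)$ is then obtained by differentiating the PDE in $t$ as well: once $h$ and $P$ are spatially smooth, $\p_t\varphi$ is too, and the same alternating Schauder argument carries time derivatives of all orders.

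The main technical obstacle is the pseudo-differential character of $P$: because $P$ is defined implicitly through an elliptic equation whose coefficients depend on $\varphi$, one cannot treat $P$ as an external source but must propagate every regularity gain for $\varphi$ through an elliptic Schauder step for $P$ before the next parabolic Schauder step for $\varphi$ becomes available. This forces the interleaved structure of the bootstrap, which is precisely the structure underlying \lemref{short time: P regular} and Proposition \ref{regularity of PCF: classical solution reg}; once the two Schauder estimates are alternated in this order no new PDE input is required, and standard interpolation handles the passage from the pointwise-in-$t$ estimates to the full space-time H\"older bounds at each stage.
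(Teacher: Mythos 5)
Your proposal is correct and follows essentially the same route as the paper: an alternating bootstrap in which \lemref{short time: P regular} (elliptic Schauder) upgrades $P$, the difference-quotient equation \eqref{regularity of PCF: x deri of PCF} with frozen leading coefficients plus the parabolic Schauder estimate upgrades $\p_k\vphi$, the time intervals shrink by a summable amount accumulating at $2\epsilon$, and joint smoothness in $t$ is obtained at the end by differentiating the flow equation in time. The only cosmetic difference is your explicit preliminary use of \lemref{regularity of PCF: imb lem} to place $\vphi$ in the space-time H\"older class, which the paper leaves implicit.
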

\begin{proof}
\lemref{short time: P regular} implies $P\in
C^0([\epsilon,T],C^{2+\a}(M))$. Then we have $$\p_k P\in
C^0([\epsilon,T],C^{\a}(M)) \text{ and }a^{ij}\in
C^0([\epsilon,T],C^{\a}(M)).$$
We consider the equation by freezing the coefficient of \eqref{regularity of PCF: x deri of PCF},
\begin{align}\label{regularity of PCF: x deri of PCF frz}
\frac{\p\vphi_\rho}{\p
t}&=\tri\vphi_\rho
+(\int_0^1g_{\vphi\theta}^{i\bar{j}}d\theta-g^{i\bar j})\vphi_{\rho_{i\bar{j}}}
+\int_0^1g_{\vphi\theta}^{i\bar{j}}d\theta\psi_{\rho_{i\bar{j}}}\\
&-\frac{1}{\rho}\log\frac{\om^n(x+\rho
e_k,t)}{\om^n(x,t)}+\p_kP(x+ \vartheta e_k,t).\nonumber
\end{align}
Therefore, we obtain
$\p_k\vphi\in C^0([\epsilon,T],C^{2,\a}(M))$ by
\thmref{regularity of PCF: classical solution reg}
and the condition \eqref{short time: vphi small}
in each coordinate chart $(\mathcal{U},x_k)$.
Moreover, by using \eqref{regularity of PCF: x deri of PCF}
we deduce that $$\p_t\vphi\in
C^0([\epsilon,T],C^{1,\a}(M)).$$
Repeating the same process again
and again,
we have $$\vphi\in
C^0([\eps\sum^q_{p=0}\frac{1}{2^p},T],C^{q+2,\a}(M))$$
for any $q=1,2,\cdots$.
Then taking the derivative of \eqref{short time: pam of PCF}
with respect to variable $t$,
we have $\vphi\in
C^2([2\epsilon,T],C^{\infty}(M))$.
By iteration of this procedure, the proposition follows.
\end{proof}
Since $\eps$ can be arbitrarily small,
we have the following theorem.
\begin{thm}\label{regularity of PCF: reg of PCF}
The $C^0([0,T],C^{2+\a}(M))\cap C^1([0,T],C^{\a}(M))$
solution of the pseudo-Calabi flow equation in fact belongs to
$C^{\infty}(M\times(0,T])$.
\end{thm}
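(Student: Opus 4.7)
The plan is to deduce this theorem directly from Proposition \ref{regularity of PCF: higher reg} by a simple exhaustion argument in the time variable. The key observation is that Proposition \ref{regularity of PCF: higher reg} already upgrades a $C^0([\eps,T],C^{2+\a})\cap C^1([\eps,T],C^{\a})$ solution to $C^\infty(M\times[2\eps,T])$ for each fixed $\eps>0$, and nothing in that proposition's hypotheses ties $\eps$ to a specific value. So the only issue is to let $\eps$ shrink to $0$.

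Concretely, I would fix an arbitrary $t_0\in(0,T]$ and choose any $\eps\in(0,t_0/2)$. Since the hypothesis of the theorem gives a solution on all of $[0,T]$, its restriction to $[\eps,T]$ automatically satisfies the regularity assumption of Proposition \ref{regularity of PCF: higher reg}. Invoking that proposition yields $\vphi\in C^\infty(M\times[2\eps,T])$, and in particular $\vphi$ is smooth in a neighborhood of $M\times\{t_0\}$. Because $t_0\in(0,T]$ was arbitrary, this gives $\vphi\in C^\infty(M\times(0,T])$.

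I do not expect any real obstacle here: the heart of the work has been done in Propositions \ref{regularity of PCF: classical solution reg} and \ref{regularity of PCF: higher reg}, where the bootstrap from $C^{2,\a}$ to $C^\infty$ on a positive-distance-from-$0$ time slab is carried out. The present theorem is merely the statement that because the bootstrap constants depend on $\eps$ but the \emph{regularity conclusion} does not, one may take a countable exhaustion $\eps_k\downarrow 0$ and conclude smoothness on $M\times(0,T]$. There is no claim of smoothness up to $t=0$, consistent with the mere $C^{2,\a}$ regularity assumed initially. The only minor point to be careful about is that although smoothness at any fixed $t_0>0$ holds, the $C^k$ norms can blow up as $t_0\to 0^+$; but this is permitted by the statement, which only asserts membership in $C^\infty(M\times(0,T])$ and not any uniform bound near $t=0$.
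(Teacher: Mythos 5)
Your argument is exactly the paper's: the authors deduce the theorem from Proposition~\ref{regularity of PCF: higher reg} with the single remark ``Since $\eps$ can be arbitrarily small, we have the following theorem,'' which is precisely your exhaustion in $\eps$. Your additional observations --- that smoothness is claimed only on $M\times(0,T]$ and that the $C^k$ bounds may degenerate as $t\to 0^+$ --- are correct and consistent with the statement.
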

\begin{rem}
The proof still holds if
one
uses the estimate \eqref{regularity of PCF: space reg potential} instead of \thmref{regularity of PCF: space reg Rn}.
\end{rem}
\section{Long time existence of the pseudo-Calabi flow}\label{long time}
In this section we shall use $C_i$ for $i=1,2,\cdots$ to distinguish different generic constants.
\begin{thm}\label{long time: main}
Let $T$ be a finite time.
If $g_{\vphi}(t)$ is a solution of the pseudo-Calabi flow with
$Ric_{\vphi}(t)$ uniformly bounded for all time $t\in[0,T]$, then the pseudo-Calabi flow
can be extended past the time $T$.
\end{thm}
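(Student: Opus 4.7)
The plan is to derive uniform $C^{2,\alpha}$ bounds on $\varphi(t)$ for $t\in[0,T)$, pass to a limit at $T$, and then invoke the short-time existence \thmref{short time: main} to restart the flow from $\varphi(T)$ and continue past $T$.

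First I would exploit the identity
\[
\mathrm{Ric}(\omega_\varphi)=\mathrm{Ric}(\omega)-\frac{\sqrt{-1}}{2}\partial\bar\partial h,\qquad h=\log\frac{\omega_\varphi^n}{\omega^n},
\]
so that the hypothesis $|\mathrm{Ric}(\omega_\varphi)|\le C$ provides immediate control of $\partial\bar\partial h$ as a $(1,1)$-form, together with a bound on $S_\varphi$ once the metrics are comparable. Next, I would establish metric equivalence $\lambda\omega\le\omega_\varphi\le\Lambda\omega$ on $[0,T)$. The natural candidate to feed into the maximum principle is $\mathrm{tr}_\omega\omega_\varphi$ or $h$ itself: under PCF one has $\partial_t h=\mathrm{tr}_{\omega_\varphi}\partial_t\omega_\varphi=-\Delta_\varphi f=-(S_\varphi-\underline S)$, so once $S_\varphi$ is bounded the quantity $h$ evolves by a bounded parabolic source. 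Combining a Laplacian-type computation on $\log\mathrm{tr}_\omega\omega_\varphi$ (as in Yau / Cao's second-order estimate, adapted to the extra $P$-term) with the preserved volume $\int e^h\omega^n=V$ should give two-sided bounds on $h$, and consequently $\lambda\omega\le\omega_\varphi\le\Lambda\omega$ uniformly on $[0,T)$.

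Once metric equivalence is in hand, the Monge--Amp\`ere equation $\omega_\varphi^n=e^h\omega^n$ with $h\in C^{1,1}$ yields the Evans--Krylov $C^{2,\alpha}$ estimate for $\varphi$; the pseudo-differential term $P$, governed by
\[
\Delta_\varphi P=\mathrm{tr}_\varphi\mathrm{Ric}(\omega)-\underline S
\]
together with its normalization, is controlled in $C^{2,\alpha}$ by the Schauder estimate under the metric equivalence, so $\dot\varphi=h-P\in C^{\alpha}$ is bounded uniformly on $[0,T)$. Bootstrapping via \thmref{regularity of PCF: reg of PCF} then supplies whatever higher regularity is needed.

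Finally, the uniform $C^{2,\alpha}$ bound lets us pass to a limit $\varphi(T)\in C^{2,\alpha}(M)$ with $\omega_{\varphi(T)}>0$, and \thmref{short time: main} restarts the flow from $\varphi(T)$ on a further interval $[T,T+\tau)$, contradicting maximality. The main obstacle is the metric equivalence step: unlike the K\"ahler--Ricci flow, where $(\partial_t-\Delta_\varphi)\log\mathrm{tr}_\omega\omega_\varphi$ takes a clean form, here the presence of $P$ in $\dot\varphi$ pollutes the maximum principle computation, and a priori one only controls $P$ through its Laplacian. The delicate point is therefore to arrange the estimates so that the bounds on $P$ and the bounds on $g_\varphi$ are derived simultaneously (or via a bootstrap through integral bounds on $h$), avoiding the circularity that $P\in L^\infty$ seems to already require the metric comparability we are trying to prove.
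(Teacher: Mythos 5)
Your overall skeleton --- derive a uniform $C^{2,\alpha}$ bound on $[0,T)$, pass to a limit $\varphi(T)$, and restart via \thmref{short time: main} --- is exactly the paper's strategy (the restart is the whole content of the paper's proof of \thmref{long time: main}, with the estimates deferred to Proposition~\ref{long time: 2+a}). But the step you yourself flag as the main obstacle, metric equivalence via a parabolic maximum principle on $\log\mathrm{tr}_\omega\omega_\varphi$, is a genuine gap: the circularity you worry about (needing $P\in L^\infty$, which one only controls through $\Delta_\varphi P$, hence through the very metric comparability being proved) is real, and the proposal does not resolve it. The paper avoids the issue entirely by never running a parabolic computation for the second-order estimate. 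It invokes Chen--He's \emph{elliptic} estimate $n+\Delta\varphi\le C_8\,e^{C_9\,\osc_M\varphi+\frac{\sup_M h}{n-1}}$, which holds at each fixed time whenever $Ric_\varphi$ is bounded above and makes no reference to the flow, so $P$ never enters. Its inputs are obtained without any maximum principle involving $P$: $\sup_M h$ by integrating $\partial_t h=-(S_\varphi-\underline S)$ in time over the finite interval (\lemref{long time: h above}; the Ricci bound controls $S_\varphi$); $\inf_M h$ from Chen--Tian's estimate under the Ricci lower bound; and $\osc_M\varphi$ from the Green representation for $\sup(\varphi-\underline\varphi)$ (\lemref{long time: sup varphi}) together with Yau's $C^0$ estimate for the lower bound. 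After metric equivalence, the two-sided Ricci bound gives $|\Delta h|\le C(n+\Delta\varphi)+|S|$ directly, so $h\in W^{2,p}$ for all $p$ and Evans--Krylov applied to $\det(g_{i\bar j}+\varphi_{i\bar j})=e^h\det(g_{i\bar j})$ yields the $C^{2,\alpha}$ bound. Thus $P$ is never estimated in the continuation argument at all; it only reappears when the flow is restarted, where \thmref{short time: main} handles it.

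A second, smaller omission: your proposal never addresses the $C^0$ estimate on $\varphi$. Any Yau-type second-order estimate (Chen--He's included) requires $\osc_M\varphi$ as input, and the paper needs three separate ingredients for it: the Green's function bound for $\sup(\varphi-\underline\varphi)$, Yau's estimate for $\inf(\varphi-\underline\varphi)$ in terms of $\sup_M h$, and control of the average $\underline\varphi$ itself, which uses the normalization $\int_M e^{P}\omega^n=V$ and Jensen's inequality to get $\int_M P\,\omega^n\le 0$, hence $\partial_t\underline\varphi\ge\inf_M h$. Without this last step the zeroth-order bound does not close, since $\dot\varphi=h-P$ and $P$ is not yet known to be bounded.
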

\begin{proof}
Because of the short time existence theorem,
we know that the pseudo-Calabi flow can be restarted at time $T$ if the solution then belongs to $C^{2,\a}(M)$. \thmref{long time: main} will thus follow from the a priori estimates that we shall present in Proposition~\ref{long time: 2+a}.
\end{proof}

\begin{lem}\label{long time: h above}
Suppose that along the pseudo-Calabi flow, there holds
$\int_0^tS_{\vphi}(t)dt\geq-C_1$ for some constant $C_1$. Then there exists a constat $C_2$ depending on $C_1$, $T$ and $\sup_M\log\frac{\om^n_{\vphi_0}}{\om^n}$ such that
$$\sup_Mh\leq C_2.$$
\end{lem}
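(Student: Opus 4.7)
The plan is to derive an ODE (pointwise in space) for the quantity $h = \log\tfrac{\omega_\varphi^n}{\omega^n}$ along the flow and then integrate it in time.

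First I would compute $\tfrac{\partial h}{\partial t}$. By the standard identity $\tfrac{\partial}{\partial t}\log\det(g_{i\bar j}+\varphi_{i\bar j}) = g_\varphi^{i\bar j}\varphi_{t,i\bar j} = \triangle_\varphi \dot\varphi$, and since the reference metric $\omega$ is fixed,
\begin{equation*}
\frac{\partial h}{\partial t} \;=\; \triangle_\varphi \dot\varphi.
\end{equation*}
By the potential form \eqref{notations: PCF} of the pseudo-Calabi flow, $\triangle_\varphi \dot\varphi = -\triangle_\varphi f = -S_\varphi + \underline S$. Equivalently one can derive this directly from $\dot\varphi = h - P$ together with \eqref{notations: P}: $\triangle_\varphi(h-P) = \triangle_\varphi h - (\mathrm{tr}_\varphi\mathrm{Ric}(\omega)-\underline S) = -S_\varphi + \underline S$, using the decomposition $S_\varphi = -\triangle_\varphi h + \mathrm{tr}_\varphi\mathrm{Ric}(\omega)$. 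So in either case,
\begin{equation*}
\frac{\partial h}{\partial t}(x,t) \;=\; -S_\varphi(x,t) + \underline S.
\end{equation*}

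Next I would integrate this pointwise identity in time from $0$ to $t$ for each fixed $x\in M$:
\begin{equation*}
h(x,t) \;=\; h(x,0) \;-\; \int_0^t S_\varphi(x,s)\,ds \;+\; \underline S\, t.
\end{equation*}
Taking the supremum over $x\in M$ and using the hypothesis $\int_0^t S_\varphi\,ds \geq -C_1$ together with $t\leq T$, we obtain
\begin{equation*}
\sup_M h(\cdot,t) \;\leq\; \sup_M \log\frac{\omega_{\varphi_0}^n}{\omega^n} \;+\; C_1 \;+\; |\underline S|\,T \;=:\; C_2.
\end{equation*}

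There is no real obstacle here; the only thing to be careful of is that $h$ is genuinely smooth in $t$ so the pointwise integration is legitimate, which is guaranteed by the regularity theorem \thmref{regularity of PCF: reg of PCF} on $(0,T]$ combined with continuity up to $t=0$ from $\dot X^0_T$. The lemma is a direct consequence of the fact that $\partial_t h$ depends only on the scalar curvature and the (constant) average scalar curvature, which is the special feature of the pseudo-Calabi flow that makes the $L^1$-in-time bound on $S_\varphi$ suffice to control $h$ from above.
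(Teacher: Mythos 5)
Your proof is correct and takes essentially the same route as the paper's: both compute $\partial_t h=\triangle_\varphi\dot\varphi=-S_\varphi+\underline{S}$ from the potential equation and integrate pointwise in time, using the assumed lower bound on $\int_0^t S_\varphi\,ds$. Your write-up is in fact slightly more careful, since you keep track of the $\underline{S}\,t$ term explicitly, which is precisely why the constant $C_2$ must depend on $T$.
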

\begin{proof}
Since the $t$ integral of the scalar curvature
is uniformly bounded below, and
\begin{align*}
\p_t
h=\tri_\vphi\p_t\vphi=-S_\vphi+\ul S,
\end{align*}
then $\sup_Mh\leq C_1+\sup_Mh(0) $.
\end{proof}
Denote the average of $\vphi$
with respect to $\om$
by $\underline{\vphi}=\int_M\vphi\om^n$.
\begin{lem}\label{long time: sup varphi}
There exists a constant $C_3$ such that
\begin{align*}
\sup_{M}(\vphi-\underline{\vphi})\leq C_3.
\end{align*}
\end{lem}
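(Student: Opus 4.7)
The plan is to use the classical Green function argument, which works at each fixed time $t$ and gives a bound that depends only on the background metric $\omega$ (hence is uniform in $t$ along the flow).

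First I would observe that, since the pseudo-Calabi flow preserves the K\"ahler class by Theorem~\ref{notations: pre kahler class}, at every time $t$ we have $\omega + \frac{\sqrt{-1}}{2}\p\bar\p\vphi > 0$. Taking the trace against $\omega$ yields the pointwise lower bound
\[
\tri_\omega \vphi > -n \quad \text{on } M.
\]
This is the only property of $\vphi(t)$ that will be used; in particular, no information from the evolution equation is required here.

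Next, let $G(x,y)$ be the Green function of $\tri_\omega$ on the compact K\"ahler manifold $(M,\omega)$. Since $M$ is compact, $G$ is bounded below, so there is a constant $A > 0$ (depending only on $\omega$) with $G(x,y) + A \geq 0$ for all $x,y \in M$, and one has the standard representation
\[
\vphi(x) - \underline{\vphi} \;=\; -\frac{1}{V}\int_M G(x,y)\, \tri_\omega \vphi(y)\, \om^n(y)
\]
(up to the normalization convention used for $\underline{\vphi}$ in the paper). Using $\int_M \tri_\omega \vphi\, \om^n = 0$, I can replace $G(x,y)$ by the nonnegative kernel $G(x,y)+A$ without changing the integral, and then insert the inequality $\tri_\omega \vphi > -n$ to obtain
\[
\vphi(x) - \underline{\vphi} \;\leq\; \frac{n}{V}\int_M \bigl(G(x,y)+A\bigr)\om^n(y) \;=\; nA,
\]
giving $C_3 = nA$, a constant depending only on the fixed background metric $\omega$.

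There is essentially no serious obstacle here: the argument is purely elliptic/pointwise at each time slice, and the uniformity in $t$ comes for free from the invariance of the K\"ahler class along the pseudo-Calabi flow. The only small bookkeeping issue is to match the normalization convention for $\underline{\vphi}$ adopted in the paper (the displayed definition $\underline{\vphi}=\int_M \vphi\,\om^n$ differs by a factor of $V$ from the volume-normalized average), but this only changes $C_3$ by a multiplicative constant and does not affect the conclusion.
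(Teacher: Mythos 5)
Your proposal is correct and follows essentially the same route as the paper: both use $\tri_\omega\vphi + n > 0$ (from positivity of $\om_\vphi$) together with the Green representation of $\vphi - \underline{\vphi}$ to get a time-uniform upper bound depending only on $\omega$. The only difference is cosmetic: you shift the Green function by a constant $A$ to make it nonnegative, whereas the paper simply takes the normalization $G \geq 0$ as given.
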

\begin{proof}
Since $ \tri\vphi+n>0, \forall t\geq0$, using the Green
representation we have
\begin{align*}
\vphi(x)-\underline{\vphi}&=
-\frac{1}{V}\int_M\tri\vphi(y)G(x,y)\om^n(y)\\
&\leq n\frac{1}{V}\int_MG(x,y)\om^n(y).
\end{align*}
Since $0\leq G(x,y)\leq \frac{C_4}{d(x,y)^{2n-2}}$, we have
$$\sup_{M\times[0,T]}(\vphi-\underline{\vphi})\leq
C_3=n\frac{1}{V}\int_MG(x,y)\om^n(y).$$
\end{proof}
The lower bound of the normalized potential is obtained by Yau's $C^0$ estimate.
\begin{thm}(Yau \cite{MR480350}) The lower bound of $\vphi-\underline{\vphi}$
is controlled by the upper bound of $h$,
\begin{align}\label{long time: inf varphi}
\inf_{M}(\vphi-\underline{\vphi})\geq-C_5e^{C_6\sup_{M}h}.
\end{align} Here $C_5$ and $C_6$ depend only on $\om$
and $\sup_M(\vphi-\underline{\vphi})$.
\end{thm}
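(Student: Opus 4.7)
The plan is to apply Yau's Moser iteration to the complex Monge-Amp\`ere equation $\om_{\vphi}^n = e^h \om^n$ that comes from the definition of $h$; note that $[\om_\vphi]=[\om]$ forces the compatibility $\int_M e^h \om^n = V$. Set $u := \sup_M(\vphi - \underline{\vphi}) - (\vphi - \underline{\vphi}) \geq 0$, so that proving $\sup_M u \leq C_5 e^{C_6 \sup_M h}$ is equivalent to the stated bound. Since $\int_M(\vphi - \underline{\vphi})\om^n = 0$ by definition of $\underline{\vphi}$, the identity $\|u\|_{L^1(\om^n)} = V\sup_M(\vphi - \underline{\vphi})$ combined with \lemref{long time: sup varphi} provides an a priori $L^1$ bound on $u$ that is independent of $h$.

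The second step is the energy estimate obtained by testing against powers of $u$. Writing
\[
\om_{\vphi}^n - \om^n = \tfrac{\sqrt{-1}}{2}\p\bar\p\vphi \wedge T, \qquad T = \sum_{k=0}^{n-1}\om_{\vphi}^k \wedge \om^{n-1-k},
\]
multiplying by $u^{p+1}$ for $p \geq 0$, and integrating by parts (using that $T$ is $d$-closed and $\p\bar\p\vphi = -\p\bar\p u$), one obtains
\[
\int_M u^{p+1}(e^h - 1)\om^n \;=\; (p+1)\int_M u^p \sqrt{-1}\p u \wedge \bar\p u \wedge T.
\]
Since $T$ is a sum of positive $(n-1,n-1)$-forms, it dominates the $k=0$ term $\om^{n-1}$; discarding the other (nonnegative) terms and using $e^h - 1 \leq e^{\sup_M h}$, one arrives at an inequality of the shape
\[
\int_M |\nabla u^{(p+2)/2}|_\om^2 \om^n \;\leq\; C_n (p+2)\, e^{\sup_M h} \int_M u^{p+1}\om^n.
\]

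The final step is to combine this energy inequality with the Sobolev embedding on $(M,\om)$ and run Moser iteration with $p_k\to \infty$ geometrically, say $p_k + 2 = q_0 \chi^k$ for $\chi = n/(n-1)$. Each iteration contributes a multiplicative factor of the form $\bigl(C(p_k+2)e^{\sup_M h}\bigr)^{1/(p_k+2)}$; because $\sum_k 1/(p_k+2)$ is a convergent geometric series, the product of these factors telescopes into a constant of the form $C(\om)\,e^{C'(\om)\sup_M h}$. This yields
\[
\sup_M u \;\leq\; C(\om)\, e^{C'(\om)\sup_M h}\,\|u\|_{L^1(\om^n)},
\]
and combining with the $L^1$ bound from Step 1 gives the claim after unwinding the normalization. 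The main obstacle is the bookkeeping of constants through the iteration: one must verify that the exponential factor $e^{\sup_M h}$ passes through each step with exponent $1/(p_k+2)$ and not, say, $1$, since otherwise the iteration would blow up. This is precisely what the telescoping of the geometric series in the exponent achieves, and it is what yields the single exponential dependence on $\sup_M h$ rather than an iterated exponential.
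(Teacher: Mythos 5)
The paper states this result as a citation to Yau \cite{MR480350} without reproducing a proof, and your sketch is precisely the standard Moser-iteration argument for Yau's $C^0$ estimate applied to $\om_\vphi^n=e^h\om^n$: the $L^1$ bound on $u$ from \lemref{long time: sup varphi}, the energy inequality from testing $\om_\vphi^n-\om^n=\frac{\sqrt{-1}}{2}\p\bar\p\vphi\wedge T$ against $u^{p+1}$ and keeping only the $\om^{n-1}$ term of $T$, and the telescoping of the factors $\bigl(C(p_k+2)e^{\sup_M h}\bigr)^{1/(p_k+2)}$ are all correct and yield exactly the claimed single-exponential dependence with $C_5$ depending on $\om$ and $\sup_M(\vphi-\underline{\vphi})$ and $C_6$ on $\om$ alone. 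This matches the cited source; the only detail left implicit (passing from $\int_M u^{p+1}\om^n$ to a norm of $u^{(p+2)/2}$ suitable for iteration, and starting the iteration from $L^1$ via the $p=0$ case plus Poincar\'e) is routine bookkeeping.
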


\begin{thm}(Chen-Tian \cite{MR1893004})
Suppose $Ric_{\vphi}$ is bounded from below, then
\begin{align}\label{long time: inf h}
\inf_M{h}&\geq-4C_7e^{2(1+\int_Mh\om_\vphi^n)}.
\end{align}
\end{thm}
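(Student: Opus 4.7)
The plan is to combine three ingredients: (i) $L^p(\om_\vphi^n)$ bounds on the negative part of $h$ that come for free from the identity $\om^n=e^{-h}\om_\vphi^n$, (ii) a differential inequality for $h$ coming from the Ricci lower bound, and (iii) a Moser iteration on $(M,\om_\vphi)$ using the Sobolev constant that the Ricci lower bound provides. The goal is to bound $\sup(-h)$; equivalently, using Lemma \ref{long time: h above} to set $w=-h+C$ with $w\ge 1$, we seek to bound $\sup w$.

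For ingredient (i), from $V=\int_M e^{-h}\om_\vphi^n$ Chebyshev gives $|\{h\le -t\}|_{\om_\vphi}\le Ve^{-t}$, and the layer-cake formula then yields $\int_M(h^-)^p\om_\vphi^n\le V\G(p+1)$ for all $p\ge 1$. This provides a priori $L^p$ control on $w$ that is essentially the source of the exponential shape of the final bound.

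For ingredient (ii), the identity $-\sqrt{-1}\p\bar\p h=Ric(\om_\vphi)-Ric(\om)$ together with $Ric(\om_\vphi)\ge -K\om_\vphi$ and the (smooth) bound $|Ric(\om)|_\om\le C$ gives, after tracing with $\om_\vphi$,
\[
\Delta_\vphi w \;\ge\; -nK - C\,\mathrm{tr}_{\om_\vphi}\om,
\]
where the nuisance term $\mathrm{tr}_{\om_\vphi}\om$ has the cohomological $L^1(\om_\vphi^n)$ bound $\int_M \mathrm{tr}_{\om_\vphi}\om\cdot\om_\vphi^n=nV$. For ingredient (iii), the Ricci lower bound combined with the fixed volume and diameter bound (available via Myers-type arguments under Ricci lower bound, or via Perelman non-collapsing) yields a uniform Sobolev inequality on $(M,\om_\vphi)$, with constant depending only on $n,K,V$ and $\om$. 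Moser iteration then proceeds in the standard way: multiply the differential inequality by $w^{p-1}$, integrate against $\om_\vphi^n$, and feed the resulting energy estimate into the Sobolev inequality to step up the integrability exponent. The exponential factor $e^{2(1+\int_M h\,\om_\vphi^n)}$ arises by tracking how the $\int w\cdot\mathrm{tr}_{\om_\vphi}\om$ contributions accumulate through the iteration, combined with the $L^p$ bounds from step (i), and using that $\int_M w\,\om_\vphi^n = CV-\int_M h\,\om_\vphi^n$ links the base $L^1$ norm to the entropy quantity.

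The main obstacle is controlling the nuisance term $\mathrm{tr}_{\om_\vphi}\om$ during the Moser iteration: since it only has an $L^1(\om_\vphi^n)$ bound (not $L^p$ for $p>1$), one cannot apply H\"older naively at each iteration step. The trick will be to integrate by parts using $\mathrm{tr}_{\om_\vphi}\om\cdot\om_\vphi^n = n\,\om\wedge\om_\vphi^{n-1}$, trading the troublesome factor against $|\nabla w^{p/2}|_\vphi^2$ which is already being produced on the other side of the energy identity. Managing this absorption cleanly — so that the iteration constants grow only geometrically and the dependence on $\int_M h\,\om_\vphi^n$ enters exactly as claimed — is the technical heart of the proof.
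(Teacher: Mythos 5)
This statement is quoted from Chen--Tian \cite{MR1893004}; the paper itself gives no proof (it is used as a black box in the derivation of \eqref{long time: h}), so there is no internal argument to compare yours against. Judged on its own terms, your plan has gaps at each of its load-bearing points. The most basic one is ingredient (iii): a lower Ricci bound $Ric_\vphi\ge -K$ with $K>0$ gives \emph{no} diameter bound --- Myers' theorem needs a strictly positive lower bound, and flat metrics of fixed volume already have unbounded diameter --- while Perelman's non-collapsing is a Ricci-flow statement that neither applies to this flow nor yields diameter upper bounds. Without a diameter (or injectivity-radius) bound, the uniform Sobolev constant for $(M,\om_\vphi)$ that your Moser iteration requires simply does not follow from $Ric_\vphi\ge -K$ and fixed volume. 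Second, the $L^1$-only forcing term is not actually disarmed by your integration-by-parts trick: writing $\int w^{p-1}\mathrm{tr}_{\om_\vphi}\om\,\om_\vphi^n=n\int w^{p-1}\om_\vphi^n+n\int\frac{\sqrt{-1}}{2}\p(w^{p-1})\wedge\bar\p\vphi\wedge\om_\vphi^{n-1}$ and absorbing the gradient piece (at the cost of a Young constant that degenerates like $p$) leaves you needing $\int w^{p-2}|\nabla\vphi|_\vphi^2\,\om_\vphi^n$, and $|\nabla\vphi|_\vphi^2$ is again controlled only in $L^1(\om_\vphi^n)$. You have traded one $L^1$ weight for another; closing this requires higher integrability of $\mathrm{tr}_{\om_\vphi}\om$, i.e.\ a partial second-order estimate on $\vphi$, which in the paper's own scheme is obtained \emph{after} and \emph{from} the bound on $\inf_M h$ (via the quoted Chen--He estimate on $n+\tri\vphi$). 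Your route is therefore circular relative to the intended application.

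There is also a structural sign that the argument cannot produce the stated inequality. Since $\int_Me^{-h}\om_\vphi^n=V$, Jensen gives $\int_Mh\,\om_\vphi^n\ge 0$, so your seed norm $\int_Mw\,\om_\vphi^n=CV-\int_Mh\,\om_\vphi^n$ is bounded above \emph{independently} of the entropy; a Moser iteration started from it would output a bound with no dependence on $\int_Mh\,\om_\vphi^n$ and with polynomial dependence on the Sobolev constant and the data --- not the exponential $4C_7e^{2(1+\int_Mh\om_\vphi^n)}$. Likewise your step (i) gives $\|h^-\|_{L^p(\om_\vphi^n)}\le (V\,\Gamma(p+1))^{1/p}\sim p/e$, which is exactly the borderline exponential-integrability growth from which no $L^\infty$ bound follows by letting $p\to\infty$; it is not "the source of the exponential shape" but rather the reason a purely integral argument stalls. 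The form of the Chen--Tian constant points to a different mechanism (a Green's-function/Harnack-type representation for $\om_\vphi$, where the exponential arises from exponentiating an averaged quantity) rather than bookkeeping inside a Moser iteration. As written, the proposal does not constitute a proof, and I do not see how to repair it without importing precisely the second-order or Green's-function estimates it is meant to precede.
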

Combining \lemref{long time: h above} and \eqref{long time: inf h}, we get the uniform bound of $h$:
\begin{align}\label{long time: h}
\sup_M|h|\leq C_2+4C_7e^{2(1+C_2V)}.
\end{align}
We deduce the lower bound of the average of $\vphi$
by the normalization condition.
\begin{lem}The following estimate holds
\[
\inf_{[0,T]}\underline{\vphi}\geq\underline\vphi(0)+T\inf_Mh.
\]
\end{lem}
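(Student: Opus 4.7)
The plan is to integrate the ODE satisfied by the spatial average $\underline{\varphi}$. Using the pseudo-Calabi flow in the form $\partial_t\varphi = h - P$ (with the normalization $c(t)=0$ from \eqref{notations: normalization condition}), I would differentiate under the integral sign to obtain
\[
\frac{d}{dt}\underline{\varphi} \;=\; \frac{1}{V}\int_M (h - P)\,\omega^n.
\]

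The substantive step is to discard the pseudo-differential term with the correct sign. For this I would invoke the normalization $\int_M e^P\omega^n = V$ from \eqref{notations: P normalization condition} and apply Jensen's inequality with the convex function $e^x$ against the probability measure $\omega^n/V$:
\[
\exp\!\Bigl(\tfrac{1}{V}\!\int_M P\,\omega^n\Bigr) \;\leq\; \tfrac{1}{V}\!\int_M e^P\,\omega^n \;=\; 1,
\]
so $\int_M P\,\omega^n \leq 0$. Combined with the trivial pointwise bound $\int_M h\,\omega^n \geq V\inf_M h$, this gives $\tfrac{d}{dt}\underline{\varphi} \geq \inf_M h$ at each time. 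Integrating from $0$ to any $t\leq T$ produces $\underline{\varphi}(t) \geq \underline{\varphi}(0) + \int_0^t \inf_M h(\cdot,s)\,ds$.

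The last step is to replace $t$ by $T$ in that integral, which requires a sign check on the integrand. A second application of Jensen's inequality to the normalization $\int_M e^h\,\omega^n = V$ (which follows from $\int_M e^{\partial_t\varphi+P}\omega^n = V$) gives $\int_M h\,\omega^n \leq 0$, hence $\inf_M h(\cdot,s) \leq 0$ for every $s$. Because the integrand is nonpositive and $t\leq T$, one has $\int_0^t \inf_M h\,ds \geq T\inf_{M\times[0,T]} h$; taking the infimum over $t\in[0,T]$ yields the claim. The only real obstacle is recognizing that both normalizations --- for $P$ and for $\partial_t\varphi+P$ --- must be used, one to kill $P$ and the other to justify enlarging $t$ to $T$; beyond that, the argument is a routine ODE integration.
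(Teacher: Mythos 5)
Your proposal is correct and follows essentially the same route as the paper: Jensen's inequality applied to the normalization $\int_M e^{P}\om^n=V$ to get $\int_M P\,\om^n\leq 0$ and hence $\frac{d}{dt}\underline{\vphi}\geq\inf_M h$, then a second application of Jensen to $\int_M e^{h}\om^n=V$ to conclude $\inf_M h\leq 0$, which justifies replacing $t$ by $T$ after integrating. The only difference is that you write out the integration and sign-check explicitly where the paper compresses them into ``the lemma follows from $\inf_M h\leq 0$.''
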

\begin{proof}
Due to the normalization condition $\int_Me^P\om^n=Vol$, we apply the Jensen's inequality to get $$\int_MP\om^n\leq0.$$
So we obtain
\[\frac{\p}{\p t}\underline\vphi
=\frac{1}{V}\int_M\frac{\p\vphi}{\p t}\om^n
=\frac{1}{V}\int_M(h-P)\om^n\geq\inf_Mh.
\]
Similarly, we get $\int_Mh\om^n\leq0$ since $\int_Me^h\om^n=V$.
Therefore the lemma follows from $\inf_Mh\leq0$.
\end{proof}
Plugging the above lemma and \lemref{long time: sup varphi} in \eqref{long time: inf varphi},
we get
\begin{align*}
\inf_{M}\vphi\geq-C_5e^{C_6\sup_{M}h}+\underline\vphi(0)+T\inf_Mh.
\end{align*}
Then by \eqref{long time: h} we have that $\vphi$ is uniformly bounded below. On the other hand \eqref{long time: inf varphi} further implies that for a fixed point $p\in M$
\begin{align*}
\underline{\vphi}\leq\inf_{M}\vphi+C_5e^{C_6\sup_{M}h}
\leq\vphi(p)+C_5e^{C_6\sup_{M}h}.
\end{align*}
So the upper bound of $\vphi$ follows from \lemref{long time: sup varphi}, i.e.
\begin{align*}
\sup_{M}\vphi\leq \underline{\vphi}+C_3.
\end{align*}
\begin{thm}(Chen-He \cite{MR2405167})
Suppose that the Ricci curvature $Ric_\vphi$ is uniformly bounded above.
Then there exist two constants $C_8$ and $C_9$ such that
\begin{align*}
n+\tri\vphi\leq
C_8e^{C_9\cdot\osc_M\vphi+\frac{\sup_M
h}{n-1}}.
\end{align*}
\end{thm}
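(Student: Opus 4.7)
The plan is the classical Aubin--Yau $C^2$ maximum-principle argument for complex Monge--Amp\`ere equations, adapted here so that the Ricci upper bound enters as the only analytic input. First I would set up the barrier
\[
u := \log(n+\tri\vphi) - A\vphi,
\]
where $A$ is a large constant to be chosen depending only on a lower bound of the holomorphic bisectional curvature of the fixed background metric $\om$. Since $M$ is compact, $u$ attains its maximum at some point $x_0\in M$.

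Next I would invoke the standard Aubin--Yau computation, which yields a differential inequality of the form
\[
\tri_\vphi \log(n+\tri\vphi) \;\geq\; \frac{\tri h}{n+\tri\vphi} - C_1\,tr_{g_\vphi}g,
\]
with $C_1=C_1(\om)$ depending on a lower bound of the bisectional curvature of $g$. Combined with the identity $\tri_\vphi \vphi = n - tr_{g_\vphi}g$, this gives
\[
\tri_\vphi u \;\geq\; \frac{\tri h}{n+\tri\vphi} + (A-C_1)\,tr_{g_\vphi}g - An.
\]
At this point the Ricci upper bound enters: from $h=\log(\om_\vphi^n/\om^n)$ and $R_{i\bar{j}}(\om_\vphi)=R_{i\bar{j}}(\om)-h_{i\bar{j}}$ one obtains $\tri h = S_\om - tr_\om Ric(\om_\vphi)$, and the hypothesis $Ric(\om_\vphi)\leq K\om_\vphi$ then yields $\tri h \geq -|S_\om|_{L^\infty} - K(n+\tri\vphi)$. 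Choosing $A\geq C_1+1$ and applying $\tri_\vphi u(x_0)\leq 0$ produces a uniform pointwise bound
\[
tr_{g_\vphi}g(x_0) \;\leq\; C_2(A,K,\om).
\]

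Then I would convert this into a bound on $n+\tri\vphi$ at $x_0$ via the Monge--Amp\`ere equation $\det(g_\vphi)=e^h\det(g)$. Writing $\mu_1,\ldots,\mu_n>0$ for the eigenvalues of $g_\vphi$ with respect to $g$, we have $\sum\mu_i = n+\tri\vphi$, $\sum 1/\mu_i = tr_{g_\vphi}g$, and $\prod\mu_i = e^h$, and a sharp reverse-Maclaurin / AM--GM argument on these elementary symmetric polynomials gives an estimate of the form
\[
(n+\tri\vphi)(x_0) \;\leq\; C_3\, e^{h(x_0)/(n-1)}.
\]
Finally, propagating from $x_0$ via $u(x)\leq u(x_0)$ for every $x\in M$ yields
\[
(n+\tri\vphi)(x) \;\leq\; (n+\tri\vphi)(x_0)\,e^{A(\vphi(x)-\vphi(x_0))} \;\leq\; C_8\,e^{C_9 \osc_M \vphi + \sup_M h/(n-1)},
\]
which is the claimed inequality with $C_9 = A$.

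The main obstacle is the sharp eigenvalue conversion: the naive AM--GM gives only $(n+\tri\vphi)\leq n\,e^{h}\,(tr_{g_\vphi}g)^{n-1}$, which would yield exponent $1$ rather than $\tfrac{1}{n-1}$ on $\sup_M h$. Extracting the sharp $\tfrac{1}{n-1}$ requires a more careful exploitation of the simultaneous constraints on $\mu_i$ (their sum, the sum of their reciprocals, and their product), via the sharp form of Maclaurin's inequalities for the elementary symmetric polynomials $e_1,e_{n-1},e_n$. The remaining ingredients --- the Aubin--Yau computation, the absorption trick for the bad $tr_{g_\vphi}g$ term by the barrier $-A\vphi$, and the translation from the bound at $x_0$ to a bound on all of $M$ --- are entirely standard.
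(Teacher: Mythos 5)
The paper offers no proof of this statement---it is quoted from Chen--He \cite{MR2405167}---so your attempt can only be measured against the standard argument, and your overall strategy (the barrier $u=\log(n+\tri\vphi)-A\vphi$, the Aubin--Yau differential inequality, converting the Ricci upper bound into $\tri h\geq -\|S_\om\|_{\infty}-K(n+\tri\vphi)$ via $h_{i\bar j}=R_{i\bar j}(\om)-R_{i\bar j}(\om_\vphi)$, and the bound $tr_{g_\vphi}g(x_0)\leq C_2$ at the maximum point) is exactly that standard argument and is sound. The gap is the step you yourself flag as ``the main obstacle'': the claim that a sharp Maclaurin-type argument upgrades the pointwise data at $x_0$ to $(n+\tri\vphi)(x_0)\leq C_3e^{h(x_0)/(n-1)}$. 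No such inequality exists. The only pointwise information at $x_0$ is $\sum_i\mu_i^{-1}\leq C_2$ and $\prod_i\mu_i=e^{h}$, where $\mu_i$ are the eigenvalues of $g_\vphi$ relative to $g$; taking $\mu_1=\dots=\mu_{n-1}=n/C_2$ and $\mu_n=(C_2/n)^{n-1}e^{h}$ satisfies both constraints while $\sum_i\mu_i\sim(C_2/n)^{n-1}e^{h}$. So for $n\geq3$ the best available conversion is the ``naive'' one you dismiss, $(n+\tri\vphi)(x_0)\leq C_2^{n-1}e^{h(x_0)}$, and your argument proves only
\begin{align*}
n+\tri\vphi\leq C_8\,e^{C_9\cdot\osc_M\vphi+\sup_Mh},
\end{align*}
with exponent $1$ rather than $\tfrac{1}{n-1}$ on $\sup_Mh$; since $\sup_Mh\geq0$ (from $\int_Me^{h}\om^n=V$ and Jensen), that is a strictly weaker conclusion than the one stated.

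Two further remarks. Running Yau's inequality with the full term $(A-C_1)e^{-h/(n-1)}(n+\tri\vphi)^{n/(n-1)}$ does not rescue the exponent: at the maximum it yields $(n+\tri\vphi)^{1/(n-1)}\leq Ce^{h/(n-1)}$, i.e.\ again $(n+\tri\vphi)(x_0)\leq Ce^{h(x_0)}$; and absorbing $-h/(n-1)$ into the barrier introduces $\tri_\vphi h=tr_{\vphi}Ric(\om)-S_\vphi$, which an upper Ricci bound alone does not control. So either the $\tfrac{1}{n-1}$ exponent requires an input beyond your outline, or the statement as transcribed is imprecise. For the paper's purposes the distinction is immaterial---by the time this theorem is invoked, $\sup_M|h|$ is already uniformly bounded, so the exponent-$1$ estimate you actually establish gives the same metric equivalence---but as a proof of the statement as written, the ``sharp reverse-Maclaurin'' step is not a gap to be filled later; it is false, and the argument must either settle for the weaker bound or find a genuinely different mechanism.
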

Working with normal coordinates, we have
\begin{align*}
\frac{1}{1+\vphi_{i\bar{i}}}
=\frac{\prod_{j\neq
i}(1+\vphi_{j\bar{j}})}{\prod_{i}(1+\vphi_{i\bar{i}})}
\leq(\frac{n+\tri\vphi}{n-1})^{n-1}e^{-h}
\leq
C_{10}e^{C_{11}\osc_M\vphi+\osc_Mh}.
\end{align*}
So the metrics are all equivalent, that is
\begin{align*}
C_{10}e^{-C_{11}\osc_M\vphi-\osc_Mh}g_{i\bar{j}}\leq
g_{\vphi i\bar{j}}\leq
C_8e^{C_9\osc_M\vphi+\frac{\sup_M
h}{n-1}}g_{i\bar{j}}.
\end{align*}
By the Ricci curvature bounds
\begin{align*}
-C_1g_{\vphi i\bar{j}}\leq R_{\vphi i\bar{j}}\leq C_2 g_{\vphi
i\bar{j}},
\end{align*}
we have that $\tri h$ is bounded; namely
\begin{align*}
-C_2(n+\tri\vphi)+S\leq\tri h=-g^{i\bar{j}}R_{\vphi
i\bar{j}}+S\leq C_1(n+\tri\vphi)+S.
\end{align*}
This together with the fact that $\sup_{M\times[0,T]}|h|\leq C$ implies that $h\in
W^{2,p}(M)$ for any $p>1$ and $t\in[0,T]$. Then the Evans-Krylov estimate \cite{MR649348}\cite{MR661144}
shows that $\vphi$ has uniform $C^{2,\a}$ bound. The argument we made so far is summarized in the statement here below.
\begin{prop}\label{long time: 2+a}
Let $T$ be a finite time.
If $g_{\vphi}(t)$ is a solution of the pseudo-Calabi flow with
$Ric_{\vphi}(t)$ uniformly bounded for all time $t\in[0,T]$, then
there exists a constant $C$ depending on $T$ such that
$\sup_{[0,T]}|\vphi(t)|_{C^{2,\a}}\leq C$ for some $0<\a<1$.
\end{prop}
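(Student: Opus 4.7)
The plan is to chain the lemmas of this section into a bootstrap that moves from the Ricci bound to an $L^\infty$ bound on the volume ratio $h = \log(\om_\vphi^n/\om^n)$, then to a $C^0$ bound on $\vphi$, then to a Laplacian bound via Chen--He, and finally to the $C^{2,\a}$ bound via Evans--Krylov.

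First I would extract a uniform bound on $h$. Using the Ricci lower bound together with Jensen's inequality applied to the volume-preserving normalization $\int_M e^h \om^n = V$ (which yields $\int_M h\, \om_\vphi^n \leq 0$), the Chen--Tian estimate \eqref{long time: inf h} provides the lower bound $\inf_M h \geq -C$. For the upper bound, the Ricci bound controls $\int_0^t S_\vphi\, dt$ from below, so Lemma \ref{long time: h above} gives $\sup_M h \leq C$. Together, one obtains $\|h\|_{L^\infty([0,T]\times M)} \leq C$ as in \eqref{long time: h}.

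Next, the $C^0$ estimate for $\vphi$. Lemma \ref{long time: sup varphi} supplies $\sup_M(\vphi - \underline\vphi) \leq C_3$ via the Green's representation, using only $\tri\vphi + n > 0$. Yau's $C^0$ estimate \eqref{long time: inf varphi} then controls $\inf_M(\vphi - \underline\vphi)$ in terms of $\sup_M h$. To bound $\underline\vphi$ itself on $[0,T]$, Jensen applied to $\int_M e^P \om^n = V$ gives $\int_M P\, \om^n \leq 0$, whence $\p_t \underline\vphi \geq \inf_M h$; the analogous inequality $\int_M h\, \om^n \leq 0$ gives the matching upper bound. Thus $\vphi$ is uniformly bounded on $[0,T] \times M$. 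With the $C^0$ and $h$ bounds in hand, the Chen--He estimate (using the upper Ricci bound) provides $n + \tri\vphi \leq C_8 e^{C_9 \osc_M \vphi + \sup_M h/(n-1)}$, and hence metric equivalence $\l g \leq g_\vphi \leq \L g$. From this, $\tri h = -g_\vphi^{i\bar j} R_{\vphi i\bar j} + S_\vphi$ is in $L^\infty$ because the Ricci bound controls both $|Ric_\vphi|$ and $|S_\vphi|$, so $h \in W^{2,p}$ for all $p > 1$ by elliptic $L^p$ theory, hence $h \in C^\a$. Evans--Krylov applied to the complex Monge--Amp\`ere equation $\det(g_{i\bar j} + \vphi_{i\bar j}) = e^h \det g_{i\bar j}$ finally yields the desired $C^{2,\a}$ bound.

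The main delicate point is the ordering of the argument so as to avoid circularity: one needs both an upper and a lower bound on $h$ before the metric equivalence is available. This is resolved because Chen--Tian's lower bound on $h$ requires only the Ricci lower bound (not metric equivalence), while the upper bound from Lemma \ref{long time: h above} needs only a lower bound on $\int_0^t S_\vphi\, dt$, which in turn follows from the Ricci bound and the finiteness of $T$. Once $\|h\|_{L^\infty}$ is controlled, the remaining steps are standard a priori K\"ahler geometry estimates.
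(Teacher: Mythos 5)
Your proposal reproduces the paper's proof essentially step for step: upper bound on $h$ from Lemma~\ref{long time: h above}, lower bound from Chen--Tian, the $C^0$ estimate via the Green representation, Yau's estimate and the Jensen bound $\int_M P\,\om^n\le 0$ controlling $\underline{\vphi}$, then Chen--He for the Laplacian bound, metric equivalence, $\tri h\in L^\infty$, and Evans--Krylov. The one slip is the Jensen step for $h$: $\int_M e^{h}\om^n=V$ yields $\int_M h\,\om^n\le 0$ with respect to the \emph{background} volume form (Jensen with respect to $\om_\vphi^n$ actually gives $\int_M h\,\om_\vphi^n\ge 0$), so the upper bound on the exponent $\int_M h\,\om_\vphi^n$ in the Chen--Tian estimate \eqref{long time: inf h} must instead come from $\sup_M h\le C_2$ of Lemma~\ref{long time: h above} --- which you already have, and which is exactly how the paper closes this step.
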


\section{Pseudo-Calabi flow
in the space of K\"ahler metrics}\label{PCF on SoKM}
Suppose that $\vphi(t)$ for $0\leq t<T$ is the solution of \eqref{notations: PCF} given by \thmref{short time: main} and $T$ is the maximal existence time.
In this section, we consider the following system of equations, \begin{equation}\label{PCF on SoKM: MPCF}
  \left\{
   \begin{aligned}
\frac{\p}{\p t}\psi
&=\log\frac{\om^n_\psi}{\om^n}-P(\psi)-\bar h+\bar P,\\
\tri_\psi P(\psi)&=tr_{\psi}{Ric(\om)}-\ul S,\\
\psi(0)&=\vphi_0,  \\
   \end{aligned}
  \right.
\end{equation}
with the normalization condition
\begin{align*}
\int_Me^{P}\om^n=\int_Me^{\frac{\p\psi}{\p t}
+P+\bar h-\bar P}\om^n=Vol(M).
\end{align*}
Here $\bar h=\frac{1}{V}\int_M\log\frac{\om^n_\psi}{\om^n}\om^n_\psi$ and $\bar P=\frac{1}{V}\int_MP(\psi)\om^n_\psi$.
Actually \eqref{PCF on SoKM: MPCF} is obtained by replacing $\vphi$ with
$$\psi=\vphi+\int_0^t(-\bar h+\bar P)ds$$ in \eqref{notations: pam of PCF}.
Since $\p_tI(\psi)=\int_M\p_t\psi\om_\vphi^n=0$ (see \eqref{notations: I}), if we further assume $I(\vphi_0)=0$ then $\psi$ always
stays in $\mathcal{H}_0$.

We assume that $M$ admits a cscK metric $\om$.
We choose $\om$ as the reference metric.
Then we shall show that
if $\vphi_0$ is in a sufficiently small neighborhood of the zero function, then $\vphi(t)$ can be extended and $\om_\vphi$ always stays in a small neighborhood of $\om$. Before we go into the details of the proof, we cite the theorems we will use later.

Recall the explicit form
of $K$-energy in Chen \cite{MR1772078} and Tian \cite{MR1787650}
\begin{align*}
\nu_\om(\vphi)
&=\frac{1}{V}\int_M\log\frac{\om^n_\vphi}{\om^n}\om_\vphi^{n}
+\frac{\as}{V}\sum_{i=0}^n\frac{n!}{(i+1)!(n-i)!}
\int_{M}\vphi\om^{n-i}\wedge(\p\bar\p\vphi)^i\\
&-\frac{1}{V}\sum_{i=0}^{n-1}\frac{n!}{(i+1)!(n-i-1)!}\int_{M}\vphi
Ric\wedge\om^{n-1-i}\wedge(\p\bar\p\vphi)^{i}.
\end{align*} So the $K$-energy is well defined for any $L^\infty$ K\"ahler metrics. Its derivative is
\begin{align*}
\nu'_\om(v)
=\int_M\log\frac{\om^n_\vphi}{\om^n}\tri_\vphi v\om_\vphi^n
-\frac{1}{(n-1)!}\int_Mv Ric(\om_0)\wedge\om_\vphi^{n-1}
+\ul S\int_Mv\om^n_\vphi.
\end{align*} Here $v$ is the infinitesimal variation of $\vphi$.

Chen-Tian further proved that
\begin{thm}(Chen-Tian \cite{MR2434691})\label{PCF on SoKM: CT K LB}
Let $M$ be a compact K\"ahler manifold with a cscK metric $\om$. Then
$\nu_\om(\vphi)\geq0$ for any $\vphi$ with $\om_\vphi>0$.
\end{thm}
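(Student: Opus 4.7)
The plan is to exploit convexity of the $K$-energy along geodesics in the space of K\"ahler potentials, combined with the fact that a cscK metric is a critical point of $\nu_\om$, to force $0$ to be a global minimum. Concretely, I would connect the reference potential $0$ (corresponding to the cscK metric $\om$) to the given $\vphi$ by a $C^{1,1}$ weak geodesic $\vphi(s)$, $s\in[0,1]$, using Chen's existence result \cite{MR1863016} recalled in Section~2, with $\vphi(0)=0$ and $\vphi(1)=\vphi$.

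The main analytic step is to prove that $s\mapsto\nu_\om(\vphi(s))$ is convex. For a smooth geodesic this follows from the Chen-Tian second variation identity
$$\frac{d^2}{ds^2}\nu_\om(\vphi(s))=\frac{1}{V}\int_M|\bar\p\nabla\dot\vphi|^2_{g_{\vphi(s)}}\,\om^n_{\vphi(s)}\geq 0,$$
obtained by differentiating the explicit formula for $\nu_\om$ recalled just before the theorem twice along the geodesic equation $\ddot\vphi-|\nabla\dot\vphi|^2_{g_{\vphi(s)}}=0$. For the genuine $C^{1,1}$ geodesic I would approximate by smooth $\epsilon$-geodesics $\vphi_\epsilon(s)$ solving the regularized HCMA equation on $M\times[0,1]\times S^1$ with right-hand side $\epsilon\,\om^{n+1}$; convexity of $\nu_\om(\vphi_\epsilon(s))$ then holds modulo a defect of size $O(\epsilon)$, and one passes to the limit $\epsilon\to 0$ using the uniform $C^{1,1}$ bounds plus continuity of $\nu_\om$ on $L^\infty$ K\"ahler potentials (which is visible from the explicit Chen-Tian formula).

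Once convexity is established the conclusion is immediate. Since $\om$ is cscK, the first variation at $\vphi=0$ is
$$\delta\nu_\om|_0(v)=-\frac{1}{V}\int_M v(S(\om)-\ul S)\om^n=0,$$
so $s=0$ is a critical point of the convex function $s\mapsto\nu_\om(\vphi(s))$ on $[0,1]$, which forces it to attain the minimum at $s=0$. Hence $\nu_\om(\vphi)=\nu_\om(\vphi(1))\geq\nu_\om(\vphi(0))=0$.

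The main obstacle is rigorously justifying convexity along the $C^{1,1}$ geodesic: the second variation identity involves second and third derivatives of $\vphi(s)$ that only exist distributionally, so the $\epsilon$-regularization must be handled carefully, with uniform estimates ensuring that the defect terms vanish in the limit and that the endpoint values $\nu_\om(\vphi_\epsilon(0))$ and $\nu_\om(\vphi_\epsilon(1))$ converge to the correct limits. A further subtlety is that the Lichnerowicz-type quantity $|\bar\p\nabla\dot\vphi|^2_{g_{\vphi(s)}}$ degenerates where $\om_{\vphi(s)}$ is only semipositive, which is intrinsic to weak geodesics and is ultimately the reason this convexity statement is analytically delicate.
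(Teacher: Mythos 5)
First, note that the paper does not prove this statement at all: it is quoted from Chen--Tian \cite{MR2434691} and used as a black box in Section 7, so there is no internal proof to compare yours against. Your outline does reproduce the strategy that underlies the cited result (convexity of the $K$-energy along weak geodesics plus the fact that a cscK potential is a critical point), but as written it has a genuine gap at exactly the point you flag as ``the main obstacle,'' and the mechanism you propose to close it does not work.

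The problem is the passage from $\epsilon$-geodesics to the $C^{1,1}$ geodesic. Along a smooth path the second variation is
$$\frac{d^2}{ds^2}\nu_\om(\vphi(s))=\frac{1}{V}\int_M|\bar\p\nabla^{1,0}\dot\vphi|^2_{g_{\vphi}}\,\om^n_{\vphi}-\frac{1}{V}\int_M\bigl(\ddot\vphi-|\nabla\dot\vphi|^2_{g_\vphi}\bigr)(S_\vphi-\ul S)\,\om^n_\vphi,$$
and for the $\epsilon$-geodesic, where $(\ddot\vphi-|\nabla\dot\vphi|^2_{g_\vphi})\om^n_\vphi=\epsilon\,\om^n$, the second term equals $-\frac{\epsilon}{V}\int_M(S_{\vphi_\epsilon}-\ul S)\om^n$. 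This is \emph{not} $O(\epsilon)$ uniformly: $S_{\vphi_\epsilon}$ contains $\tri_{\vphi_\epsilon}\log(\om^n_{\vphi_\epsilon}/\om^n)$, i.e.\ fourth derivatives of $\vphi_\epsilon$, which are not controlled by the uniform $C^{1,1}$ bounds, and only $\int_M S_{\vphi_\epsilon}\om^n_{\vphi_\epsilon}=\ul S\,V$ is topological --- the integral against the fixed volume form $\om^n$ is not, especially where $\om_{\vphi_\epsilon}$ degenerates in the limit. This is precisely the difficulty that kept convexity of the $K$-energy along $C^{1,1}$ geodesics open; Chen--Tian's actual proof in \cite{MR2434691} does not proceed by naive $\epsilon$-approximation but develops a partial regularity theory for the homogeneous complex Monge--Amp\`ere equation (foliations by holomorphic discs) to extract enough regularity of the weak geodesic to run the convexity/critical-point argument, and the later complete proof of convexity along weak geodesics (Berman--Berndtsson) uses an entirely different positivity argument. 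So your sketch is the right picture, but the step ``convexity holds modulo an $O(\epsilon)$ defect and one passes to the limit'' is itself the assertion requiring a new idea. The remaining ingredients you use (continuity of $\nu_\om$ on bounded potentials via the explicit formula, vanishing of the first variation at $s=0$ since $S(\om)=\ul S$) are comparatively harmless.
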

They also proved the uniqueness of the extremal metrics.
\begin{thm}(Chen-Tian \cite{MR2434691})\label{PCF on SoKM: ex uni}
Let $(M, [\om])$ be a compact K\"ahler manifold with a K\"ahler class $[\om]\in
H^2(M,R)\cap H^{1,1}(M,C)$. Then there is at most one extremal K\"ahler metric with K\"ahler class $[\om]$
modulo holomorphic transformations. Namely, if $\om_1$ and $\om_2$ are two extremal K\"ahler metrics with
the same K\"ahler class, then there is a holomorphic transformation $\sigma$ such that
$\om_1=\sigma^\ast\om_2.$
\end{thm}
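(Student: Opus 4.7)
My plan is to exploit the variational viewpoint together with the geodesic structure of $\mathcal{H}$. Write $\omega_i = \omega + \frac{\sqrt{-1}}{2}\partial\bar\partial\varphi_i$ ($i=1,2$) with respect to some fixed background $\omega$ in the common class. Extremal metrics are precisely the critical points of a modified $K$-energy $\widetilde\nu_\omega$, obtained from the Mabuchi $K$-energy $\nu_\omega$ by subtracting the integral of a Futaki-type correction coming from the $L^2$-projection $\pi$ onto real holomorphy potentials. I would first verify that $\widetilde\nu_\omega$ is well-defined on $\mathcal{H}$ and that its critical points are exactly extremal K\"ahler metrics, using that the extremal vector field depends only on the K\"ahler class.

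Next, I would invoke Chen's theorem (cited in the excerpt) to join $\varphi_1$ and $\varphi_2$ by a $C^{1,1}$ geodesic $\varphi(s)$, $s\in[0,1]$, inside $\mathcal{H}$. The crucial step is to prove that $s\mapsto\widetilde\nu_\omega(\varphi(s))$ is convex. Since $\varphi(s)$ is only $C^{1,1}$, convexity cannot be read off a direct second variation; instead, I would regularize the geodesic equation by a small $\varepsilon$-Monge--Amp\`ere perturbation to obtain smooth $\varepsilon$-geodesics, along which the second derivative of $\widetilde\nu_\omega$ is manifestly the integral of a nonnegative quantity of $|\bar\partial\nabla\dot\varphi_\varepsilon|^2$ type, and then pass to the limit $\varepsilon\to 0$ via integration by parts combined with uniform $C^{1,1}$ bounds on the approximate geodesics.

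Once convexity is in hand, the fact that both endpoints are critical points of $\widetilde\nu_\omega$---hence absolute minimizers, by an extension of Theorem \ref{PCF on SoKM: CT K LB} to the extremal setting---forces $s\mapsto\widetilde\nu_\omega(\varphi(s))$ to be constant. Tracking the equality case in the convexity inequality shows that, at almost every $s$, $\dot\varphi(s)$ is the potential (with respect to $\omega_{\varphi(s)}$) of a real holomorphic vector field $V_s$, that is, $\bar\partial\nabla_{\omega_{\varphi(s)}}\dot\varphi(s)\equiv 0$. Integrating the time-dependent field $V_s$ from $s=0$ to $s=1$ produces a one-parameter family of biholomorphisms $\sigma_s$ of $M$ with $\sigma_0=\mathrm{id}$ and $\sigma_1^\ast\omega_2=\omega_1$; setting $\sigma=\sigma_1$ yields the desired automorphism.

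The principal obstacle is the convexity of $\widetilde\nu_\omega$ along $C^{1,1}$ geodesics: a naive second variation needs $C^2$ regularity in $s$, which is unavailable, while the $\varepsilon$-regularization demands sharp uniform a priori estimates and delicate control of the error terms that appear when one integrates by parts and then lets $\varepsilon\to 0$. A secondary technical point is the lower semicontinuity of $\widetilde\nu_\omega$ under the relevant weak convergence, needed to pass from smooth approximations to the genuine $C^{1,1}$ limit both in identifying the minimizers and in extracting the holomorphic rigidity from the equality case.
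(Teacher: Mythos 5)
This statement is not proved in the paper at all: it is quoted verbatim from Chen--Tian \cite{MR2434691} as an external input, so there is no in-paper argument to compare against. Judged on its own merits, your proposal correctly identifies the standard variational strategy (critical points of a modified $K$-energy, convexity along geodesics in $\mathcal{H}$, rigidity in the equality case), but the central step is exactly where the known difficulty sits, and your proposed resolution does not close it. The second variation of the (modified) $K$-energy along a smooth path is $\int_M |\bar\partial\nabla\dot\varphi|^2_{\omega_\varphi}\,\omega_\varphi^n$ \emph{only after} subtracting a term proportional to the geodesic defect $\ddot\varphi - |\nabla\dot\varphi|^2_{\omega_\varphi}$; along an $\varepsilon$-geodesic this defect is $\varepsilon\,\omega^n/\omega_{\varphi_\varepsilon}^n$, and the resulting error term is $\varepsilon\int_M (S_{\varphi_\varepsilon}-\pi S_{\varphi_\varepsilon})\,\omega^n$ integrated in $s$. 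Controlling this as $\varepsilon\to 0$ requires bounds on the scalar curvature of the approximating metrics, which the uniform $C^{1,1}$ estimates on $\varepsilon$-geodesics do not provide (the metrics $\omega_{\varphi_\varepsilon(s)}$ may degenerate as currents with merely bounded potentials, so $S_{\varphi_\varepsilon}$ is not even defined pointwise in the limit). This is precisely why Chen--Tian did not argue this way: their proof develops a partial regularity theory for the geodesic via foliations by holomorphic discs and works on the regular set, and the later, genuinely different proof of Berman--Berndtsson replaces convexity by subharmonicity arguments coming from positivity of direct image bundles. Asserting convexity along $C^{1,1}$ geodesics by ``pass to the limit via integration by parts'' assumes the conclusion of the hard analysis rather than supplying it.

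Two further steps are also under-supported. First, you invoke ``an extension of Theorem \ref{PCF on SoKM: CT K LB} to the extremal setting'' to conclude that both endpoints are absolute minimizers; that lower-bound theorem is itself one of the main results of \cite{MR2434691} and is of comparable depth to the uniqueness statement, so it cannot be treated as a black box inside a proof of uniqueness. Second, in the equality case you extract a family of vector fields $V_s$ with $\bar\partial\nabla_{\omega_{\varphi(s)}}\dot\varphi(s)=0$ for a.e.\ $s$ and integrate them to a biholomorphism; but $\dot\varphi(s)$ is only Lipschitz in $x$ and measurable in $s$, $\omega_{\varphi(s)}$ may be degenerate, and the flow of a merely measurable-in-time family of holomorphic vector fields need not exist or land in $\mathrm{Aut}_0(M)$ without additional regularity. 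Making this rigorous is a substantial part of the actual proof, not a formality.
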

\subsection{$M$ admits no holomorphic vector field}
Suppose that $\vphi$ is the solution of \eqref{PCF on SoKM: MPCF} and $T$ is the time when
\begin{align}
|\vphi|_{C^{2,\a}}\leq\eps_1 \text{ for all }t\in[0,T].
\end{align}
Letting $t_0$ be a small time, we apply \thmref{regularity of PCF: reg of PCF} to obtain the higher order uniform bound of $\vphi$; namely
\begin{align*}
|\psi_s|_{C^{k,\a}(M)}\leq C(k,\eps_1,g,t_0) \text{ for all }t\in [T-t_0,T+t_0].
\end{align*}
Then we introduce the space
$$\mathcal{S}
=\{\vphi\vert|\vphi|_{C^{2,\a}}\leq\eps_1;
|\vphi|_{C^{k,\a}(M)}\leq C(k,\eps_1,g,t_0)\}.$$
Clearly, $0\in\mathcal{S}$.
In this subsection we are going to prove the following theorem.
\begin{thm}\label{PCF on SoKM: no holo}
Assume $M$ admits a cscK metric $\om$
and has no holomorphic vector fields.
For any $\eps_1>0$, there exits $\eps_0>0$ such that if $|\vphi_0|_{C^{2,\a}(M)}\leq\eps_0$, the lifespan of the solution is $T=\infty$ and we have that $|\psi(t)|_{2,\a}<\eps_1$ for all
$t\in[0,+\infty)$.
\end{thm}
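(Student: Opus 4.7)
The plan is a continuity-and-contradiction argument with the $K$-energy as a Lyapunov functional. By \thmref{short time: stability} and \thmref{con dpd: main}, whenever $|\vphi_0|_{C^{2,\a}(M)}$ is sufficiently small the modified flow \eqref{PCF on SoKM: MPCF} admits a solution $\psi(t)\in\mathcal{H}_0$ on a positive time interval, with $|\psi(t)|_{C^{2,\a}}$ initially small. Define
\[
T^\ast=\sup\{T\geq 0 : |\psi(s)|_{C^{2,\a}(M)}\leq \eps_1\text{ for every }s\in[0,T]\}.
\]
On $[0,T^\ast)$, \thmref{regularity of PCF: reg of PCF} provides uniform higher-order H\"older bounds and in particular a uniform bound on the Ricci curvature of $g_\psi$, so \thmref{long time: main} extends the flow beyond any finite $T^\ast$. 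It therefore suffices to show that, if $\eps_0$ is chosen small enough, $|\psi(t)|_{C^{2,\a}}<\eps_1$ throughout $[0,T^\ast)$, contradicting the maximality of $T^\ast$ unless $T^\ast=\infty$.

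The key observation is that the modifying terms $-\bar h+\bar P$ in \eqref{PCF on SoKM: MPCF} are purely time-dependent spatial constants, so the computation of \secref{Enerfun} applies verbatim and gives
\[
\frac{d}{dt}\nu_\om(\psi(t))=-\frac{1}{V}\int_M|\nabla f|^2_{g_\psi}\om_\psi^n\leq 0,
\]
while \thmref{PCF on SoKM: CT K LB} yields $\nu_\om(\psi(t))\geq 0$. The Hessian of $\nu_\om$ at the cscK reference $\om$ is the Lichnerowicz operator $\mathcal{L}=\mathcal{D}^\ast\mathcal{D}$, where $\mathcal{D}\psi=\bar\partial\nabla^{1,0}\psi$; its kernel on $\mathcal{H}_0$ consists of real parts of holomorphic potentials, which is trivial under the no-holomorphic-vector-field hypothesis. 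Hence $\mathcal{L}$ has a spectral gap $\lambda_0>0$, and a standard perturbation argument shows that, for $\eps_1$ small, on $\{|\psi|_{C^{2,\a}}\leq\eps_1\}\cap\mathcal{H}_0$ the quantities $\nu_\om(\psi)$ and $\|\psi\|_{H^2}^2$ are comparable and the dissipation satisfies $\int_M|\nabla f|^2_{g_\psi}\om_\psi^n\geq\mu_0\,\nu_\om(\psi)$. Gronwall then produces $\nu_\om(\psi(t))\leq \nu_\om(\vphi_0)e^{-\mu_0 t}$, whence
\[
\|\psi(t)\|_{H^2}^2\leq C\,|\vphi_0|_{C^{2,\a}}^2\,e^{-\mu_0 t}.
\]

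Finally, I would bootstrap this $H^2$-decay to $C^{2,\a}$-decay using the parabolic Schauder and $L^p$ estimates of \secref{estimate of linearized} and \secref{regularity of PCF}, together with the non-local control of the pseudo-differential term $P$ provided by \lemref{short time: Lp estimate of Q}. Because on $[0,T^\ast)$ the coefficients are uniformly $C^{\a}$-bounded thanks to the $C^{2,\a}$-hypothesis on $\psi$, the resulting constants depend only on $(M,\om,\eps_1)$, giving an inequality of the form $|\psi(t)|_{C^{2,\a}}\leq C_0\,|\vphi_0|_{C^{2,\a}}^{\theta}\,e^{-\delta t}$; choosing $\eps_0$ so that $C_0\eps_0^{\theta}<\eps_1$ contradicts the definition of $T^\ast$ and forces $T^\ast=\infty$. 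The main obstacle is precisely this final bootstrap: one must upgrade $L^2$-type exponential decay to $C^{2,\a}$ decay with the rate preserved and with constants genuinely independent of $T^\ast$, since the non-local term $P(\psi)$ couples all frequencies and the Lichnerowicz spectral gap at $\om$ must be transferred quantitatively to the linearization at the drifting potential $\psi(t)$ without degeneration.
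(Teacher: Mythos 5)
Your strategy is genuinely different from the paper's. The paper does not attempt any quantitative decay estimate at this stage: it argues by compactness and contradiction. Assuming the statement fails, it takes initial data $\vphi^0_s$ with $|\vphi^0_s|_{C^{2,\a}}\leq 1/s$, lets $T_s$ be the first time the solution reaches $C^{2,\a}$-norm $\eps_1$ (with $\inf_s T_s>0$ by continuous dependence, \thmref{con dpd: main}), uses the smoothing estimates of \thmref{regularity of PCF: reg of PCF} to get uniform $C^{k,\a}$ bounds near $t=T_s$, and extracts a smooth limit $\phi_\infty$ with $|\phi_\infty|_{C^{2,\a}}=\eps_1$. Monotonicity of the $K$-energy together with Chen--Tian's lower bound $\nu_\om\geq 0$ forces $\nu_\om(\phi_\infty)=0$, so $\om_{\phi_\infty}$ is cscK; uniqueness plus the absence of holomorphic vector fields and the normalization $I(\phi_\infty)=0$ give $\phi_\infty=0$, contradicting $|\phi_\infty|_{C^{2,\a}}=\eps_1$. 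This soft argument needs only that the $K$-energy is a nonnegative Lyapunov functional vanishing exactly at cscK potentials; it completely sidesteps any spectral gap, coercivity, or decay-rate considerations. (The paper does eventually prove exponential decay, but only in \S\ref{ed of PCF}, \emph{after} stability is established, and the eigenvalue bound there, \lemref{ed of PCF:eigenvalue}, is itself proved by a compactness argument that uses the convergence already obtained.)

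Your route, if completed, would be stronger (it yields exponential convergence directly), but as written it has two genuine gaps, both at the steps you yourself flag as "standard" or as "the main obstacle". First, the dissipation inequality $\int_M|\nabla f|^2_{g_\psi}\,\om_\psi^n\geq\mu_0\,\nu_\om(\psi)$ together with the comparability $\nu_\om(\psi)\simeq\|\psi\|_{H^2}^2$ is a Lojasiewicz-type gradient inequality for the $K$-energy with respect to the Calabi gradient metric; it is not a one-line perturbation of the spectral gap of $\cD^*\cD$ at $\om$, because $f$ solves a $\psi$-dependent elliptic equation, the relevant quadratic form is $\langle(-\tri_\psi)^{-1}\cL_\psi\,\cdot,\cL_\psi\,\cdot\rangle$ rather than $\langle\cL\,\cdot,\cdot\rangle$, and transferring the gap from $\om$ to the drifting metric $g_\psi$ uniformly on $[0,T^\ast)$ requires control of more than the $C^{2,\a}$ norm of $\psi$ (the Lichnerowicz operator involves curvature). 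Second, the upgrade from $H^2$ exponential decay to the pointwise bound $|\psi(t)|_{C^{2,\a}}<\eps_1$ is left as an assertion; it can be done by interpolating against the $\eps_1$-dependent higher-order bounds from \thmref{regularity of PCF: reg of PCF} for $t\geq t_0$ and invoking \thmref{short time: stability} on $[0,t_0]$, but that argument needs to be written out, and the constants must be tracked so that they depend on $\eps_1$ but not on $T^\ast$. Until these two steps are supplied, the proof is incomplete; the paper's compactness argument is the shorter path to the stated theorem.
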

\begin{proof}
Suppose that the conclusion fails,
then there must exist a sequence of initial data
$\vphi^0_{s}$ such that
\begin{align*}
|\vphi^0_s|_{C^{2,\a}}\leq\frac{1}{s}.
\end{align*}
By virtue of the short time
existence \thmref{short time: main},
we get a sequence of solutions $\psi_s(t)$ satisfying the
equations \eqref{PCF on SoKM: MPCF} with
$\psi_s(0)=\vphi^0_s$. Let $T_s$ be the first time
that
\begin{align}\label{PCF on SoKM: no hol ass}
|\psi_s(T_s)|_{C^{2,\a}}=\eps_1\text{ and }
|\psi_s(t)|_{C^{2,\a}}<\eps_1 \text{ on }[0,T_s).
\end{align} According to \thmref{con dpd: main}, we have that
$\inf_{s}T_{s}>0$ uniformly and there
is a uniformly small time $t_0$ such that
\begin{align*}
|\psi_s|_{C^{2,\a}(M)}\leq C(\eps_1,t_0), \forall
t\in[T_s-t_0,T_s+t_0].
\end{align*}
Moreover,
from the regularity \thmref{regularity of PCF: reg of PCF},
we obtain the
higher order uniform bound of the sequence of the solutions
\begin{align}\label{PCF on SoKM: hb}
|\psi_s|_{C^{k,\a}(M)}\leq C(k,\eps_1,g,t_0), \forall
t\in[T_s-\frac{t_0}{2},T_s+\frac{t_0}{2}].
\end{align}
Therefore we can choose a subsequence of $\phi_s=\psi_s(T_s)$;
we use the
same index for convenience, so that
\begin{align*}
\phi_s\rightarrow\phi_\infty\text{ in } C^{k,\a},\forall k\geq0.
\end{align*} Since \eqref{PCF on SoKM: no hol ass} still holds in the limit, we have
\begin{align}\label{PCF on SoKM: no hol ass lim}
|\phi_\infty|_{C^{2,\a}}=\eps_1.
\end{align}
Note that the $K$-energy is well defined along $\psi_s(t)$.
By using \thmref{PCF on SoKM: CT K LB} and the decrease of the $K$-energy along the flow, we conclude that
\begin{align*}
0\leq\nu_\om(\phi_s)\leq\nu_\om(\psi_s(0))\leq\frac{C}{s}
\end{align*}
which implies
\begin{align*}
\lim_{s\rightarrow\infty}\nu_\om(\psi_s)=\nu_\om(\psi_\infty)=0.
\end{align*}
So \thmref{PCF on SoKM: ex uni} implies that $\phi_\infty=const$.
Furthermore from $I(\phi_\infty)=0$, we deduce that $\phi_\infty=0$.
This
contradicts \eqref{PCF on SoKM: no hol ass lim}.
\end{proof}

Since $|\psi(t)|_{C^{2,\a}}\leq\eps_1$ is uniformly
bounded, we have that $|\psi(t)|_{C^k}\leq C_k$ for any
$k\geq3$ away from $t=0$. Similarly to the argument above, for any sequence $t_i$ we can extract a subsequence (still denoted by $\psi_{t_i}$) such that $\psi(t_j)$ converge to a limit function $\psi_\infty$ in $C^{\infty}$ norm. Also the limiting metric $\om_{\psi_\infty}$ is a cscK metric. Since we assume that $M$  admits no holomorphic vector field, according to \thmref{PCF on SoKM: ex uni} we deduce that $\psi_\infty=0$ from the normalization condition $I(\psi_\infty)=0$.
Therefore the pseudo-Calabi flow converges to the
original cscK metric, since ${t_i}$ is chosen randomly.
In conclusion, the picture in the space of K\"ahler metric $\mathcal{H}_0$
is that the pseudo-Calabi flow
will shrink to the unique cscK metric if the initial potential is close around it.

\subsection{$M$ admits holomorphic vector fields}
When $M$ admits holomorphic vector fields,
the
contradiction argument is much more sophisticated.
We denote the subset of $\mathcal{H}_0$ that contains the potentials of all cscK metrics by
\[
\mathcal{E}_0=\{\rho\in C^\infty(M,R)|\sigma^\ast\om
=\om+\frac{\sqrt{-1}}{2}\p\bar\p\rho\text{ and }I(\vphi)=0
\text{ for any }\sigma\in Aut_0(M)\}.
\]
Mabuchi \cite{MR909015} proved that it is a finite-dimensional totally geodesic submanifold in $\mathcal{H}_0$.
Then for each $\vphi\in\mathcal{H}_0$,
there exists a unique $\rho$ that
minimizes the distance from $\vphi$ to $\mathcal{H}_0$, i.e.
$$dist(\vphi,\rho)=dist(\vphi,\mathcal{E}_0).$$
Meanwhile there is a $\sigma\in Aut_0(M)$
such that
$\sigma^\ast\om_\vphi=\om+\frac{\sqrt{-1}}{2}\p\bar\p(\vphi(\sigma)+\rho)\in [\om]$.
We assume $\psi(0)$ stays in the complement of $\mathcal{H}_0$;
otherwise the flow keeps fixed. Furthermore we can assume that $\rho_0=0$;
if not,
we can replace the background metric $\om$ with  $\om_{\rho_0}$.
\begin{lem}\label{PCF on SoKM: dis imp norm}
There exists a small constant $\eps$ and a positive constant $C_2$ such
that if $\rho$ satisfies $d(0,\rho)\leq\eps$, then
$|\rho|_{C^{3,\a}}\leq C_2\eps$.
\end{lem}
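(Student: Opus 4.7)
The plan is to reduce the lemma to a finite-dimensional computation by parametrizing $\mathcal{E}_0$ through the Lie algebra of $\mathrm{Aut}_0(M)$. As recalled just above, Mabuchi \cite{MR909015} showed that $\mathcal{E}_0$ is a smooth, finite-dimensional, totally geodesic submanifold of $\mathcal{H}_0$, obtained (after the $I$-normalization) as the $\mathrm{Aut}_0(M)$-orbit of $0$. First I would introduce the map
\[
\Phi:\mathfrak{g}\supset U \longrightarrow \mathcal{E}_0, \qquad X \longmapsto \rho_X,
\]
where $\mathfrak{g}$ is the (finite-dimensional) Lie algebra of real holomorphic vector fields and $\rho_X$ is the $I$-normalized K\"ahler potential of $(\exp X)^{\ast}\omega$ relative to $\omega$. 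Since $\mathrm{Aut}_0(M)$ acts smoothly on smooth tensor fields over the compact $M$, the map $\Phi$ is smooth into every $C^{k,\alpha}(M)$, its derivative at the origin is injective, and $\Phi(0)=0$.

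Next I would establish two matching comparison inequalities on a small ball in $\mathfrak{g}$. The smoothness of $\Phi$ together with $\rho_0=0$ immediately yields the Lipschitz bound
\[
|\rho_X|_{C^{3,\alpha}(M)} \leq C_3\,\|X\|_{\mathfrak{g}}.
\]
On the other hand, the Mabuchi--Donaldson--Semmes metric restricts to a smooth Riemannian metric on the finite-dimensional manifold $\mathcal{E}_0$; at $0$ this restricted metric pairs tangent vectors, which are precisely the mean-zero real-holomorphy potentials of $\omega$, by the $L^2(\omega)$ inner product and is therefore positive definite. By the standard equivalence of a smooth Riemannian distance with any coordinate norm on a finite-dimensional chart,
\[
d_{\mathcal{E}_0}(0,\rho_X) \geq c\,\|X\|_{\mathfrak{g}}
\]
for $\|X\|_{\mathfrak{g}}$ sufficiently small, where $d_{\mathcal{E}_0}$ denotes the intrinsic Riemannian distance on $\mathcal{E}_0$.

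The remaining, and most delicate, step is to identify $d_{\mathcal{E}_0}(0,\rho_X)$ with the ambient distance $d(0,\rho_X)$ appearing in the hypothesis. For this I would invoke two facts already cited in the paper: the Calabi--Chen theorem \cite{MR1969662} that $\mathcal{H}_0$ is non-positively curved in the sense of Alexandrov, and the Mabuchi property that $\mathcal{E}_0$ is totally geodesic in $\mathcal{H}_0$. Together these imply that $\mathcal{E}_0$ is convex in the NPC space $\mathcal{H}_0$, so every ambient minimizing geodesic connecting two points of $\mathcal{E}_0$ lies inside $\mathcal{E}_0$, whence $d(0,\rho_X)=d_{\mathcal{E}_0}(0,\rho_X)$. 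Combining with the two displays above yields $|\rho|_{C^{3,\alpha}(M)}\leq (C_3/c)\,d(0,\rho)$, proving the lemma with $C_2=C_3/c$. The main obstacle is precisely this ambient-versus-intrinsic distance identification; everything else is smoothness of the Lie group action plus elementary Riemannian geometry on a finite-dimensional manifold.
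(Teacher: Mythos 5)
Your proposal is, at its core, the same argument the paper gives: both reduce the lemma to the finite-dimensionality of $\mathcal{E}_0$ and the equivalence of norms on a finite-dimensional chart. The paper works with the Riemannian exponential map $\exp_0$ of $\mathcal{E}_0$ and simply declares the distance-induced norm on $T_0(\mathcal{E}_0)$ equivalent to the $C^{3,\alpha}$ norm; you instead chart $\mathcal{E}_0$ by the orbit map $X\mapsto\rho_X$ from the Lie algebra of $\mathrm{Aut}_0(M)$ and prove the two comparison inequalities separately. Two comments on the differences. First, you make explicit a point the paper's proof silently elides: the hypothesis bounds the \emph{ambient} distance in $\mathcal{H}_0$, while the chart controls the \emph{intrinsic} distance on $\mathcal{E}_0$, and these a priori satisfy only $d_{\mathrm{ambient}}\leq d_{\mathcal{E}_0}$ (the wrong direction for your step 2). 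Your appeal to total geodesy of $\mathcal{E}_0$ plus non-positive curvature of $\mathcal{H}_0$ to identify the two distances is exactly the justification the paper's proof needs but does not state; this is a genuine improvement in rigor. Second, your chart has a defect the paper's does not: in the only interesting case (when $M$ admits nontrivial holomorphic vector fields) the cscK metric $\omega$ has a positive-dimensional isometry group, so the orbit map $X\mapsto\rho_X$ kills the Killing fields and its derivative at $0$ is \emph{not} injective; consequently the lower bound $d_{\mathcal{E}_0}(0,\rho_X)\geq c\,\|X\|_{\mathfrak{g}}$ fails as stated (take $X$ a nonzero Killing field, for which $\rho_X=0$). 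This is routinely repaired by parametrizing $\mathcal{E}_0\cong \mathrm{Aut}_0(M)/\mathrm{Isom}_0(M,\omega)$ by a complement of the isotropy algebra (e.g.\ $J\mathfrak{k}$ in the Calabi--Lichnerowicz--Matsushima decomposition), after which your two inequalities and the distance identification combine exactly as you describe. With that one-line fix, your argument is complete and somewhat more careful than the paper's own.
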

\begin{proof}
In the Riemannain manifold $\mathcal{E}_0$, any small $\eps$ neighborhood near $\rho=0$ can be pulled back
by the exponential map $exp_0$ to the tangent space
$T_0(\mathcal{E}_0)$ near $0$.
Denote $\psi=exp_0^{-1}(\rho)$.
Note that all norms on a finite-dimensional vector space are equivalent, so the norm induced by the distance on $T_0(\mathcal{E}_0)$ is equivalent to the $C^{3,\a}$ norm (here the point in $T_0(\mathcal{E}_0)$ is also a function on $M$).
So $d(0,\rho)\leq\eps$ implies that $|exp_0^{-1}(\rho)|_{C^{3,\a}}$ is bounded by $C_1\eps$.
Furthermore, let $\eps$ be small enough such that $\psi$ is a diffeomorphism in the $\eps$ neighborhood near $\rho=0$;  then there exists a constant $C_2$ such that $|\rho|_{C^{3,\a}}\leq C_2\eps$. Hence this lemma follows.
\end{proof}
\begin{rem}
In fact, we can improve the above conclusions
in \lemref{PCF on SoKM: dis imp norm} for $C^{k}$
of any fix $k\geq0$.
\end{rem}
\begin{lem}\label{PCF on SoKM: gauge bound}
Suppose that $|\vphi|_{C^{2,\a}}\leq\eps_1$ for some small constant $\eps_1$ depending on $\eps$. Then there exists a constant $C$ depending on $\eps_1$ such that
$|\rho|_{C^{3,\a}}\leq C$
and $|\sigma|_h\leq C$.
Here $h$ is the left invariant metric in $Aut(M)$.
\end{lem}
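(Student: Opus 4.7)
The plan is to bootstrap: first control the Mabuchi distance $d(0,\rho)$ in terms of $|\vphi|_{C^{2,\a}}$ and invoke Lemma \ref{PCF on SoKM: dis imp norm} for the bound on $\rho$; then deduce the bound on $|\sigma|_h$ from the identity $\sigma^*\om=\om_\rho$ via properness of the $\mathrm{Aut}_0(M)$-orbit map near the cscK reference metric $\om$.

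I estimate $d(0,\vphi)$ along the linear path $\vphi_t=t\vphi$: its Mabuchi length is at most
\[
\int_0^1\Bigl(\int_M|\vphi|^2\om_{t\vphi}^n\Bigr)^{\frac{1}{2}}\,dt \leq V^{\frac{1}{2}}|\vphi|_{C^0}\leq C\eps_1.
\]
Since $\om$ is cscK, the origin lies in $\mathcal{E}_0$, so by the minimality defining $\rho$ one has $d(\vphi,\rho)=d(\vphi,\mathcal{E}_0)\leq d(\vphi,0)$, and the triangle inequality then gives $d(0,\rho)\leq 2d(0,\vphi)\leq 2C\eps_1$. Choosing $\eps_1$ small enough that $2C\eps_1\leq\eps$ (the threshold of Lemma \ref{PCF on SoKM: dis imp norm}), that lemma delivers $|\rho|_{C^{3,\a}}\leq 2CC_2\eps_1$, proving the first assertion.

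For $\sigma$, rewriting $\sigma^*\om_\vphi=\sigma^*\om+\frac{\sqrt{-1}}{2}\p\bar\p(\vphi\circ\sigma)$ and comparing with the given decomposition $\sigma^*\om_\vphi=\om+\frac{\sqrt{-1}}{2}\p\bar\p(\vphi\circ\sigma+\rho)$ reduces the identity to $\sigma^*\om=\om_\rho$. By the previous step this forces $\sigma^*\om$ to be $C^{1,\a}$-close to $\om$. The isotropy $\mathrm{Isom}_0(\om)\cap\mathrm{Aut}_0(M)$ is compact by Myers-Steenrod, and the orbit map $\tau\mapsto\tau^*\om$ is, by the slice theorem at the cscK point $\om$, a proper embedding of $\mathrm{Aut}_0(M)/\mathrm{Isom}_0(\om)$ onto a neighborhood of $\om$ in its orbit. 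Consequently the preimage of a $C^{1,\a}$-bounded set lies in a bounded region of $(\mathrm{Aut}_0(M),h)$, giving $|\sigma|_h\leq C$. The main obstacle here is matching the gauge-fixed $\sigma$ produced by the nearest-point construction of $\rho$ (and the normalization $I(\vphi)=0$) with the slice parametrization of the orbit, so that the $C^{1,\a}$ smallness of $\sigma^*\om-\om$ really translates into a bound on $|\sigma|_h$; everything else reduces to the routine Mabuchi-distance estimate and the triangle inequality.
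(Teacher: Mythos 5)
Your argument is essentially the paper's: the paper likewise bounds $d(0,\vphi)$ by the length of the linear path (normalized as $\gamma_t=t\vphi-I(t\vphi)$ so it stays in $\mathcal{H}_0$, a small correction your path $t\vphi$ would also need), uses the minimality of $\rho$ plus the triangle inequality to get $d(0,\rho)\leq 2d(0,\vphi)\leq C_3\eps_1$, and then invokes Lemma \ref{PCF on SoKM: dis imp norm}. For the bound on $|\sigma|_h$ the paper simply cites Lemma 4.6 of Chen--Tian \cite{MR2219236}, which is precisely the properness-of-the-orbit-map statement you sketch, so your proposal matches the paper's proof in substance.
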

\begin{proof}
we define a path by $\gamma_t=t\varphi-I(t\varphi)\in \mathcal{E}_0$
for $0\leq t\leq1$.
It is obvious that this path stays in $\mathcal{H}_0$.
Then since $|\varphi|_{C^{2,\alpha}}\leq\eps_1$, we have
\begin{align*}
d(0,\varphi)
&\leq L(\gamma_t)=\int_0^1(\int_M(\frac{\partial\gamma_t}{\partial t})^2
\omega^n_{\gamma_t})^{\frac{1}{2}}dt\\
&=\int_0^1(\int_M(\varphi-\partial_tI(t\varphi))^2
\omega^n_{\gamma_t})^{\frac{1}{2}}dt\leq C_3\epsilon_1.
\end{align*}
Moreover,
since $\rho$ realizes the shortest distance from $\vphi$ to $\mathcal{H}_0$, by using the triangle inequality we obtain
\begin{align}\label{dis}
d(0,\rho)
&\leq d(0,\vphi)+d(\vphi,\rho)\nonumber\\
&\leq 2d(0,\vphi) \leq C_3\eps_1.
\end{align}
Applying \lemref{PCF on SoKM: dis imp norm} with $\eps=C_3\eps_1$ we have $|\rho|_{C^{3,\a}}\leq C_2C_3\eps_1$.
Furthermore, from Lemma 4.6 in Chen-Tian \cite{MR2219236},
we obtain $|\sigma|_h$ is also bounded.
Here $h$ is the left invariant metric in $Aut(M)$. Therefore the lemma holds for some constant $C$.
\end{proof}
Now we prove the invariance of the $K$-energy.
\begin{lem}\label{PCF on SoKM: energy decay}
$\nu(\om,\om_{(\sigma^{-1})^\ast(\vphi-\rho)})=\nu(\om,\om_{\vphi})$.
\end{lem}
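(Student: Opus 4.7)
The plan is to reduce the identity to two standard properties of the Mabuchi $K$-energy, namely its invariance under pullback by diffeomorphisms and the cocycle relation $\nu(\om_1,\om_3)=\nu(\om_1,\om_2)+\nu(\om_2,\om_3)$. First I would unwind the defining property of $\sigma$. Writing $\sigma^\ast\om_\vphi = \om+\frac{\sqrt{-1}}{2}\p\bar\p(\vphi\circ\sigma+\rho)$ and subtracting $\frac{\sqrt{-1}}{2}\p\bar\p(\vphi\circ\sigma)$ from both sides isolates $\sigma^\ast\om = \om_\rho$. Applying $(\sigma^{-1})^\ast$ to $\om_\vphi = \om + \frac{\sqrt{-1}}{2}\p\bar\p\vphi$ and using this identification then yields $(\sigma^{-1})^\ast\om_\vphi = \om_{(\sigma^{-1})^\ast(\vphi-\rho)}$, where the equalities should be read modulo additive constants in the potentials (to which $\nu$ is insensitive).

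Next I would apply diffeomorphism invariance of $\nu$ with $\tau=\sigma$ to obtain
\[
\nu(\om,\om_{(\sigma^{-1})^\ast(\vphi-\rho)}) = \nu(\om,(\sigma^{-1})^\ast\om_\vphi) = \nu(\sigma^\ast\om,\om_\vphi) = \nu(\om_\rho,\om_\vphi),
\]
followed by the cocycle identity $\nu(\om_\rho,\om_\vphi) = \nu(\om,\om_\vphi) - \nu(\om,\om_\rho)$, which is immediate from the path-integral definition \eqref{notations: K energy}. Hence the claim reduces to proving $\nu(\om,\om_\rho)=0$.

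For this, note that $\rho\in\mathcal{E}_0$ means $\om_\rho=\sigma_\rho^\ast\om$ for some $\sigma_\rho\in\Aut_0(M)$, so $\om_\rho$ is also cscK with the same average scalar curvature $\ul S$. I would choose a smooth path $\sigma_t$ in the connected group $\Aut_0(M)$ from the identity to $\sigma_\rho$; letting $\rho_t\in\mathcal{E}_0$ be the potential of $\om_t=\sigma_t^\ast\om$, the first-variation formula gives $\frac{d}{dt}\nu_\om(\rho_t) = -\frac{1}{V}\int_M\dot\rho_t(S_{\om_t}-\ul S)\om_t^n = 0$, so $\nu(\om,\om_\rho) = \nu(\om,\om) = 0$.

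The main obstacle is essentially notational: one must verify that the identification $\om_{(\sigma^{-1})^\ast(\vphi-\rho)} = (\sigma^{-1})^\ast\om_\vphi$ is genuinely what the defining property of $\sigma$ encodes, rather than an artifact of the normalization $I(\cdot)=0$ used to define $\mathcal{H}_0$. Once this bookkeeping is in place, the remaining steps are routine consequences of cocycle invariance and the vanishing of $\nu$ along an $\Aut_0(M)$-orbit of a cscK metric.
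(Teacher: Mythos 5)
Your proposal is correct and follows essentially the same route as the paper: invariance of the $K$-energy under pullback by $\sigma\in\Aut_0(M)$, the Mabuchi $1$-cocycle condition, and the vanishing of $\nu$ between the two cscK metrics $\om$ and $\om_\rho$. The only difference is that you supply details the paper leaves implicit (the identification $(\sigma^{-1})^\ast\om_\vphi=\om_{(\sigma^{-1})^\ast(\vphi-\rho)}$ and the path-in-$\Aut_0(M)$ argument for $\nu(\om,\om_\rho)=0$), both of which are correct.
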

\begin{proof}
Since the $K$-energy is invariant under the holomorphic transformation, we get $$\nu(\om,\om_{(\sigma^{-1})^\ast(\vphi-\rho)})
=\nu(\sigma^\ast\om,\om_{\vphi})=\nu(\om_\rho,\om_{\vphi}).$$
Then the $1$-cocycle condition of the $K$-energy (Theorem (2.4) in \cite{MR867064}) gives
$$\nu(\sigma^\ast\om,\om_{\vphi})=\nu(\om_{\rho},\om)+\nu(\om,\om_{\vphi}).$$
Since both $\om$ and $\om_{\rho}$ are cscK metrics, the lemma follows from $\nu(\om_{\rho},\om)=0$.
\end{proof}
\begin{lem}\label{PCF on SoKM: norm bound by enery}
For any $\eps>0$, There exists a small constant $o$
such that for any $\vphi\in\mathcal{S}$,
if $\nu_{\om}(\vphi)\leq o$,
then $|(\sigma^{-1})^\ast(\vphi-\rho)|_{C^{2,\a}}<\eps$.
\end{lem}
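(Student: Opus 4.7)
The plan is a compactness-and-rigidity argument, parallel to the proof of \thmref{PCF on SoKM: no holo} but now carried out modulo the action of $Aut_0(M)$. Suppose for contradiction that the conclusion fails: there exist $\eps > 0$ and a sequence $\vphi_s \in \mathcal{S}$ with $\nu_\om(\vphi_s) \to 0$, yet
\[
|(\sigma_s^{-1})^\ast(\vphi_s - \rho_s)|_{C^{2,\a}} \geq \eps \qquad \text{for every } s,
\]
where $\rho_s$, $\sigma_s$ are the distance minimizer in $\mathcal{E}_0$ and the associated holomorphic transformation attached to $\vphi_s$ as in the setup.

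The definition of $\mathcal{S}$ yields uniform $C^{k,\a}$ bounds on $\vphi_s$ for every $k$, so Arzel\`a--Ascoli lets me pass to a subsequence (still written $\vphi_s$) with $\vphi_s \to \vphi_\infty$ in $C^\infty$, and $\vphi_\infty \in \mathcal{S}$. Since the explicit Chen--Tian formula for $\nu_\om$ recalled at the start of this section is continuous in the $C^2$ topology on potentials, I obtain $\nu_\om(\vphi_\infty) = \lim_s \nu_\om(\vphi_s) = 0$. Combined with the lower bound \thmref{PCF on SoKM: CT K LB}, $\vphi_\infty$ is a global minimizer of $\nu_\om$, so the first variation formula forces $S_{\vphi_\infty} = \ul S$ and hence $\om_{\vphi_\infty}$ is cscK. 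By the uniqueness \thmref{PCF on SoKM: ex uni}, there is $\tau \in Aut_0(M)$ with $\tau^\ast\om = \om_{\vphi_\infty}$, so $\vphi_\infty \in \mathcal{E}_0$ after the additive constant is killed by the normalization $I(\vphi_\infty)=0$.

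Next, \lemref{PCF on SoKM: gauge bound} provides uniform bounds $|\rho_s|_{C^{3,\a}} \leq C$ and $|\sigma_s|_h \leq C$. Passing to a further subsequence, $\rho_s \to \rho_\infty \in \mathcal{E}_0$ in $C^{3,\a}$ and $\sigma_s \to \sigma_\infty \in Aut_0(M)$, and higher regularity convergence is automatic because $\mathcal{E}_0$ is finite-dimensional and $\vphi_s$ is bounded in every $C^{k,\a}$. The continuity of the Mabuchi distance on $C^{1,1}$-bounded families shows that $\rho_\infty$ realizes $d(\vphi_\infty,\mathcal{E}_0)$; but $\vphi_\infty\in\mathcal{E}_0$ already, so this distance is $0$, and $\rho_\infty$ coincides with $\vphi_\infty$ modulo the isotropy of $\om$ inside $Aut_0(M)$. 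Passing to the $C^\infty$ limit in the defining identity $\sigma_s^\ast\om_{\vphi_s} = \om + \frac{\sqrt{-1}}{2}\p\bar\p(\vphi_s(\sigma_s) + \rho_s)$ then forces $(\sigma_\infty^{-1})^\ast(\vphi_\infty - \rho_\infty)$ to be a constant; the normalization $I=0$ kills that constant. This contradicts the uniform lower bound $\eps$, completing the argument.

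The main obstacle I anticipate is the compatibility of the subsequential limits $(\rho_\infty,\sigma_\infty)$ with the intrinsic distance-minimizing choices attached to $\vphi_\infty$: because $\rho$ and $\sigma$ are only determined up to the isotropy of $\om$, one cannot claim uniqueness of the limiting pair. The fix is that the contradiction only requires \emph{some} limiting $(\rho_\infty,\sigma_\infty)$ realizing limit distance zero, which is automatic from \thmref{PCF on SoKM: ex uni}, and the isotropy ambiguity can be absorbed into $\sigma_\infty$. A secondary technical point, continuity of $\nu_\om$ in the $C^2$ topology, is immediate from the explicit Chen--Tian formula.
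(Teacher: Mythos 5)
Your proof is correct and follows essentially the same route as the paper's: a contradiction-plus-compactness argument using the uniform $C^{k,\a}$ bounds from $\mathcal{S}$, the Chen--Tian lower bound and uniqueness theorems to identify the limit as a cscK potential in $\mathcal{E}_0$, and continuity of the distance to $\mathcal{E}_0$ to force the gauge-fixed limit to vanish. The only cosmetic difference is that the paper extracts the convergent subsequence directly from $\hat\vphi_s=(\sigma_s^{-1})^\ast(\vphi_s-\rho_s)$ and invokes the $K$-energy invariance of \lemref{PCF on SoKM: energy decay}, whereas you track $\vphi_s$, $\rho_s$ and $\sigma_s$ separately before assembling the same conclusion.
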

\begin{proof}
 If the conclusion fails,
 we assume there is a sequence $\vphi_s$ with
 \begin{align*}
|\vphi_s|_{C^{2,\a}(M,g)}\leq\eps_1,
|\vphi|_{C^{k,\a}(M)}\leq C(k,\eps_1,g,t_0),
\text{ and }\nu_{\om}(\vphi_s)\leq\frac{1}{s}
\end{align*} such that
\begin{equation}\label{PCF on SoKM: contra assum}
|(\sigma_s^{-1})^\ast(\vphi_s-\rho_s)|_{C^{2,\a}}\geq\eps_1.
\end{equation}
We denote $\hat\vphi_s=(\sigma_s^{-1})^\ast(\vphi_s-\rho_s)$.
After making use of \lemref{PCF on SoKM: gauge bound}
and \lemref{PCF on SoKM: energy decay},
we can choose some subsequence of $\hat\vphi_s$ such that
$$\hat\vphi_s\rightarrow\hat\vphi_\infty\in C^{l} \text{ for any }l\geq 0\text{
and }\nu_{\om}(\hat\vphi_\infty)=0.$$
Therefore we have $\hat\vphi_\infty\in\mathcal{E}_0$ by Chen-Tian \cite{MR2434691}.
And then $\vphi_\infty\in\mathcal{E}_0$.

We claim that $d(\vphi_\infty,\rho_\infty)=0$.
Otherwise there is some sufficient large $N$ such that,
for any $s>N$, the sequence
$d(\vphi_s,\rho_s)=d(\vphi_s,\mathcal{E}_0)$
has positive lower bound.
From the fact that the distance function is at least $C^1$
we have $d(\vphi_\infty,\mathcal{E}_0)>0$,
that contradicts $\vphi_\infty\in \mathcal{E}_0$.

This claim
implies $\hat\vphi_\infty=0$. But \eqref{PCF on SoKM: contra assum} gives
$|\hat\vphi_\infty|_{C^{2,\a}}\geq\eps_1>0$. It is a contradiction.
\end{proof}
\begin{rem}
In fact, we can improve the above conclusion to get
$$|(\sigma^{-1})^\ast(\vphi-\rho)|_{C^{k,\a}}<\eps$$
for any $k\geq0$.
Then combining with \thmref{ed of PCF: ed},
we can show that the solution will stay in a small neighborhood
and exponentially decay to a unique cscK metric
by directly obtaining the estimate of the solution.
\end{rem}
\begin{rem}
We use the theorem of Chen-Tian \cite{MR2434691} that the cscK metric is the global minimizer of the $K$-energy here. Actually, we only need a local version. I.e. the cscK metric is the local minimizer of the $K$-energy. That can be proved by using the non-negativeness of the hessian of the $K$-energy and the geometry of the critical submanifold.
\end{rem}
\begin{thm}\label{PCF on SoKM: hol}
Assume that $M$ admits a cscK metric $\om$
and has nontrivial holomorphic vector fields.
If $|\psi_0|_{C^{2,\a}(M)}\leq\eps_0$,
then there exists a holomorphic transformation $\varrho(t)$ such that $\varrho(t)^\ast \om_{\psi_{t}}$ stays
in a small neighborhood of $\om$.
Moreover,
for any sequence $g_{\psi_{t_i}}$, one can extract a subsequence $g_{\psi_{{t_i}_j}}$ such that $\varrho_{j}^\ast g_{\psi_{{t_i}_j}}$
converges to a cscK metric.
\end{thm}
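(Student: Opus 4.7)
The plan is to adapt the contradiction strategy of \thmref{PCF on SoKM: no holo}, replacing control of $|\psi(t)|_{C^{2,\a}}$ by control of a gauge-fixed potential. For each $t$ in the interval of existence, let $\rho(t)\in\mathcal{E}_0$ be the nearest point to $\psi(t)$ and $\sigma(t)\in\Aut_0(M)$ the associated holomorphic transformation from the paragraph preceding \lemref{PCF on SoKM: dis imp norm}, so that $\sigma(t)^\ast\om_{\psi(t)}=\om+\frac{\sqrt{-1}}{2}\p\bar\p(\psi(t)(\sigma(t))+\rho(t))$. Set $\varrho(t)=\sigma(t)$ and $\hat\psi(t)=(\sigma(t)^{-1})^\ast(\psi(t)-\rho(t))$; the $C^{2,\a}$ size of $\hat\psi(t)$ controls the distance from $\varrho(t)^\ast\om_{\psi(t)}$ to $\om$.

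Next I would prove $|\hat\psi(t)|_{C^{2,\a}}<\eps_1$ for all $t\geq 0$ by contradiction, paralleling the proof of \thmref{PCF on SoKM: no holo}. Assume a sequence of initial data $\psi^0_s$ with $|\psi^0_s|_{C^{2,\a}}\leq 1/s$ and times $T_s>0$ at which $|\hat\psi_s(T_s)|_{C^{2,\a}}=\eps_1$ for the first time; \thmref{short time: stability} and \thmref{con dpd: main} guarantee $\inf_s T_s>0$. While $|\hat\psi_s(t)|_{C^{2,\a}}\leq\eps_1$, the pulled-back metric $\varrho_s(t)^\ast\om_{\psi_s(t)}$ is uniformly equivalent to $\om$, so its Ricci curvature is bounded; by holomorphic invariance this bound transfers to $Ric(\om_{\psi_s(t)})$, and \thmref{long time: main} extends $\psi_s$ up to $T_s$, with uniform $C^{k,\a}$ bounds on $\hat\psi_s(T_s)$ supplied by \thmref{regularity of PCF: reg of PCF}. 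Monotonicity of the $K$-energy along the flow, its $\Aut_0(M)$-invariance (\lemref{PCF on SoKM: energy decay}), and the Chen-Tian nonnegativity (\thmref{PCF on SoKM: CT K LB}) then give
\[
0\leq \nu_\om(\hat\psi_s(T_s))=\nu_\om(\psi_s(T_s))\leq \nu_\om(\psi^0_s)\leq C/s \to 0.
\]
Extracting $C^\infty$-convergent subsequences of $\hat\psi_s(T_s)$, $\sigma_s(T_s)$ (compact by \lemref{PCF on SoKM: gauge bound}) and $\rho_s(T_s)$, the limit $\hat\psi_\infty$ satisfies $\nu_\om(\hat\psi_\infty)=0$, so $\om_{\hat\psi_\infty}$ is cscK by \thmref{PCF on SoKM: CT K LB}, i.e.\ $\hat\psi_\infty\in\mathcal{E}_0$; passing the nearest-point property to the limit forces $\hat\psi_\infty=0$, contradicting $|\hat\psi_\infty|_{C^{2,\a}}=\eps_1$.

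For the convergence statement, take $t_i\to\infty$. By Remark \ref{notations: gradient flow}, $-\frac{d}{dt}\nu_\om(\psi(t))=\frac{1}{V}\int_M|\nabla f|^2\om^n_\psi$, which is integrable on $[0,\infty)$ since $\nu_\om$ is monotone and bounded below by $0$; hence there exist $s_k\in[t_{i_k},t_{i_k}+1]$ with $\int_M|\nabla f(s_k)|^2\om^n_\psi\to 0$. Using the now-established $C^{2,\a}$ bound on $\hat\psi$, \thmref{regularity of PCF: reg of PCF} yields uniform $C^{k,\a}$ bounds on $\hat\psi(s_k)$; after a further subsequence, $\hat\psi(s_k)\to\hat\psi_\infty$ in $C^\infty$ and the limit satisfies $\nabla f_\infty\equiv 0$, i.e.\ $S(\om_{\hat\psi_\infty})=\ul S$. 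Thus $\varrho(s_k)^\ast g_{\psi(s_k)}$ converges along a subsequence to the cscK metric $\om_{\hat\psi_\infty}$, as required.

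The hard part will be arranging the gauge-fixing so that $t\mapsto(\rho(t),\sigma(t))$ is continuous, in order to make the ``first time'' $T_s$ well-defined and the continuity argument rigorous; this requires a slice theorem for the $\Aut_0(M)$-action on a tubular neighborhood of $\mathcal{E}_0$ in $\mathcal{H}_0$, with quantitative inputs from \lemref{PCF on SoKM: dis imp norm} and \lemref{PCF on SoKM: gauge bound}. A secondary subtlety is that different subsequences may produce different cscK limits along the $\Aut_0(M)$-orbit of $\om$, which is consistent with (though weaker than) genuine convergence of the flow.
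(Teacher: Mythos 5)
Your high-level strategy (gauge-fix by the nearest cscK potential, run the contradiction via monotonicity of the $K$-energy, Chen--Tian nonnegativity, and compactness) matches the paper's, but the way you implement the gauge-fixing creates genuine gaps that the paper's proof is specifically designed to avoid. The paper does \emph{not} gauge-fix continuously in $t$: it runs the flow for $\psi$ itself until the first time $T$ that $|\psi(T)|_{C^{2,\a}}=\eps_1$, then applies \lemref{PCF on SoKM: norm bound by enery} (small $K$-energy plus membership in $\mathcal{S}$ forces $|(\sigma^{-1})^\ast(\psi(T)-\rho_1)|_{C^{2,\a}}<\eps_0$), verifies by an explicit computation (equations \eqref{PCF on SoKM: transform equ}--\eqref{PCF on SoKM: extend equ}) that the flow equation is covariant under $(\sigma^{-1})^\ast$ and subtraction of $\rho_1$, and \emph{restarts} the flow from the small gauge-fixed datum. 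The transformation $\varrho$ is then a finite product of the $\sigma_i$'s, constant on each interval $[\sum_{i<s}T_i,\sum_{i\le s}T_i)$. This discrete restart is the key idea your proposal is missing, and it matters for two reasons. First, your continuously gauge-fixed $\hat\psi(t)=(\sigma(t)^{-1})^\ast(\psi(t)-\rho(t))$ does not satisfy the pseudo-Calabi flow (there are extra terms from $\dot\sigma$ and $\dot\rho$), so \thmref{regularity of PCF: reg of PCF}, the continuous-dependence theorem, and condition \eqref{short time: vphi small} cannot be applied to $\hat\psi$; yet the only quantity you control is $|\hat\psi|_{C^{2,\a}}$, while $|\psi|_{C^{2,\a}}$ itself may drift along the $\Aut_0(M)$-orbit. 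This is a circularity, not merely the continuity issue you flag at the end. Second, the step ``the pulled-back metric is uniformly equivalent to $\om$, so its Ricci curvature is bounded'' is false as stated: a $C^{2,\a}$ bound on the potential gives metric equivalence but says nothing about the fourth-order quantities entering $Ric$; the paper never routes through \thmref{long time: main} here, and obtains Ricci (and all higher) bounds only after the regularity theorem has been applied to a potential that actually solves the equation and is small in $C^{2,\a}$.

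On the convergence statement, your argument produces cscK limits only at auxiliary times $s_k\in[t_{i_k},t_{i_k}+1]$ chosen so that $\int_M|\nabla f|^2\om^n_\psi\to0$, which does not prove the claim for a subsequence of the \emph{given} sequence $t_i$. The paper instead extracts a convergent subsequence at the times $t_{i_j}$ themselves (using the uniform $C^{k,\a}$ bounds on $\varrho_j^\ast\om_{\psi_{t_j}}$) and identifies the limit as cscK because $\nu(\om,\om_{\psi_{t_j}})\to0$ together with \thmref{PCF on SoKM: CT K LB} and \thmref{PCF on SoKM: ex uni}. Your good-times argument can be used to show that the limit of the (monotone, bounded below) $K$-energy is zero, after which the zero-$K$-energy characterization handles arbitrary sequences; but as written your proof stops one step short of the stated conclusion.
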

\begin{proof}
According to the short time existence \thmref{short time: main},
we assume that $T$ is the first time when the following holds
$$|\psi|_{C^{2,\a}}<\eps_1\text{ on }[0,T)
\text{ and } |\psi(T)|_{C^{2,\a}}
=\eps_1.$$ Now there are two cases.
If $\psi(T)$ is  a cscK metric,
then the flow will stop right at $T$ and our theorem is proved.
Otherwise, we will extend the flow as follows.
By virtue of \thmref{regularity of PCF: reg of PCF}, we obtain
the higher order uniform bound of the solutions
\begin{align*}
|\psi|_{C^{k,\a}(M)}\leq C(k,\eps_1,g,t_0), \forall
t\in[T-\frac{t_0}{2},T+\frac{t_0}{2}].
\end{align*}
So we have $\psi(T)\in\mathcal{S}$.

Now we firstly choose $\eps_0$ small enough to guarantee
$$\nu_\om(\psi_0)\leq o.$$
Since the $K$-energy is decreasing along the pseudo-Calabi flow, we get
so $$\nu_\om(\psi(T))\leq\nu_\om(\psi_0).$$ Let $\sigma$ be the projection of $\psi(T)$ onto $\mathcal{H}_0$.
Then due to \lemref{PCF on SoKM: norm bound by enery} we obtain
\begin{equation}\label{PCF on SoKM: itr small norm}
|(\sigma^{-1})^\ast(\psi(T)-\rho_1)|_{C^{2,\a}}<\eps_0.
 \end{equation}
Next we show the equation is invariant
under the holomorphic transformation.
By \eqref{PCF on SoKM: MPCF},
$\psi(t)-\rho_1$ is the solution of
\begin{equation}\label{PCF on SoKM: transform equ}
   \begin{cases}
\frac{\p}{\p  t}\psi
=\log\frac{\om^n_{\psi+\rho_1}}{\om^n}
-P(\psi+\rho_1)
-\overline{\log\frac{\om^n_{\psi+\rho_1}}{\om^n}}
+\overline{P(\psi+\rho_1)},\\
\tri_{\psi+\rho_1}P(\psi+\rho_1)
=tr_{\psi+\rho_1}{Ric(\om)}-\ul S.  \\
   \end{cases}
\end{equation}
Note that $\om_{\rho_1}$ is a cscK metric satisfying
 \begin{equation}\label{PCF on SoKM: gauge equ}
   \begin{cases}
\log\frac{\om_{\rho_1}^n}{\om^n}=P(\rho_1),\\
\tri_{\rho_1}P(\rho_1)=tr_{\rho_1}{Ric(\om)}-\ul S.
   \end{cases}
\end{equation}
Letting $P_1(\psi+\rho_1)=P(\psi+\rho_1)-P(\rho_1)$,
we have
\begin{align*}
\tri_{\psi+\rho_1}P_1(\psi+\rho_1)
&=\tri_{\psi+\rho_1}(P(\psi+\rho_1)-P(\rho_1))\\
&=tr_{\psi+\rho_1}{Ric(\om)}-\as-\tri_{\psi+\rho_1}P(\rho_1)\\
&=tr_{\psi+\rho_1}{Ric(\om_{\rho_1})}-\as.
\end{align*}
From this we obtain new equations from $t=T$
by combining \eqref{PCF on SoKM: transform equ} and
\eqref{PCF on SoKM: gauge equ} so that
$\psi(t)-\rho_1$ satisfies
 \begin{equation}\label{PCF on SoKM: extend equ 1}
   \begin{cases}
\frac{\p}{\p  t}\psi
=\log\frac{\om^n_{\psi+\rho_1}}{\om_{\rho_1}^n}
-P_1(\psi+\rho_1)
-\overline{\log\frac{\om^n_{\psi+\rho_1}}{\om_{\rho_1}^n}}
+\overline{P_1(\psi+\rho_1)},\\
\tri_{\psi+\rho_1}P_1(\psi+\rho_1)
=tr_{\psi+\rho_1}{Ric(\om_{\rho_1})}-\ul S.
   \end{cases}
\end{equation}
After taking transformation $(\sigma^{-1})^\ast$ of
\eqref{PCF on SoKM: extend equ 1} we have
$\psi_1=(\sigma^{-1})^\ast(\psi(t)-\rho_1)$ is the solution of
\begin{equation}\label{PCF on SoKM: extend equ}
  \left\{
   \begin{aligned}
\frac{\p}{\p  t}\psi_1
&=\log\frac{\om^n_{\psi_1}}{\om^n}-P_1(\psi_1)
-\overline{\log\frac{\om^n_{\psi_1}}{\om^n}}+\overline{P_1(\psi_1)},\\
\tri_{\psi_1}P_1(\psi_1)&=tr_{\psi_1}{Ric(\om)}-\as,\\
\psi_1(0)&=\psi(T)(\sigma)-\rho_1,  \\
   \end{aligned}
  \right.
\end{equation}
 with \eqref{PCF on SoKM: itr small norm} and
 $$\nu_{\om}(\psi_1(0))\leq o.$$
All averages in these equations are taken over $M$ with respect to the metric $g_{\psi_1}$
in these equations. \thmref{short time: main}
implies that \eqref{PCF on SoKM: extend equ} can be extended beyond $T$.
Finally since $\psi_1(T_1)\in\mathcal{S}$,
we can repeat the same steps as before
by induction till $\psi_s$ becomes a cscK metric at time $T$ if $T<\infty$.
If not, we deduce that the pseudo-Calabi flow has long time existence
and there is a sequence $$\psi_s(0)=\psi_s^0
=(\sigma_{s-1}^{-1})^\ast(\psi_{s-1}(T_{s-1})-\rho_{s-1})$$
such that
 \begin{align*}
 &|\psi_s^0|_{C^{k}}\leq C(k,\eps_1)
\forall l\geq0,\\
&\lim_{s\rightarrow\infty}\nu_{\om}(\psi^0_{s})
=\lim_{s\rightarrow\infty}\nu(\om,\om_{\psi_{s-1}(T_{s-1})})=0.
\end{align*}

We further define
$$\omega_{\psi(t)}
=\prod_{i=0}^{s-1}\sigma_{i}^\ast\om_{\psi_{s}(t)}\text{ on }[\sum_{i=0}^{s-1}T_{i},\sum_{i=0}^{s}T_{i}).$$
Then we obtain a solution $\psi(t)$ for all $t\geq0$.
In general, for any sequence $\{\psi_{t_j}\}$,
there is $s$ such that
$\sum_{i=0}^{s-1}T_{i}\leq t_j \leq\sum_{i=0}^{s}T_i$.
Furthermore, writing $\varrho_{j}=(\prod_{i=0}^{s-1}\sigma_{i})^{-1}$,
we have
\begin{align*}
|\varrho_{j}^\ast\om_{\psi_{t_j}}
-\om|_{C^{\a}}\leq \eps_1 \text{ and }
|\varrho_{j}^\ast\om_{\psi_{t_j}}
-\om|_{C^{k}}\leq C(k,\eps_1).
\end{align*}
Therefore all metrics are equivalent and their derivatives are bounded.
It follows that there is a subsequence (with the same notation) of
$\varrho_{j}^\ast g_{\psi_{t_j}}$ that
converges to a limit K\"ahler metric $g_\infty$
which may depend on the choice of the sequence.
Since the $K$-energy is bounded below,
we obtain
\begin{align*}
\lim_{s\rightarrow\infty}\nu(\om,\om_{\psi_{t_j}})=0.
\end{align*}
It follows that $g_\infty$ is a cscK metric.
\end{proof}
\begin{rem}
Following the same argument in Chen-Tian \cite{MR2219236},
we can extend the holomorphic transformation $\varrho$ to each $t$
so that it is Lipschitz continuous in $t$.
\end{rem}

\section{Exponential decay of the pseudo-Calabi flow}\label{ed of PCF}
Recall that $\psi(t)$ is the solution
of \eqref{PCF on SoKM: MPCF}
and $g_\phi=\varrho^\ast g_\psi$ is
the modified solution defined in \thmref{PCF on SoKM: hol}. We have already proved that $g_\phi$ always stays
in small neighborhood of $\om$
and converges to a cscK metric sequently.
\begin{defn}
We call $f$-tensor the $(1,1)$ form locally given by
$\mathrm{f}
=[P_{i\bar{j}}+R_{i\bar{j}}-R_{i\bar{j}}(\om)]
dz^i\wedge dz^{\bar j}
=f_{i\bar{j}}dz^i\wedge dz^{\bar j}$.
\end{defn}
Since $\lim_{t_j\rightarrow\infty}S_{t_j}-\ul S=0$ for arbitrary subsequence $\{t_j\}$,
we have $$\lim_{t\rightarrow\infty}S_{t}-\ul S=0.$$
Moreover, the uniform bound of $|Rm(g_\phi)|_{g_\phi}$ gives rise to the uniform bound of $|Rm(g_\psi)|_{g_\psi}$.
These facts are important to prove the exponential decay of the energy functionals.
\begin{lem}\label{ed of PCF: ed}
The following formula holds
\begin{align}\label{ed of PCF: mu1}
&\frac{d}{dt}\frac{1}{V}\int_M|\nabla\dot\psi|^2\om_\psi^n
=-2\frac{1}{V}\int_M(L_t\dot\psi,\dot\psi)\om_\psi^n\\
&+\frac{1}{V}\int_M(-P^{i\bar{j}}-R^{i\bar{j}}
+R^{i\bar{j}}(\om))\dot\psi_i\dot\psi_{\bar j}\om_\psi^n
-\frac{1}{V}\int_M|\nabla\dot\psi|^2(S-\ul{S})\om_\psi^n.\nonumber
\end{align}
\end{lem}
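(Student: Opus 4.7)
The plan is to differentiate the energy directly and reorganize by integration by parts. Write $u = \dot\psi$; then by the flow equation \eqref{notations: PCF} we have $\tri_\psi u = -(S_\psi-\ul S)$.

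First I would apply the product rule to $|\nabla u|^2\om_\psi^n = g^{i\bar j}_\psi u_i u_{\bar j}\om_\psi^n$, using $\partial_t g^{i\bar j}_\psi = -u^{i\bar j}$ and $\partial_t\om_\psi^n = (\tri_\psi u)\om_\psi^n$. This splits $\frac{d}{dt}\int |\nabla u|^2\om_\psi^n$ into three pieces: a cubic Hessian term $-\int u^{i\bar j} u_i u_{\bar j}\om_\psi^n$; a mixed term $2\operatorname{Re}\int g^{i\bar j}_\psi \dot u_i u_{\bar j}\om_\psi^n$; and a volume-form piece $\int |\nabla u|^2\tri_\psi u\,\om_\psi^n$. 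The volume-form piece immediately produces $-\frac{1}{V}\int |\nabla u|^2(S-\ul S)\om_\psi^n$, the third term of the claim.

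Next I would integrate by parts the mixed term into $-2\int \dot u\cdot\tri_\psi u\,\om_\psi^n$ and eliminate $\dot u$ through the flow. Since $u = -f+c(t)$, we have $\dot u = -\dot f + \dot c(t)$; differentiating $\tri_\psi f = S-\ul S$ in time gives
$$\tri_\psi \dot f = \dot S_\psi + u^{i\bar j} f_{i\bar j},$$
and the standard K\"ahler linearization of scalar curvature is
$$\dot S_\psi = -\tri_\psi^2 u - u^{i\bar j}R_{i\bar j}(\psi).$$
Substituting these, using $\tri_\psi u = -(S-\ul S)$, and integrating by parts once more on the $\tri_\psi^2 u$ piece paired against $f$, the leading fourth-order contribution should assemble into the symmetric pairing $-\frac{2}{V}\int (L_t u, u)\om_\psi^n$, where $L_t$ is the self-adjoint Lichnerowicz-type operator arising from linearizing the map $\psi\mapsto f(\psi)$ at the current metric.

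Finally I would collect the residual Hessian contractions. The cubic term $-\int u^{i\bar j}u_i u_{\bar j}\om_\psi^n$ from the metric variation combines with $-2\int f\cdot u^{i\bar j} f_{i\bar j}\om_\psi^n$ and $+2\int f\cdot u^{i\bar j} R_{i\bar j}(\psi)\om_\psi^n$ produced in the previous step; substituting $f=-u+c$ and performing one final integration by parts, these reorganize into $\int(-P^{i\bar j}-R^{i\bar j}(\psi)+R^{i\bar j}(\om))u_i u_{\bar j}\om_\psi^n$. Here one uses $f_{i\bar j}=P_{i\bar j}-h_{i\bar j}$ (from $f=P-h$) together with $R_{i\bar j}(\psi)=R_{i\bar j}(\om)-h_{i\bar j}$, which together annihilate the $h$-dependence and leave exactly the raised $f$-tensor of the preceding definition.

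The main obstacle is the bookkeeping in the second and third steps: one must arrange the integrations by parts so that the fourth-order terms reassemble into the symmetric pairing $(L_t u, u)$ while the remaining lower-order Hessian contractions regroup precisely into $-P^{i\bar j}-R^{i\bar j}(\psi)+R^{i\bar j}(\om)$. The structural reason this works is that the $h$-contributions coming separately from $\dot S_\psi$ and from $f_{i\bar j}$ cancel, isolating exactly the $f$-tensor in the remainder and thereby matching the statement.
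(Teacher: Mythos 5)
Your outline is correct in total and is, at bottom, the same computation as the paper's, reorganized in the middle: the paper differentiates the potential equation to get $\ddot\psi=\tri_\psi\dot\psi-\dot P-(\text{averages})$, absorbs the $g_\psi^{i\bar j}(\tri_\psi\dot\psi)_i\dot\psi_{\bar j}$ terms with the pointwise Bochner formula for $\tri_\psi|\nabla\dot\psi|^2$, and treats $\dot P$ by differentiating $\tri_\psi P=tr_\psi Ric(\om)-\ul S$; you instead route everything through $\tri_\psi\dot f=\dot S+\dot\psi^{i\bar j}f_{i\bar j}$ and the linearization $\dot S=-\tri_\psi^2\dot\psi-\dot\psi^{i\bar j}R_{i\bar j}$. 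Both versions hinge on the same two facts, which you should make explicit: the tensor $T_{i\bar j}=R_{i\bar j}(\om)-P_{i\bar j}=R_{i\bar j}-f_{i\bar j}$ is \emph{harmonic}, which is what legitimizes your ``one final integration by parts'' moving a derivative off $\dot\psi^{i\bar j}$ onto $f$ with no divergence remainder; and the Ricci identity $\int|\dot\psi_{i\bar j}|^2\om_\psi^n=\int|\dot\psi_{ij}|^2\om_\psi^n+\int R^{i\bar j}\dot\psi_i\dot\psi_{\bar j}\om_\psi^n$. One bookkeeping misattribution should be fixed, though it does not change the final answer: your fourth-order piece is $2\int f\tri_\psi^2\dot\psi\,\om_\psi^n=-2\int(\tri_\psi\dot\psi)^2\om_\psi^n=-2\int(L_t\dot\psi,\dot\psi)\om_\psi^n-2\int R^{i\bar j}\dot\psi_i\dot\psi_{\bar j}\om_\psi^n$, not the Lichnerowicz pairing alone; correspondingly, since $\dot\psi_{i\bar j}=T_{i\bar j}-R_{i\bar j}$, your three ``residual'' terms sum to $\int(T^{i\bar j}+R^{i\bar j})\dot\psi_i\dot\psi_{\bar j}\om_\psi^n=\int(-P^{i\bar j}+R^{i\bar j}+R^{i\bar j}(\om))\dot\psi_i\dot\psi_{\bar j}\om_\psi^n$, with a \emph{plus} sign on $R^{i\bar j}$. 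The two discrepancies are each $\pm2\int R^{i\bar j}\dot\psi_i\dot\psi_{\bar j}\om_\psi^n$ and cancel, so the assembled identity \eqref{ed of PCF: mu1} is recovered, but as literally grouped neither of your two intermediate claims holds on its own.
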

\begin{proof}
Differentiating \eqref{PCF on SoKM: MPCF} with respect to $t$ we have
\begin{align}\label{ed of PCF: p_t equ}
\ddot\psi=\triangle_\psi\dot\psi
-\dot P-\ul{\triangle_\psi\dot\psi}-\ul{\dot P}.
\end{align} Then we get
\begin{align*}
\frac{d}{dt}|\nabla\dot\psi|^2
&=f^{i\bar{j}}\dot\psi_i\dot\psi_{\bar j}+g_\psi^{i\bar
j}\ddot\psi_i\dot\psi_{\bar j}
+g_\psi^{i\bar
j}\dot\psi_i\ddot\psi_{\bar j}\\
&=f^{i\bar j}\dot\psi_i\dot\psi_{\bar j}
+g_{\psi}^{i\bar j}\triangle_\psi\dot\psi_i\dot\psi_{\bar j}
-g_\psi^{i\bar
j}\dot P_i\dot\psi_{\bar j}
+g_{\psi}^{i\bar j}\dot\psi_i\triangle_\psi\dot\psi_{\bar j}
-g_\psi^{i\bar
j}\dot\psi_i\dot P_{\bar j}.
\end{align*}
On the other hand
since
$$(\tri_\psi\psi)_i=\psi_{k\bar{k}i}
=\psi_{\bar{k}ki}=\psi_{\bar{k}ik}
=\psi_{i\bar{k}k}=\triangle_\psi\psi_i,$$
we obtain the Laplacian of $|\nabla\dot\psi|^2$ given by
$$\triangle_\psi|\nabla\dot\psi|^2
=R^{i\bar{j}}\dot\psi_i\dot\psi_{\bar{j}}
+g_\psi^{i\bar{j}}\triangle_\psi\dot\psi_i\dot\psi_{\bar{j}}
+g_\psi^{i\bar{j}}\dot\psi_i\triangle_\psi\dot\psi_{\bar{j}}
+|\dot\psi_{ij}|^2+|\dot\psi_{i\bar{j}}|^2.$$
In the following we use the upper index
to represent the raise of the index by means of the metric $g_\psi$.
Then combining these two identities we have the following evolution equation,
\begin{align}\label{ed of PCF: p t equ p}
&\frac{d}{dt}\frac{1}{V}\int_M|\nabla\dot\psi|^2\om_\psi^n\nonumber\\
&=\frac{1}{V}\int_M[(f^{i\bar{j}}-R^{i\bar{j}})\dot\psi_i\dot\psi_{\bar j}
-|\dot\psi_{ij}|^2-|\dot\psi_{i\bar{j}}|^2
-g_\psi^{i\bar j}\dot P_i\dot\psi_{\bar{j}}
-g_\psi^{i\bar j}\dot\psi_i\dot{P}_{\bar{j}}]\om_\psi^n\nonumber\\
&+\frac{1}{V}\int_M|\nabla\dot\psi|^2
\tri_\psi\dot\psi\om_\psi^n.
\end{align}
Differentiating the second equation in \eqref{PCF on SoKM: MPCF} we get
$$g_\psi^{i\bar{j}}\dot P_{i\bar{j}}
=\dot g_\psi^{i\bar{j}}(-P_{i\bar{j}}+R_{i\bar{j}}(\om))
=f^{i\bar{j}}(-P_{i\bar{j}}+R_{i\bar{j}}(\om)).$$ So using \eqref{notations: pam of PCF} and the fact that
$-P_{i\bar{j}}+R_{i\bar{j}}(\om)$ is a harmonic form, we have
\begin{align}\label{ed of PCF: dot p}
-\int_Mg_\psi^{i\bar j}\dot P_i\dot\psi_{\bar{j}}\om_\psi^n
&=\int_M\tri_\psi\dot P\dot\psi\om_\psi^n
=\int_M(f^{i\bar{j}}(-P_{i\bar{j}}+R_{i\bar{j}}(\om)))
\dot\psi\om_\psi^n\nonumber\\
&=\int_M((-P^{i\bar{j}}+R^{i\bar{j}}(\om)))
\dot\psi_i\dot\psi_{\bar j}\om_\psi^n.
\end{align}
Combining \eqref{ed of PCF: p t equ p}, \eqref{ed of PCF: dot p} with the following equations
$$f_{i\bar j}=P_{i\bar j}-h_{i\bar j}\text{ and }
\tri_\psi\frac{\p}{\p t}\psi
=-\tri_\psi f=-(S-\ul{S}),$$
we obtain
\begin{align*}
&\frac{d}{dt}\frac{1}{V}\int_M|\nabla\dot\psi|^2\om_\psi^n
=-\frac{1}{V}\int_M|\dot\psi_{ij}|^2\om_\psi^n
-\frac{1}{V}\int_M|\dot\psi_{i\bar{j}}|^2\om_\psi^n\nonumber\\
&+\frac{1}{V}\int_M(-P^{i\bar{j}}+R^{i\bar{j}}(\om))
\dot\psi_i\dot\psi_{\bar j}\om_\psi^n
-\frac{1}{V}\int_M|\nabla\dot\psi|^2(S-\ul{S})\om_\psi^n.
\end{align*}
Last we insert the Ricci identity, i.e.
$$\int_M|\dot\psi_{i\bar j}|^2\om_\psi^n
=\int_M|\dot\psi_{ij}|^2\om_\psi^n
+\int_MR^{i\bar{j}}\dot\psi_i\dot\psi_{\bar j}\om_\psi^n$$
into the above differential inequality and obtain immediately,
\begin{align*}
&\frac{d}{dt}\frac{1}{V}\int_M|\nabla\dot\psi|^2\om_\psi^n\\
&=-2\frac{1}{V}\int_M|\nabla\nabla\dot\psi|^2\om_\psi^n
+\frac{1}{V}\int_M(-P^{i\bar{j}}-R^{i\bar{j}}
+R^{i\bar{j}}(\om))\dot\psi_i\dot\psi_{\bar j}\om_\psi^n\\
&-\frac{1}{V}\int_M|\nabla\dot\psi|^2(S-\ul{S})\om_\psi^n\\
&=-2\frac{1}{V}\int_M(L_t\dot\psi,\dot\psi)\om_\psi^n
-\frac{1}{V}\int_Mf^{i\bar{j}}\dot\psi_i\dot\psi_{\bar j}\om_\psi^n
-\frac{1}{V}\int_M|\nabla\dot\psi|^2(S-\ul{S})\om_\psi^n.
\end{align*}
\end{proof}

Since $g_\psi$ converges to a cscK metric,
we have that both $\mathrm{f}$
and $S-\ul S$ tend to zero. As a result we deduce that for any small $\eps$,
\begin{align}\label{PCF on SoKM:mu1 small}
\frac{d}{dt}\frac{1}{V}\int_M|\nabla\dot\psi|^2\om_\psi^n
\leq-2\frac{1}{V}\int_M(L_t\dot\psi,\dot\psi)\om_\psi^n
+\eps\frac{1}{V}\int_M|\nabla\dot\psi|^2\om_\psi^n.
\end{align}
Moreover,
the Futaki invariant implies that
\begin{align}\label{PCF on SoKM:Futaki}
0&=F(X)=\int_MX(f)\om_\psi^n
=\int_M(\theta_X(\psi))_if_{\bar i}\om_\psi^n\nonumber\\
&=-\int_M\theta_X(\psi)(S-\ul S)\om_\psi^n
=-\int_M(\theta_X(\psi))_i\dot\psi_{\bar i}\om_\psi^n
\end{align}
for any $X=\uparrow\bar\p\theta_X\in\eta(M)$.

Let $L_t$ be the Lichnerowicz operator. It is a positive semidefinite,
self-adjoint operator and $L_t\psi=0$ if and only if
$\nabla\psi=\uparrow\bar\p\psi$ is a holomorphic vector field.
We are going to obtain the first eigenvalue of $L_t$ with
regard to the metric $g_{\phi(t)}$ first.
We define the set
\begin{align*}
A_t&=\{f\in C^\infty_R(M)\vert
\int_Mf\om_\phi^n=0;\\
&\int_M(\theta_X(\phi))_i f_{\bar i}\om_\phi^n=0,
\forall X=\uparrow\bar\p\theta_X(\phi)\in\eta(M)\}.
\end{align*}
Then \eqref{PCF on SoKM:Futaki} implies $f_\phi\in A_t$ for $\tri_\phi f_\phi=S_\phi-\ul S$.
We further define
\begin{align*}
\l(\phi)&=\inf_{f\in A_t}\{c|\int_M|\nabla\nabla f|^2\om_\phi^n \geq
c\int_M|\nabla f|^2\om_\phi^n\}\\
&=\inf_{f\in A_t}\{c|c\leq\int_M|\nabla\nabla f|^2\om_\phi^n;
\underline{}\int_M|\nabla f|^2\om_\phi^n=1\}.
\end{align*}

We prove a similar lemma to Chen-Li-Wang \cite{MR2481736}.
\begin{lem}\label{ed of PCF:eigenvalue}
We have the uniform lower bound of
the Lichnerowicz operator $L_t$ i.e.
$$\l>0\text{, }\forall t\geq0.$$
\end{lem}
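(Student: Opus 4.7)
The plan is to argue by contradiction. Suppose there exists a sequence $t_j \to \infty$ (or simply times $t_j$ in $[0,\infty)$) such that $\lambda(\phi(t_j)) \to 0$. By the definition of $\lambda(\phi(t_j))$ as an infimum over $A_{t_j}$, we select minimizers (or near-minimizers) $f_j \in A_{t_j}$ satisfying the normalizations
\begin{equation*}
\int_M f_j\,\om_{\phi(t_j)}^n=0,\qquad \int_M|\nabla f_j|^2\om_{\phi(t_j)}^n=1,\qquad \int_M|\nabla\nabla f_j|^2\om_{\phi(t_j)}^n\to 0.
\end{equation*}

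Next, I would invoke Theorem~\ref{PCF on SoKM: hol}: after passing to a subsequence, the metrics $g_{\phi(t_j)}$ converge in $C^{k,\alpha}$ for every $k$ to a cscK metric $g_\infty$. The uniform $C^{k,\alpha}$ bounds on $g_{\phi(t_j)}$, together with the $L^2$ bounds on $\nabla f_j$ and $\nabla\nabla f_j$ and the Poincar\'e inequality applied on the fixed underlying manifold, furnish uniform $W^{2,2}$ bounds on $f_j$. Elliptic bootstrapping via the standard $L^p$ theory applied to $\triangle_{\phi(t_j)} f_j$ (whose $L^2$ norm is controlled by $\int|\nabla\nabla f_j|^2$ after integration by parts together with curvature terms that are uniformly bounded) yields compactness in $W^{1,2}$, so a subsequence converges strongly in $W^{1,2}$ and weakly in $W^{2,2}$ to some $f_\infty$.

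Passing to the limit, the normalizations give $\int_M f_\infty\,\om_\infty^n=0$ and $\int_M|\nabla f_\infty|^2\om_\infty^n=1$; by lower semicontinuity of the norm under weak convergence, $\int_M|\nabla\nabla f_\infty|^2\om_\infty^n=0$. This means $\nabla f_\infty$ is a holomorphic vector field, so $f_\infty$ is itself a holomorphic potential on $(M,g_\infty)$. On the other hand, since $f_j\in A_{t_j}$, the orthogonality
\begin{equation*}
\int_M(\theta_X(\phi(t_j)))_i\,(f_j)_{\bar i}\,\om_{\phi(t_j)}^n=0
\end{equation*}
passes to the limit for every $X\in\eta(M)$, yielding $f_\infty\in A_\infty$. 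Choosing $X$ to be the holomorphic vector field generated by $f_\infty$ forces $\int_M|\nabla f_\infty|^2\om_\infty^n=0$, which contradicts the normalization. Hence $\lambda$ admits a uniform positive lower bound.

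The main obstacle is the passage to the limit in the orthogonality constraint defining $A_t$: one must verify that the holomorphic potentials $\theta_X(\phi(t_j))$ for a fixed basis of $\eta(M)$ converge in $C^{1,\alpha}$ to $\theta_X(\phi_\infty)$, which requires continuous dependence of the normalized holomorphic potential on the K\"ahler form — a consequence of the Hodge-theoretic characterization of $\theta_X$ on a background of uniformly bounded geometry. Once this continuity is in place, combined with the strong $W^{1,2}$ convergence of $f_j$, the argument closes.
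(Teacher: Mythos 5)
Your proposal is correct and follows essentially the same route as the paper's proof: a contradiction argument with a normalized minimizing sequence, compactness of the metrics from \thmref{PCF on SoKM: hol}, weak $W^{2,2}$ and strong $W^{1,2}$ convergence of the $f_j$, and passage to the limit in the normalization and orthogonality constraints. The only cosmetic difference is in closing the contradiction: you test the limiting orthogonality against $X=\uparrow\bar\partial f_\infty$ to force $\int_M|\nabla f_\infty|^2\om_\infty^n=0$ directly, while the paper notes $f_\infty\notin \mathrm{Ker}\,L_\infty$ and invokes the positivity of the first nonzero eigenvalue of $L_\infty$ for the fixed limit metric; both steps are equivalent in substance.
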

\begin{proof}
We prove the lemma by the contradiction method. If the conclusion fails,
then we can choose a sequence $t_s$
such that
\begin{align*}
&\int_M|\nabla_s\nabla_s f_s|^2\om_{\phi_s}^n
<\l(\phi_s)\int_M|\nabla_s f_s|^2\om_{\phi_s}^n
=\frac{1}{s}\int_M|\nabla_s f_s|^2\om_{\phi_s}^n,
\int_Mf_s\om_{\phi_s}^n=0\\
&\text{and }\int_M(\theta_X(\phi_s))_i (f_s)_{\bar i}\om_{\phi_s}^n=0,
\forall X=\uparrow\bar\p\theta_X(\phi_s)\in\eta(M).
\end{align*}
We further scale $f_s$ such that
\begin{align}\label{ed of PCF: bound of fs}
&\int_M|\nabla_s\nabla_s f_s|^2\om_{\phi_s}^n <\frac{1}{s},
\int_M|\nabla f_s|^2\om_{\phi_s}^n=1,
\int_Mf_s\om_{\phi_s}^n=0\\
&\text{and }\int_M(\theta_X(\phi_s))_i (f_s)_{\bar i}\om_{\phi_s}^n=0,
\forall X=\uparrow\bar\p\theta_X(\phi_s)\in\eta(M).\nonumber
\end{align}
Recall that \thmref{PCF on SoKM: hol} gives a subsequence of $g_{\phi_s}$ satisfying
$$\lim_{j\rightarrow\infty}g(\phi(t_{s_j}))
=g(\phi_\infty)\text{ in }C^{l} \text{ for any }l\geq0.$$
Moreover, combining Ricci identity
$$\int_M|(f_s)_{i\bar j}|^2\om_{\phi_s}^n
=\int_M|(f_s)_{ij}|^2\om_{\phi_s}^n
+\int_MR^{i\bar{j}}(f_s)_i(f_s)_{\bar j}\om_{\phi_s}^n$$
and by the uniform boundness of Ricci curvature we obtain
$$\int_M|\nabla\bar\nabla f_s|^2\om_{\phi_s}^n \leq\frac{1}{s}+C(Ric).$$
This inequality together with \eqref{ed of PCF: bound of fs} and the Poincar\'{e} inequality
provides a $W^{2,2}$-weak convergent subsequence of $f_s$ such that
\begin{align*}
f_s\rightharpoonup f_\infty\nonumber \text{ in } W^{2,2}.
\end{align*}
In addition, the Sobolev imbedding theorem implies
\begin{align*}
\nabla_s f_s\rightarrow\nabla_\infty f_\infty \text{ in } L^2\text{ and }
f_s\rightarrow f_\infty \text{ in } L^2. \nonumber
\end{align*}
Accordingly, by the assumption we have
\begin{align}\label{ed of PCF: eigenvalue ass}
&\int_M|\nabla_\infty\nabla_\infty f_\infty|^2\om_\infty^n
\leq\liminf_{s\rightarrow\infty}
\int_M|\nabla_\infty\nabla_\infty f_s|^2\om_{\phi_\infty}^n\\
&\leq C\liminf_{s\rightarrow\infty}
\int_M|\nabla_s\nabla_s f_s|^2\om_{\phi_s}^n=0,\nonumber\\
&\lim_{s\rightarrow\infty}\int_M|\nabla_s f_s|^2\om_{\phi_s}^n
=\int_M|\nabla\infty f_\infty|^2\om_{\phi_\infty}^n=1,\nonumber\\
&\lim_{s\rightarrow\infty}\int_Mf_s\om_{\phi_s}^n
=\int_Mf_\infty\om_{\phi_\infty}^n=0.\nonumber
\end{align}
On the other hand we turn to the
Futaki invariant,
\begin{align*}
0=\int_MX(f_s)\om_{\phi_s}^n
=\int_M(\theta_X(\phi_s))_i (f_s)_{\bar i}\om_{\phi_s}^n=0
\end{align*} for any $X\in \eta(M,g_\phi)$
which implies
\begin{align}\label{ed of PCF: futaki}
0=\int_MX(f_\infty)\om_\infty^n
=\int_M{\theta_X(\phi_\infty)}_i{f_\infty}_{\bar i}\om_\infty^n.
\end{align} after taking limit.
Notice that the complex structure is fixed, then we have
\begin{align*}
\eta(M,g_\infty)=\eta(M,g(\phi_s)).
\end{align*}
Together with \eqref{ed of PCF: futaki} it implies $f_\infty$
does not belong to
$$Ker L_\infty=\{\theta_X(\phi_\infty)\vert X
=\uparrow\bar\p \theta_X(\phi_\infty)\in \eta(M,g_\infty);
\int_M\theta_X(\phi_\infty)\om_{\phi_\infty}^n=0\}.$$
Consequently, we have
\begin{align}
\int_M|\nabla_\infty\nabla_\infty f_\infty|^2\om_\infty^n
>\l\int_M|\nabla_\infty f_\infty|^2\om_\infty^n=\l>0.
\end{align}
That contradicts to \eqref{ed of PCF: eigenvalue ass} and the lemma follows.
\end{proof}
Since the eigenvalue $\l$ is invariant under the holomorphic transformation,
we obtain the uniform positive lower bound
of the first eigenvalue of $g(\psi(t))$. Therefore
\lemref{ed of PCF:eigenvalue} immediately implies
\begin{align*}
\int_M\dot\psi L_t\dot\psi\om_\psi^n
\geq \l\int_M|\dot\psi|^2\om_\psi^n.
\end{align*}
Substituting this inequality into \eqref{PCF on SoKM:mu1 small}, by the Gronwall's inequality we obtain the exponential decay of the energy $\mu_1$,
\begin{align}\label{mu1}
\mu_1(t)=\frac{1}{V}\int_M|\nabla\dot\psi|^2\om_\psi^n\leq
\mu_1(0)e^{-\theta t}.
\end{align}
Moreover the inequality \eqref{mu1} together with the Poincar\'{e} inequality and the normalization condition $$\int_M\p_t\vphi\om_\vphi^n=0,$$ implies
\begin{align}\label{mu0}
\mu_0(t)=\frac{1}{V}\int_M|\dot\psi|^2\om_\psi^n
\leq\mu_0(0)e^{-\theta t}.
\end{align}
Furthermore, we can control the evolution of $\mu_l(t)=\frac{1}{V}\int_M|\nabla^l\dot\psi|^2\om_\psi^n$ by the following lemma.
\begin{lem}\label{ed of PCF: nu l ev}For any $l\geq2$ the following inequality holds
\begin{align*}
\p_t\mu_l(t)\leq C\mu_0(t).
\end{align*}
\end{lem}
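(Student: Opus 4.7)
The plan is to differentiate $\mu_l(t)$ in time, substitute the evolution equations derived earlier, and integrate by parts enough times to cast $\p_t\mu_l$ as a quadratic form in $\dot\psi$ against uniformly bounded tensor coefficients. Combined with the uniform higher-order bounds on $\psi$ guaranteed by Theorem~\ref{regularity of PCF: reg of PCF} and the assumption $\psi(t)\in\mathcal S$, this will yield the desired estimate.

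First I would differentiate under the integral sign to obtain
\[
\p_t\mu_l=\frac{2}{V}\Re\int_M\langle\nabla^l\ddot\psi,\nabla^l\dot\psi\rangle_{g_\psi}\om_\psi^n+\frac{1}{V}\int_M(\dot g_\psi\text{-contractions})|\nabla^l\dot\psi|^2\om_\psi^n-\frac{1}{V}\int_M(S-\ul S)|\nabla^l\dot\psi|^2\om_\psi^n,
\]
using $\p_t\om_\psi^n=\tri_\psi\dot\psi\,\om_\psi^n=-(S-\ul S)\om_\psi^n$ and $\p_tg_{i\bar j}=-f_{i\bar j}$. Next I would substitute the evolution equation \eqref{ed of PCF: p_t equ}, namely $\ddot\psi=\tri_\psi\dot\psi-\dot P+\text{(constants from averaging)}$, noting that the constants are annihilated by $\nabla^l$ for $l\ge1$, and recall from differentiating \eqref{PCF on SoKM: MPCF} that $\tri_\psi\dot P=-\dot\psi^{i\bar j}(P_{i\bar j}-R_{i\bar j}(\om))$. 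Performing $l$ integrations by parts in the principal term and commuting $\nabla$ with $\tri_\psi$ via Bochner-type identities, the leading contribution is $-\frac{2}{V}\int_M|\nabla^{l+1}\dot\psi|^2\om_\psi^n\le0$, which is discarded.

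The surviving terms are all of the schematic form $\int_M A\,\nabla^{m_1}\dot\psi\otimes\nabla^{m_2}\dot\psi\,\om_\psi^n$ with $m_1+m_2\le 2l+2$ and $A$ a smooth tensor built from $g_\psi$, $Ric(\om)$, $P$, $\dot P$ and their derivatives. By Schauder theory applied to $\tri_\psi\dot P$ and to $\tri_\psi\dot\psi=-(S-\ul S)$, together with the uniform $C^{k,\alpha}$ bounds on $\psi$ from $\psi\in\mathcal S$, each tensor $A$ and each $\nabla^k\dot\psi$ carries a uniform $L^\infty$ bound. I would then integrate by parts once more to symmetrize and concentrate both factors of $\dot\psi$ without derivatives, producing $\int_M\dot\psi\cdot K\cdot\dot\psi\,\om_\psi^n$ with $K\dot\psi$ uniformly bounded; exploiting the normalization $\int_M\dot\psi\,\om_\psi^n=0$ (which follows from $\p_tI(\psi)=0$) to kill constant modes, Cauchy--Schwarz delivers $|\p_t\mu_l|\le C\|\dot\psi\|_{L^2}^2=CV\mu_0$.

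The main obstacle is precisely this last step: naive Cauchy--Schwarz between a single factor of $\dot\psi$ and a uniformly bounded remainder would give only the weaker bound $C\sqrt{\mu_0}$. Upgrading to $C\mu_0$ requires a careful symmetric rearrangement of the integrations by parts so that both copies of $\dot\psi$ appear with no derivatives left on them, exploiting the mean-zero condition $\int_M\dot\psi\,\om_\psi^n=0$ and the uniform $C^\infty$ smoothness of $\psi$ supplied by Theorem~\ref{regularity of PCF: reg of PCF}. The combinatorial bookkeeping of the Bochner commutators in the principal term is tedious but routine once this structural reduction is in place.
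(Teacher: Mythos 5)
Your proposal tracks the paper's computation through the first two thirds: differentiate $\mu_l$, substitute $\ddot\psi=\tri_\psi\dot\psi-\dot P-(\text{averages})$, use the Bochner/Weitzenb\"ock identity for $\tri_\psi|\nabla^l\dot\psi|^2$ to trade the terms $g_\psi^{IJ}(\tri_\psi\dot\psi)_I\dot\psi_J+\mathrm{c.c.}$ for $-|\nabla^{l+1}\dot\psi|^2$ plus curvature terms, and account for $\p_t\om_\psi^n$. Up to that point you and the paper agree.

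The fatal step is where you \emph{discard} the good term $-\frac{2}{V}\int_M|\nabla^{l+1}\dot\psi|^2\om_\psi^n$ and then try to close the estimate by integrating by parts until ``both copies of $\dot\psi$ appear with no derivatives left on them.'' That cannot be done: integration by parts only redistributes derivatives between the two factors of $\dot\psi$ and the coefficient tensors, it never destroys them, so a term such as $\int_M A\,\nabla^l\dot\psi\otimes\nabla^l\dot\psi\,\om_\psi^n$ can at best be rewritten as $\int_M\dot\psi\,(K\dot\psi)\,\om_\psi^n$ with $K$ a differential operator of order $2l$; bounding this by $C\|\dot\psi\|_{L^2}^2$ would amount to a reverse Poincar\'e inequality $\mu_l\le C\mu_0$, which is false (the mean-zero condition gives $\mu_0\le C\mu_1$, i.e.\ the inequality in the wrong direction for your purposes). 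You correctly sense the obstacle in your last paragraph but the proposed remedy --- ``careful symmetric rearrangement'' --- does not exist. The paper's resolution is precisely to \emph{keep} the dissipative term: all error terms are bounded by $C\mu_l$, so $\p_t\mu_l\le C\mu_l-\mu_{l+1}$, and the Gagliardo--Nirenberg interpolation $\mu_l\le\eps\mu_{l+1}+C(\eps)\mu_1$ lets the $\eps\mu_{l+1}$ be absorbed into $-\mu_{l+1}$ for $\eps$ small, leaving $\p_t\mu_l\le C(\eps)\mu_1$ (which, with the already-established exponential decay of $\mu_1$ and $\mu_0$, is what the subsequent Gronwall argument actually needs). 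Without retaining $-\mu_{l+1}$ you can only reach $\p_t\mu_l\le C\mu_l$, i.e.\ exponential growth, and the lemma fails.
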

\begin{proof}
We prove this lemma in real coordinates.
Let $I=(i_1,\cdots,i_l)$, $J=(j_1,\cdots,j_l)$ and $g^{IJ}=g^{i_1j_1}\cdots g^{i_lj_l}$.
Differentiating $|\nabla^l\dot\psi|^2$ with respect to $t$ and using \eqref{ed of PCF: p_t equ}, we get
\begin{align*}
\frac{d}{dt}|\nabla^l\dot\psi|^2
&=\sum g^{i_1j_1}\cdots f^{pq}\cdots g^{i_lj_l}\dot\psi_{i_1,\cdots,p,\cdots,i_l}\dot\psi_{j_1,\cdots,q,\cdots,j_l}\\
&+g_\psi^{IJ}\ddot\psi_I\dot\psi_J
+g_\psi^{IJ}\dot\psi_I\ddot\psi_J\\
&=\sum g^{i_1j_1}\cdots f^{pq}\cdots g^{i_lj_l}\dot\psi_{i_1,\cdots,p,\cdots,i_l}\dot\psi_{j_1,\cdots,q,\cdots,j_l}\\
&+g_{\psi}^{IJ}(\tri_\psi\dot\psi)_I\dot\psi_J
-g_\psi^{IJ}\dot P_I\dot\psi_J
+g_{\psi}^{IJ}\dot\psi_I(\tri_\psi\dot\psi)_J
-g_\psi^{IJ}\dot\psi_I\dot P_J.
\end{align*}
Meanwhile, a standard computation gives
$$\tri_\psi|\nabla^l\dot\psi|^2
=Rm\ast\nabla^l\dot\psi\ast\nabla^l\dot\psi
+g_{\psi}^{IJ}(\tri_\psi\dot\psi)_I\dot\psi_J
+g_{\psi}^{IJ}\dot\psi_I(\tri_\psi\dot\psi)_J
+|\nabla^{l+1}\dot\psi_{ij}|^2.$$
Combining these two equalities, we obtain
\begin{align*}
\p_t\frac{1}{V}\int_M|\nabla^l\dot\psi|^2\om_\psi^n
&=\frac{1}{V}\int_M[\sum g^{i_1j_1}\cdots f^{pq}\cdots g^{i_lj_l}\dot\psi_{i_1,\cdots,p,\cdots,i_l}\dot\psi_{j_1,\cdots,q,\cdots,j_l}\\
&-g_\psi^{IJ}\dot P_I\dot\psi_J
-g_\psi^{IJ}\dot\psi_I\dot P_J-Rm\ast\nabla^l\dot\psi\ast\nabla^l\dot\psi
-|\nabla^{l+1}\dot\psi_{ij}|^2]\om_\psi^n\\
&+\frac{1}{V}\int_M|\nabla^l\dot\psi|^2\tri_\psi\dot\psi\om_\psi^n\\
&\leq C\mu_l(t)-\mu_{l+1}(t)\\
&\leq C(\eps)\mu_1(t)+(C\eps-1)\mu_{l+1}(t).
\end{align*}
The last inequality holds by the interpolation inequality.
Then the lemma follows by \eqref{mu1} when we choose $\eps$ to be small enough.
\end{proof}
Together with \eqref{mu0} \lemref{ed of PCF: nu l ev} implies
\begin{align*}
\frac{1}{V}\int_M|\nabla^l\dot\psi|^2\om_\psi^n
\leq\mu_l(0)e^{-\theta t}
\end{align*}
for $l\geq0$ by the Gronwall's inequality again.
Since all $\varrho_{t}^\ast\om_{\psi_{t}}$ are equivalent and
the Sobolev constant and the Poincar\'{e} constant are invariant
under the holomorphic transformations, we deduce that
$\om_{\psi_{t}}$ has uniform Sobolev constant and Poincar\'{e} constant.
The Sobolev imbedding theorem implies
\begin{align}\label{PCF on SoKM:l potential}
|\dot\psi|_{C^l(g_\psi)}\leq Ce^{-\theta t}.
\end{align}
In view of the identity
\begin{align*}
\psi(t)=\psi(0)+\int_0^1\dot\psi dt,
\end{align*}
we have
\begin{align*}
|\psi(t)|_{C^0}\leq|\psi(0)|_{C^0}+Ce^{-\theta t}.
\end{align*}
Now note that the potential
and the Ricci curvature are uniformly bounded
\begin{align*}
C_1\om_\psi\leq Ric_\psi\leq C_2\om_\psi.
\end{align*}
According to Chen-He's compactness theorem \cite{MR2405167},
we obtain $|\psi|_{2,\a}$ is uniformly bounded.
Moreover the higher order bound follows from \thmref{regularity of PCF: reg of PCF}.
Consequently, \eqref{PCF on SoKM:l potential} gives rise to
$$|\psi-\psi_\infty|_{C^{l}(g_\infty)}\leq C_le^{-\theta t}.$$
Hence combining with \thmref{PCF on SoKM: no holo} and \thmref{PCF on SoKM: hol},
we obtain the following stability theorem.
\begin{thm}
For each K\"ahler
class which admits a cscK metric $\om$,
there exits a
small constant $\eps_0$ such that,
if $|\vphi_0|_{C^{2,\a}(M)}\leq\eps_0$,
then the solution to the pseudo-Calabi flow
exists for all times and
converges exponentially fast to
a unique cscK metric nearby in the K\"ahler class $[\om]$.
\end{thm}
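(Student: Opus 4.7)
The plan is to combine the two subsequential convergence results (Theorem~\ref{PCF on SoKM: no holo} in the absence of holomorphic vector fields and Theorem~\ref{PCF on SoKM: hol} in the general case) with the exponential energy decay derived via the Lichnerowicz operator. The subsequential theorems already give us that, for $|\vphi_0|_{C^{2,\a}}\leq\eps_0$ small, the modified flow $g_\phi=\varrho^\ast g_\psi$ stays in an arbitrarily small $C^{2,\a}$ neighborhood of the fixed cscK metric $\om$ for all time, and that every sequence $g_{\phi(t_i)}$ admits a subsequence converging smoothly to a cscK metric. So the main remaining task is to promote subsequential convergence to genuine exponential convergence and identify a unique limit.

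First, I would establish the uniform coercivity of the Lichnerowicz operator $L_t$ along the flow, that is, Lemma~\ref{ed of PCF:eigenvalue}. The argument is by contradiction: if $\lambda(\phi(t_s))\to 0$, pick near-minimizers $f_s\in A_{t_s}$ normalized by $\int|\nabla_s f_s|^2\om_{\phi_s}^n=1$ and $\int f_s\om_{\phi_s}^n=0$. The uniform $C^\infty$ bounds on $g_{\phi(t_s)}$ (from the subsequential compactness) and the Ricci identity give a $W^{2,2}$-weak limit $f_\infty$ with $L_\infty f_\infty=0$; the Futaki-type orthogonality in $A_t$ passes to the limit, forcing $f_\infty\perp \mathrm{Ker}\,L_\infty$, contradicting $\|\nabla_\infty f_\infty\|=1$.

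Next, using Lemma~\ref{ed of PCF: ed}, plus the fact that $\mathrm{f}$-tensor and $S-\underline S$ tend to zero along the flow (since any subsequence converges to a cscK metric), the evolution of $\mu_1(t)=\frac{1}{V}\int_M|\nabla\dot\psi|^2\om_\psi^n$ satisfies
\begin{equation*}
\frac{d}{dt}\mu_1(t)\leq -2\lambda\,\mu_0(t)+\eps\,\mu_1(t),
\end{equation*}
for arbitrarily small $\eps$ eventually. Combined with the Poincar\'e inequality and the normalization $\int_M\dot\psi\,\om_\psi^n=0$, Gronwall yields $\mu_1(t),\mu_0(t)\leq Ce^{-\theta t}$ for some $\theta>0$. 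Then Lemma~\ref{ed of PCF: nu l ev} bootstraps this exponential decay to every higher derivative $\mu_l(t)$. Invariance of Sobolev/Poincar\'e constants under the holomorphic gauge $\varrho$ and Sobolev embedding then give $|\dot\psi|_{C^l}\leq C_l e^{-\theta t}$.

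Finally, once $\|\dot\psi(t)\|_{C^l}$ decays exponentially, the identity $\psi(t_2)-\psi(t_1)=\int_{t_1}^{t_2}\dot\psi\,ds$ shows that $\psi(t)$ is Cauchy in every $C^l$, so $\psi(t)\to\psi_\infty$ exponentially in $C^l$; by the subsequential results $\om_{\psi_\infty}$ is cscK, and $|\psi-\psi_\infty|_{C^l}\leq C_l e^{-\theta t}$. The uniqueness of the nearby limit follows from Chen--Tian's uniqueness of cscK representatives modulo $\mathrm{Aut}_0(M)$ together with the gauge-fixing via $\varrho$. The main obstacle is the Lichnerowicz eigenvalue lower bound: the delicate point is verifying that the Futaki-type constraint defining $A_t$ is preserved in the $W^{2,2}$-weak limit so that $f_\infty$ avoids the kernel of $L_\infty$; everything else is a fairly mechanical Gronwall/bootstrap argument.
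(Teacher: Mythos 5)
Your proposal follows essentially the same route as the paper: it invokes the subsequential convergence results of \thmref{PCF on SoKM: no holo} and \thmref{PCF on SoKM: hol}, establishes the uniform lower bound for the Lichnerowicz operator (\lemref{ed of PCF:eigenvalue}) by the same contradiction/compactness argument with the Futaki orthogonality passing to the weak limit, feeds it into the evolution inequality of \lemref{ed of PCF: ed} to get exponential decay of $\mu_1$ and $\mu_0$ via Gronwall, bootstraps to all $\mu_l$ by \lemref{ed of PCF: nu l ev}, and integrates $\dot\psi$ to obtain the exponentially converging limit, exactly as in the paper. The argument is correct and matches the paper's proof in both structure and detail.
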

An immediate corollary is
\begin{cor}
If the pseudo-Calabi flow converges to a cscK metric by passing to a subsequence if necessary, then the limit cscK metric (depends on the subsequence) must be unique.
\end{cor}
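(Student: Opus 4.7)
The plan is to assemble the theorem from the three pillars established earlier in the paper: (i) the long-time stability of the modified flow \eqref{PCF on SoKM: MPCF} near a cscK metric (Theorems \ref{PCF on SoKM: no holo} and \ref{PCF on SoKM: hol}), (ii) the spectral gap for the Lichnerowicz operator along the flow, and (iii) the energy decay estimates proved in this section. First I would replace the original potential $\vphi$ by the normalized potential $\psi=\vphi+\int_0^t(-\bar h+\bar P)\,ds\in\mathcal{H}_0$, which satisfies \eqref{PCF on SoKM: MPCF}, and then pass to the gauge-fixed potential $\phi$ with $g_\phi=\varrho^*g_\psi$ (as in Theorem \ref{PCF on SoKM: hol}); since $\varrho$ is the identity when $M$ admits no holomorphic vector fields, the two cases fit in a single framework. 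By choosing $\eps_0$ small enough that $\nu_\om(\vphi_0)\le o$, Theorems \ref{PCF on SoKM: no holo}/\ref{PCF on SoKM: hol} give long-time existence and confine $g_\phi(t)$ to a small $C^{2,\a}$ neighborhood of $\om$, with uniform higher-order bounds along the flow.

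Next I would extract exponential decay of the first two energies $\mu_0(t)=\frac{1}{V}\int_M|\dot\psi|^2\om_\psi^n$ and $\mu_1(t)=\frac{1}{V}\int_M|\nabla\dot\psi|^2\om_\psi^n$. The differential identity of Lemma~\ref{ed of PCF: ed} combined with the smallness of $\mathrm{f}$ and $S-\ul S$ (which follows from the subsequential convergence to a cscK limit) yields \eqref{PCF on SoKM:mu1 small}. I would then invoke Lemma~\ref{ed of PCF:eigenvalue}, which gives a uniform lower bound $\l>0$ of the Lichnerowicz operator along $g_\psi$ because $\dot\psi=-f\in A_t$ by the Futaki invariant computation \eqref{PCF on SoKM:Futaki}. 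Plugging this into \eqref{PCF on SoKM:mu1 small} and applying Gronwall produces $\mu_1(t)\le\mu_1(0)e^{-\theta t}$, and the Poincar\'e inequality (valid because $\int_M\dot\psi\,\om_\psi^n=0$) then propagates the decay to $\mu_0$.

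The higher-order energies $\mu_l(t)$ are controlled by Lemma~\ref{ed of PCF: nu l ev}, which after another Gronwall argument gives $\mu_l(t)\le C_l\,e^{-\theta t}$ for every $l\ge0$. Since the Sobolev and Poincar\'e constants of $g_{\psi}$ are invariant under the holomorphic gauge $\varrho$ and hence uniformly comparable to those of $\om$, the Sobolev embedding upgrades this to $|\dot\psi|_{C^l(g_\psi)}\le Ce^{-\theta t}$, so the identity $\psi(t)=\psi(0)+\int_0^t\dot\psi\,ds$ shows $\psi(t)$ is Cauchy in every $C^l$ and converges exponentially to a limit $\psi_\infty$. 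The uniform Ricci bound together with Chen--He's compactness theorem and the smoothing Theorem~\ref{regularity of PCF: reg of PCF} promote this to exponential convergence in every $C^{k,\a}$ norm; the limit $\om_{\psi_\infty}$ is cscK, and uniqueness near $\om$ within the class follows from Theorem~\ref{PCF on SoKM: ex uni}.

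The main obstacle to be handled carefully is the spectral gap of the Lichnerowicz operator in the presence of nontrivial holomorphic vector fields: the argument only works because the Futaki invariant \eqref{PCF on SoKM:Futaki} ensures $\dot\psi$ is orthogonal to $\ker L_t$, and because $g_\phi$ is known a priori to remain in a small neighborhood of $\om$ thanks to the gauge-fixing of Theorem~\ref{PCF on SoKM: hol}. Without that confinement one cannot rule out the first eigenvalue drifting to $0$. Once these two ingredients are in place, the Gronwall/bootstrap chain is routine, and the combination of $C^{2,\a}$ compactness and the smoothing regularity provides the exponential convergence in $C^\infty$ and in turn the uniqueness of the nearby cscK limit.
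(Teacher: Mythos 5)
Your proposal re-derives the stability theorem that immediately precedes this corollary --- long-time existence, confinement near $\om$, the spectral gap of \lemref{ed of PCF:eigenvalue}, the Gronwall chain for $\mu_0,\mu_1,\mu_l$, and exponential decay --- but that theorem's hypothesis is $|\vphi_0|_{C^{2,\a}}\leq\eps_0$, and the corollary makes no such assumption: the flow may start far from any cscK metric, and all you are given is that some subsequence $g_{\vphi(t_j)}$ converges to a cscK metric. The step that is missing is the reduction of the corollary to the stability theorem by \emph{restarting} the flow. Concretely: take the subsequential cscK limit as the new reference metric; since the $K$-energy is monotone along the flow and a cscK metric is its minimizer (\thmref{PCF on SoKM: CT K LB}), and since $\vphi(t_j)$ converges to the limit in $C^{2,\a}$ (indeed in $C^{k,\a}$ by the smoothing \thmref{regularity of PCF: reg of PCF}), for $j$ large the potential at time $t_j$ satisfies the smallness hypotheses of \thmref{PCF on SoKM: no holo} or \thmref{PCF on SoKM: hol}. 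Restart the flow at $t=t_j$ with this initial data; by uniqueness of solutions (\thmref{short time: main}) the restarted flow coincides with the original one for $t\geq t_j$, and the stability theorem then gives exponential convergence of the entire flow to a single nearby cscK metric, so every subsequential limit agrees with it. That restart-plus-uniqueness observation is the whole content of the corollary, and it is the one idea your write-up does not supply.

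Without it your argument does not close: the confinement of $g_\phi(t)$ to a small neighborhood of $\om$ for \emph{all} large $t$ --- which is what legitimizes the uniform lower bound on the Lichnerowicz operator and the inequality \eqref{PCF on SoKM:mu1 small} --- is established in the paper only under the smallness hypothesis on the initial data, and mere convergence along a subsequence $t_j$ does not by itself prevent the flow from leaving that neighborhood between consecutive $t_j$. Two smaller points: you "choose $\eps_0$ small", but $\vphi_0$ is given in the corollary, so smallness must be manufactured by the restart rather than assumed; and you appeal to \thmref{PCF on SoKM: ex uni} for uniqueness, whereas that theorem only gives uniqueness of cscK metrics modulo holomorphic transformations --- the uniqueness of the flow limit comes from the exponential convergence itself once the restart is in place.
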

\begin{rem}
We hope our method of proving the stability problem may be useful in other geometric flow problems.
Since the pseudo-Calabi flow is just the K\"ahler-Ricci flow in the canonical class. By using the similar technique of the pseudo-Calabi flow,
we shall prove the stability of the K\"ahler-Ricci flow near a K\"ahler-Ricci soliton in the subsequent paper \cite{zheng-2009krf}. Meanwhile, we prove the stability of the Calabi flow near an extremal metric in \cite{zheng-2009cf}.
\end{rem}
\section{Further remarks and problems}
A Chen's conjecture \cite{Chen4-2008} says that
\begin{conj}(Chen \cite{Chen4-2008})
A global $C^{1,1}$ $K$-energy minimizer in any K\"ahler class must be smooth.
\end{conj}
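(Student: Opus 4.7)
The plan is to attack Chen's conjecture by using the pseudo-Calabi flow as a regularizer, exploiting the rigidity that arises when $K$-energy monotonicity is combined with the hypothesis that the initial datum is already a global $K$-energy minimizer. Let $\vphi_0\in C^{1,1}(M)$ be such a minimizer in the fixed K\"ahler class. First I would run the pseudo-Calabi flow starting from $\vphi_0$ and, invoking \thmref{regularity of PCF: reg of PCF}, produce a solution $\vphi(t)$ on some interval $(0,T]$ that is smooth for every $t>0$.

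With this flow in hand, the rigidity step is short. The derivative of the $K$-energy along the pseudo-Calabi flow is $-V^{-1}\int_M|\nabla f|_{g_\vphi}^2\,\om_\vphi^n$, while the minimizing property of $\vphi_0$ forces $\nu_\om(\vphi(t))\ge\nu_\om(\vphi_0)$. Monotonicity then pins $\nu_\om(\vphi(t))$ at its initial value for all $t\in(0,T]$, so $\nabla f\equiv 0$ and hence $f\equiv 0$ after imposing the normalization of Section \ref{notations}. Since $\dot\vphi=-f$ on the smooth stratum, $\vphi(t)$ is stationary there, equal to some smooth cscK potential $\vphi_\ast$. A continuous-dependence argument in the spirit of \thmref{con dpd: main}, extended to the $C^{1,1}$ level, then identifies $\vphi_0=\lim_{t\to 0^+}\vphi(t)=\vphi_\ast$, forcing $\vphi_0$ to be smooth.

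The main obstacle is the very first step: extending the short-time existence theory of Section \ref{short time} from $C^{2,\alpha}$ to $C^{1,1}$ initial data. The whole apparatus---the contraction mapping in the space $X$, the $L^p$ bounds on the pseudo-differential term $Q$, and the Schauder-type estimates for the linearized equation---relies crucially on being in the little-H\"older class $C^{2,\alpha}$ that fits the DaPrato--Grisvard/Angenent interpolation framework. To bypass this one could try either (i) approximating $\vphi_0$ by smooth near-minimizers $\vphi_0^\epsilon$ (for instance by Demailly-type regularization tailored to the $K$-energy), running the flow from each, and extracting a limit via the Ricci-bound extension \thmref{long time: main} together with uniform estimates coming from the $K$-energy being nearly at its infimum; or (ii) developing a weaker $W^{2,p}$ short-time theory for \eqref{notations: pam of PCF} and exploiting the built-in smoothing of the pseudo-differential term $P$, continuing the partial estimates of Subsection \ref{estimate of linearized}.

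A secondary subtlety is to establish convergence $\vphi(t)\to\vphi_0$ at $t=0$ in a topology strong enough to conclude $\vphi_0=\vphi_\ast$. Once the constancy $\vphi(t)\equiv\vphi_\ast$ on $(0,T]$ is proved, any form of continuity at $t=0$ compatible with the notion of solution used at the boundary---even $C^0$ or distributional---suffices, since a smooth constant family cannot have two distinct $C^{1,1}$ limits. The overall strategy mirrors the success of the weak K\"ahler-Ricci flow proof of the conjecture in the canonical class, replacing the parabolic Monge-Amp\`ere reduction there with the pseudo-differential Monge-Amp\`ere flow formulation made available by this paper.
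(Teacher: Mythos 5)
This statement is a \emph{conjecture} in the paper, not a theorem: the authors explicitly do not prove it, and they reduce it (in a general K\"ahler class) to the open question of whether the pseudo-Calabi flow can be started from a $C^{1,1}$ potential. Your proposal is therefore not a proof but a restatement of the paper's own intended program, with the single genuinely hard step left open. The rigidity half of your argument is fine as far as it goes: \emph{if} a flow $\vphi(t)$ existed on $(0,T]$, smooth for $t>0$, with $\nu_\om$ continuous down to $t=0$ and the monotonicity formula valid on $(0,T]$, then minimality of $\vphi_0$ plus monotonicity forces $\nabla f\equiv 0$, hence stationarity at a smooth cscK potential, and continuity at $t=0$ would identify $\vphi_0$ with it. But everything in the paper's well-posedness theory (Theorem~\ref{short time: main}, the contraction argument in $X$, Proposition~\ref{short time: Linearized iso}, the treatment of the pseudo-differential term $P$ via Lemma~\ref{short time: P regular}) is built on $C^{2,\a}$ initial data in the little-H\"older/continuous-interpolation framework, and the smoothing statement Theorem~\ref{regularity of PCF: reg of PCF} you invoke presupposes a solution already in $C^0([0,T],C^{2+\a})\cap C^1([0,T],C^\a)$. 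You cannot invoke it for $C^{1,1}$ data; that is precisely what has to be constructed.

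Neither of your two bypass routes closes the gap. For route (i), approximating $\vphi_0$ by smooth near-minimizers $\vphi_0^\eps$ and flowing from each gives no uniform lifespan and no uniform estimates: Theorem~\ref{long time: main} extends the flow only under a uniform $L^\infty$ Ricci bound along the flow, and near-minimality of the $K$-energy supplies no such bound, so there is no compactness with which to extract a limiting flow as $\eps\to 0$. For route (ii), a $W^{2,p}$ short-time theory for \eqref{notations: pam of PCF} is exactly the ``subsequent paper'' the authors defer to; the partial estimates of Subsection~\ref{estimate of linearized} concern the linearized equation with coefficients frozen at a $C^{2,\a}$ potential satisfying \eqref{short time: vphi small} and do not yield existence from $C^{1,1}$ data. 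Finally, your closing continuity argument also needs uniqueness of the weak solution emanating from $\vphi_0$ (otherwise one smooth stationary branch says nothing about $\vphi_0$ itself), and no uniqueness theory below $C^{2,\a}$ is available here. In short: the main obstacle you name \emph{is} the conjecture, and it is not overcome.
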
 This conjecture has been proved in the canonical K\"ahler class via the weak K\"ahler-Ricci flow \cite{MR2348984}\cite{MR2434691}\cite{ctz-2008}\cite{songtian2009}. In general K\"ahler class, the conjecture will be verified if one proves the following question:
\begin{ques}(Chen \cite{Chencom})
Can the pseudo-Calabi flow start from a $C^{1,1}$ K\"ahler potential?
\end{ques} In Subsection \ref{estimate of linearized}, we have obtained a partial estimate related to this conjecture.
In the subsequent paper, we will relax the regularity of the initial K\"ahler potential.

As we have proved, the pseudo-Calabi flow shares some properties with the Calabi flow; recall for instance \thmref{intro: ricci}. However, even in the canonical class $C_1(M)$, they are extremely different. Within $C_1(M)$ the pseudo-Calabi flow is precisely the K\"ahler-Ricci flow. The K\"ahler-Ricci flow has many properties such as the long time existence \cite{MR799272} and the Perelman's estimate \cite{Perelman}. Chen-Tian proved the convergence of the K\"ahler-Ricci flow on K\"ahler-Einstein manifolds \cite{MR1893004}\cite{MR2219236}. Later Zhu \cite{MR2291916} extended their theorems to the K\"ahler manifolds which admit a K\"ahler-Ricci soliton. One may ask if the pseudo-Calabi flow inherits these fine properties of the K\"ahler-Ricci flow.
\begin{conj}(Chen \cite{Chencom})
The pseudo-Calabi flow has long time existence.
\end{conj}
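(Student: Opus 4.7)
The plan is to reduce long time existence to an a priori $L^\infty$ bound on the Ricci curvature on any finite time interval, and then to establish that bound by the chain of estimates already initiated in Section \ref{long time}, strengthened so that none of the intermediate constants depend on the final time $T$. By \thmref{long time: main} the flow extends past any finite $T$ provided $|Ric_\vphi(t)|_{L^\infty}$ is bounded on $[0,T]$. Since short time existence is given by \thmref{short time: main}, long time existence follows if the bound $\sup_{[0,T]}|Ric_\vphi|_{L^\infty}\leq C(T,\vphi_0)$ can be established for every finite $T$ (without assuming cscK exists in the class).

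First I would set up the basic monotone quantities. The $K$-energy is non-increasing and its derivative equals $-\frac{1}{V}\int_M|\nabla f|^2_{g_\vphi}\om_\vphi^n$, so under the standing assumption that $\nu_\om$ is bounded below on $\mathcal H$ (which is classical if the class admits any cscK metric, and which we would impose as the natural hypothesis) we obtain the space-time bound $\int_0^T\int_M|\nabla f|^2_{g_\vphi}\om_\vphi^n\,dt\leq \nu_\om(\vphi_0)-\inf \nu_\om$. Using $\Delta_\vphi f=S_\vphi-\as$ this gives a time-integrated Poincar\'e-type control on $S_\vphi-\as$ and, after pairing with $f$, on $\|f\|_{L^2(\om_\vphi^n)}$. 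Next I would prove $\int_0^tS_\vphi\,dt\geq -C_1$ along the flow, so that \lemref{long time: h above} yields $\sup_M h\leq C_2$, and then invoke \lemref{long time: sup varphi} and \eqref{long time: inf varphi} to control $\mathrm{osc}_M\vphi$ in terms of $\sup_M h$ alone. The normalization $\int_Me^P\om^n=V$ together with the computation in Section \ref{long time} gives $\inf_{[0,T]}\underline\vphi\geq \underline\vphi(0)+T\inf_M h$, closing the loop to produce $\|\vphi\|_{C^0(M\times[0,T])}\leq C(T)$.

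The next stage is to bootstrap to $C^2$. The delicate point is that $\sup_M h$ (hence $\text{osc}_M \vphi$) can a priori depend on $\sup_M S_\vphi$, and in turn the Chen-He estimate $n+\tri\vphi\leq C_8 e^{C_9\,\mathrm{osc}_M\vphi+\sup_M h/(n-1)}$ requires a \emph{two-sided} Ricci bound. I would therefore attempt the following closed loop: (i) evolve $h$ along the flow, using $\p_t h=\Delta_\vphi(h-P)$ and $\Delta_\vphi P=tr_\vphi Ric(\om)-\as$, to derive $\p_t h=\as-S_\vphi$, exactly as for the K\"ahler-Ricci flow; (ii) adapt Perelman-type monotonicity or Chen-He's differential inequality for $|Ric_\vphi|$ along the flow, treating the extra pseudo-differential term $P$ as a perturbation controlled by Schauder/$L^p$ estimates on the fixed background $\om$ using \lemref{short time: P regular} and \lemref{short time: Lp estimate of Q}; (iii) feed the resulting two-sided Ricci bound back into Chen-He to obtain uniform $C^2$ estimates, and then the Evans-Krylov estimate followed by \thmref{regularity of PCF: reg of PCF} to promote this to $C^{k,\a}$ for every $k$.

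The main obstacle is Step (ii), the a priori $L^\infty$ bound on $Ric_\vphi$. In the canonical class, $P$ collapses and the flow reduces to the K\"ahler-Ricci flow for which Cao's argument and Perelman's estimates deliver this bound. In a general K\"ahler class the pseudo-differential term $P$ is genuinely nonlocal, so the pointwise maximum principle applied to $\sup_M|Ric_\vphi|$, or to the Perelman functional, picks up an uncontrolled commutator term $[\Delta_\vphi,\text{pseudo part}]$. Morally, one needs to show this commutator is a lower order perturbation, which is plausible because $-P_{i\bar j}+R_{i\bar j}(\om)$ is a harmonic $(1,1)$-tensor with respect to $g_\vphi$ (see \eqref{short time: harmonic tensor}) and so has elliptic Schauder regularity one order better than naively expected. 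My proposed route is to differentiate $|Ric_\vphi|^2_{g_\vphi}$ along the flow, absorb the local reaction terms via Kato-type Moser iteration on $M\times[0,T]$, and handle the $P$-commutator via the integrated Schauder estimate of \lemref{short time: P regular} combined with the energy bound on $\nabla f$. If this can be pushed through, the combination of Steps (i)--(iii) together with \thmref{long time: main} yields the conjectured long time existence; if not, the failure would identify precisely the nonlocal obstruction that separates the pseudo-Calabi flow from the K\"ahler-Ricci flow outside the canonical class.
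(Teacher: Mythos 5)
This statement is one of the open problems collected in the paper's final section: it is stated as a \emph{conjecture} (attributed to Chen), and the paper offers no proof of it. What the paper does prove in this direction is \thmref{long time: main} (equivalently \thmref{intro: ricci}): a uniform $L^\infty$ bound on $Ric_\varphi$ on $[0,T]$ is exactly the obstruction to extending the flow past $T$. Your proposal correctly identifies this reduction, but then your ``Step (ii)'' --- producing the a priori Ricci bound on an arbitrary finite interval --- is precisely the unresolved content of the conjecture, and you explicitly leave it conditional (``If this can be pushed through\ldots''). So the proposal is not a proof; it is a program that terminates at the same open point the authors do, and the honest conclusion of your last paragraph concedes as much.

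Two further concrete problems with the intermediate steps. First, you import the hypothesis that the $K$-energy is bounded below (or that the class admits a cscK metric); the conjecture as stated carries no such hypothesis, so even a successful completion of your outline would prove a weaker statement. Second, the passage from the $K$-energy monotonicity to the hypothesis of \lemref{long time: h above} does not work as written: the monotonicity formula gives only the space-time integral bound $\int_0^T\int_M|\nabla f|^2_{g_\varphi}\,\omega_\varphi^n\,dt\leq C$, whereas \lemref{long time: h above} needs a \emph{pointwise in $x$} lower bound on $\int_0^t S_\varphi\,dt$ (since $\partial_t h=\underline{S}-S_\varphi$, so $\sup_M h$ is controlled by $\sup_M\bigl(-\int_0^t S_\varphi\,dt\bigr)$). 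An $L^2$-in-space-time bound on $|\nabla f|$ does not yield such a pointwise bound, so even the first link in your chain ($\sup_M h\leq C_2$) is not established. The entire subsequent loop (Yau's $C^0$ estimate, Chen--He's second-order estimate, Evans--Krylov) is the paper's own Section~\ref{long time} argument, but there it is run \emph{under the assumption} of a two-sided Ricci bound, which is what must be produced, not assumed.
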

There is a conjecture in \cite{Chen4-2008} saying
\begin{conj}(Chen \cite{Chen4-2008})
The existence of cscK metrics implies that the
$K$-energy is proper in the sense that it bounds the geodesic distance.
\end{conj}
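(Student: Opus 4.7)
The plan is to attack this properness conjecture via the pseudo-Calabi flow itself. Given $\vphi_0\in\cH_0$ with $\nu_\om(\vphi_0)\le C$, start the flow $\vphi(t)$ with initial datum $\vphi_0$. By the gradient flow structure of Remark~\ref{notations: gradient flow} together with the lower bound $\nu_\om\ge 0$ from \thmref{PCF on SoKM: CT K LB}, $\nu_\om(\vphi(t))$ converges to some $\nu_\infty\ge 0$, and the Calabi functional $\int_M|\nabla f|_{g_\vphi}^2\,\om_\vphi^n$ is integrable in $t$ with integral at most $\nu_\om(\vphi_0)$. Extracting $t_i\to\infty$ along which the Calabi functional tends to zero, one hopes for a $C^{2,\a}$-convergent subsequence $\vphi(t_i)\to\vphi_\infty$ with $\om_{\vphi_\infty}$ cscK.

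Once such a cscK limit $\vphi_\infty$ is in hand, the exponential stability \thmref{intro: stability} applied at $\vphi_\infty$ takes over: for $i$ large, the tail $\vphi(t)|_{t\ge t_i}$ lies in a small $C^{2,\a}$ neighborhood of $\vphi_\infty$ and converges exponentially to a nearby cscK metric $\vphi_*$, with finite Mabuchi length on $[t_i,\infty)$. Combining this with a bound on the Mabuchi length of the initial segment $[0,t_i]$ in terms of $\nu_\om(\vphi_0)$ would yield the desired properness inequality, namely that the Mabuchi geodesic distance $d(\vphi_0,\vphi_*)$ is bounded above by an increasing function of $\nu_\om(\vphi_0)$.

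The principal obstacle is threefold. First, the compactness step requires a uniform $C^{2,\a}$ bound along $\vphi(t_i)$; by \thmref{intro: ricci} this would follow from a uniform Ricci bound along the flow, which is presently open (cf.\ the long time existence conjecture just above in Section~8). Second, upgrading a vanishing Calabi functional to a cscK limit requires a regularity result for almost-minimizers of the $K$-energy, closely related to Chen's $C^{1,1}$ regularity conjecture also stated in Section~8. Third, and most serious, the gradient flow structure controls $\int_0^\infty\int_M|\nabla\dot\vphi|^2\,\om_\vphi^n\,dt$ (the Calabi norm of the velocity), not $\int_0^\infty\bigl(\int_M\dot\vphi^2\,\om_\vphi^n\bigr)^{1/2}dt$ (the Mabuchi length), so a naive Cauchy-Schwarz argument loses a factor $\sqrt{t_i}\to\infty$ when estimating the initial segment. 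Overcoming this gap is the genuine heart of the problem and likely requires either a Lojasiewicz-Simon inequality near the cscK orbit or a direct convexity argument for $\nu_\om$ along weak geodesics; either input is substantially beyond the tools built up in the present paper.
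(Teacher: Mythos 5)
This statement is not proved in the paper: it appears in Section~8 as an open conjecture attributed to Chen, listed alongside the long time existence conjecture and the $C^{1,1}$ regularity question. There is therefore no proof in the paper to compare yours against, and your proposal should not be read as closing the conjecture --- indeed you say as much yourself. What you have written is a plausible strategy outline whose every essential step is itself open.

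To be concrete about where the argument actually fails rather than merely ``remains to be checked'': (i) the compactness and convergence steps are not available for a general initial datum with bounded $K$-energy, because \thmref{intro: stability} only applies \emph{after} the flow has entered a small $C^{2,\a}$ neighborhood of a cscK potential, and a bound on $\nu_\om(\vphi_0)$ gives no mechanism forcing the flow into such a neighborhood (the uniform Ricci bound required by \thmref{intro: ricci} is exactly what is missing); (ii) even granting a cscK limit, the gradient flow identity controls $\int_0^\infty\int_M|\nabla\dot\vphi|^2\om_\vphi^n\,dt$, and passing to a length bound via Cauchy--Schwarz degenerates like $\sqrt{t_i}$, as you note --- but in addition the flow is a gradient flow for the \emph{Calabi gradient} metric \eqref{notations: gra Calabi metric}, whereas the conjecture concerns the Mabuchi geodesic distance, so even a finite-length conclusion in the flow's own metric would require a further comparison between the two metrics that the paper does not supply. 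Each of these is a genuine missing idea, not a routine verification; the proposal is an honest research plan, not a proof.
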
 Inspired from this conjecture,
one may generalize \thmref{intro: stability} to the following question.
\begin{ques}(Chen \cite{Chencom})
On K\"ahler manifolds admit a cscK metric, does the pseudo-Calabi flow converge to a cscK metric?
\end{ques}
Note that \lemref{PCF on SoKM: dis imp norm} provides a method to control the norm of the potential by its geodesic distance in the critical submanifold.

Recently, Donaldson \cite{MR2507220}\cite{MR2433928}\cite{MR2154300}\cite{MR1988506} proved the following theorem.
\begin{thm}(Donaldson \cite{MR2507220})
If a polarized complex toric surface with zero Futaki invariant is
$K$-stable, then it admits a cscK metric.
\end{thm}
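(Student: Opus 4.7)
The plan is to recast the cscK problem as a boundary-value problem for a convex function on the moment polytope, and then solve it by a continuity method whose closedness step is driven by the K-stability hypothesis. Let $X$ be the polarized toric surface, $P\subset\mathbb{R}^2$ its moment polytope with edge measure $d\sigma$, and $A=\bar S$. A torus-invariant K\"ahler metric corresponds via the moment map to a symplectic potential $u:P\to\mathbb{R}$ which is strictly convex on the interior and has the Guillemin singularity $u = u_P + v$, where $u_P = \tfrac12\sum_k \ell_k(x)\log \ell_k(x)$ with $\ell_k$ the defining affine functions of the edges, and $v\in C^\infty(\bar P)$. The cscK equation becomes Abreu's equation
\begin{equation*}
-\sum_{i,j=1}^{2}\frac{\partial^{\,2} u^{ij}}{\partial x_i\,\partial x_j}=A,
\end{equation*}
where $(u^{ij})$ is the inverse of the Hessian of $u$. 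In parallel, K-stability with vanishing Futaki invariant translates into the condition that the linear functional
\begin{equation*}
\mathcal{L}(f):=\int_{\partial P}\! f\,d\sigma - A\int_P f\,dx
\end{equation*}
satisfies $\mathcal{L}(f)\geq 0$ for every continuous convex $f$ on $\bar P$, with equality only when $f$ is affine.

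First I would set up a continuity path. Introduce a one-parameter family of reference data (e.g.\ replacing $A$ by $A_t$, or adding a tame source term $f_t$) connecting a model problem with an explicit solution at $t=0$ to the Abreu equation at $t=1$, inside a Banach manifold of symplectic potentials with prescribed Guillemin boundary asymptotics. The openness step uses the implicit function theorem: the linearization of the Abreu operator is a self-adjoint fourth-order Lichnerowicz-type operator on $\bar P$, whose kernel consists precisely of the affine functions (which correspond to the $T^2$-action). Modding out by this kernel via the standard normalization $\int_P u\,dx=0$ and $\int_P x_i u\,dx=0$ yields an isomorphism and hence perturbative existence.

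The real work is closedness, where the stability hypothesis must be converted into uniform a priori estimates. My plan is to organize the estimates in three layers. The Mabuchi energy, in toric coordinates, reads $\mathcal{M}(u)=-\int_P\log\det(u_{ij})\,dx+\mathcal{L}(u)$; along a continuity family the constant-scalar-curvature equation is the Euler--Lagrange equation of $\mathcal{M}$, so the non-negativity $\mathcal{L}\geq 0$ on convex functions gives a lower bound for $\mathcal{M}$ modulo the affine action. The first layer is to upgrade this to a quantitative properness estimate: a Moser--Trudinger-type inequality on the finite-dimensional space of piecewise-linear convex functions, extended by density, controlling $\int_P u\,d\sigma$ (and hence the boundary behavior of $u$) in terms of $\mathcal{M}(u)$ when $u$ is normalized. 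The second layer is the interior convex-analytic estimate: use the extremal nature of $u$ under Abreu's equation, together with Trudinger--Wang type inequalities for Monge--Amp\`ere-like operators in two variables, to bound $u$ in $C^0_{\mathrm{loc}}$ modulo affine functions and then the Hessian $(u_{ij})$ two-sidedly on compact subsets of the interior. The third layer is boundary regularity, for which one models a neighborhood of an edge by a half-strip with the explicit Guillemin singularity $\tfrac12 x_1\log x_1$, and a neighborhood of a vertex by the quadrant with $\tfrac12(x_1\log x_1+x_2\log x_2)$; in these local models one uses the scaling invariance of the Abreu equation and Schauder estimates for its linearization to show that $v=u-u_P$ extends to $C^{k,\alpha}(\bar P)$ for all $k$.

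The main obstacle is the interior $C^0$-modulo-affine and two-sided $C^2$ estimates, and this is where the stability hypothesis is indispensable. The natural strategy is a contradiction/blow-up argument: if along the continuity path there is a sequence $u_{t_n}$ whose Hessians degenerate or blow up, renormalize by an affine change and pass to a limit. In two dimensions, the limit object is a convex function on a half-plane, a wedge, or an infinite strip obtained as a rescaled limit of $P$; its induced measure on the relative boundary must then violate $\mathcal{L}\geq 0$ because one can construct a convex test function $f$ (piecewise linear, supported on the rescaled region) for which $\mathcal{L}(f)<0$, contradicting K-stability. The restriction to surfaces is used essentially here, both in classifying the possible blow-up limits (only a finite list of model domains appears when $\dim P=2$) and in the convex-geometric constructions of destabilizing test functions. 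Once the interior $C^2$ bound is in hand, boundary regularity and higher-order bootstrap follow from linear theory, closedness of the continuity path is achieved, and the solution at $t=1$ gives the desired cscK metric.
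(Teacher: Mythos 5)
This statement is not proved in the paper at all: it is quoted verbatim from Donaldson \cite{MR2507220} in the closing section of remarks and open problems, purely as motivation for Question \ref{cong csck}. So there is no proof of the authors' own to compare your proposal against; the only meaningful comparison is with Donaldson's published argument, and your outline does track the broad shape of that program (Abreu's equation on the moment polytope, the functional $\mathcal{L}(f)=\int_{\partial P}f\,d\sigma-A\int_Pf\,dx$, a continuity path with openness via the linearized operator, and closedness via interior and boundary a priori estimates plus a blow-up analysis).

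That said, what you have written is a plan rather than a proof, and the two places where it is thinnest are exactly where the entire difficulty lies. First, K-stability in the usual sense gives $\mathcal{L}(f)>0$ only for \emph{rational piecewise-linear} non-affine convex $f$; what the closedness argument actually needs is a uniform coercivity statement of the form $\mathcal{L}(f)\geq\lambda\int_{\partial P}f\,d\sigma$ for all normalized convex $f$ and some $\lambda>0$. Passing from the qualitative stability hypothesis to this quantitative bound is not a routine density argument (a pointwise-positive linear functional on an infinite-dimensional cone need not be coercive), and establishing it for surfaces is the main content of one of Donaldson's papers; your appeal to ``a Moser--Trudinger-type inequality on piecewise-linear convex functions, extended by density'' does not supply this. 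Second, the interior two-sided Hessian estimate and the boundary regularity near edges and vertices of $P$ are each the subject of a separate long paper (the interior estimate rests on a delicate ``M-condition'' controlling the geometry of the graph of $u$, not merely on Trudinger--Wang-type inequalities, and the classification of blow-up limits together with the construction of destabilizing convex test functions is where the restriction to $\dim_{\mathbb C}X=2$ enters in an essential and technically heavy way). As it stands your proposal identifies the correct skeleton but leaves all of the load-bearing estimates unproved, so it cannot be accepted as a proof of the theorem.
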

Following the same line of Donaldson's theorem, one may ask
\begin{ques}(Chen \cite{Chencom})\label{cong csck}
Let $M$ be a polarized complex toric surface with zero Futaki invariant. If $M$ is $K$-stable, does the pseudo-Calabi flow converge to a cscK metric up to holomorphic diffeomorphisms?
\end{ques}
Actually, according to \thmref{intro: stability} the convergence up to holomorphic diffeomorphisms in Question \ref{cong csck} implies the exponential convergence.
\bibliography{bib}

\def\cprime{$'$}
\begin{thebibliography}{10}

\bibitem{MR1059647}
Sigurd~B. Angenent.
\newblock Nonlinear analytic semiflows.
\newblock {\em Proc. Roy. Soc. Edinburgh Sect. A}, 115(1-2):91--107, 1990.

\bibitem{MR0488101}
Melvin~S. Berger.
\newblock {\em Nonlinearity and functional analysis}.
\newblock Academic Press [Harcourt Brace Jovanovich Publishers], New York,
  1977.
\newblock Lectures on nonlinear problems in mathematical analysis, Pure and
  Applied Mathematics.

\bibitem{MR645743}
Eugenio Calabi.
\newblock Extremal {K}\"ahler metrics.
\newblock In {\em Seminar on {D}ifferential {G}eometry}, volume 102 of {\em
  Ann. of Math. Stud.}, pages 259--290. Princeton Univ. Press, Princeton, N.J.,
  1982.

\bibitem{MR780039}
Eugenio Calabi.
\newblock Extremal {K}\"ahler metrics. {II}.
\newblock In {\em Differential geometry and complex analysis}, pages 95--114.
  Springer, Berlin, 1985.

\bibitem{Cacom}
Eugenio Calabi and Xiuxiong Chen.
\newblock Private communication.

\bibitem{MR1969662}
Eugenio Calabi and Xiuxiong Chen.
\newblock The space of {K}\"ahler metrics. {II}.
\newblock {\em J. Differential Geom.}, 61(2):173--193, 2002.

\bibitem{simone-2010}
Simone Calamai.
\newblock The {C}alabi's metric for the space of {K}\"ahler metrics.
\newblock arXiv:1004.5482v1, 2010.

\bibitem{MR799272}
Huai~Dong Cao.
\newblock Deformation of {K}\"ahler metrics to {K}\"ahler-{E}instein metrics on
  compact {K}\"ahler manifolds.
\newblock {\em Invent. Math.}, 81(2):359--372, 1985.

\bibitem{MR768584}
Isaac Chavel.
\newblock {\em Eigenvalues in {R}iemannian geometry}, volume 115 of {\em Pure
  and Applied Mathematics}.
\newblock Academic Press Inc., Orlando, FL, 1984.
\newblock Including a chapter by Burton Randol, With an appendix by Jozef
  Dodziuk.

\bibitem{MR1772078}
Xiuxiong Chen.
\newblock On the lower bound of the {M}abuchi energy and its application.
\newblock {\em Internat. Math. Res. Notices}, (12):607--623, 2000.

\bibitem{MR1863016}
Xiuxiong Chen.
\newblock The space of {K}\"ahler metrics.
\newblock {\em J. Differential Geom.}, 56(2):189--234, 2000.

\bibitem{MR1820328}
Xiuxiong Chen.
\newblock Calabi flow in {R}iemann surfaces revisited: a new point of view.
\newblock {\em Internat. Math. Res. Notices}, (6):275--297, 2001.

\bibitem{Chencom}
Xiuxiong Chen.
\newblock Private communication.
\newblock 2007.

\bibitem{Chen4-2008}
Xiuxiong Chen.
\newblock Space of {K}\"ahler metrics ({IV})--{O}n the lower bound of the
  {K}-energy.
\newblock arXiv:0809.4081v2, 2008.

\bibitem{MR2348984}
Xiuxiong Chen and Weiyue Ding.
\newblock Ricci flow on surfaces with degenerate initial metrics.
\newblock {\em J. Partial Differential Equations}, 20(3):193--202, 2007.

\bibitem{chen-2007}
Xiuxiong Chen and Weiyong He.
\newblock The {C}alabi flow on {K}\"ahler surface with bounded sobolev
  constant--({I}).
\newblock arXiv:0710.5159, 2007.

\bibitem{chen-2008}
Xiuxiong Chen and Weiyong He.
\newblock The {C}alabi flow on toric {F}ano surface.
\newblock arXiv:0807.3984, 2008.

\bibitem{MR2405167}
Xiuxiong Chen and Weiyong He.
\newblock On the {C}alabi flow.
\newblock {\em Amer. J. Math.}, 130(2):539--570, 2008.

\bibitem{MR2481736}
Xiuxiong Chen, Haozhao Li, and Bing Wang.
\newblock K\"ahler-{R}icci flow with small initial energy.
\newblock {\em Geom. Funct. Anal.}, 18(5):1525--1563, 2009.

\bibitem{MR1893004}
Xiuxiong Chen and Gang Tian.
\newblock Ricci flow on {K}\"ahler-{E}instein surfaces.
\newblock {\em Invent. Math.}, 147(3):487--544, 2002.

\bibitem{MR2219236}
Xiuxiong Chen and Gang Tian.
\newblock Ricci flow on {K}\"ahler-{E}instein manifolds.
\newblock {\em Duke Math. J.}, 131(1):17--73, 2006.

\bibitem{MR2434691}
Xiuxiong Chen and Gang Tian.
\newblock Geometry of {K}\"ahler metrics and foliations by holomorphic discs.
\newblock {\em Publ. Math. Inst. Hautes \'Etudes Sci.}, (107):1--107, 2008.

\bibitem{ctz-2008}
Xiuxiong Chen, Gang Tian, and Zhou Zhang.
\newblock On the weak {K}\"ahler-{R}icci flow.
\newblock 2008.
\newblock arXiv:0802.0809.

\bibitem{MR1101689}
Piotr~T. Chru{\'s}ciel.
\newblock Semi-global existence and convergence of solutions of the
  {R}obinson-{T}rautman ({$2$}-dimensional {C}alabi) equation.
\newblock {\em Comm. Math. Phys.}, 137(2):289--313, 1991.

\bibitem{MR551075}
Giuseppe Da~Prato and Pierre Grisvard.
\newblock Equations d'\'evolution abstraites non lin\'eaires de type
  parabolique.
\newblock {\em Ann. Mat. Pura Appl. (4)}, 120:329--396, 1979.

\bibitem{Ding}
Wei~Yue Ding.
\newblock An introduction to harmonic maps.
\newblock unpublished.

\bibitem{MR1736211}
Simon~K. Donaldson.
\newblock Symmetric spaces, {K}\"ahler geometry and {H}amiltonian dynamics.
\newblock In {\em Northern {C}alifornia {S}ymplectic {G}eometry {S}eminar},
  volume 196 of {\em Amer. Math. Soc. Transl. Ser. 2}, pages 13--33. Amer.
  Math. Soc., Providence, RI, 1999.

\bibitem{MR1988506}
Simon~K. Donaldson.
\newblock Scalar curvature and stability of toric varieties.
\newblock {\em J. Differential Geom.}, 62(2):289--349, 2002.

\bibitem{MR2154300}
Simon~K. Donaldson.
\newblock Interior estimates for solutions of {A}breu's equation.
\newblock {\em Collect. Math.}, 56(2):103--142, 2005.

\bibitem{MR2433928}
Simon~K. Donaldson.
\newblock Extremal metrics on toric surfaces: a continuity method.
\newblock {\em J. Differential Geom.}, 79(3):389--432, 2008.

\bibitem{MR2507220}
Simon~K. Donaldson.
\newblock Constant scalar curvature metrics on toric surfaces.
\newblock {\em Geom. Funct. Anal.}, 19(1):83--136, 2009.

\bibitem{MR0164306}
James Eells, Jr. and J.~H. Sampson.
\newblock Harmonic mappings of {R}iemannian manifolds.
\newblock {\em Amer. J. Math.}, 86:109--160, 1964.

\bibitem{MR649348}
Lawrence~C. Evans.
\newblock Classical solutions of fully nonlinear, convex, second-order elliptic
  equations.
\newblock {\em Comm. Pure Appl. Math.}, 35(3):333--363, 1982.

\bibitem{MR0181836}
Avner Friedman.
\newblock {\em Partial differential equations of parabolic type}.
\newblock Prentice-Hall Inc., Englewood Cliffs, N.J., 1964.

\bibitem{MR1814364}
D.~Gilbarg and N.~S. Trudinger.
\newblock {\em Elliptic partial differential equations of second order}.
\newblock Classics in Mathematics. Springer-Verlag, Berlin, 2001.
\newblock Reprint of the 1998 edition.

\bibitem{MR2366370}
Daniel Guan.
\newblock Extremal solitons and exponential {$C^\infty$} convergence of the
  modified {C}alabi flow on certain {$\Bbb{C}{\rm P}^1$} bundles.
\newblock {\em Pacific J. Math.}, 233(1):91--124, 2007.

\bibitem{MR656198}
Richard~S. Hamilton.
\newblock The inverse function theorem of {N}ash and {M}oser.
\newblock {\em Bull. Amer. Math. Soc. (N.S.)}, 7(1):65--222, 1982.

\bibitem{zheng-2009cf}
Hongnian Huang and Kai Zheng.
\newblock Stability of calabi flow near extremal metric.
\newblock Submitted, 2009.

\bibitem{MR930267}
T.~Kato.
\newblock {\em Abstract differential equations and nonlinear mixed problems}.
\newblock Lezioni Fermiane. [Fermi Lectures]. Scuola Normale Superiore, Pisa,
  1985.

\bibitem{MR0393848}
S.~N. Kru{\v{z}}kov, A.~Kastro, and M.~Lopes.
\newblock Schauder type estimates, and theorems on the existence of the
  solution of fundamental problems for linear and nonlinear parabolic
  equations.
\newblock {\em Dokl. Akad. Nauk SSSR}, 220:277--280, 1975.

\bibitem{MR661144}
N.~V. Krylov.
\newblock Boundedly inhomogeneous elliptic and parabolic equations.
\newblock {\em Izv. Akad. Nauk SSSR Ser. Mat.}, 46(3):487--523, 670, 1982.

\bibitem{MR0241822}
O.~A. Lady{\v{z}}enskaja, V.~A. Solonnikov, and N.~N. Ural{\cprime}ceva.
\newblock {\em Linear and quasilinear equations of parabolic type}.
\newblock Translated from the Russian by S. Smith. Translations of Mathematical
  Monographs, Vol. 23. American Mathematical Society, Providence, R.I., 1967.

\bibitem{MR1465184}
Gary~M. Lieberman.
\newblock {\em Second order parabolic differential equations}.
\newblock World Scientific Publishing Co. Inc., River Edge, NJ, 1996.

\bibitem{MR1329547}
Alessandra Lunardi.
\newblock {\em Analytic semigroups and optimal regularity in parabolic
  problems}.
\newblock Progress in Nonlinear Differential Equations and their Applications,
  16. Birkh\"auser Verlag, Basel, 1995.

\bibitem{MR867064}
Toshiki Mabuchi.
\newblock {$K$}-energy maps integrating {F}utaki invariants.
\newblock {\em Tohoku Math. J. (2)}, 38(4):575--593, 1986.

\bibitem{MR909015}
Toshiki Mabuchi.
\newblock Some symplectic geometry on compact {K}\"ahler manifolds. {I}.
\newblock {\em Osaka J. Math.}, 24(2):227--252, 1987.

\bibitem{Perelman}
Grisha Perelman.
\newblock Unpublished note on {K}\"ahler-{R}icci flow.

\bibitem{MR1333601}
Richard Schoen and Shing-Tung Yau.
\newblock {\em Lectures on differential geometry}.
\newblock Conference Proceedings and Lecture Notes in Geometry and Topology, I.
  International Press, Cambridge, MA, 1994.
\newblock Lecture notes prepared by Wei Yue Ding, Kung Ching Chang [Gong Qing
  Zhang], Jia Qing Zhong and Yi Chao Xu, Translated from the Chinese by Ding
  and S. Y. Cheng, Preface translated from the Chinese by Kaising Tso.

\bibitem{MR1165352}
Stephen Semmes.
\newblock Complex {M}onge-{A}mp\`ere and symplectic manifolds.
\newblock {\em Amer. J. Math.}, 114(3):495--550, 1992.

\bibitem{MR2163555}
Santiago~R. Simanca.
\newblock Heat flows for extremal {K}\"ahler metrics.
\newblock {\em Ann. Sc. Norm. Super. Pisa Cl. Sci. (5)}, 4(2):187--217, 2005.

\bibitem{songtian2009}
Jian Song and Gang Tian.
\newblock The {K}\"ahler-ricci flow through singularities.
\newblock Unpublished, 2009.

\bibitem{MR1787650}
Gang Tian.
\newblock {\em Canonical metrics in {K}\"ahler geometry}.
\newblock Lectures in Mathematics ETH Z\"urich. Birkh\"auser Verlag, Basel,
  2000.
\newblock Notes taken by Meike Akveld.

\bibitem{MR2291916}
Gang Tian and Xiaohua Zhu.
\newblock Convergence of {K}\"ahler-{R}icci flow.
\newblock {\em J. Amer. Math. Soc.}, 20(3):675--699 (electronic), 2007.

\bibitem{MR2357473}
Valentino Tosatti and Ben Weinkove.
\newblock The {C}alabi flow with small initial energy.
\newblock {\em Math. Res. Lett.}, 14(6):1033--1039, 2007.

\bibitem{MR480350}
Shing-Tung Yau.
\newblock On the {R}icci curvature of a compact {K}\"ahler manifold and the
  complex {M}onge-{A}mp\`ere equation. {I}.
\newblock {\em Comm. Pure Appl. Math.}, 31(3):339--411, 1978.

\bibitem{zheng-2009krf}
Kai Zheng.
\newblock Stability of {K}\"ahler-{R}icci flow in the space of {K}\"ahler
  metrics.
\newblock arXiv:1004.2695, 2010.

\end{thebibliography}
\bibliographystyle{plain}
\end{document}